\documentclass{amsart}
\usepackage[margin=1in]{geometry}
\usepackage[colorlinks=true,citecolor=black,linkcolor=black,urlcolor=blue]{hyperref}
\usepackage{amsmath}
\usepackage{amsthm}
\usepackage{amssymb}
\usepackage{mathtools}
\usepackage{dsfont}
\usepackage{pifont,enumitem,shuffle, mathrsfs}
\usepackage{tikz}
\usetikzlibrary{patterns}
\usepackage{caption}
\usepackage{mathdots}
\usepackage{yhmath}
\usepackage{siunitx}
\usepackage{array}
\usepackage{multirow}
\usepackage{diagbox}
\usepackage{standalone}
\usepackage[all]{xy}
\usepackage{colortbl}
\usepackage{thm-restate}
\usepackage{thmtools} 
\usepackage{float}
\usepackage{subcaption}
\usepackage{graphicx}
\usepackage{nicematrix}
\usepackage{orcidlink}
\usetikzlibrary{fit}
\usepackage{upgreek}
\usepackage[capitalise, nameinlink]{cleveref}

\usetikzlibrary{fadings}
\usetikzlibrary{patterns}
\usetikzlibrary{shadows.blur}
\usetikzlibrary{shapes}

\usepackage{soul}
\usepackage{color}

\newcommand{\D}{\mathcal{D}}
\newcommand{\N}{\mathbb{N}}
\newcommand{\Z}{\mathbb{Z}}
\renewcommand{\a}{\mathbf{a}}
\renewcommand{\b}{\mathbf{b}}

\renewcommand{\c}{\mathbf{c}}
\renewcommand{\d}{\mathbf{d}}
\newcommand{\e}{\mathbf{e}}
\newcommand{\p}{\mathbf{p}}
\newcommand{\q}{\mathbf{q}}
\newcommand{\x}{\mathbf{x}}

\newcommand{\h}{\mathbf{h}}
\newcommand{\m}{\mathbf{m}}

\DeclareMathOperator{\rev}{rev}
\DeclareMathOperator{\RLmin}{RLmin}
\DeclareMathOperator{\spn}{span}

\newtheorem{theorem}{Theorem}[section]
\newtheorem{conjecture}[theorem]{Conjecture}
\newtheorem{proposition}[theorem]{Proposition}
\newtheorem{corollary}[theorem]{Corollary}
\newtheorem{lemma}[theorem]{Lemma}
\theoremstyle{definition}
\newtheorem{definition}[theorem]{Definition}
\newtheorem{example}[theorem]{Example}
\newtheorem{remark}[theorem]{Remark}

\definecolor{lightgray}{rgb}{0.88, 0.88, 0.88}
\definecolor{yellow}{rgb}{1.0, 1.0, 0.0}

\DeclareSymbolFont{kerkisletters}{OML}{mak}{m}{it}
\DeclareMathSymbol{\elambda}{\mathalpha}{kerkisletters}{"15}

\newcommand{\down}[1]{\mathbf{#1}\!\downarrow}
\newcommand{\hht}{\mathsf{ht}}

\title{The stable Tamari lattice}

\author{H.~Chau}
\author{S.~Daugherty}
\author{S.~Grate}
\author{M. Haiman}
\author{J.~C. Mart\'inez Mori}
\author{A.~Pun\orcidlink{0000-0001-5279-3936}}

\thanks{A.~P. was supported by PSC-CUNY Research Award 67362-00 55
and the 2024--2025 Eugene M. Lang Junior Faculty Research Fellowship
at Baruch College.}

\address[H.~Chau]{
Department of Mathematics,
University of Washington,
Seattle, WA 98195}
\email{hchau@uw.edu}

\address[S.~Daugherty]{
Department of Mathematics,
University of Colorado Boulder,
Boulder, CO 80309}
\email{spencer.daugherty@colorado.edu}

\address[S.~Grate]{
Department of Mathematics,
Iowa State University,
Ames, IA 50011}
\email{sgrate@iastate.edu}

\address[M.~Haiman]
{Department of Mathematics,
University of California,
Berkeley, CA 94720}
\email{mhaiman@math.berkeley.edu}

\address[J.~C. Mart\'inez Mori]{
Department of Mathematical and Statistical Sciences,
University of Colorado Denver,
Denver, CO 80204}
\email{carlos.martinezmori@ucdenver.edu}

\address[A.~Pun]{
Department of Mathematics,
Baruch College and CUNY Graduate Center,
New York, NY 10010}
\email{anna.pun@baruch.cuny.edu}

\keywords{Stable Tamari lattice, $r$-Tamari lattice, $\nu$-Tamari lattice, Tamari intervals, M\"obius function, EL-shellability.}
\subjclass[2020]{Primary 06A07; Secondary 06B05, 05A15.}

\begin{document}
\begin{abstract}
We introduce the stable Tamari lattice $\mathcal{ST}_n$, an infinite lattice on $\N^n$. The $r$-Tamari lattices on $r$-Dyck paths of height $n+1$ occur as a nested family of principal lower order ideals whose union is $\mathcal{ST}_n$. Partition-indexed principal lower order ideals recover the $\nu$-Tamari lattices associated with partition boundaries. We give an explicit coordinate description of the stable Tamari order in terms of interval vectors determined by the lower endpoint. These coordinates characterize comparability and covering relations, and their sum gives the maximum length of a chain between two comparable elements. Analogous coordinates determined by the upper endpoint give an explicit formula for the M\"obius function. We also construct an EL-labeling of every interval and show that every interval in $\mathcal{ST}_n$ is isomorphic to an interval in a classical Tamari lattice.

We classify the possible $h$-vectors by Dyck paths and show that each $h$-vector fiber is parametrized by a unimodular basis of interval vectors. Combining this parametrization with the lower-endpoint coordinates, we obtain a finite rational formula for the multivariate generating function of the height generating functions of all principal lower order ideals. Under the $r$-Tamari realization, these height generating functions recover the interval generating functions of Bergeron and Pr\'eville-Ratelle.
\end{abstract}

\maketitle
\tableofcontents
\section{Introduction}
\label{sec:introduction}

\subsection{Background and motivation}

The Tamari lattice $\mathcal{T}_n$ was introduced by Tamari in his study of associativity, as an order on parenthesizations obtained by directing applications of the associative law \cite{Tamari62}. It has since appeared in many equivalent forms, including binary trees, Dyck paths, and triangulations of a convex polygon; see, for example, \cite{stanley_catalan_2015}. Its undirected cover graph is the $1$-skeleton of the associahedron, which also arose in Stasheff's study of homotopy associativity \cite{Stasheff63}. These realizations connect the Tamari lattice with Catalan combinatorics, lattice theory, and polyhedral geometry, and have motivated extensive study of its intervals and generalizations.

A striking connection between Tamari intervals and diagonal coinvariants appeared in the study of the diagonal action of the symmetric group. For two sets of variables, the ring of diagonal coinvariants has a bigraded structure whose study led to connections with Macdonald polynomials, parking functions, and $(q,t)$-Catalan combinatorics \cite{GarsiaHaiman96, HHLRU05, Haiman94,Haiman02}. Haiman also considered the analogous ring of diagonal coinvariants in three sets of $n$ variables. His calculations for small $n$ suggested that the alternating component has dimension
\[
\frac{2}{n(n+1)}\binom{4n+1}{n-1}.
\]
Independently, Chapoton proved that the same expression counts the intervals in the ordinary Tamari lattice on Dyck paths of semilength $n$ \cite{Chapoton06}.

Bergeron and Pr\'eville-Ratelle developed this connection further in their study of higher trivariate diagonal harmonics \cite{FBLP12}. Their work led them to introduce the $r$-Tamari posets on $r$-Dyck paths. For an $r$-Dyck path $\beta$, they define
\begin{equation}
\label{eq:intro-bpr-polynomial}
i_\beta^{(r)}(q)
=
\sum_{\alpha\le\beta} q^{d(\alpha,\beta)},
\end{equation}
where $d(\alpha,\beta)$ is the length of a longest chain from $\alpha$ to $\beta$ in the $r$-Tamari lattice. Thus $i_\beta^{(r)}(q)$ refines the cardinality of the principal lower ideal below $\beta$ by recording the longest-chain distance to its top element. These generating functions occur in their conjectural formula for the Frobenius characteristic of higher trivariate diagonal harmonics with one grading parameter specialized to $1$ \cite[Conjecture~1]{FBLP12}. In that formula, $i_\beta^{(r)}(q)$ is combined with parking functions and the $\operatorname{dinv}$ statistic. Thus the connection with Tamari combinatorics involves not only which paths lie below $\beta$, but also the chain-length statistic $d(\alpha,\beta)$. A closed formula for the total number of intervals in the $r$-Tamari lattices was later proved by Bousquet-M\'elou, Fusy, and Pr\'eville-Ratelle \cite{BFPL11}.

Pr\'eville-Ratelle and Viennot later introduced the $\nu$-Tamari lattice associated with an arbitrary fixed lattice path $\nu$ \cite{LPXV17}. This replaces the linear boundary of an $r$-Dyck path by an arbitrary boundary path, while recovering the ordinary and $r$-Tamari lattices as special cases. They also showed that every $\nu$-Tamari lattice is isomorphic to an interval in a classical Tamari lattice. Thus the $\nu$-Tamari construction provides a finite Tamari lattice for each fixed boundary path, rather than only for the uniform boundaries arising from a fixed value of $r$.

The longest-chain generating functions in \eqref{eq:intro-bpr-polynomial} suggest studying these interval statistics in a setting where neither the parameter $r$ nor the boundary path is fixed in advance. This leads us to introduce the \emph{stable Tamari lattice} $\mathcal{ST}_n$, an infinite lattice on $\N^n$, where $\N$ is the set of nonnegative integers. The term \emph{stable} refers to the fact that adding the same nonnegative integer to every coordinate of both sequences preserves the order. For fixed $n$, the $r$-Tamari lattices on $r$-Dyck paths of height $n+1$ occur as a nested family of principal lower order ideals whose union is $\mathcal{ST}_n$. Principal lower order ideals indexed by partitions recover the $\nu$-Tamari lattices associated with partition boundaries. Since every partition lies in a sufficiently large $r$-Tamari lower order ideal, each such $\nu$-Tamari lattice is contained in one of these $r$-Tamari lower order ideals.

At the same time, the stable order admits intrinsic coordinates that determine comparability, maximum chain lengths, and M\"obius values directly from the endpoints of an interval. A second family of coordinates, naturally indexed by Dyck paths, gives a finite rational expression that organizes the height generating functions of all principal lower order ideals simultaneously. We leave further applications of the stable Tamari lattice to a companion paper.

\subsection{Overview of results}
In \cref{sec:STO}, we introduce the stable Tamari order and develop its $h$-vector and interval-vector descriptions. For $\a\in\N^n$ and $j\in[n]$, start at position $j$ and move left across consecutive entries that are strictly larger than $a_j$. Let $\h_\a(j)$ be the leftmost position reached.

To organize the sequences in $\mathcal{ST}_n$ globally, we group them according to their $h$-vectors. The possible $h$-vectors are exactly the vectors $\h=(h(1),\ldots,h(n))\in[n]^n$ such that $h(j)\le j$ for every $j$ and the intervals $[h(j),j]$ form a laminar family, meaning that any two are disjoint or one contains the other. There are $C_n$ such vectors, naturally indexed by Dyck paths of semilength $n$.

For each $h$-vector $\h$, we construct a canonical sequence $\a(\h)$, which is the minimum element among all sequences having $h$-vector $\h$. In \cref{prop:h-vector-fiber,prop:h-vector-fiber-unimodular}, we prove that every sequence in this fiber is obtained uniquely from $\a(\h)$ by adding an $\N$-linear combination of $n$ interval indicator vectors, and that these vectors form a unimodular basis of $\Z^n$. Thus each $h$-vector fiber is naturally parametrized by $\N^n$.

To describe comparability, we use the interval vectors $\e_{[\h_\a(j),j]}$ associated with a sequence $\a\in\N^n$. Let $M_\a$ be the lower unitriangular matrix whose $j^{\text{th}}$ row is the indicator vector of $[\h_\a(j),j]$. Regarding vectors in matrix products as row vectors, set
\[
\m_{\a,\b}=(\b-\a)M_\a^{-1}.
\]
In other words, $\m_{\a,\b}$ records the coefficients of $\b-\a$ with respect to the interval vectors determined by the lower endpoint $\a$.

In \cref{thm:compare seq with M matrix}, we prove that $\a\le_T\b\Leftrightarrow\m_{\a,\b}\in\N^n$. Thus the lower-endpoint coordinates $\m_{\a,\b}$ determine comparability in $\mathcal{ST}_n$. The intervals $[\h_\a(j),j]$ also have a natural rooted-forest interpretation. In \cref{prop:forest-matrices,cor:m-vector-forest}, we associate with $\a$ a rooted forest in which the descendants of $j$, including $j$ itself, are precisely the positions in $[\h_\a(j),j]$. Thus $M_\a$ is the descendant-incidence matrix of this forest, while $M_\a^{-1}$ records differences between a vertex and its parent, with the root entries retained.

These interval vectors also describe the covering relations of $\mathcal{ST}_n$. In \cref{cor:up-covers}, we prove that for each $j\in[n]$, $\a+\e_{[\h_\a(j),j]}$ is an upper cover of $\a$, and that these are all of its upper covers. In particular, every element has exactly $n$ upper covers.

In \cref{sec:structural-properties}, we study interval heights and the lattice structure of $\mathcal{ST}_n$. In \cref{thm:max-chain}, we prove that for $\a\le_T\b$, $\hht(\a,\b)=\sum_{j=1}^n\m_{\a,\b}(j)$, where $\hht(\a,\b)$ is the maximum length of a chain from $\a$ to $\b$, measured in edges. Thus the same lower-endpoint coordinates determine both comparability and maximum chain length, even though $\mathcal{ST}_n$ is not graded in general.

The height formula is proved by applying upper-cover moves in nonincreasing order of their indices, producing a saturated chain whose length is the sum of the $m$-coordinates. For the upper bound, we show that the $h$-intervals of any $\c\ge_T\a$ decompose into $h$-intervals of $\a$. Consequently, the $m$-coordinates based at $\a$ are weakly increasing coordinatewise along any chain above $\a$, with at least one coordinate increasing at each strict step. Since the coordinates are integers, their total increase gives the matching upper bound.

In \cref{cor:upper-height-series}, we deduce that for fixed $\a$, the map $\b\mapsto\m_{\a,\b}$ is a bijection from $\{\b\in\N^n:\a\le_T\b\}$ to $\N^n$. Consequently,
\[ \sum_{\b\ge_T\a} t^{\hht(\a,\b)} = \frac{1}{(1-t)^n}. \]
In particular, this height distribution is the same for every $\a\in\N^n$.

We also consider two height generating functions associated with a principal lower ideal. For $\b\in\N^n$, define
\[f_\b(t)=\sum_{\a\le_T\b}t^{\hht(\a,\b)} \qquad\text{and}\qquad F_\b(t)=\sum_{\p\le_T\q\le_T\b}t^{\hht(\p,\q)}.
\]
Thus $f_\b(t)$ records the heights of intervals with top element $\b$, while $F_\b(t)$ records the heights of all intervals contained in $\b\!\downarrow$. We also obtain a symmetry in the sizes of principal lower ideals. In \cref{thm:rev preserves size}, the complement-reversal map $\a\mapsto\rev(\b-\a)$ gives a bijection proving $|\b\!\downarrow\!|=|\rev(\b)\!\downarrow\!|$.

We next realize the $r$-Tamari lattices as principal lower order ideals in $\mathcal{ST}_n$ and show that, for fixed $n$, these form a nested family whose union is $\mathcal{ST}_n$. These realizations also give structural properties of $\mathcal{ST}_n$. In \cref{cor:interval-classical-tamari}, we prove that every interval of $\mathcal{ST}_n$ is isomorphic to an interval in an ordinary Tamari lattice. In \cref{thm:semidistributive-trim}, we prove that $\mathcal{ST}_n$ is semidistributive and that every interval is trim.

In \cref{thm:EL-lab}, we construct a uniform EL-labeling of all intervals in $\mathcal{ST}_n$. For a cover $\a\lessdot_T\b$, let $j=\max\{i:a_i\ne b_i\}$ and assign the label $\elambda(\a,\b)=(-j,b_j)$. With the lexicographic order on $\Z\times\N$, this gives an EL-labeling. On the finite $r$-Tamari lower order ideals, it recovers M\"uhle's labeling up to an order-preserving relabeling \cite{Muhle12}.

For an interval $[\a,\b]$ in $\mathcal{ST}_n$, the M\"obius function is determined by expressing $\b-\a$ in the interval basis associated with the upper endpoint
$\b$. Define $\x_{\a,\b}=(\b-\a)M_\b^{-1}$. In \cref{thm:general-mobius}, we prove that
\[ \mu(\a,\b) = 
\begin{cases}
(-1)^{|\operatorname{supp}(\x_{\a,\b})|}, & \text{if }\x_{\a,\b}\in\{0,1\}^n \text{ and }\operatorname{supp}(\x_{\a,\b})\subseteq\D(\b),\\
0, & \text{otherwise},
\end{cases}
\]
where $\D(\b)=\{j\in[n]:b_{\h_\b(j)-1}<b_j\}$, with $b_0=0$. The nonzero case is precisely the case in which $[\a,\b]$ has a falling saturated chain, and that chain is unique. Thus, expressing $\b-\a$ in the interval basis associated with the lower endpoint $\a$ of $[\a,\b]$ determines comparability and maximum chain length, while expressing $\b-\a$ in the analogous basis associated with the upper endpoint $\b$ determines falling chains and M\"obius values.

In \cref{section: lower order ideals}, we study principal lower order ideals in more detail. For a partition $\lambda$, we identify $\lambda\!\downarrow$ with the generalized Tamari lattice associated with the boundary path of $\lambda$. Using the duality of Pr\'eville-Ratelle and Viennot \cite{LPXV17}, we obtain in \cref{cor:partition-lower-ideal-duality,cor:partition-height-gen-function} the symmetries
\[
F_\lambda(t)=F_{\lambda'}(t) \qquad\text{and}\qquad f_\lambda(t)=f_{\lambda'}(t).
\]

For arbitrary principal lower ideals, we obtain a decomposition by the last coordinate. For $(\b,k)\!\downarrow$, fixing the last coordinate decomposes it into layers, each of which is an interval isomorphic to a principal lower ideal of a shorter sequence. In \cref{prop:last-coordinate-reduction,thm:int_decomp}, an explicit suffix-subtraction procedure determines the maximum element of each layer and yields a recursive formula for $|(\b,k)\!\downarrow|$.

Finally, for an $h$-vector $\h$, let $\mathcal I(\h)$ be the family of intervals whose indicator vectors form the unimodular basis in \cref{prop:h-vector-fiber-unimodular}, with endpoints defined in \cref{def:k-j-gamma}. Combining the parametrization of the $h$-vector fibers with the $m$-coordinates, we prove in \cref{thm:global-height-gen-function} that \[ \sum_{\b\in\N^n} f_\b(t)\x^\b = \sum_{\h\in\mathcal H_n} \frac{\x^{\a(\h)}} {\displaystyle \prod_{I\in\mathcal I(\h)}(1-x_I) \prod_{r=1}^n(1-tx_{[h(r),r]})}, \] where $\mathcal H_n$ is the set of possible $h$-vectors, $\x^\b=x_1^{b_1}\cdots x_n^{b_n}$, and $x_I=\prod_{i\in I}x_i$ for an integer interval $I$.

The first product parametrizes the sequences in a fixed $h$-vector fiber, while the second parametrizes the difference between the endpoints $\a$ and $\b$ of each interval $[\a,\b]$, with $t$ recording its maximum chain length. Thus the height generating functions of all principal lower ideals are organized by a finite sum of rational functions indexed by the $C_n$ possible $h$-vectors.

The height generating functions $f_\b(t)$ recover the interval generating functions of Bergeron and Pr\'eville-Ratelle \cite{FBLP12} exactly. If $\b$ corresponds to an $r$-Dyck path $\beta$ under the realization of the $r$-Tamari lattice as a principal lower order ideal in $\mathcal{ST}_n$, then
\[ f_\b(t) = i_\beta^{(r)}(t). \]
Thus \cref{thm:global-height-gen-function} organizes these $r$-Tamari interval generating functions simultaneously in the stable setting. Moreover, since $f_\b(1)=|\b\!\downarrow|$, setting $t=1$ gives a finite rational expression for the multivariate generating function of principal lower-ideal sizes.

\section{Stable Tamari order}\label{sec:STO}

\subsection{Basic definitions} 

Let $\mathbb{N}=\{0,1,2,\ldots\}$ be the set of all nonnegative integers. For integers $i\leq j$, let $[i,j]:=\{i,i+1,\ldots,j\}$, and let $[n]:=[1,n]$. We also set $[i,j]:=\emptyset$ whenever $i>j$. Throughout, we use boldface lowercase letters for vectors and the corresponding unbolded letters for their entries. Partitions are denoted by unbolded Greek letters. We also adopt the convention that the $0^{\text{th}}$ entry of any vector is $0$. For instance, if $\mathbf{a}=(a_1,\ldots,a_n)\in\mathbb{N}^n$, then $a_i$ denotes the $i^{\text{th}}$ entry of $\mathbf{a}$ and, in particular, $a_0 = 0$. For $k\in[n]$, let $\e_k\in\N^n$ be the $k^{\text{th}}$-\emph{indicator vector} whose $k^{\text{th}}$ entry is $1$ and other entries are $0$. For $S\subseteq[n]$, let $\e_S:=\sum_{k\in S}\e_k$, which we call the \emph{indicator vector} of $S$. In particular, for $1\leq i\leq j\leq n$, $\e_{[i,j]}=\sum_{k=i}^j\e_k.$

\begin{definition}\label{def: po}
Let $\mathbf{a}\in\mathbb{N}^n$. Define a relation $\leq_{\mathbf{a}}$
on $[n]$ by
\[
i\leq_{\mathbf{a}}j \quad\Longleftrightarrow\quad i=j \quad\text{or}\quad \left(i<j \text{ and } a_i,a_{i+1},\ldots,a_{j-1}>a_j\right).
\]
We call $\leq_{\mathbf{a}}$ the \emph{position order} associated to
$\mathbf{a}$.
\end{definition}

\begin{proposition}\label{prop:po} 
   The relation $\leq_{\mathbf{a}}$ is a partial order on $[n]$. 
\end{proposition}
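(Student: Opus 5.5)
We verify the three axioms of a partial order directly from \cref{def: po}; the argument is elementary and needs only the definition.

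\emph{Reflexivity} holds by definition, since $i=j$ is one of the allowed cases.

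\emph{Antisymmetry.} Suppose $i\le_{\a}j$ and $j\le_{\a}i$ with $i\ne j$. Each non-reflexive relation forces a strict inequality of indices, so we would need both $i<j$ and $j<i$, which is impossible. Hence $i=j$. Note that antisymmetry uses only the index inequality $i<j$, not the values of $\a$.

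\emph{Transitivity.} Suppose $i\le_{\a}j$ and $j\le_{\a}k$. If $i=j$ or $j=k$, there is nothing to prove, so assume $i<j<k$. The hypotheses give
\[
a_i,\dots,a_{j-1}>a_j \qquad\text{and}\qquad a_j,\dots,a_{k-1}>a_k .
\]
We must show that $a_l>a_k$ for every $l\in[i,k-1]$. For $l\in[j,k-1]$ this is exactly the second hypothesis. For $l\in[i,j-1]$ we chain the two hypotheses: $a_l>a_j$ by the first, and $a_j>a_k$ by the second (taking $l=j$ there, which is valid since $j\le k-1$). Hence $a_l>a_j>a_k$. Since also $i<k$, we conclude $i\le_{\a}k$.

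None of these steps is a real obstacle. The only point needing care is transitivity: one must cover the range $[i,j-1]$ by passing through the intermediate value $a_j$, and this is valid because $j<k$ puts $a_j$ among the entries that the second hypothesis bounds below by $a_k$.
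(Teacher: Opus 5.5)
Your proof is correct and takes the same approach as the paper. Reflexivity and antisymmetry come directly from the definition. For transitivity, you concatenate the two strict-inequality runs through $a_j>a_k$, as the paper does, only with the index ranges spelled out more explicitly.
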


\begin{proof}
Reflexivity and antisymmetry follow directly from the definition. For transitivity, suppose $i\leq_{\mathbf{a}}j$ and $j\leq_{\mathbf{a}}k$. The claim is immediate if $i=j$ or $j=k$. Otherwise, $i<j<k$, and $a_i,\ldots,a_{j-1}>a_j$ and $a_j,\ldots,a_{k-1}>a_k$. Hence $a_i,\ldots,a_{k-1}>a_k$, so $i\leq_{\mathbf{a}}k$. 
\end{proof}

\begin{remark}\label{remark: po continuity}
By definition, if $i\le_\a j$, then $k\le_\a j$ for every $k\in[i,j]$.
\end{remark}

\begin{example}\label{ex:po}
Let $n = 5$ and $\mathbf{a} = (6, 7, 0, 2,1)$. Then we can construct the poset  $([5],\leq_{\mathbf{a}})$. \cref{tab:po} shows the complete comparisons of pairs of $(i,j)$ where $1 \leq i,j \leq 5$ under the position order $\leq_{\mathbf{a}}$. The cells with $i \leq_\mathbf{a} j$ are colored grey. \cref{fig:po} is the Hasse diagram corresponding to this poset.
\vspace{0.175in}

\begin{minipage}{0.47\linewidth}
\captionsetup{type=table}
\centering
\setlength\tabcolsep{3pt}\renewcommand\arraystretch{1.25}
  \begin{tabular}{|c|c|c|c|c|c|}
      \hline
      \diagbox[width=\dimexpr \textwidth/8+\tabcolsep\relax, height=0.8cm]{ $i$ }{$j$}
                   & 1 &2 & 3 & 4&5\\
      \hline
      
     $1$ &\cellcolor{lightgray} \(1\leq_{\mathbf{a}} 1\) &  \(1\not\leq_{\mathbf{a}} 2\) &  \cellcolor{lightgray} \(1\leq_{\mathbf{a}} 3\) &  \(1\not\leq_{\mathbf{a}} 4\) &  \(1\not\leq_{\mathbf{a}} 5\) \\
      \hline
     $2$ &  \(2\not\leq_{\mathbf{a}} 1\) & \cellcolor{lightgray} \(2\leq_{\mathbf{a}} 2\) & \cellcolor{lightgray} \(2\leq_{\mathbf{a}} 3\) &  \(2\not\leq_{\mathbf{a}} 4\) &  \(2\not\leq_{\mathbf{a}} 5\) \\
      \hline
      $3$ &  \(3\not\leq_{\mathbf{a}} 1\) &  \(3\not\leq_{\mathbf{a}} 2\) & \cellcolor{lightgray} \(3\leq_{\mathbf{a}} 3\) &  \(3\not\leq_{\mathbf{a}} 4\) &  \(3\not\leq_{\mathbf{a}} 5\) \\
      \hline
      $4$ &  \(4\not\leq_{\mathbf{a}} 1\) &  \(4\not\leq_{\mathbf{a}} 2\) &  \(4\not\leq_{\mathbf{a}} 3\) & \cellcolor{lightgray} \(4\leq_{\mathbf{a}} 4\) & \cellcolor{lightgray} \(4\leq_{\mathbf{a}} 5\) \\
      \hline
      $5$ &  \(5\not\leq_{\mathbf{a}} 1\) &  \(5\not\leq_{\mathbf{a}} 2\) &  \(5\not\leq_{\mathbf{a}} 3\) &  \(5\not\leq_{\mathbf{a}} 4\) & \cellcolor{lightgray} \(5\leq_{\mathbf{a}} 5\) \\
      \hline    \end{tabular}\\
 
\caption{Comparisons under $\leq_{\a}$.}
\label{tab:po}
\end{minipage}%
\captionsetup{type=figure}
\hfill
\begin{minipage}{0.5\linewidth}
\centering
\begin{tikzpicture}[scale=0.6]
  \node (A) at (0,0) {$1$};
  \node (B) at (4,0) {$2$};
  \node (C) at (2,2) {$3$};
  \node (D) at (6,0) {$4$};
  \node (E) at (6,2) {$5$};

  \draw (A)--(C)--(B);
  \draw (D)--(E);
\end{tikzpicture}
\caption{Hasse diagram of $([5],\leq_{\a}).$}
\label{fig:po}
\end{minipage}
\vspace{2mm}

\end{example}

\begin{definition}\label{def: sto}
Let $n$ be a positive integer. Define the \textit{stable Tamari order} $\leq_T$ on $\mathbb{N}^n$ by
\[
\mathbf{a} \leq_T \mathbf{b} \quad\Longleftrightarrow\quad \mathbf{a} \leq \mathbf{b} \text{ coordinatewise and } a_i-a_j\leq b_i-b_j \text{ for all } i\leq_{\mathbf{a}}j.
\]
\end{definition}

\begin{lemma}\label{lem:po-refinement}
Let $\a,\b\in\mathbb{N}^n$ such that $\a\leq_T\b$. For $i,j\in[n]$, if $i\leq_{\a}j$, then $i\leq_{\b}j$.
\end{lemma}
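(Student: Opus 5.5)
We argue directly from \cref{def: sto}, handling the trivial case $i=j$ first and then assuming $i<j$ with $i\le_\a j$. By \cref{def: po}, this means $a_k>a_j$ for every $k\in[i,j-1]$. To show $i\le_\b j$ we must prove $b_k>b_j$ for every $k\in[i,j-1]$.

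The key observation is \cref{remark: po continuity}. For each $k\in[i,j-1]$ it gives $k\le_\a j$, so the defining inequality of $\a\le_T\b$ applies to the pair $(k,j)$ and yields
\[
a_k-a_j\le b_k-b_j .
\]
Since $a_k-a_j\ge 1$, we obtain $b_k-b_j\ge 1$, that is, $b_k>b_j$. As this holds for all $k\in[i,j-1]$, we conclude $i\le_\b j$.

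There is essentially no obstacle. The only point needing care is that the hypothesis of \cref{def: sto} is stated for pairs comparable in $\le_\a$. We therefore invoke \cref{remark: po continuity} to apply it to each intermediate pair $(k,j)$, rather than only to $(i,j)$. Note also that the coordinatewise inequality $\a\le\b$ is not needed for this argument.
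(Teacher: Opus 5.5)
Your proof is correct and follows the paper's argument essentially line for line. Both reduce to $i<j$, use \cref{remark: po continuity} to get $k\le_\a j$ for each $k\in[i,j-1]$, and then apply \cref{def: sto} to obtain $0<a_k-a_j\le b_k-b_j$; your remark that the coordinatewise inequality $\a\le\b$ is not used is also accurate.
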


\begin{proof}
The claim is immediate when $i=j$. Suppose $i<j$. By \cref{remark: po continuity}, $k\le_\a j$ for every $k\in[i,j-1]$. Hence, by \cref{def: sto}, $0<a_k-a_j\le b_k-b_j$ for every $k\in[i,j-1]$. Thus $b_k>b_j$ for all $k\in[i,j-1]$, and therefore $i\le_\b j$.
\end{proof}

The following proposition verifies that $\leq_T$ is indeed a partial order.

\begin{proposition}\label{prop:sto}
The relation $\leq_T$ is a partial order on $\mathbb{N}^n$.
\end{proposition}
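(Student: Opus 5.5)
We check the three axioms directly from \cref{def: sto}, using \cref{lem:po-refinement} for transitivity.

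Reflexivity is immediate: $\a\le\a$ coordinatewise, and $a_i-a_j\le a_i-a_j$ for every pair. Antisymmetry is equally direct. If $\a\le_T\b$ and $\b\le_T\a$, then $\a\le\b$ and $\b\le\a$ coordinatewise, so $\a=\b$. The second condition in the definition plays no role here.

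Transitivity is the only step with content. Suppose $\a\le_T\b$ and $\b\le_T\c$. Coordinatewise $\a\le\c$ follows from transitivity of the componentwise order. Now fix $i\le_\a j$. Since $\a\le_T\b$, the definition gives $a_i-a_j\le b_i-b_j$. To compare with $\c$ we would like to apply the condition from $\b\le_T\c$, but that condition only applies to pairs with $i\le_\b j$, not $i\le_\a j$. This mismatch is the one potential obstacle. It is resolved by \cref{lem:po-refinement}: since $\a\le_T\b$ and $i\le_\a j$, we get $i\le_\b j$. Hence $b_i-b_j\le c_i-c_j$, and chaining the two inequalities gives $a_i-a_j\le c_i-c_j$. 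Therefore $\a\le_T\c$.

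With the lemma already available, no further difficulty is expected. The proof is a short verification, the only nontrivial input being that the position order can only grow along $\le_T$.
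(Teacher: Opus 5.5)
Your proposal is correct and matches the paper's proof essentially line for line. Reflexivity and antisymmetry follow directly from the definition, and transitivity follows by using \cref{lem:po-refinement} to pass from $i\le_\a j$ to $i\le_\b j$ and then chaining $a_i-a_j\le b_i-b_j\le c_i-c_j$.
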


\begin{proof}
Reflexivity and antisymmetry follow directly from \cref{def: sto}. For transitivity, suppose $\a\leq_T\b$ and $\b\leq_T\c$. Then $\a\leq\b\leq\c$ coordinatewise. If $i\leq_{\a}j$, then \cref{lem:po-refinement} gives $i\leq_{\b}j$. Hence $a_i-a_j\leq b_i-b_j\leq c_i-c_j.$ Therefore, $\a\leq_T\c$, proving transitivity.
\end{proof}

We call $\mathcal{ST}_n:= (\mathbb{N}^n,\leq_T)$ the \textbf{\emph{stable Tamari poset}}. We also set $\N^0=\{()\}$ and let $\mathcal{ST}_0$ be the one-element poset consisting of the empty sequence.

\begin{example}\label{ex:sto}
Let $n = 5$ and consider sequences $\mathbf{a} = (6, 7, 0, 2,1), \mathbf{b} = (10, 9, 0, 4,3),$ and $\mathbf{c} = (8, 7, 2, 3,1)$. We have $\mathbf{a} \leq \mathbf{b}$ and $\mathbf{a} \leq \mathbf{c} $ coordinatewise. By \cref{ex:po}, we know that $1 \leq_\mathbf{a} 3, 2\leq_\mathbf{a} 3$ and $4 \leq_\mathbf{a} 5$.  Therefore, to determine whether $ \mathbf{a} \leq_T \mathbf{b}$ (resp. $ \mathbf{a} \leq_T \mathbf{c}$),  we only need to compare $a_i - a_j$ and $b_i - b_j$ (resp. $c_i - c_j$) for $(i, j) = (1,3), (2,3)$ and $(4,5)$. We can see from \cref{tab:sto comparison} that $\mathbf{a} \leq_T \mathbf{b}$  but $\mathbf{a} \not\leq_T \mathbf{c}$  (because $a_2 - a_3 = 7 > 5 = c_2 - c_3$).

\noindent
    \centering
    \setlength\tabcolsep{3.5pt}
    \renewcommand\arraystretch{1.25}
    \captionsetup{type=table}
    \begin{tabular}{|c||c|c|c|}
      \hline     
           $(i,j)$        & $(1,3)$ & $(2,3)$ & $(4,5)$\\
      \hline
           $a_i - a_j$  & $6$ & $7$ & $1$\\
         \hline
            $b_i - b_j$ & $10$ & $9$ &$1$\\
         \hline
            $c_i - c_j$ & $6$ & $5$ & $2$\\
      \hline    \end{tabular}
\caption{Coordinate differences for index pairs $(i,j)$ with $i<_{\a}j$}
\label{tab:sto comparison}
\end{example}

\begin{remark}\label{rem:classical-Tamari}
The use of \emph{Tamari} in the name comes from the fact that the principal lower order ideal generated by $(n,n-1,\ldots,1)$ in $\mathcal{ST}_n$ is isomorphic to the classical Tamari lattice $\mathcal{T}_{n+1}$. See \cref{prop:increasing-union}.
\end{remark}

\begin{proposition}\label{prop:stability}
Let $\a,\b\in\N^n$ and let $\c=(c,c,\ldots,c)\in\N^n$. Then $\a\leq_T\b$ if and only if $\a+\c\leq_T\b+\c.$
\end{proposition}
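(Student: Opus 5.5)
The plan is to show that both conditions in \cref{def: sto} are unchanged when the same constant $c$ is added to every coordinate of $\a$ and $\b$. The argument has three checks, carried out in order.

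First, compare the position orders. The relation $i\le_{\a}j$ only involves strict inequalities $a_k>a_j$ for $k\in[i,j-1]$. Adding $c$ to both sides does not affect these, so $\le_{\a+\c}$ and $\le_{\a}$ are the same relation on $[n]$. Second, coordinatewise comparison is clearly invariant: $\a\le\b$ if and only if $\a+\c\le\b+\c$. Third, for any pair $i,j$ the differences are unchanged, since $(a_i+c)-(a_j+c)=a_i-a_j$ and likewise for $\b$. So the inequality $a_i-a_j\le b_i-b_j$ holds for the shifted pair exactly when it holds for the original pair.

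Putting these together, the conditions defining $\a\le_T\b$ are the same list of inequalities as those defining $\a+\c\le_T\b+\c$, indexed by the same set of pairs $i\le_{\a}j=i\le_{\a+\c}j$. This gives both directions at once.

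There is no real obstacle. The only point to notice is that the reverse direction requires no nonnegativity check for $\a,\b$: they are assumed to lie in $\N^n$, and $c\ge0$ ensures that $\a+\c$ and $\b+\c$ also lie in $\N^n$.
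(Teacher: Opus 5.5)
Your proposal is correct and takes the same approach as the paper. Both arguments observe that $\le_{\a}$ and $\le_{\a+\c}$ are the same relation, that coordinatewise comparison is unchanged by translation, and that the differences $a_i-a_j$ and $b_i-b_j$ are unchanged, so the two sides have the same defining conditions for $\le_T$.
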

\begin{proof}
 Note that  $i\leq_{\a}j\Leftrightarrow i\leq_{\a+\c}j$ because adding $c$ to every entry of $\a$ preserves all inequalities among its entries. Moreover, $(a_i+c)-(a_j+c)=a_i-a_j$ for all $i,j\in[n]$. Finally, $\a\leq\b$ coordinatewise if and only if $\a+\c\leq\b+\c$ coordinatewise. The result now follows directly from \cref{def: sto}.
\end{proof}

\begin{remark}
The use of \emph{stable} in the name refers to the translation invariance in \cref{prop:stability}.
\end{remark}

\begin{corollary}\label{cor:stability-interval}
If $\a\leq_T\b$ and $\c=(c,c,\ldots,c)\in\N^n$, then $[\a,\b]\cong[\a+\c,\b+\c].$
\end{corollary}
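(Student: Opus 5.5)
The plan is to exhibit the translation map $\phi\colon\x\mapsto\x+\c$ and show that it restricts to an order isomorphism from $[\a,\b]$ onto $[\a+\c,\b+\c]$. By \cref{prop:stability}, $\phi$ is an order embedding of $\mathcal{ST}_n$ into itself: for all $\x,\y\in\N^n$ we have $\x\le_T\y$ if and only if $\phi(\x)\le_T\phi(\y)$. Consequently, if $\a\le_T\x\le_T\b$, then $\a+\c\le_T\x+\c\le_T\b+\c$. So $\phi$ maps $[\a,\b]$ into $[\a+\c,\b+\c]$, and it is injective and order-preserving and -reflecting there.

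What remains is surjectivity, which is the only real point, because $\phi^{-1}$ (subtracting $c$) need not preserve $\N^n$ in general. Take $\y\in[\a+\c,\b+\c]$. Since $\le_T$ implies the coordinatewise order, $\y\ge\a+\c$ coordinatewise. Hence $\x:=\y-\c$ satisfies $\x\ge\a\ge0$, so $\x\in\N^n$ and $\phi(\x)=\y$. Applying \cref{prop:stability} to the pairs $(\a,\x)$ and $(\x,\b)$ then gives $\a\le_T\x\le_T\b$. So $\phi$ is a bijection of the two intervals that preserves and reflects order, which is an isomorphism of posets.

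I expect no step to be a genuine obstacle. The only thing to check carefully is that $\y-\c$ has nonnegative entries, and this is secured by the lower bound $\a+\c\le\y$ coming from the coordinatewise part of \cref{def: sto}.
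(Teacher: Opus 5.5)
Your proof is correct and matches the paper's argument: translate by $\c$ and use \cref{prop:stability} to get that the map preserves and reflects the order. Your check that $\mathbf{y}-\c\in\N^n$, via the coordinatewise bound $\mathbf{y}\ge\a+\c$, makes explicit what the paper leaves implicit when it states that the inverse is $\x\mapsto\x-\c$.
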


\begin{proof}
By \cref{prop:stability}, the map $\x\longmapsto\x+\c$ is an order-preserving bijection from $[\a,\b]$ to $[\a+\c,\b+\c]$, with inverse $\x\mapsto\x-\c$.
\end{proof}

We conclude this subsection with a decomposition property of the stable Tamari poset.

\begin{proposition}\label{prop:product-poset}
Let $i\in[n]$, and let $R_i\subseteq\N^n$ be the set of sequences whose $i^{\text{th}}$ coordinate is $0$. Then the subposet of $\mathcal{ST}_n$ induced by $R_i$ is isomorphic to $\mathcal{ST}_{i-1}\times\mathcal{ST}_{n-i}$, via the map \[(a_1,\ldots,a_{i-1},0,a_{i+1},\ldots,a_n) \mapsto \left((a_1,\ldots,a_{i-1}),(a_{i+1},\ldots,a_n)\right).\]
\end{proposition}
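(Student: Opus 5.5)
The map $\phi\colon R_i\to\N^{i-1}\times\N^{n-i}$ that deletes the $i^{\text{th}}$ coordinate is clearly a bijection, because every element of $R_i$ has $0$ in position $i$. So the proposition reduces to showing that, for $\a,\b\in R_i$ with images $(\a',\a'')$ and $(\b',\b'')$, we have $\a\le_T\b$ if and only if $\a'\le_T\b'$ in $\mathcal{ST}_{i-1}$ and $\a''\le_T\b''$ in $\mathcal{ST}_{n-i}$. The product is given the componentwise order.

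The key step is to determine the position order $\le_\a$ when $a_i=0$. Since all entries lie in $\N$, no entry can be strictly smaller than $a_i=0$. Hence if $k<j$ and $i\in[k,j-1]$, the condition $a_k,\ldots,a_{j-1}>a_j$ fails at the entry $a_i$. So any relation $k<_\a j$ with $k<j$ has either $j\le i-1$, or $i\le k<j$. I will split the relations into three cases.
\begin{enumerate}
\item Both $k,j\in[1,i-1]$. Then $k\le_\a j$ if and only if $k\le_{\a'}j$, since the condition only involves entries in $[k,j]\subseteq[1,i-1]$.
\item Both $k,j\in[i+1,n]$. Then $k\le_\a j$ if and only if $(k-i)\le_{\a''}(j-i)$, after reindexing.
\item $k=i<j$. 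Then $a_i=0>a_j$ is impossible, so this case never occurs.
\end{enumerate}
The relation $i\le_\a i$ is trivial. Consequently the pairs $(k,j)$ with $k\le_\a j$ are exactly the reflexive pair $(i,i)$, the pairs coming from $\le_{\a'}$, and the shifted pairs coming from $\le_{\a''}$.

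Now I compare the definitions directly. Because $a_i=b_i=0$, the coordinatewise condition $\a\le\b$ is equivalent to $\a'\le\b'$ together with $\a''\le\b''$. The difference conditions $a_k-a_j\le b_k-b_j$ for $k\le_\a j$ split along the three cases above. The pair $(i,i)$ imposes nothing. The remaining pairs give exactly the difference conditions defining $\a'\le_T\b'$ and $\a''\le_T\b''$. This proves the equivalence, so $\phi$ and $\phi^{-1}$ are both order-preserving. The same argument covers the degenerate cases $i=1$ and $i=n$, using the convention that $\mathcal{ST}_0$ is the one-point poset.

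The only point needing care is the observation that $0$ is the minimum possible value. This is what blocks any relation of $\le_\a$ that crosses position $i$, and it is the reason the proposition needs $a_i=0$ rather than an arbitrary fixed value.
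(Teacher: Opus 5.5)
Your strategy is the same as the paper's: delete the $i^{\text{th}}$ coordinate, then sort the relations $k\le_\a j$ into blocks. However, your description of $\le_\a$ has an off-by-one error that leaves one case unchecked.

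Ruling out $i\in[k,j-1]$ gives $j\le i$ or $k\ge i+1$. It does not give ``$j\le i-1$ or $i\le k<j$'' as you wrote. In particular, relations $k<_\a j$ with $k<j=i$ are not excluded, and they do occur. Such a relation holds exactly when $a_k,\ldots,a_{i-1}>0$; for example, $n=2$, $i=2$, $\a=(1,0)$ gives $1\le_\a 2$. So your ``consequently'' list of the pairs in $\le_\a$ is false. In the converse direction you never verify $a_k-a_i\le b_k-b_i$ for these pairs. Your case (3), $k=i<j$, is correctly shown to be empty, but it is the opposite boundary case from the one actually needed.

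The repair is one line. Since $a_i=b_i=0$, the required inequality reduces to $a_k\le b_k$, which follows from the coordinatewise condition $\a'\le\b'$. This is exactly the case ``$q=i$'' in the paper's proof. Your forward direction is unaffected, because every relation of $\le_{\a'}$, and every shifted relation of $\le_{\a''}$, is still a relation of $\le_\a$. Your closing remark should also be adjusted: the value $0$ blocks relations that pass strictly through position $i$ or start at $i$, but not relations that end at $i$.
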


\begin{proof}
Define $\phi:R_i\to\N^{i-1}\times\N^{n-i}$ by $\phi(\a)=(\a^L,\a^R)$, where $\a^L=(a_1,\ldots,a_{i-1})$ and $\a^R=(a_{i+1},\ldots,a_n)$. Clearly, $\phi$ is a bijection.

Suppose $\a\leq_T\b$. Then $\a\leq\b$ coordinatewise, so $\a^L\leq\b^L$ and $\a^R\leq\b^R$ coordinatewise. If $p\leq_{\a^L}q$, then $p\leq_\a q$, and hence $a_p-a_q\leq b_p-b_q$. Thus $\a^L\leq_T\b^L$. Similarly, if $p\leq_{\a^R}q$, then $p+i\leq_\a q+i$, so $a_{p+i}-a_{q+i}\leq b_{p+i}-b_{q+i}$. Hence $\a^R\leq_T\b^R$. Therefore $\phi(\a)\leq\phi(\b)$ in the product
poset.

Conversely, suppose $\a^L\leq_T\b^L$ and $\a^R\leq_T\b^R$. Then $\a\leq\b$ coordinatewise since $a_i=b_i=0$. Let $p\leq_\a q$. If $q<i$, then this relation lies entirely in the left block, so $a_p-a_q\leq b_p-b_q$. If $p>i$, then it lies entirely in the right block, and the same inequality follows from $\a^R\leq_T\b^R$. If $q=i$, then $a_p-a_i=a_p\leq b_p=b_p-b_i$. Finally, the case $p\leq i<q$ cannot occur for $p\leq_\a q$, since $a_i=0\leq a_q$. Thus $\a\leq_T\b$.  Hence, $\phi$ is a poset isomorphism.
\end{proof}

\subsection{Algebraic tools for comparing elements}
Although \cref{def: sto} is useful for theoretical arguments, determining whether $\a\leq_T\b$ directly from the definition can become cumbersome as the length of the sequences increases. In this subsection, we develop algebraic tools for comparing elements of $\mathcal{ST}_n$ that are also well suited for computation.

\subsubsection{The $h$-vector}
\begin{definition}\label{def: h vector}
Given $\mathbf{a} \in \mathbb{N}^n$, define the \emph{$h$-vector} $\h_\a\in\N^n$ by $\h_\a(j):=\min\{i\in[n]:i\le_\a j\}$ for $j\in[n]$.
\end{definition}

\begin{remark}
The term $h$-vector refers to the position data defined above, not to the $h$-vector of a simplicial complex or an order complex.
\end{remark}

\begin{example}\label{ex:h vector}
Let $\mathbf{a} = (6, 7, 0, 2, 1) \in \mathbb{N}^5$. From \cref{tab:po}, we have $\mathbf{h}_{\a} = (1,2,1,4,4)$.
\end{example}

We first record some basic properties of the $h$-vector and its relation to the position order.

\begin{lemma}
\label{lemma: alternate position order}
Let $\a \in \N^n$ and $i, j \in [n]$.  Then, $i \le_{\a} j$ if and only if $\h_\a(j) \le i \le j$.
\end{lemma}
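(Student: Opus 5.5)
The plan is to prove both implications by combining the definition of $\h_\a(j)$ as a minimum with the interval-closure property recorded in \cref{remark: po continuity}.

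For the forward direction, suppose $i\le_\a j$. By \cref{def: sto}'s underlying \cref{def: po}, either $i=j$ or $i<j$, so in either case $i\le j$. Moreover, $i$ belongs to the set $\{k\in[n]:k\le_\a j\}$, whose minimum is $\h_\a(j)$ by \cref{def: h vector}. Hence $\h_\a(j)\le i$, which gives $\h_\a(j)\le i\le j$.

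For the reverse direction, suppose $\h_\a(j)\le i\le j$. First we observe that the minimum defining $\h_\a(j)$ is taken over a nonempty set, since $j\le_\a j$ by reflexivity (\cref{prop:po}). Because it is the minimum, $\h_\a(j)$ itself belongs to the set, so $\h_\a(j)\le_\a j$. Now $i\in[\h_\a(j),j]$, and \cref{remark: po continuity} states that if $h\le_\a j$ then every $k\in[h,j]$ satisfies $k\le_\a j$. Applying this with $h=\h_\a(j)$ yields $i\le_\a j$.

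None of these steps presents a real obstacle. The only point needing care is noting that the minimum is attained, which uses reflexivity so that the set is nonempty. After that, the reverse direction follows directly from the continuity remark.
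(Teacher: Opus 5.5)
Your proposal is correct and follows the paper's proof essentially verbatim: the forward direction comes from \cref{def: po} and the minimality in \cref{def: h vector}, and the reverse direction applies \cref{remark: po continuity} with $\h_\a(j)\le_\a j$. Your extra remark that the minimum is attained because $j\le_\a j$ is a harmless bit of care that the paper leaves implicit.
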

\begin{proof}
Suppose $i\leq_{\a}j$. \cref{def: po,def: h vector} imply $i\leq j$ and $\h_\a(j)\leq i$, respectively. Hence $\h_\a(j)\leq i\leq j$. Conversely, suppose $\h_\a(j)\le i\le j$. By \cref{def: h vector}, $\h_\a(j)\le_\a j$. Hence, by \cref{remark: po continuity}, $i\le_\a j$.
\end{proof}

\begin{lemma}\label{lem:h-vector-basic}
Let $\a\in\N^n$.
\begin{enumerate}
    \item For $i\in[n]$, $\h_\a(i)=i\Leftrightarrow a_{i-1}\le a_i$.
    \item If $q,r\in[n]$ satisfy $\h_\a(r)\le q$ and  $a_q\ge a_r$, then $\h_\a(q)\ge\h_\a(r)$.
\end{enumerate}
\end{lemma}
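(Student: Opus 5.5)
Both parts follow directly from \cref{def: po}, \cref{def: h vector}, and \cref{lemma: alternate position order}.

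\emph{Part (1).} By \cref{def: h vector}, $\h_\a(i)\le i$ always holds, since $i\le_\a i$. Suppose $\h_\a(i)<i$. Then \cref{lemma: alternate position order} gives $i-1\le_\a i$, so $a_{i-1}>a_i$ by \cref{def: po}. Conversely, suppose $a_{i-1}>a_i$. For $i\ge2$, \cref{def: po} gives $i-1\le_\a i$, hence $\h_\a(i)\le i-1<i$. For $i=1$ we have $a_0=0\le a_1$, and indeed $\h_\a(1)=1$, which is consistent. So $\h_\a(i)=i$ exactly when $a_{i-1}\le a_i$, with the convention $a_0=0$ covering $i=1$.

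\emph{Part (2).} Assume $\h_\a(r)\le q$ and $a_q\ge a_r$, and argue by contradiction: suppose $p:=\h_\a(q)<\h_\a(r)=:s$. Then $s>p\ge1$, so $s-1\ge p$.

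We first show $q<r$. If $q=r$ there is nothing to prove. If $q>r$, then since $s\le r<q$, the relation $\h_\a(q)\le s-1<q$ and \cref{lemma: alternate position order} give $s-1\le_\a q$, hence $a_k>a_q$ for all $k\in[s-1,q-1]$. In particular $a_r>a_q$, because $r\in[s,q-1]$. This contradicts $a_q\ge a_r$. So $q<r$, and $q\in[s,r]$.

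Now $s\le_\a r$ gives $a_k>a_r$ for all $k\in[s,r-1]$. Moreover $s-1\not\le_\a r$ by minimality of $s$, while all of $a_s,\dots,a_{r-1}$ exceed $a_r$. The only way \cref{def: po} can fail is therefore $a_{s-1}\le a_r$. On the other hand, $p\le s-1<s\le q$ implies $s-1\le_\a q$ by \cref{lemma: alternate position order}, so $a_{s-1}>a_q\ge a_r$. This contradicts $a_{s-1}\le a_r$.

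The only delicate point is ruling out $q>r$, which is why that case is handled first.
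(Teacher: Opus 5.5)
Your proof is correct, and its final paragraph is exactly the paper's argument for part (2): minimality of $s=\h_\a(r)$ forces $a_{s-1}\le a_r$, while $\h_\a(q)\le s-1<q$ gives $s-1\le_\a q$ and hence $a_{s-1}>a_q\ge a_r$. Contrary to your closing remark, the step ruling out $q>r$ is not a delicate point but an unnecessary one, since that final contradiction uses only $s\le q$ and $s\le r$, never $q<r$. Your part (1) correctly spells out the routine unpacking of the definitions that the paper leaves implicit.
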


\begin{proof}
The first statement follows directly from  \cref{def: po,def: h vector}. We now prove the second statement. Let  $q,r\in[n]$ satisfy $\h_\a(r)\le q$ and  $a_q\ge a_r$. Suppose the contrary, that $\h_\a(q)<\h_\a(r)$. Since $\h_\a(r)\le q$, we have $\h_\a(r)-1\in[\h_\a(q),q-1]$. By the definition of $\h_\a(q)$, $a_{\h_\a(r)-1}>a_q$. On the other hand, the definition of $\h_\a(r)$ gives $a_{\h_\a(r)-1}\le a_r\le a_q$, a contradiction. Hence, $\h_\a(q)\ge\h_\a(r)$.
\end{proof}

\begin{corollary}\label{cor:h-refinement}
Let $\a,\b\in\N^n$ such that $\a\leq_T\b$. Then $\h_\b(j)\leq \h_\a(j)$ for all $j\in[n]$.
\end{corollary}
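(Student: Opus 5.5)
This is a direct consequence of \cref{lem:po-refinement} combined with \cref{def: h vector}; no new ideas are needed.

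Fix $j\in[n]$ and set $i=\h_\a(j)$. By \cref{def: h vector}, $i\le_\a j$. Since $\a\le_T\b$, \cref{lem:po-refinement} upgrades this to $i\le_\b j$. Hence $i$ belongs to the set $\{i'\in[n]: i'\le_\b j\}$. The minimum of that set is $\h_\b(j)$, so
\[
\h_\b(j)\le i=\h_\a(j).
\]
Alternatively, one can phrase the last step through \cref{lemma: alternate position order}: the relation $\h_\a(j)\le_\b j$ is equivalent to $\h_\b(j)\le\h_\a(j)\le j$, which gives the same inequality.

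There is no real obstacle here. The only substantive input is \cref{lem:po-refinement}, whose content is that the strict inequalities $a_k>a_j$ for $k\in[\h_\a(j),j-1]$ persist in $\b$. This holds because $0<a_k-a_j\le b_k-b_j$, and that argument has already been carried out in the paper. The corollary is then just the statement that every relation of $\le_\a$ is also a relation of $\le_\b$, recorded in terms of the minimal elements below each $j$.
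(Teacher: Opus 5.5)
Your proposal is correct and is essentially the paper's own proof: both use $\h_\a(j)\le_\a j$, upgrade it to $\h_\a(j)\le_\b j$ via \cref{lem:po-refinement}, and then apply the minimality in \cref{def: h vector}. The alternative phrasing through \cref{lemma: alternate position order} is equivalent and also fine.
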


\begin{proof}
By definition, $\h_\a(j)\leq_{\a}j$. Hence, by \cref{lem:po-refinement}, $\h_\a(j)\leq_{\b}j$. It follows by \cref{def: h vector} that $\h_\b(j)\leq\h_\a(j)$.
\end{proof}

\begin{example}
Let $\a = (6, 7, 0, 2,1)$ and $\b = (10,9,0,4,3)$. Then $\h_\a=(1,2,1,4,4)$  and  $\h_\b=(1,1,1,4,4)$. Thus $\h_\b\le\h_\a$ coordinatewise.
\end{example}

The following lemma shows that for each $\a\in\N^n$, the intervals $\{[\h_\a(k),k]:k\in[n]\}$ form a laminar family.

\begin{lemma} \label{lem:h-intervals-laminar}
Let $\a\in\N^n$ and $1\le i<j\le n$. Then either $[\h_\a(i),i]\subseteq[\h_\a(j),j]$ or $[\h_\a(i),i]\cap[\h_\a(j),j]=\varnothing$.
\end{lemma}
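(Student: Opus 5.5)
We prove the laminarity directly from \cref{lemma: alternate position order} and the definition of the position order, splitting on whether the two intervals meet. Fix $1\le i<j\le n$ and suppose $[\h_\a(i),i]\cap[\h_\a(j),j]\neq\varnothing$. Since $i<j$, the intersection is nonempty exactly when $\h_\a(j)\le i$, because the right endpoint $i$ of the first interval lies left of $j$. In that case $i\in[\h_\a(j),j]$, so \cref{lemma: alternate position order} gives $i\le_\a j$. Since $i<j$, the definition of $\le_\a$ forces $a_i>a_j$.

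Next we show $\h_\a(i)\ge\h_\a(j)$, which yields $[\h_\a(i),i]\subseteq[\h_\a(j),j]$. For this we apply \cref{lem:h-vector-basic}(2) with $q=i$ and $r=j$. Its hypotheses are $\h_\a(j)\le i$, which we have, and $a_i\ge a_j$, which holds strictly. Hence $\h_\a(i)\ge\h_\a(j)$. Together with $i\le j$, this gives the containment.

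A more hands-on alternative to citing the lemma: if $\h_\a(i)<\h_\a(j)$, then the position $p=\h_\a(j)-1$ lies in $[\h_\a(i),i-1]$. It therefore satisfies $a_p>a_i>a_j$. Also $p$ is adjacent to the block $[\h_\a(j),j-1]$, all of whose entries exceed $a_j$. So $p\le_\a j$, contradicting the minimality of $\h_\a(j)$.

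The only real care needed is the case analysis on the intersection. Because $i<j$, the two intervals meet precisely when $\h_\a(j)\le i$. The alternative, a partial overlap where $[\h_\a(i),i]$ sticks out to the right of $[\h_\a(j),j]$, cannot occur since $i<j$. Beyond that, everything reduces to the earlier lemma, so I expect no serious obstacle.
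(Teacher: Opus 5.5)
Your proof is correct and has the same structure as the paper's: a nonempty intersection forces $\h_\a(j)\le i$, hence $i\le_\a j$, and it then remains to show $\h_\a(j)\le\h_\a(i)$. The paper closes that step by transitivity, $\h_\a(i)\le_\a i\le_\a j$, followed by \cref{lemma: alternate position order}, instead of citing \cref{lem:h-vector-basic}(2); your ``hands-on alternative'' is exactly this transitivity argument written out in terms of the entries $a_p$.
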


\begin{proof}
Suppose $[\h_\a(i),i]\cap[\h_\a(j),j]\neq\varnothing$. Since $i<j$, we have $\h_\a(j)\le i<j$, and hence $i\le_\a j$ by \cref{lemma: alternate position order}. By the definition of the $h$-vector, $\h_\a(i)\le_\a i$. By transitivity, we get $\h_\a(i)\le_\a j$, which again by \cref{lemma: alternate position order} gives $\h_\a(j)\le\h_\a(i)$. Therefore, $[\h_\a(i),i]\subseteq[\h_\a(j),j]$.
\end{proof}

\begin{definition}
\label{def:canonical-h-sequence}
Let $\h=(h(1),\ldots,h(n))\in[n]^n$ satisfy $h(j)\le j$ for all $j\in[n]$. Define $\a(\h)\in\N^n$ by
\[
\a(\h)_i = \bigl|\{r\in[n]:i\in[h(r),r]\}\bigr|-1.
\]
\end{definition}

\begin{example}
Let $\h=(1,2,1,4,4)$. The corresponding intervals $[h(r),r]$ are $[1,1],\ [2,2],\ [1,3],\ [4,4],\ [4,5]$. The numbers of these intervals containing $1,2,3,4,5$ are $2,2,1,2,1$, respectively. Hence $\a(\h)=(1,1,0,1,0)$.
\end{example}

The laminarity property in \cref{lem:h-intervals-laminar} in fact characterizes the vectors that arise as $h$-vectors.

\begin{proposition}
\label{prop:h-vector-characterization}
Let $\h=(h(1),\ldots,h(n))\in[n]^n$. The following are equivalent:
\begin{enumerate}
    \item $\h=\h_\a$ for some $\a\in\N^n$;
    \item $h(j)\le j$ for all $j\in[n]$, and the intervals $\{[h(j),j]:j\in[n]\}$ form a laminar family.
\end{enumerate}
Moreover, if (2) holds, then $\h_{\a(\h)}=\h$.
\end{proposition}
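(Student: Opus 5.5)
The implication (1)$\Rightarrow$(2) is immediate from what is already established. By \cref{def: h vector}, $\h_\a(j)=\min\{i:i\le_\a j\}\le j$, since $j\le_\a j$. The family $\{[\h_\a(j),j]\}$ is laminar by \cref{lem:h-intervals-laminar}. So the work is in (2)$\Rightarrow$(1), and I would prove it by showing directly that $\h_{\a(\h)}=\h$, which also gives the ``moreover'' clause.

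Fix $\h$ satisfying (2) and write $c_i=|\{r:i\in[h(r),r]\}|$, so that $\a(\h)_i=c_i-1$. Since $i\in[h(i),i]$, we have $c_i\ge 1$, so $\a(\h)\in\N^n$. By \cref{def: po,def: h vector}, to show $\h_{\a(\h)}(j)=h(j)$ it suffices to check two things for each $j$:
\begin{itemize}
\item[(A)] $c_k>c_j$ for all $k\in[h(j),j-1]$;
\item[(B)] if $p:=h(j)-1\ge 1$, then $c_p\le c_j$.
\end{itemize}
(When $h(j)=1$, (B) is vacuous.) The key structural fact used throughout is that each $r\in[n]$ is the right endpoint of exactly one interval, namely $[h(r),r]$. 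Hence the intervals containing a given position can be indexed by their right endpoints, and laminarity can be applied to any pair of them.

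For (A), take $k\in[h(j),j-1]$. First consider any interval $[h(r),r]$ containing $j$ with $r\neq j$. Then $r>j$, and this interval meets $[h(j),j]$, so laminarity forces $[h(j),j]\subseteq[h(r),r]$; in particular it contains $k$. The interval $[h(j),j]$ itself also contains $k$. So every interval containing $j$ also contains $k$. Moreover, $[h(k),k]$ contains $k$ but not $j$, because $k<j$. Therefore $c_k\ge c_j+1$.

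For (B), classify the intervals $[h(r),r]$ containing $p$ by the position of $r$:
\begin{itemize}
\item[(i)] $r=p$: this is exactly one interval, $[h(p),p]$.
\item[(ii)] $p<r<j$: then $r\in[h(j),j-1]$, so $[h(r),r]$ meets $[h(j),j]$. Laminarity with $r<j$ gives $[h(r),r]\subseteq[h(j),j]$, which contradicts $h(r)\le p<h(j)$. So this case cannot occur.
\item[(iii)] $r=j$: impossible, since $h(j)>p$.
\item[(iv)] $r>j$: such an interval contains $p<j<r$, hence contains $j$.
\end{itemize}
Thus $c_p=1+\#\{\text{type (iv)}\}$. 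On the other hand, $c_j\ge 1+\#\{\text{type (iv)}\}$, where the extra $1$ counts $[h(j),j]$. Hence $c_p\le c_j$.

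This completes the argument. The main obstacle is case (ii) in the proof of (B), which is the one place where laminarity excludes intervals straddling the left boundary of $[h(j),j]$; everything else is direct counting.
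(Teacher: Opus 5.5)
Your proof is correct and follows essentially the same route as the paper: (1)$\Rightarrow$(2) comes from the definition together with \cref{lem:h-intervals-laminar}, and (2)$\Rightarrow$(1) is proved by directly checking $\h_{\a(\h)}=\h$ through the same two counting comparisons, $\a(\h)_i>\a(\h)_j$ for $h(j)\le i<j$ and $\a(\h)_{h(j)-1}\le\a(\h)_j$. Your case split (i)--(iv) for the second comparison is a more explicit version of the paper's observation that every interval containing $h(j)-1$, other than $[h(h(j)-1),h(j)-1]$, must contain $[h(j),j]$.
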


\begin{proof}
Suppose first that (1) holds, so $\h=\h_\a$ for some $\a\in\N^n$. By the definition of the $h$-vector, $h(j)\le j$ for all $j\in[n]$, and by \cref{lem:h-intervals-laminar}, the intervals $\{[h(j),j]:j\in[n]\}$ form a laminar family. Thus (2) holds.

Conversely, suppose (2) holds. We show that
$\h_{\a(\h)}=\h$, which in particular proves (1).

Fix $j\in[n]$. First, let $h(j)\le i<j$. Every interval $[h(r),r]$ containing $j$ also contains $i$. Indeed, such an interval intersects $[h(j),j]$, so by laminarity either $[h(r),r]\subseteq[h(j),j]$ or $[h(j),j]\subseteq[h(r),r]$. In the first case, since $[h(r),r]$ contains $j$, we must have $r=j$. Thus, in either case, $i\in[h(r),r]$. Moreover, $[h(i),i]$ contains $i$ but not $j$. Therefore, $\a(\h)_i>\a(\h)_j$.

Now suppose $h(j)>1$. Every interval containing $h(j)-1$, other than
$[h(h(j)-1),h(j)-1]$, also contains $j$. Indeed, any such interval
has right endpoint at least $h(j)$, so it intersects $[h(j),j]$.
Since it contains $h(j)-1$, it cannot be contained in $[h(j),j]$.
By laminarity, it must therefore contain $[h(j),j]$. On the other
hand, $[h(j),j]$ contains $j$ but not $h(j)-1$. Hence $\a(\h)_{h(j)-1}\le \a(\h)_j$.

Thus, $\a(\h)_i>\a(\h)_j$ for all $h(j)\le i<j$. If $h(j)=1$, this already implies that $\h_{\a(\h)}(j)=1=h(j)$. If $h(j)>1$, we also have $\a(\h)_{h(j)-1}\le\a(\h)_j$, and hence $\h_{\a(\h)}(j)=h(j)$.
Since $j$ was arbitrary, $\h_{\a(\h)}=\h$.
\end{proof}

\begin{example}
Let $\h=(1,2,1,2)$. Although $h(j)\le j$ for every $j\in[4]$, the intervals $[h(3),3]=[1,3]$ and $[h(4),4]=[2,4]$ intersect without either containing the other. Hence, by \cref{prop:h-vector-characterization}, $\h$ is not the $h$-vector of any sequence in $\N^4$.
\end{example}

A sequence $\a\in\N^n$ is the area sequence of a partition $\lambda\subseteq(n-1,n-2,\ldots,1,0)$ if and only if $a_n=0$ and $a_i-a_{i+1}\le1$ for all $i\in[n-1]$. In this case, the partition is determined by $\lambda_i=n-i-a_i$ for $i\in[n]$. The terminology comes from the corresponding Dyck path along the right boundary of $\lambda$: the entries of $\a$ record the row areas between this path and the diagonal.

\begin{proposition} \label{prop:canonical-h-area-sequence}
Let $\h\in[n]^n$ satisfy the equivalent conditions of \cref{prop:h-vector-characterization}. Then $\a(\h)$ is the unique area sequence of a partition contained in $(n-1,n-2,\ldots,1,0)$ whose $h$-vector is $\h$.
\end{proposition}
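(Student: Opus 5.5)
We need to show two things: first, that $\a(\h)$ is an area sequence with $h$-vector $\h$; second, that any area sequence $\a$ with $\h_\a=\h$ equals $\a(\h)$. Throughout, $\a(\h)$ is as in \cref{def:canonical-h-sequence}.

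\emph{Existence.} By \cref{prop:h-vector-characterization}, $\h_{\a(\h)}=\h$, so it remains to check the area-sequence conditions.
\begin{itemize}[leftmargin=*]
\item For $a_n=0$: only intervals $[h(r),r]$ with $r=n$ can contain $n$, so exactly one interval does, and $\a(\h)_n=0$.
\item For $a_i-a_{i+1}\le 1$: each interval containing $i$ but not $i+1$ has right endpoint $r=i$. Hence at most one such interval exists, namely $[h(i),i]$.
\item It follows that $\a(\h)_i\le \a(\h)_{i+1}+1$.
\end{itemize}
This step is immediate.

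\emph{Uniqueness.} Let $\a$ be an area sequence with $\h_\a=\h$. I would show $a_j=\a(\h)_j$ by downward induction on $j$, starting from $a_n=0=\a(\h)_n$. For the inductive step, suppose $a_{i+1}$ is known, and consider the two cases of \cref{lem:h-vector-basic}(1).

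\textbf{Case $h(i+1)\le i$.} Then $a_i>a_{i+1}$, and the step condition forces $a_i=a_{i+1}+1$.

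\textbf{Case $h(i+1)=i+1$.} Then $a_i\le a_{i+1}$, and the value of $a_i$ is not forced locally. This is the main obstacle. Here I would instead locate the target $j$ of the relation: take $j>i$ minimal with $h(j)\le i$, so that $h(j)=i+1$ would fail. More precisely, let $k>i$ be the smallest index with $a_k<a_i$. If $k$ exists, then $i\le_\a k$ gives $h(k)\le i$. Conversely, the value of $a_i$ is pinned down by $\h$: $a_i$ must equal $1+a_k$, where $k$ is the least index $>i$ with $h(k)\le i$, or $0$ if no such $k$ exists.

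To justify this, I would use the step condition $a_{m}-a_{m+1}\le 1$ on the positions from $i$ to $k$. The values strictly between positions $i$ and $k$ are all $\ge a_i$, by minimality of $k$. Since the sequence can drop by at most one at each step, $a_{k-1}\ge a_i$ and $a_k\ge a_{k-1}-1$ give $a_k\ge a_i-1$. Combined with $a_k<a_i$, this yields $a_k=a_i-1$.

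In the other direction, the first index $k'>i$ with $h(k')\le i$ satisfies $a_{k'}<a_i$, and it is exactly this $k$. Indeed, if $k$ (the first value below $a_i$) occurred before $k'$, we would have $h(k)\le i$, contradicting the minimality of $k'$. If $k'$ occurred before $k$, then $h(k')\le i$ would give $a_i>a_{k'}$, contradicting the minimality of $k$.

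So in all cases $a_i$ is determined by $\h$ and by later entries, which are already known by induction. Hence the area sequence with $h$-vector $\h$ is unique, and since $\a(\h)$ is such a sequence, $\a=\a(\h)$. This rule also treats the first case uniformly, since there $k=i+1$.
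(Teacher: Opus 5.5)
Your proof is correct. The existence half is the paper's argument essentially verbatim, but your uniqueness argument takes a different route.

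\textbf{The paper's route.} It does not induct. Using $\h_\b=\h$ and \cref{lemma: alternate position order}, it rewrites $\a(\h)_i$ as $\bigl|\{r\in[i+1,n]: i\le_\b r\}\bigr|$. It then shows this set is in bijection with $[0,b_i-1]$ via $t\mapsto\min\{r>i:b_r\le t\}$, where the step condition $b_{m}-b_{m+1}\le 1$ forces $b_{r_t}=t$. This identifies $b_i=\a(\h)_i$ directly, coordinate by coordinate.

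\textbf{Your route.} You show instead that an area sequence is recursively determined by its $h$-vector. Namely, $a_i=a_q+1$ where $q=\min\{q>i:h(q)\le i\}$, which is $p_\a(i)$ from \cref{lem:first-smaller}, i.e.\ the parent of $i$ in $F_\a$. If no such $q$ exists, then $a_i=0$. You leave that last value implicit; it follows at once from $a_n=0\ge a_i$. You then conclude uniqueness by strong downward induction and invoke the existence half.

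\textbf{Comparison.} Your recursion says that $a_i$ is the depth of $i$ in the forest $F_\a$. That is exactly the quantity the paper counts, since $\{r>i: i\le_\b r\}$ is the set of strict ancestors of $i$. Your version is more local and elementary, and it makes the forest structure visible. The paper's bijection instead gives a closed-form identification. Its injection of $\{b_r : r\in S_i\}$ into $[0,b_i-1]$ is also the counting device reused in \cref{prop:canonical-h-minimum,prop:h-vector-below}, so it sets up machinery the paper needs later.
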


\begin{proof}
By \cref{prop:h-vector-characterization}, we know that $\h_{\a(\h)}=\h$. We first show that $\a(\h)$ is an area sequence.

Note that $n \in [h(r),r]$ if and only if $r = n$. Hence $\a(\h)_n=0$. Now let $i\in[n-1]$. Any interval $[h(r),r]$ that contains $i$ but not $i+1$ must have $r=i$. Thus, there is at most one more interval containing $i$ than there are intervals containing $i+1$. Therefore, $\a(\h)_i-\a(\h)_{i+1}\le 1$. Hence $\a(\h)$ is an area sequence.

It remains to prove uniqueness. Let $\b$ be an area sequence with $\h_\b=\h$. For $i\in[n]$, by \cref{lemma: alternate position order}, $i\in[h(r),r] = [\h_\b(r),r]$ if and only if $i\le_\b r$. Therefore, \[\a(\h)_i = \left|\{r\in[n]:i\le_\b r \}\right|-1 =  \left|\{r\in[i, n]:i\le_\b r \}\right|-1 = \left|\{r\in[i+1, n]:i\le_\b r \}\right|.\] Next, we show that there is a bijection between $[0,b_i-1]$ and $\{r\in[i+1,n]:i\le_\b r\}$.

For $t\in[0,b_i-1]$, let $r_t=\min\{r>i:b_r\le t\}$. Such an index exists since $b_n=0$. By the minimality of $r_t$, we have $b_q>t$ for all $i\le q<r_t$. In particular, $b_{r_t-1}>t$. Since $\b$ is an area sequence, $b_{r_t-1}-b_{r_t}\le 1$, while $b_{r_t}\le t$. Hence $b_{r_t}=t$. Therefore, $b_q>b_{r_t}$ for all $i\le q<r_t$, which implies  $i\le_\b r_t$. Thus the map $t\mapsto r_t$ is well-defined. Moreover, since $b_{r_t}=t$, this map is injective.

To see that it is surjective, let $r\in[i+1,n]$ satisfy $i\le_\b r$. We want to prove that there exists $t\in[0,b_i-1]$ such that $r_t=r$. We claim that we can choose $t=b_r$. Since $i<r$ and $i\le_\b r$, we have $b_i>b_r$. As $b_r\in\N$, this shows that $b_r\in[0,b_i-1]$.

It remains to show that $r_{b_r}=r$, that is, $\min\{j>i:b_j\le b_r\}=r$. Since $i\le_\b r$, we have $b_q>b_r$ for all $i\le q<r$, while $b_r\le b_r$. Hence $r_{b_r} = \min\{j>i:b_j\le b_r\}=r$. Therefore, $t\mapsto r_t$ is a bijection, so $\bigl|\{r\in[i+1,n]:i\le_\b r\}\bigr|= \bigl|[0,b_i-1]\bigr| = b_i$.  Hence $\a(\h)_i=b_i$ for every $i\in[n]$, and therefore $\b=\a(\h)$.
\end{proof}

The preceding propositions give a canonical representative for each possible $h$-vector. Recall that area sequences of partitions contained in the staircase $(n-1,n-2,\ldots,1,0)$ are in bijection with Dyck paths from $(0,0)$ to $(n,n)$. Thus, possible $h$-vectors are also naturally indexed by Dyck paths.

\begin{corollary}
$\left\vert \{\h_\a: \a \in \N^n\}\right\vert = C_n=\frac{1}{n+1}\binom{2n}{n}$. That is, there are $C_n$ distinct $h$-vectors arising from sequences in $\N^n$.
\end{corollary}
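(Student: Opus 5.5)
We will exhibit a bijection between the set $\{\h_\a:\a\in\N^n\}$ and the set of area sequences of partitions contained in the staircase $(n-1,n-2,\ldots,1,0)$. The latter are in bijection with Dyck paths of semilength $n$, of which there are $C_n$.

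First, by \cref{prop:h-vector-characterization}, the set of $h$-vectors arising from sequences in $\N^n$ is exactly the set $\mathcal H$ of vectors $\h\in[n]^n$ with $h(j)\le j$ for all $j$ and with $\{[h(j),j]\}$ laminar. Define $\Phi:\mathcal H\to\{\text{area sequences}\}$ by $\Phi(\h)=\a(\h)$. By \cref{prop:canonical-h-area-sequence}, $\Phi(\h)$ is indeed an area sequence of a partition in the staircase. Moreover, $\h_{\Phi(\h)}=\h$ by \cref{prop:h-vector-characterization}, so $\Phi$ is injective.

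For surjectivity, let $\b$ be any area sequence and set $\h=\h_\b$, which lies in $\mathcal H$. By the uniqueness clause of \cref{prop:canonical-h-area-sequence}, $\b$ is the unique area sequence with $h$-vector $\h$. Since $\a(\h)$ is also such an area sequence, $\b=\a(\h)=\Phi(\h)$. Hence $\Phi$ is a bijection.

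Finally, we invoke the standard bijection between area sequences (equivalently, partitions inside the staircase) and Dyck paths from $(0,0)$ to $(n,n)$, together with the classical count $C_n=\frac{1}{n+1}\binom{2n}{n}$. The only real content is the uniqueness statement in \cref{prop:canonical-h-area-sequence}, which has already been established, so no step presents a genuine obstacle. The one point to check is that the area-sequence condition stated before \cref{prop:canonical-h-area-sequence} ($a_n=0$ and $a_i-a_{i+1}\le1$) matches the usual characterization of partitions in the staircase. It does: $\lambda_i=n-i-a_i$ is weakly decreasing if and only if $a_i-a_{i+1}\le 1$, and $\lambda_i\le n-i$ if and only if $a_i\ge0$. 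Note that $a_n=0$ forces $\lambda_n=0$, and nonnegativity of every $\lambda_i$ then follows inductively, since $a_{i}\le a_{i+1}+1\le n-i$.
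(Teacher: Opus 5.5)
Your proof is correct and follows the paper's own argument. Like the paper, you use \cref{prop:h-vector-characterization} and the uniqueness clause of \cref{prop:canonical-h-area-sequence} to show that $\h\mapsto\a(\h)$ is a bijection onto area sequences, and then pass to Dyck paths; you just write out injectivity, surjectivity, and the area-sequence/staircase check more explicitly than the paper does.
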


\begin{proof}
By \cref{prop:canonical-h-area-sequence}, possible $h$-vectors are in bijection with area sequences of partitions contained in $(n-1,n-2,\ldots,1,0)$, which are in bijection with Dyck paths from $(0,0)$ to $(n,n)$.
\end{proof}

The sequence $\a(\h)$ is distinguished not only as the unique area sequence with $h$-vector $\h$, but also by its position in the stable Tamari order.

\begin{proposition}\label{prop:canonical-h-minimum}
Let $\h$ be an $h$-vector. Then $\a(\h)$ is the minimum element of $\{\b\in\N^n:\h_\b=\h\}$ under the stable Tamari order.
\end{proposition}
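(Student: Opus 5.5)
The plan is to compare an arbitrary $\b\in\N^n$ with $\h_\b=\h$ directly against $\a:=\a(\h)$ using \cref{def: sto}. By \cref{prop:h-vector-characterization}, $\h_{\a(\h)}=\h$, so $\a(\h)$ lies in the fiber. It then suffices to show that $\a\le_T\b$ for every such $\b$, which requires two things: $\a\le\b$ coordinatewise, and $a_i-a_j\le b_i-b_j$ whenever $i\le_\a j$. The first step is to rewrite the entries of $\a$ in terms of $\b$. By \cref{lemma: alternate position order} applied to $\b$, and since $\h_\b=\h$, we have $i\in[h(r),r]$ if and only if $i\le_\b r$. Hence, exactly as in the proof of \cref{prop:canonical-h-area-sequence},
\[
a_i=\bigl|\{r\in[i+1,n]:i\le_\b r\}\bigr|.
\]

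The key tool is a chain observation. Suppose $i\le_\b r<r'$ and $i\le_\b r'$. Then $b_q>b_{r'}$ for all $q\in[i,r'-1]$, and in particular $b_r>b_{r'}$. So if $r_1<r_2<\cdots<r_k$ list the elements of $\{r\in[i+1,n]:i\le_\b r\}$, we obtain a strictly decreasing chain of nonnegative integers
\[
b_i>b_{r_1}>\cdots>b_{r_k}\ge 0.
\]
This forces $b_i\ge k=a_i$, which gives the coordinatewise inequality.

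For the difference condition, fix $i<j$ with $i\le_\a j$. Since $\h_\a=\h=\h_\b$, \cref{lemma: alternate position order} gives $h(j)\le i<j$, and hence also $i\le_\b j$. Every index $r>i$ with $i\le_\b r$ and $r\ge j$ satisfies $i\le_\b r$ together with $j\le_\b r$. This is the laminarity argument already used in \cref{prop:h-vector-characterization}: an interval containing $i$ and ending at $r\ge j$ must contain $j$. Conversely, $j\le_\b r$ implies $i\le_\b r$ by transitivity. Therefore
\[
a_i-a_j=\bigl|\{r\in[i,j-1]:i\le_\b r\}\bigr|,
\]
where the term $r=i$ accounts for the interval $[h(i),i]$. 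Listing these indices as $i=r_1<\cdots<r_k<j$, the same chain observation together with $i\le_\b j$ gives $b_{r_1}>\cdots>b_{r_k}>b_j$. Hence $b_i-b_j\ge k=a_i-a_j$.

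The main obstacle is getting this counting identity for $a_i-a_j$ exactly right, namely showing that the intervals containing $i$ but not $j$ are precisely those with right endpoint $r\in[i,j-1]$ and $i\le_\b r$. I would handle it through the laminar-family argument of \cref{prop:h-vector-characterization}, as above. Once the identity is in place, both inequalities follow from the strict-decrease chain, and $\a(\h)\le_T\b$ holds for every $\b$ in the fiber, so $\a(\h)$ is the minimum.
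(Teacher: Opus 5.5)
Your proposal is correct and follows essentially the same route as the paper: express $\a(\h)_i$ as the number of indices $r>i$ with $i\le_\b r$, and bound it (and likewise $\a(\h)_i-\a(\h)_j$) using the strictly decreasing values $b_r$ along those indices. Your index set $\{r\in[i,j-1]:i\le_\b r\}$ for the difference is just a reindexing of the paper's $\{r\in[i+1,j]:i\le_\b r\}$, with the same cardinality since both $i\le_\b i$ and $i\le_\b j$ hold.
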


\begin{proof}
Let $\b\in\N^n$ satisfy $\h_\b=\h$. We show that $\a(\h)\le_T\b$. First, we prove that $\a(\h)\le\b$ coordinatewise. For $i\in[n]$, by \cref{lemma: alternate position order}, $\a(\h)_i = \bigl|\{r\in[i+1,n]:i\le_\b r\}\bigr|$. Let $S_i=\{r\in[i+1,n]:i\le_\b r\}$.

If $r_1,r_2\in S_i$ and $r_1<r_2$, then $i\le_\b r_2$ and $r_1\in[i,r_2-1]$. Hence, by \cref{remark: po continuity}, $r_1\le_\b r_2$, so $b_{r_1}>b_{r_2}$. Moreover, $i\le_\b r$ implies $b_i>b_r$ for every $r\in S_i$. Thus, the values $\{b_r:r\in S_i\}$ are distinct nonnegative integers strictly smaller than $b_i$. Hence $|S_i|\le b_i$, and therefore $\a(\h)_i\le b_i$.

It remains to verify the difference inequalities. Let $i\le_{\a(\h)}j$. Since $\h_{\a(\h)}=\h_\b=\h$, by \cref{lemma: alternate position order}, we also have $i\le_\b j$. Note that $S_j \subseteq S_i$. We claim that $S_i \backslash S_j  = \{r \in[i+1,j]:i\le_\b r\}$. Indeed, for $r>j$, if $j\le_\b r$, then $i\le_\b r$ by transitivity, since $i\le_\b j$. Conversely, if $i\le_\b r$, then $j\le_\b r$ by \cref{remark: po continuity}. Hence,  $\a(\h)_i-\a(\h)_j = \left\vert S_i \backslash S_j\right\vert  = \left\vert\{r\in[i+1,j]:i\le_\b r\}\right\vert$.  

Let $S_{i,j}=\{r\in[i+1,j]:i\le_\b r\}$. If $r_1<r_2$ belong to $S_{i,j}$, then $b_{r_1}>b_{r_2}$. Moreover, since $i\le_\b j$, we have $b_i>b_r\ge b_j$ for every $r\in S_{i,j}$. Thus $r\mapsto b_r$ is an injection from $S_{i,j}$ into $[b_j,b_i-1]$. Therefore, $\a(\h)_i-\a(\h)_j = |S_{i,j}| \le b_i-b_j$. Hence $\a(\h)\le_T\b$. Since $\b$ was arbitrary among the sequences with $h$-vector $\h$, the sequence $\a(\h)$ is the minimum element of $\{\b\in\N^n:\h_\b=\h\}$.
\end{proof}

\cref{prop:canonical-h-minimum} identifies the minimum element among the sequences with a fixed $h$-vector. To describe all sequences with the same $h$-vector, we first characterize how a sequence may differ from this minimum element.  For a finite set $S\subseteq\Z^n$, let $\spn_\N S$ denote the set of all $\N$-linear combinations of elements of $S$.

\begin{lemma}\label{lem:same-h-vector-difference}
Let $\h$ be an $h$-vector, let $\a=\a(\h)$, and let $\b\in\N^n$. Set $\d=\b-\a$. Then $\h_\b=\h$ if and only if $\d\in\N^n$ and, for every $r\in[n]$,
\begin{enumerate}
    \item $d_q\ge d_r$ for all $q\in[\h_\a(r),r-1]$;
    \item $d_{\h_\a(r)-1}\le d_r$.
\end{enumerate}
\end{lemma}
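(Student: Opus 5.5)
Let $\a=\a(\h)$, so that $\h_\a=\h$ by \cref{prop:h-vector-characterization}. The plan is to prove each direction by comparing, for every $r$, the two conditions that pin down $\h_\b(r)=\h(r)$ with the corresponding facts for $\a$. By \cref{def: po,def: h vector}, $\h_\b(r)=h(r)$ holds exactly when (i) $b_q>b_r$ for all $q\in[h(r),r-1]$, and (ii) $b_{h(r)-1}\le b_r$ whenever $h(r)>1$.

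\emph{($\Leftarrow$).} Assume $\d\in\N^n$ and that conditions (1) and (2) of the statement hold for every $r$. Since $\b=\a+\d$ and $\d\in\N^n$, we have $\b\in\N^n$. For (i), take $q\in[h(r),r-1]$. Then $a_q>a_r$ because $\h_\a=\h$, and $d_q\ge d_r$ by (1), so adding gives $b_q>b_r$. For (ii), we need an equality at the left boundary; a strict inequality $a_{h(r)-1}<a_r$ would leave slack that (2) does not obviously control. The proof of \cref{prop:h-vector-characterization} shows that when $h(r)>1$, every interval containing $h(r)-1$ other than $[h(h(r)-1),h(r)-1]$ contains $r$. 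The interval $[h(r),r]$ contains $r$ but not $h(r)-1$, and these are the only discrepancies. Hence $a_{h(r)-1}=a_r$ exactly. Combining this with (2) gives $b_{h(r)-1}\le b_r$, so $\h_\b=\h$.

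\emph{($\Rightarrow$).} Assume $\h_\b=\h$. Then $\d\in\N^n$ by \cref{prop:canonical-h-minimum}, since $\a\le_T\b$ implies $\a\le\b$ coordinatewise. Condition (1) is the difference inequality of $\a\le_T\b$ for the pair $q\le_\a r$, which again comes from \cref{prop:canonical-h-minimum}: $a_q-a_r\le b_q-b_r$, that is, $d_q\ge d_r$. For condition (2), suppose $h(r)>1$. Using $a_{h(r)-1}=a_r$ from the previous paragraph, we get $d_{h(r)-1}-d_r=b_{h(r)-1}-b_r\le 0$, by (ii) applied to $\b$. When $h(r)=1$, condition (2) reads $d_0\le d_r$, which holds trivially since $d_0=0$ by the paper's convention and $\d\in\N^n$.

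\emph{Main obstacle.} The crux is the identity $a_{h(r)-1}=a_r$ for the canonical sequence whenever $h(r)>1$. I would state it as a short claim proved by the interval-counting argument above. The counting must carefully track both the exceptional interval $[h(h(r)-1),h(r)-1]$ and the interval $[h(r),r]$, which cancel in the two counts.
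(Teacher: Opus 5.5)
Your argument has a genuine gap. The identity $a_{h(r)-1}=a_r$ (for $h(r)>1$), which you use in both directions, is false. For $\h=(1,2,1,4,4)$ one has $\a(\h)=(1,1,0,1,0)$, and at $r=4$ we get $h(4)=4$ but $a_3=0\ne 1=a_4$.

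Your count overlooks the intervals $[h(r),r']$ with $r'>r$ and $h(r')=h(r)$ (here $[4,5]$), which contain $r$ but not $h(r)-1$. Done correctly, the count gives $a_r-a_{h(r)-1}=\bigl|\{r'\ge r: h(r')=h(r)\}\bigr|-1$. This vanishes only when $r$ is the last index with $h$-value $h(r)$.

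In the direction ($\Leftarrow$) the error is harmless. You only need $b_{h(r)-1}\le b_r$, and adding $a_{h(r)-1}\le a_r$ (from the definition of $\h_\a(r)$) to (2) gives it at once; the slack you worried about points the right way. In the direction ($\Rightarrow$), however, $\h_\b(r)=h(r)$ only yields $d_{h(r)-1}\le d_r+(a_r-a_{h(r)-1})$. This is strictly weaker than (2) whenever $a_{h(r)-1}<a_r$, so your derivation of (2) does not go through.

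The missing idea is to move the left-boundary condition to a different index. Pick $s\ge r$ with $h(s)=h(r)$ and $a_s=a_{h(r)-1}$. One choice is the largest $s$ with $h(s)=h(r)$, where the corrected count gives equality. The paper instead takes $s=\min\{q\in[r,n]:a_q\le a_{h(r)-1}\}$, uses that $\a$ is an area sequence with $a_n=0$ to get $a_s=a_{h(r)-1}$, and then checks $\h_\a(s)=h(r)$.

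With such an $s$, the hypothesis $\h_\b(s)=h(r)$ gives $b_{h(r)-1}\le b_s$, hence $d_{h(r)-1}\le d_s$. Since $h(s)\le r\le s$, you have $r\le_\a s$, so condition (1), which you have already established, gives $d_s\le d_r$. Chaining the two inequalities yields (2).
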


\begin{proof}
Suppose first that $\h_\b=\h$. By \cref{prop:canonical-h-minimum}, $\a\le_T\b$, so $\d\in\N^n$. If $q\in[\h_\a(r),r-1]$, then $q\le_\a r$ by \cref{lemma: alternate position order}. Hence $a_q-a_r\le b_q-b_r$, which gives $d_q\ge d_r$. Thus (1) holds.

We next prove (2). Set $j=\h_\a(r)$. If $j=1$, then $d_{j-1}=d_0=0\le d_r$. Suppose $j>1$. By the definition of the $h$-vector, $a_{j-1}\le a_r$. Let $s=\min\{q\in[r,n]:a_q\le a_{j-1}\}$. Such an index exists since $a_n=0\le a_{j-1}$. We claim that $a_s=a_{j-1}$. If $s=r$, then $a_s=a_r\le a_{j-1}$, while $a_{j-1}\le a_r$, so equality holds. Suppose instead that $s>r$. By the minimality of $s$, $a_{s-1}>a_{j-1}$. Since $\a$ is an area sequence, $a_{s-1}-a_s\le 1$. If $a_s<a_{j-1}$, then $a_{s-1}\le a_s+1\le a_{j-1}$, contradicting $a_{s-1}>a_{j-1}$. Therefore $a_s=a_{j-1}$.

We next show that $\h_\a(s)=j$. If $q\in[\h_\a(r),r-1]$, then by \cref{lemma: alternate position order}, we have $q\le_\a r$, and hence  $a_q>a_r\ge a_{j-1}=a_s$. If $q\in[r,s-1]$, then the minimality of $s$ gives $a_q>a_{j-1}=a_s$. Thus $a_q>a_s$ for all $q\in[j,s-1]$. Since $a_{j-1}=a_s$, it follows from the definition of the $h$-vector that $\h_\a(s)=j$.

Since  $\h_\b=\h_\a$, we also have $\h_\b(s)=j$, and therefore $b_{j-1}\le b_s$. Since $a_{j-1}=a_s$, this gives $d_{j-1} = b_{j-1} - a_{j-1} \le b_s - a_s = d_s$. Moreover, since $\h_\a(s)=j$ and $j\le r\le s$, \cref{lemma: alternate position order} gives $r\le_\a s$. By (1), $d_r\ge d_s$. Hence $d_{j-1}\le d_s\le d_r$, proving (2).

Conversely, suppose $\d\in\N^n$ and (1)--(2) hold. Fix $r\in[n]$. For every $q\in[\h_\a(r),r-1]$, we have $a_q>a_r$ and, by (1), $d_q\ge d_r$. Hence $b_q = d_q + a_q> d_r + a_r = b_r$. That is, $b_q  > b_r$ for all $q\in[\h_\a(r),r-1]$.

If $\h_\a(r) = 1$, then $0 = a_0 = a_{\h_\a(r)-1}\le a_r$. If $\h_\a(r)>1$, then $a_{\h_\a(r)-1}\le a_r$ by the definition of the $h$-vector, while (2) gives $d_{\h_\a(r)-1}\le d_r$. Thus $b_{\h_\a(r)-1}\le b_r$. Together with $b_q  > b_r$ for all $q\in[\h_\a(r),r-1]$, we have $\h_\b(r)=\h_\a(r)$. Since $r$ was arbitrary, $\h_\b=\h_\a=\h$.
\end{proof}

\begin{definition}\label{def:k-j-gamma}
Let $\h$ be an $h$-vector and let $\a=\a(\h)$. For each $j\in[n]$ such that $\h_\a(j)=j$ and each $\gamma\in[a_{j-1},a_j]$, define \[ k_j(\gamma)=\max\{t\in[j,n]:a_r\ge\gamma\text{ for all }r\in[j,t]\}.\] Thus, $k_j(\gamma)$ records how far to the right the sequence $\a$ remains weakly above $\gamma$ starting from position $j$.
\end{definition}

\begin{example}\label{ex:k-j-gamma}
Let $\a=(1,1,0,1,0)$, whose $h$-vector is $\h_\a=(1,2,1,4,4)$. The indices satisfying $\h_\a(j)=j$ are $j=1,2,4$. Since $a_0=0$, the possible pairs $(j,\gamma)$ and the corresponding values of $k_j(\gamma)$ are

\[
\begin{array}{c|c|c|c|c|c}
j
& \multicolumn{2}{c|}{1}
& 2
& \multicolumn{2}{c}{4}\\
\hline
\gamma
& 0 & 1
& 1
& 0 & 1\\
\hline
k_j(\gamma)
& 5 & 2
& 2
& 5 & 4
\end{array}
\]

The same five choices are illustrated geometrically in \cref{fig:k-j-gamma} below. For each pair $(j,\gamma)$, the highlighted segment lies on the shifted diagonal corresponding to $\gamma$ and shows the maximal interval beginning at $j$ on which $a_r\ge \gamma$. Thus, the corresponding intervals are $[1,5]$, $[1,2]$, $[2,2]$, $[4,5]$, and $[4,4]$.

\begin{center}
\begin{tikzpicture}[x=0.52cm,y=0.52cm]

\newcommand{\dyckbase}{
    \draw[step=1, very thin, gray!60] (0,0) grid (5,5);

    \fill[pattern=north east lines, pattern color=gray!45] (0,1) rectangle (1,2);
    \fill[pattern=north east lines, pattern color=gray!45] (2,3) rectangle (3,4);
    \fill[pattern=north east lines, pattern color=gray!45] (3,4) rectangle (4,5);

    \draw[thin, gray!80] (0,0) -- (5,5);

    \draw[very thick]
        (0,0) -- (0,1) -- (0,2) -- (1,2) -- (2,2) --
        (2,3) -- (2,4) -- (3,4) -- (3,5) -- (4,5) -- (5,5);

    \draw[thin] (0,0) rectangle (5,5);
}

\newcommand{\panel}[5]{
\begin{scope}[shift={(#1,0)}]
    \dyckbase

    \pgfmathsetmacro{\yA}{5-#4}
    \pgfmathsetmacro{\yB}{6-#2}
    \pgfmathsetmacro{\xA}{\yA-#3}
    \pgfmathsetmacro{\xB}{\yB-#3}

    \draw[ultra thick, densely dashed]
        (\xA,\yA) -- (\xB,\yB);

    \fill (\xA,\yA) circle (1.2pt);
    \fill (\xB,\yB) circle (1.2pt);

    \node[font=\scriptsize] at (2.5,-0.75) {$(j,\gamma)=(#2,#3)$};
    \node[font=\scriptsize] at (2.5,-1.35) {$[#2,#5]$};
\end{scope}
}

\panel{0}{1}{0}{5}{5}
\panel{6.6}{1}{1}{2}{2}
\panel{13.2}{2}{1}{2}{2}
\panel{19.8}{4}{0}{5}{5}
\panel{26.4}{4}{1}{4}{4}

\end{tikzpicture}
{\captionsetup{hypcap=false}
\captionof{figure}{The intervals $[j,k_j(\gamma)]$ for $\a=(1,1,0,1,0)$.}
\label{fig:k-j-gamma}}
\end{center}
\end{example}

\begin{lemma}\label{lem:same-h-vector-span}
Let $\h$ be an $h$-vector, let $\a=\a(\h)$, and let
$\d\in\N^n$. The following are equivalent:
\begin{enumerate}
    \item $d_q\ge d_r$ for all $q\in[\h_\a(r),r-1]$ and $d_{\h_\a(r)-1}\le d_r$ for every $r\in[n]$;
    \item $\d\in \spn_\N \left\{ \e_{[j,k_j(\gamma)]}: \h_\a(j)=j, \gamma\in[a_{j-1},a_j]\right\}$, where $k_j(\gamma)$ is as in \cref{def:k-j-gamma}.
\end{enumerate}
\end{lemma}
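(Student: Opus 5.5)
The plan is to prove the two implications separately, handling (2)$\Rightarrow$(1) by checking generators and (1)$\Rightarrow$(2) by a greedy peeling induction.

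\emph{(2)$\Rightarrow$(1).} Condition (1) is a conjunction of linear inequalities in $\d$ of the form $d_q-d_r\ge 0$ or $d_r-d_{\h_\a(r)-1}\ge 0$, with $d_0=0$. Such conditions are closed under $\N$-linear combinations, so it suffices to check them for a single generator $\e_I$ with $I=[j,k]$, $k=k_j(\gamma)$.

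For (1a), we need: whenever $r\in I$ and $q\in[\h_\a(r),r-1]$, also $q\in I$. This amounts to $\h_\a(r)\ge j$ for $r\in[j,k]$. Since $a_{j-1}\le\gamma\le a_r$, \cref{lem:h-vector-basic}(2) applied with $r=j$ (using $\h_\a(j)=j\le r$ and $a_r\ge\gamma\ge a_{j-1}$) does not quite fit. Instead I would argue directly: $a_{j-1}\le \gamma\le a_r$ means position $j-1$ blocks the leftward run from $r$, so $\h_\a(r)\ge j$.

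For (1b), we need: if $\h_\a(r)-1\in I$ then $r\in I$. Suppose $p=\h_\a(r)-1\in[j,k]$ but $r>k$. Then $a_p\ge\gamma$ and $a_p\le a_r$. The positions $q\in[p+1,r-1]$ have $a_q>a_r\ge a_p\ge\gamma$, and $a_r\ge\gamma$, so the run of entries $\ge\gamma$ extends through $r$. This contradicts the maximality of $k$ unless $k\ge r$.

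\emph{(1)$\Rightarrow$(2).} I will use induction on $|\d|=\sum d_i$, peeling off one generator at a time. Given a nonzero $\d$ satisfying (1), let $j$ be the leftmost index with $d_j>0$.
\begin{itemize}
\item \emph{$j$ is a left endpoint.} I claim $\h_\a(j)=j$: otherwise (1a) with $r=j$ forces $d_{j-1}\ge d_j>0$, contradicting minimality.
\item \emph{Choice of the block.} Choose $\gamma$ and $k=k_j(\gamma)$ so that $\d-\e_{[j,k]}$ still lies in $\N^n$ and satisfies (1). The natural choice is to take $[j,k]$ to be the maximal interval starting at $j$ on which $\d$ is positive. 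More precisely, take the block on which $d\ge 1$ in the correct laminar sense, then show it equals $[j,k_j(\gamma)]$ for some $\gamma\in[a_{j-1},a_j]$.
\end{itemize}
To pin down $\gamma$: let $k$ be maximal such that $d_q>0$ for all $q\in[j,k]$. Since $\a$ is an area sequence, it drops by at most 1 per step. Set $\gamma=\min\{a_q: q\in[j,k]\}$ if $k=n$ or $a_{k+1}<$ that minimum; otherwise adjust accordingly.

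The step I expect to be the main obstacle is this matching of the support of $\d$ to some $k_j(\gamma)$, together with verifying that (1) survives subtraction. I would first try to prove the structural claim: if $d_{k+1}=0<d_k$, then (1b) forces $a_{k+1}<a_q$ for all relevant $q$. The reasoning is that any $r>k$ with $\h_\a(r)-1\in[j,k]$ would have $d_r\ge d_{\h_\a(r)-1}>0$, which pins down the drop of $\a$ at $k+1$ below the run minimum.

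If this direct approach gets messy, the fallback is to instead take $\gamma=a_{j-1}$ or some extreme value and re-verify (1). Alternatively, one can prove the equivalence of (2) with the $\h_\b=\h$ characterization via \cref{lem:same-h-vector-difference}, peeling off the top-most layer, meaning the largest $\gamma$, first.
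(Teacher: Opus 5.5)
Your argument for (2)$\Rightarrow$(1) is correct and matches the paper's. Both inequalities in (1) are preserved under $\N$-combinations. For a single generator, your ``blocking'' argument gives $\h_\a(r)\ge j$, and your maximality argument handles $d_{\h_\a(r)-1}\le d_r$.

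For (1)$\Rightarrow$(2), your skeleton also matches the paper: induct on $\sum_i d_i$, take a component $[j,k]$ of $\operatorname{supp}(\d)$, get $\h_\a(j)=j$, and set $\gamma=\min\{a_q:q\in[j,k]\}$. The gap is that the step making this work is left as ``otherwise adjust accordingly,'' and it needs two facts you do not prove.
\begin{enumerate}
\item $\gamma\ge a_{j-1}$. You never address this, and it is indispensable. If $\min_{[j,k]}a<a_{j-1}$, then every admissible $\gamma'\ge a_{j-1}$ has $k_j(\gamma')<k$, so $[j,k]$ would not be a generator at all. It follows from the first inequality in (1). Suppose $j>1$ and let $r$ be the first index of $[j,k]$ with $a_r<a_{j-1}$. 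Then $a_q>a_r$ for all $q\in[j-1,r-1]$, so $j-1\in[\h_\a(r),r-1]$. Hence $d_{j-1}\ge d_r>0$, contradicting $d_{j-1}=0$.
\item If $k<n$, then $a_{k+1}<\gamma$, so your ``otherwise'' case never occurs. The second inequality at $r=k+1$ gives $d_{\h_\a(k+1)-1}\le d_{k+1}=0$, so $\h_\a(k+1)-1\notin[j,k]$. Since $\h_\a(k+1)-1\le k$, this means $\h_\a(k+1)\le j$, and hence $a_q>a_{k+1}$ for all $q\in[j,k]$. Your structural-claim paragraph already contains this idea; it only needs to be applied at $r=k+1$.
\end{enumerate}
Together these give $k=k_j(\gamma)$ with $\gamma\in[a_{j-1},a_j]$.

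You also postulate, rather than prove, that (1) survives the subtraction. It does hold for your single component. If $d_r>0$, the first inequality forces $[\h_\a(r),r]\subseteq\operatorname{supp}(\d)$, so no such interval crosses a component boundary. For $r\in[j,k]$, the index $\h_\a(r)-1$ either lies in $[j,k]$ or equals $j-1$, where $d_{j-1}=0$. Hence lowering $[j,k]$ by one leaves every inequality intact.

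The paper avoids this case check by peeling all components at once. It sets $d'_i=\max\{d_i-1,0\}$, which satisfies (1) because $x\mapsto\max\{x-1,0\}$ is weakly increasing. Then $\d-\d'=\e_S$ splits into component indicators, each a generator by the argument above.

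Finally, your fallbacks fail. Take $\a=(1,1,0,1,0)$.
\begin{itemize}
\item With $\gamma=a_{j-1}$ and $\d=\e_{[1,2]}$, you get $k_1(0)=5$ and $\d-\e_{[1,5]}\notin\N^n$.
\item With $\gamma=a_j$ and $\d=\e_{[1,5]}$, the remainder is $(0,0,1,1,1)$, which violates (1) at $r=3$.
\end{itemize}
The choice $\gamma=\min_{[j,k]}a$ is forced, and the two facts above are exactly what justify it.
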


\begin{proof}
Suppose first that (1) holds. We proceed by induction on $\sum_{i=1}^n d_i$. The result is clear if $\d=\mathbf{0}$.

Let $S=\{i\in[n]:d_i>0\}$, and let $[j,k]$ be an interval component of $S$. That is, $d_{j-1} =0$ and  $d_{k+1} = 0$ if $k < n$. We claim that $[j,k]=[j,k_j(\gamma)]$ for some $\gamma\in[a_{j-1},a_j]$.

First, $\h_\a(j)=j$. Indeed, suppose $j>1$ and $\h_\a(j)<j$. Then $j-1\in[\h_\a(j),j-1]$, so (1) gives $d_{j-1}\ge d_j>0$. Thus, $j-1\in S$, contradicting the fact that $j$ is the left endpoint of an interval component of $S$. Thus, $\h_\a(j)=j$.

We next show that $a_r\ge a_{j-1}$ for all $r\in[j,k]$. This holds if $j=1$, since $a_0=0$. If $j > 1$, suppose this does not hold. Then take the smallest index $r \in [j,k]$ such that $a_r<a_{j-1}$. Then $a_q>a_r$ for all $q\in[j-1,r-1]$, which implies $j-1\le_\a r$. Hence (1) gives $d_{j-1}\ge d_r>0$, again a contradiction.

Set $\gamma=\min\{a_r:r\in[j,k]\}$. By the preceding paragraph, $\gamma\ge a_{j-1}$, while $\gamma\le a_j$ since $j\in[j,k]$. Hence $\gamma\in[a_{j-1},a_j]$. If $k=n$, then $a_r\ge\gamma$ for all $r\in[j,n]$, so $k_j(\gamma)=n=k$. Suppose $k<n$, and set $s=\h_\a(k+1)$. Since $d_{k+1}=0$, the second inequality in (1) gives $d_{s-1}\le d_{k+1}=0$. Thus $d_{s-1}=0$. If $s>j$, then $s-1\in[j,k]$, contradicting the fact that $d_i>0$ for all $i\in[j,k]$. Hence $s\le j$.

Since $\h_\a(k+1)=s$, we have $a_r>a_{k+1}$ for all $r\in[s,k]$, and hence for all $r\in[j,k]$. Therefore $a_{k+1}<\gamma$. By the definition of $\gamma$, $a_r\ge\gamma$ for every $r\in[j,k]$, while $a_{k+1}<\gamma$. Hence $k_j(\gamma)=k$. Thus every interval component of $S$ is of the form $[j,k_j(\gamma)]$ appearing in (2). 

Define $\d'\in\N^n$ by $d_i'=\max\{d_i-1,0\}$. Since the map $x\mapsto\max\{x-1,0\}$ is weakly increasing, $\d'$ also satisfies (1). By induction, $\d'$ belongs to the span in (2). Moreover, $\d-\d'=\e_S$, and $S$ is a disjoint union of interval components, each of the form $[j,k_j(\gamma)]$. Hence $\e_S$ belongs to the span in (2), and so does $\d$.

Conversely, suppose (2) holds. To prove (1), it suffices to check that each generator $\e_{[j,k_j(\gamma)]}$ in (2) satisfies (1). Write $J=[j,k_j(\gamma)]$ and let $\d=\e_J$. Fix $r\in[n]$. Then $d_r\in\{0,1\}$.  If $d_r=0$, then $d_q\ge d_r$ for all $q \in [n]$.  Suppose $d_r=1$, that is, $r\in J$. We claim that $\h_\a(r)\ge j$. Otherwise, $\h_\a(r)<j$, so $j-1\in[\h_\a(r),r-1]$. By the definition of the $h$-vector, $a_{j-1}>a_r$. On the other hand, since $r\in J$, the definition of $k_j(\gamma)$ gives $a_r\ge\gamma\ge a_{j-1}$, a contradiction. Thus $\h_\a(r)\ge j$, which implies $[\h_\a(r),r-1]\subseteq J$ and $d_q=d_r=1$ for all $q \in [\h_\a(r),r-1]$.

It remains to show $d_{\h_\a(r)-1}\le d_r$. The only nontrivial case is $d_{\h_\a(r)-1}=1$. We want to show that $d_r = 1$ in this case.  Since $\d=\e_J$, we have $\h_\a(r)-1\in J$. Hence, by the definition of $J$, $a_i\ge\gamma$ for all $i\in [j,\h_\a(r)-1]$. Moreover, $a_{\h_\a(r)-1}\le a_r$, so $a_r\ge\gamma$, and $a_q>a_r\ge\gamma$ for all $q\in[\h_\a(r),r-1]$. Therefore, $a_i\ge\gamma$ for all $i\in[j,r]$. Hence $r\le k_j(\gamma)$, so $r\in J$ and $d_r=1$.

Thus every generator satisfies (1), and therefore so does every $\N$-linear combination of the generators.
\end{proof}

\cref{prop:canonical-h-minimum} identifies the minimum element of each $h$-vector fiber. We now describe the entire fiber.

\begin{proposition}\label{prop:h-vector-fiber}
Let $\h$ be an $h$-vector and let $\a=\a(\h)$. Then, for
$\b\in\N^n$, $\h_\b=\h$ if and only if
\[ \b-\a\in \spn_\N \left\{ \e_{[j,k_j(\gamma)]}: \h_\a(j)=j,\  \gamma\in[a_{j-1},a_j] \right\},
\] where $k_j(\gamma)$ is as in \cref{def:k-j-gamma}.
\end{proposition}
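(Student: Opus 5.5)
This follows by chaining \cref{lem:same-h-vector-difference} and \cref{lem:same-h-vector-span}. Set $\d=\b-\a$. By \cref{lem:same-h-vector-difference}, $\h_\b=\h$ holds if and only if $\d\in\N^n$ and $\d$ satisfies conditions (1) and (2) of that lemma for every $r\in[n]$. Those two conditions together are exactly condition (1) of \cref{lem:same-h-vector-span}. Note that \cref{lem:same-h-vector-span} is stated for $\d\in\N^n$, so the nonnegativity of $\d$ has to be handled before the lemma is applied.

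For the forward direction, assume $\h_\b=\h$. \cref{lem:same-h-vector-difference} then gives $\d\in\N^n$ together with both inequalities. Applying \cref{lem:same-h-vector-span}, (1)$\Rightarrow$(2), places $\d$ in the stated $\N$-span.

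For the converse, assume $\d$ lies in the $\N$-span. Each generator $\e_{[j,k_j(\gamma)]}$ is a $0/1$ vector, so any $\N$-combination of generators lies in $\N^n$; hence $\d\in\N^n$. The direction (2)$\Rightarrow$(1) of \cref{lem:same-h-vector-span} now gives the inequalities, and the reverse direction of \cref{lem:same-h-vector-difference} yields $\h_\b=\h$.

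No step is a real obstacle; the only point needing care is checking $\d\in\N^n$ in the converse before invoking \cref{lem:same-h-vector-span}.
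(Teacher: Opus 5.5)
Your proposal is correct and takes the same approach as the paper, which simply sets $\d=\b-\a$ and chains \cref{lem:same-h-vector-difference,lem:same-h-vector-span}. Your explicit check that an $\N$-combination of the $0/1$ generators lies in $\N^n$ before invoking \cref{lem:same-h-vector-span} in the converse is a small detail the paper leaves implicit.
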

\begin{proof}
Set $\d=\b-\a$. The result follows directly from \cref{lem:same-h-vector-difference,lem:same-h-vector-span}.
\end{proof}

\begin{example}\label{ex:h-vector-fiber}
Consider $\h=(1,2,1,4,4)$. The intervals $[1,1],\ [2,2],\ [1,3],\ [4,4],\ [4,5]$ form a laminar family, so $\h$ is an $h$-vector by \cref{prop:h-vector-characterization}. By \cref{def:canonical-h-sequence}, $\a(\h)=(1,1,0,1,0)$, which is the minimum sequence with $h$-vector $\h$ by \cref{prop:canonical-h-minimum}. By \cref{ex:k-j-gamma}, the interval generators are $\e_{[1,5]}, \e_{[1,2]}, \e_{\{2\}}, \e_{[4,5]},\e_{\{4\}}$. Hence the sequences with $h$-vector $\h$ are precisely \[\a(\h)+c_1\e_{[1,5]} +c_2\e_{[1,2]}+c_3\e_{\{2\}}+c_4\e_{[4,5]}+c_5\e_{\{4\}},  \text{ where } c_1,\ldots,c_5\in\N.\]
\end{example}

Having described the entire fiber over a fixed $h$-vector $\h$, we now characterize the sequences $\b\in\N^n$ whose $h$-vectors are coordinatewise comparable with $\h$, that is, those satisfying $\h_\b\le\h$ or $\h_\b\ge\h$. For the first characterization, we will use the following general criterion for comparison in the stable Tamari order.

\begin{proposition}\label{prop:indicator-expansion}
Let $\a,\b\in\N^n$. Then $\a\leq_T\b$ if and only if $\b-\a\in \spn_\N\{\e_{[\h_\a(r),r]}: r\in[n]\}$.
\end{proposition}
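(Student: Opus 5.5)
We prove the two directions separately. The plan is to work with the forest (laminar) structure of the intervals $[\h_\a(r),r]$ and to read off coefficients by differencing along that structure.

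For the reverse direction, suppose $\b-\a=\sum_r c_r\,\e_{[\h_\a(r),r]}$ with $c_r\in\N$. Coordinatewise $\a\le\b$ is immediate. Fix $i\le_\a j$, so that $\h_\a(j)\le i\le j$ by \cref{lemma: alternate position order}. We must show $(b_i-a_i)-(b_j-a_j)\ge 0$, and it suffices to check that each generator $\e_{[\h_\a(r),r]}$ has $i$-coordinate at least its $j$-coordinate. If $j\in[\h_\a(r),r]$, then this interval meets $[\h_\a(j),j]$. By laminarity (\cref{lem:h-intervals-laminar}), either it contains $[\h_\a(j),j]\ni i$, or it is contained in $[\h_\a(j),j]$ while containing $j$. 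In the latter case the argument in \cref{prop:h-vector-characterization} forces $r=j$, so again $i$ lies in it. Hence the generator's $j$-coordinate $1$ is matched at $i$, and $\a\le_T\b$.

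For the forward direction, assume $\a\le_T\b$ and set $\d=\b-\a\in\N^n$. We define coefficients recursively using the laminar forest. For each $r$, let $p(r)$ be the parent of $r$, namely the smallest $s>r$ with $[\h_\a(r),r]\subseteq[\h_\a(s),s]$, if one exists. Set $c_r=d_r-d_{p(r)}$, and set $c_r=d_r$ if $r$ is a root. Two things then need to be verified.

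\emph{The expansion holds.} For each $i$, the intervals containing $i$ form a chain $i=r_0,r_1=p(r_0),\dots$ up to a root. The reason is that any interval containing $i$ contains $[\h_\a(i),i]$ by laminarity and the same endpoint argument. The sum of $c$ along this chain telescopes to $d_i$.

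\emph{The coefficients are nonnegative.} It suffices to show $d_r\ge d_{p(r)}$. Since $r\in[\h_\a(p(r)),p(r)-1]$, \cref{lemma: alternate position order} gives $r\le_\a p(r)$. The definition of $\le_T$ then gives $a_r-a_{p(r)}\le b_r-b_{p(r)}$, which is exactly $d_r\ge d_{p(r)}$. Roots have $c_r=d_r\ge 0$.

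The main obstacle is making the chain-of-ancestors claim rigorous: we need that the intervals containing $i$ are exactly the ancestors of $i$ in this forest, each appearing once. Each interval $[\h_\a(r),r]$ has the distinct right endpoint $r$, so these intervals are distinct. Any two intervals containing $i$ intersect, so by \cref{lem:h-intervals-laminar} they are nested, and therefore they form a chain ordered by right endpoint. The smallest member of this chain is $[\h_\a(i),i]$, by the argument above. Consecutive members of the chain are related by $p$, since an intermediate interval would contain $i$ and so would lie in the chain. With this established, the telescoping is exact and both directions are complete.
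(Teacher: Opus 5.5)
Your proof is correct, but the forward direction takes a different route from the paper.

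The paper argues by induction on $\sum_i d_i$. It takes $k=\max\{j:d_j>0\}$ and notes that $d_i\ge d_k>0$ on $S=[\h_\a(k),k]$. It then subtracts $\e_S$ and checks that the property ``$d_i\ge d_j$ whenever $i\le_\a j$'' survives. The coefficients are therefore produced only implicitly.

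You instead write the coefficients in closed form: $c_r=d_r-d_{p(r)}$, or $c_r=d_r$ at a root. You then verify the expansion by telescoping along the ancestor chain of $i$ in the laminar forest. Your parent $p(r)$ is exactly the paper's $p_\a(r)$ (compare \cref{lem:first-smaller}), and your roots are the right-to-left minima (\cref{lem:RLmin}). So, combined with the invertibility of $M_\a$, your argument yields \cref{cor:m-vector} and the forest description in \cref{cor:m-vector-forest} at the same time as this proposition, without first computing $M_\a^{-1}$.

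The price is the bookkeeping needed to show that the set of $r$ with $i\in[\h_\a(r),r]$ is exactly the $p$-orbit of $i$, ending at a root. You handle this correctly through nestedness, ordering by right endpoint, and consecutiveness. The paper's induction avoids any forest structure and defers the explicit coefficients to \cref{prop:inverse M matrix}.

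For the converse you use laminarity where the paper uses transitivity of $\le_\a$ directly. Since \cref{lem:h-intervals-laminar} is itself proved from transitivity, this is essentially the same argument.
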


\begin{proof}
Set $\d=\b-\a$. Suppose $\a\leq_T\b$. Then $\d \in \N^n$. We want to prove that $\d \in \spn_\N\{\e_{[\h_\a(r),r]}:r\in[n]\}$. First note that for $i\leq_\a j$, we have $d_i-d_j=(b_i-b_j)-(a_i-a_j)\geq 0$. Therefore, $d_i\geq d_j$ if $i\leq_\a j$. We proceed by induction on $\sum_{i=1}^n d_i$. If $\d=\mathbf{0}$, the result follows trivially. Otherwise, let $k=\max\{j\in[n]:d_j>0\}$ and set $S=[\h_\a(k),k]$. By \cref{lemma: alternate position order}, $i\in S$ if and only if $i\leq_\a k$, and hence $d_i\geq d_k>0$. Therefore $\d'=\d-\e_S$ belongs to $\N^n$.

We claim that $d_i'\geq d_j'$ whenever $i\leq_\a j$. If $j\in S$, then $j\leq_\a k$, so transitivity of the position order gives $i\leq_\a k$. By \cref{lemma: alternate position order}, $i\in S$. Hence $d_i' = d_i -1$ and $d_j' = d_j -1$. If $i,j\notin S$, then $d_i' = d_i$ and $d_j' = d_j$. In both cases,  we have $d_i' - d_j' = d_i - d_j \ge 0$ and hence $d_i' \ge d_j'$. It remains to consider the case when $i\in S$ and $j\notin S$. Since $i\leq j$ and $i\geq\h_\a(k)$, we must have $j>k$. By the choice of $k$, $d_j=0$, and hence $d_i'\geq0=d_j'$. Thus, $d_i' \ge d_j'$ for $i \le_\a j$. Since $\d'\in\N^n$, $d_i'\geq d_j'$ whenever $i\leq_\a j$, and $\sum_i d_i'<\sum_i d_i$, the induction hypothesis gives $\d'\in\spn_\N\{\e_{[\h_\a(r),r]}:r\in[n]\}$. Adding $\e_S$ gives $\d=\d'+\e_S\in\spn_\N\{\e_{[\h_\a(r),r]}:r\in[n]\}$.

Conversely, suppose $\d=\b-\a \in \spn_\N\{\e_{[\h_\a(r),r]}: r\in[n]\}$. Since $\d\in\N^n$, we have $\a\leq\b$ coordinatewise. Let $i\leq_\a j$. It suffices to show that each indicator vector $\e_{[\h_\a(r),r]}$ has its $i^{\text{th}}$ entry greater than or equal to its $j^{\text{th}}$ entry. If $j\notin[\h_\a(r),r]$, this is clear. If $j\in[\h_\a(r),r]$, then \cref{lemma: alternate position order} gives $j\leq_\a r$. By transitivity, $i\leq_\a r$, so again by \cref{lemma: alternate position order}, $i\in[\h_\a(r),r]$ which implies that both the $i^{\text{th}}$ and $j^{\text{th}}$ entry of $\e_{[\h_\a(r),r]}$ are 1.  Thus $d_i\geq d_j$, and consequently $b_i-b_j=(a_i+d_i)-(a_j+d_j)\geq a_i-a_j.$ Therefore $\a\leq_T\b$.
\end{proof}

We now apply this criterion, together with the properties of the canonical sequence $\a(\h)$, to characterize the sequences whose $h$-vectors lie coordinatewise below $\h$.

\begin{proposition}\label{prop:h-vector-below}
Let $\h$ be an $h$-vector and let $\a=\a(\h)$. For $\b\in\N^n$, the following are equivalent:
\begin{enumerate}
    \item $\h_\b\le\h$ coordinatewise;
    \item $\a\le_T\b$;
    \item $\b-\a\in\spn_\N\{\e_{[h(j),j]}:j\in[n]\}$.
\end{enumerate}
\end{proposition}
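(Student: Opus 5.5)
The equivalence (2)$\Leftrightarrow$(3) is immediate. By \cref{prop:h-vector-characterization} we have $\h_{\a}=\h_{\a(\h)}=\h$, so the generators $\e_{[h(j),j]}$ are exactly the vectors $\e_{[\h_\a(j),j]}$, and \cref{prop:indicator-expansion} then gives $\a\le_T\b$ if and only if $\b-\a\in\spn_\N\{\e_{[h(j),j]}:j\in[n]\}$. Likewise (2)$\Rightarrow$(1) is \cref{cor:h-refinement} applied to $\a\le_T\b$, since $\h_\a=\h$. The real content is therefore (1)$\Rightarrow$(2).

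For (1)$\Rightarrow$(2), assume $\h_\b\le\h$ coordinatewise. Since $\h_\b(j)\le h(j)=\h_\a(j)$, \cref{lemma: alternate position order} shows that every relation $i\le_\a j$ is also a relation $i\le_\b j$. In the proof of \cref{prop:canonical-h-minimum}, $\a(\h)$ was only compared with a $\b$ having the \emph{same} $h$-vector, but the counting argument there used $\le_\b$ only through the sets $S_i$. I would redo that argument with the relations of $\le_\a$. Using the defining formula of $\a(\h)$ and \cref{lemma: alternate position order} for $\a$, we get $a_i=|\{r\in[i+1,n]:i\le_\a r\}|$. Call this set $T_i$.

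For coordinatewise domination, observe that if $r_1<r_2$ both lie in $T_i$, then $r_1\le_\a r_2$ by \cref{remark: po continuity}, hence $r_1\le_\b r_2$ and $b_{r_1}>b_{r_2}$. Also $i\le_\b r$ for every $r\in T_i$, which gives $b_i>b_r$. Thus $r\mapsto b_r$ injects $T_i$ into $[0,b_i-1]$, and so $a_i\le b_i$.

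For the difference inequalities, let $i\le_\a j$. As in the proof of \cref{prop:canonical-h-minimum}, transitivity and \cref{remark: po continuity} applied to $\le_\a$ give $T_j\subseteq T_i$ and $T_i\setminus T_j=\{r\in[i+1,j]:i\le_\a r\}$. This yields $a_i-a_j=|T_i\setminus T_j|$. The same injection $r\mapsto b_r$, now using $i\le_\b j$ so that $b_i>b_r\ge b_j$, maps this set into $[b_j,b_i-1]$. Hence $a_i-a_j\le b_i-b_j$, and therefore $\a\le_T\b$.

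The main point to check is that the transitivity step identifying $T_i\setminus T_j$ is carried out entirely in $\le_\a$, where it is valid. The only facts about $\b$ needed are the strict inequalities $b_{r_1}>b_{r_2}$ and $b_i>b_r\ge b_j$, and these follow from the refinement $\le_\a\subseteq\le_\b$ obtained from hypothesis (1). I expect this refinement step to be the only place where (1) is genuinely used, and once it is in place the remainder is a verbatim adaptation of the earlier proof.
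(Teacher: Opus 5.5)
Your proposal is correct and follows the paper's proof essentially step for step. You use the refinement $\le_\a\subseteq\le_\b$ obtained from (1), the counting sets $\{r\in[i+1,n]:i\le_\a r\}$ and $\{r\in[i+1,j]:i\le_\a r\}$ with the injections $r\mapsto b_r$, and \cref{cor:h-refinement} and \cref{prop:indicator-expansion} for the remaining implications, exactly as the paper does.
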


\begin{proof}
The implication (2)$\Rightarrow$(1) follows from \cref{cor:h-refinement}, since $\h_\a=\h$. Conversely, suppose $\h_\b\le\h$. If $i\le_\a r$, then $\h_\b(r)\le h(r)=\h_\a(r)\le i$, so $i\le_\b r$ by \cref{lemma: alternate position order}.

For $i\in[n]$, let $S_i=\{r\in[i+1,n]:i\le_\a r\}$. As in the proof of \cref{prop:canonical-h-area-sequence}, we have $a_i=|S_i|$. If $u,v\in S_i$ with $u<v$, then $i\le_\a v$, and hence $i\le_\b v$. By the definition of the position order, $b_u>b_v$. Thus the values $\{b_r:r\in S_i\}$ are distinct. Moreover, $i\le_\b r$ for every $r\in S_i$, so $0\le b_r<b_i$. Hence $|S_i|\le b_i$, and therefore $a_i\le b_i$.

Now let $i\le_\a j$. The case $i=j$ is trivial, so assume $i<j$, and set $S_{i,j}=\{r\in[i+1,j]:i\le_\a r\}$. As in the proof of \cref{prop:canonical-h-minimum}, $a_i-a_j=|S_{i,j}|$. Since $i\le_\a j$, we have $i\le_\b j$. Moreover, if $r\in S_{i,j}$, then $i\le_\a r$, and hence $i\le_\b r$. As above, the values $b_r$ for $r\in S_{i,j}$ are distinct, and $b_i>b_r\ge b_j$. Thus the map $S_{i,j}\longrightarrow[b_j,b_i-1], r\mapsto b_r$ is injective. Therefore $a_i-a_j=|S_{i,j}|\le b_i-b_j$. Hence $\a\le_T\b$, proving (2).

Finally, (2)$\Leftrightarrow$(3) follows from \cref{prop:indicator-expansion}, since $\h_\a=\h$.
\end{proof}

Let $\h$ be an $h$-vector. For $i\in[n]$, define $R_\h(i)=\max\{t\in[i,n]:h(q)>i\text{ for all }i<q\le t\}$.

\begin{example}\label{ex:R-h}
Let $\h=(1,2,1,4,4)$. Then $R_\h(1)=2, R_\h(2)=2, R_\h(3)=5, R_\h(4)=4, R_\h(5)=5.$ For example, $R_\h(1)=2$ since $h(2)>1$ but $h(3)\le1$, while $R_\h(3)=5$ since $h(4),h(5)>3$. Thus, the corresponding intervals are $[1,2], [2,2],[3,5], [4,4],[5,5]$.
\end{example}

\begin{lemma}\label{lem:A-r}
    For $r\in[n]$, let $A_r=\{x\in[r]:r\le R_\h(x)\}$ and $A_0=\varnothing$. Then, we have $A_r=\{r\}\sqcup A_{h(r)-1}$.
\end{lemma}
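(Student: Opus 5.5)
We argue by strong induction on $r$; the case $r=0$ is vacuous. Throughout we use laminarity of $\{[h(q),q]\}$ (\cref{prop:h-vector-characterization}) and $h(q)\le q$. First, $r\in A_r$ since $R_\h(r)\ge r$. For $x<r$, the definition of $R_\h$ gives
\[
r\le R_\h(x)\iff h(q)>x \text{ for all } q\in[x+1,r].
\]
So the claim reduces to showing, for $x\in[1,r-1]$, that the condition ``$h(q)>x$ for all $q\in[x+1,r]$'' holds if and only if $x\le h(r)-1$ and $x\in A_{h(r)-1}$.

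Suppose $x<r$ and $h(q)>x$ for all $q\in[x+1,r]$. Taking $q=r$ gives $h(r)>x$, so $x\le h(r)-1$. If $x=h(r)-1$, then $x\in A_x$ trivially. If $x<h(r)-1$, the range $[x+1,h(r)-1]$ is a subset of $[x+1,r]$, so the condition holds there as well. This gives $h(r)-1\le R_\h(x)$, that is, $x\in A_{h(r)-1}$.

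Conversely, let $x\in A_{h(r)-1}$, so $x\le h(r)-1<r$ and $h(q)>x$ for $q\in[x+1,h(r)-1]$. It remains to show $h(q)>x$ for $q\in[h(r),r]$. This is the main point, and laminarity handles it. For $q\in[h(r),r]$, the interval $[h(q),q]$ meets $[h(r),r]$ at $q$. Since $q\le r$, laminarity forces one of two cases. Either $[h(q),q]\subseteq[h(r),r]$, giving $h(q)\ge h(r)>x$. Or $[h(r),r]\subseteq[h(q),q]$, which forces $q=r$, and then $h(q)=h(r)>x$. So the condition holds on all of $[x+1,r]$, and $x\in A_r$.

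Finally, the union is disjoint because every element of $A_{h(r)-1}$ is at most $h(r)-1<r$. No induction hypothesis is actually needed; the argument is a direct set equality. The only delicate step is the laminarity case analysis for $q\in[h(r),r]$, which covers the part of the range not controlled by $A_{h(r)-1}$. When $h(r)=1$, the claim reads $A_r=\{r\}$, and this is consistent with the argument above, since no $x\ge 1$ satisfies $x\le h(r)-1=0$.
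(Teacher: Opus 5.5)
Your proof is correct and follows the paper's argument essentially step for step. The forward inclusion reads off $h(r)>x$ and restricts the range to $[x+1,h(r)-1]$, and the reverse inclusion uses laminarity to get $h(q)\ge h(r)>x$ for $q\in[h(r),r]$. As you note yourself, the strong-induction framing is unnecessary, since the set equality is proved directly.
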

\begin{proof}
First note that $r\in A_r$, since $r\le R_\h(r)$ by the definition of $R_\h$. Now consider $i\in A_r$ with $i<r$. Then $i<r\le R_\h(i)$. Hence, by the definition of $R_\h(i)$, $h(q)>i$ for all $q\in[i+1,r]$. In particular, $h(r)>i$, so $i\le h(r)-1$. Moreover, $h(q)>i$ for all $q\in[i+1,h(r)-1]$, and therefore $h(r)-1\le R_\h(i)$. Thus $i\in A_{h(r)-1}$, and hence $A_r\subseteq\{r\}\sqcup A_{h(r)-1}$.

Conversely, suppose $i\in A_{h(r)-1}$. Then $i\le h(r)-1\le R_\h(i)$. By the definition of $R_\h(i)$, we have $h(q)>i$ for all $i<q\le h(r)-1$. Next, for any $h(r)\le t\le r-1$, the intervals $[h(t),t]$ and $[h(r),r]$ intersect. Since $t<r$, \cref{lem:h-intervals-laminar} gives $[h(t),t]\subseteq[h(r),r]$, and hence $h(t)\ge h(r)>i$. Since also $h(r)>i$, it follows that $h(x)>i$ for all $i<x\le r$. Hence $r\le R_\h(i)$, so $i\in A_r$. Together with $r\in A_r$, this gives $\{r\}\sqcup A_{h(r)-1}\subseteq A_r$.
\end{proof}

\begin{proposition}\label{prop:h-vector-above}
Let $\h$ be an $h$-vector and let $\b\in\N^n$. The following are equivalent:
\begin{enumerate}
    \item $\h_\b\ge\h$ coordinatewise;
    \item $b_{h(j)-1}\le b_j$ for every $j\in[n]$;
    \item $\b\in \spn_\N \left\{\e_{[i,R_\h(i)]}:i\in[n]\right\}$.    
\end{enumerate}
\end{proposition}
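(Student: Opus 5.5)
I will prove the cycle (1)$\Rightarrow$(2)$\Rightarrow$(3)$\Rightarrow$(1), using the laminar structure of $\h$ throughout.

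\emph{(1)$\Rightarrow$(2).} Assume $\h_\b(j)\ge h(j)$ for all $j$. If $h(j)=1$ the inequality reads $b_0=0\le b_j$ and is trivial. If $h(j)>1$, suppose $b_{h(j)-1}>b_j$. Since $h(j)-1\ge\h_\b(j)-1$, there are two cases. If $h(j)-1\ge\h_\b(j)$, then $h(j)-1\in[\h_\b(j),j-1]$, so $h(j)-1\le_\b j$ by \cref{lemma: alternate position order}, giving $\h_\b(j)\le h(j)-1<h(j)$. If $h(j)-1=\h_\b(j)-1$, then $b_{\h_\b(j)-1}\le b_j$ by the definition of the $h$-vector. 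Both cases contradict the assumption.

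\emph{(2)$\Rightarrow$(1).} Fix $j$ and suppose $\h_\b(j)<h(j)$. Then $h(j)-1\in[\h_\b(j),j-1]$, so $b_{h(j)-1}>b_j$, contradicting (2). This direction is immediate, so (1) and (2) are simply equivalent.

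\emph{(2)$\Leftrightarrow$(3).} I will work through \cref{lem:A-r}. Since $R_\h(x)\ge x$, the vector $\e_{[x,R_\h(x)]}$ has $r^{\text{th}}$ entry $1$ exactly when $x\in A_r$. Hence for $\b=\sum_x c_x\e_{[x,R_\h(x)]}$ we get $b_r=\sum_{x\in A_r}c_x$.

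By \cref{lem:A-r}, $A_r=\{r\}\sqcup A_{h(r)-1}$, and so
\[
b_r=c_r+b_{h(r)-1},
\]
using $A_0=\varnothing$ and $b_0=0$. Note that $h(r)-1<r$, so this is a recursion.

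\emph{(3)$\Rightarrow$(2).} If all $c_x\ge 0$, the recursion gives $b_{h(r)-1}\le b_r$ at once.

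\emph{(2)$\Rightarrow$(3).} Define $c_r:=b_r-b_{h(r)-1}$ for each $r$. By (2), $c_r\in\N$. I then show $\b=\sum_x c_x\e_{[x,R_\h(x)]}$ by strong induction on $r$. The $r^{\text{th}}$ entry of the right side is $\sum_{x\in A_r}c_x=c_r+\sum_{x\in A_{h(r)-1}}c_x$. By the induction hypothesis, this equals $c_r+b_{h(r)-1}$, which is $b_r$ by the definition of $c_r$. The base case is index $0$, where both sides are $0$.

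As a byproduct, the coefficients $c_r$ are uniquely determined, so the $n$ vectors $\e_{[i,R_\h(i)]}$ form a unimodular basis.

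The main point to verify is that the bookkeeping $b_r=\sum_{x\in A_r}c_x$ matches \cref{lem:A-r} exactly, including the boundary cases $h(r)=1$ and $R_\h(x)=n$. Beyond that, the argument is routine once \cref{lem:A-r} is invoked.
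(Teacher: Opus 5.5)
Your proofs of (2)$\Leftrightarrow$(3) via \cref{lem:A-r} and of (2)$\Rightarrow$(1) are correct and essentially match the paper. The gap is in (1)$\Rightarrow$(2). You write ``Since $h(j)-1\ge\h_\b(j)-1$'', but (1) gives the reverse inequality $\h_\b(j)\ge h(j)$. So your first case ($h(j)-1\ge\h_\b(j)$) is excluded by (1), and your second case only covers $\h_\b(j)=h(j)$. The substantive case $\h_\b(j)>h(j)$, where $h(j)-1$ lies strictly to the left of $\h_\b(j)-1$, is never treated.

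This case cannot be settled by looking at position $j$ alone. Take $\h=(1,2,2)$ and $\b=(2,0,1)$. Then $\h_\b(3)=3\ge h(3)=2$, yet $b_{h(3)-1}=b_1=2>1=b_3$. The proposition survives only because $\h_\b=(1,1,3)$ violates (1) at position $2$. Hence the implication genuinely needs the hypothesis at other positions together with the laminarity of $\h$. Your remark that (1) and (2) are ``simply equivalent'' is not supported by the argument you gave.

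The missing argument is as follows. Let $p=h(j)-1\ge 1$ and suppose $b_p>b_j$. Since $\h_\b(j)\ge h(j)>p$, \cref{lemma: alternate position order} gives $p\not\le_\b j$. Because $b_p>b_j$, some $q\in[p+1,j-1]$ satisfies $b_q\le b_j$; take the least such $q$. Then $b_r>b_j\ge b_q$ for every $r\in[p,q-1]$, so $p\le_\b q$. Using (1) at position $q$, this gives $h(q)\le\h_\b(q)\le p$. Now $[h(q),q]$ and $[h(j),j]=[p+1,j]$ both contain $q$, but neither contains the other, because $h(q)\le p$ and $q<j$. This contradicts the laminarity of $\h$ (\cref{lem:h-intervals-laminar}). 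In the example above, this argument produces exactly $q=2$.
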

\begin{proof}

We prove (1)$\Rightarrow$(2)$\Rightarrow$(3)$\Rightarrow$(1).

Suppose first that (1) holds. Fix $j\in[n]$. If $h(j)=1$, then $b_{h(j)-1}=b_0=0\le b_j$. Suppose $h(j)>1$, and set $p=h(j)-1$. Then $p<j$. Assume, for contradiction, that $b_p>b_j$. Since $\h_\b(j)\ge h(j)=p+1>p$, by \cref{lemma: alternate position order}, we have $p\not\le_\b j$. Hence there is some $q\in[p+1,j-1]$ such that $b_q\le b_j$. Choose the smallest such $q$. Then $b_r>b_j\ge b_q$ for all $r\in[p,q-1]$, so $p\le_\b q$. Thus $\h_\b(q)\le p$, and since $\h_\b\ge\h$, we have $h(q)\le p$. Therefore the intervals $[h(q),q]$ and $[h(j),j]=[p+1,j]$ intersect, but neither contains the other, contradicting \cref{lem:h-intervals-laminar}. Hence $b_{h(j)-1}\le b_j$, proving (2).

Now suppose (2) holds. For $i\in[n]$, set $c_i=b_i-b_{h(i)-1}$. By (2), $c_i\in\N$. To prove (3), it suffices to show that $\b=\sum_{i=1}^n c_i\e_{[i,R_\h(i)]}$. Indeed, the $r^{\text{th}}$ entry of the right-hand side is $\sum_{i\in A_r}c_i$. By \cref{lem:A-r} and induction on $r$, with $b_0=0$, we obtain \[\sum_{i\in A_r}c_i =c_r+\sum_{i\in A_{h(r)-1}}c_i =(b_r-b_{h(r)-1})+b_{h(r)-1} =b_r.\] Thus the two vectors agree in every coordinate, proving (3).

Finally, suppose (3) holds. Fix $r\in[n]$. If $h(r)=1$, then $\h_\b(r)\ge1=h(r)$. Now suppose $h(r)>1$. By \cref{lem:A-r}, $A_{h(r)-1}\subseteq A_r$. Hence every generator contributing to coordinate $h(r)-1$ also contributes to coordinate $r$, so $b_{h(r)-1}\le b_r$. If $\h_\b(r)<h(r)$, then $\h_\b(r)\le h(r)-1<r$, so $h(r)-1\le_\b r$ by \cref{lemma: alternate position order}. This gives $b_{h(r)-1}>b_r$, a contradiction. Hence $\h_\b(r)\ge h(r)$ for every $r\in[n]$, proving (1).
\end{proof}

\begin{example}\label{ex:h-vector-comparisons}
Return to $\h=(1,2,1,4,4)$ from \cref{ex:h-vector-fiber,ex:R-h}. We have $\a(\h) = (1,1,0,1,0)$. In \cref{ex:h-vector-fiber}, we showed that the sequences satisfying $\h_\b=\h$ are precisely those for which \[\b-(1,1,0,1,0)\in \spn_\N\{ \e_{[1,5]},\e_{[1,2]},\e_2, \e_4,\e_{[4,5]} \}.\]

By \cref{prop:h-vector-below}, the sequences satisfying
$\h_\b\le\h$ are precisely those for which \[\b-(1,1,0,1,0)\in \spn_\N\{ \e_1,\e_2,\e_{[1,3]}, \e_4,\e_{[4,5]} \}.\]

By \cref{ex:R-h,prop:h-vector-above}, the sequences
satisfying $\h_\b\ge\h$ are precisely those for which \[\b\in \spn_\N\{ \e_{[1,2]},\e_2,\e_{[3,5]}, \e_4,\e_5\}.\]

Equivalently, the latter condition is $b_1\le b_2$, $b_3\le b_4$, and $b_3\le b_5$.
\end{example}

The description in \cref{prop:h-vector-fiber} expresses each $h$-vector fiber as a translate of an $\N$-span of interval indicator vectors, but it does not determine whether this expression is unique. We next show that these interval vectors in fact form a unimodular basis of $\mathbb Z^n$. Thus, the coefficients in the description of each fiber are uniquely determined. This stronger description will also allow us to obtain a product formula for the generating function of an $h$-vector fiber.

\begin{lemma}\label{lem:generator-intervals}
Let $\h$ be an $h$-vector and let $\a=\a(\h)$. For each $r\in[n]$, $k_{\h_\a(r)}(a_r)$ is well defined, and $I_r:=[\h_\a(r),k_{\h_\a(r)}(a_r)]$ is the maximal interval beginning at $\h_\a(r)$ on which $a_q\ge a_r$ for every $q\in I_r$. For $q,r\in[n]$, $q\in I_r \Leftrightarrow I_q\subseteq I_r$ and $I_q=I_r\Leftrightarrow q=r$.
\end{lemma}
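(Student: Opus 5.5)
Throughout, fix $r\in[n]$ and set $j=\h_\a(r)$. The plan is to check first that the pair $(j,a_r)$ is admissible in \cref{def:k-j-gamma}, read off the maximality statement from that definition, and then derive the two equivalences from the inequalities defining the $h$-vector and \cref{lem:h-vector-basic}(2).

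\emph{Well-definedness.} We must show that $\h_\a(j)=j$ and $a_r\in[a_{j-1},a_j]$.
If $j=r$, then $a_r\le a_j$ trivially. If $j<r$, then $j\le_\a r$ by \cref{lemma: alternate position order}, so $a_j>a_r$. By the definition of $\h_\a(r)$, either $j=1$, where $a_{j-1}=a_0=0\le a_r$, or $a_{j-1}\le a_r$. In both cases
\[
a_{j-1}\le a_r\le a_j .
\]
In particular $a_{j-1}\le a_j$, so \cref{lem:h-vector-basic}(1) gives $\h_\a(j)=j$. Hence $k_j(a_r)$ is defined. By \cref{def:k-j-gamma}, $I_r$ is then exactly the maximal interval starting at $j$ on which every entry is at least $a_r$.

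\emph{The equivalence $q\in I_r\Leftrightarrow I_q\subseteq I_r$.} First we note that $q\in I_q$ for every $q$. Indeed, the entries $a_i$ for $i\in[\h_\a(q),q-1]$ exceed $a_q$, so the run from $\h_\a(q)$ of entries $\ge a_q$ reaches at least $q$. This gives the direction ($\Leftarrow$).

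For ($\Rightarrow$), suppose $q\in I_r$. Then $\h_\a(r)\le q$ and $a_q\ge a_r$, so \cref{lem:h-vector-basic}(2) yields $\h_\a(q)\ge\h_\a(r)$. Hence $[\h_\a(r),q]\subseteq I_r$, and all entries there are $\ge a_r$. Every entry of $I_q$ is $\ge a_q\ge a_r$, and $I_q$ starts inside $[\h_\a(r),q]$. So the union $[\h_\a(r),q]\cup I_q$ is a contiguous interval starting at $\h_\a(r)$ on which all entries are $\ge a_r$. By the maximality of $I_r$, this union lies in $I_r$, so $I_q\subseteq I_r$.

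\emph{The equivalence $I_q=I_r\Leftrightarrow q=r$.} Suppose $I_q=I_r$ with $q\ne r$, say $q<r$. Since $r\in I_r$ and $q\in I_q$, we have $q\in I_r$ and $r\in I_q$. From the definitions of these intervals, $a_q\ge a_r$ and $a_r\ge a_q$, so $a_q=a_r$. Equal left endpoints give $\h_\a(q)=\h_\a(r)\le q<r$, so $q\in[\h_\a(r),r-1]$. The definition of the $h$-vector then forces $a_q>a_r$, which is a contradiction. The converse direction is trivial.

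I expect the main obstacle to be the well-definedness step, specifically the inequality $a_{j-1}\le a_j$ that gives $\h_\a(j)=j$. The rest follows from the inequalities defining the $h$-vector together with \cref{lem:h-vector-basic}(2).
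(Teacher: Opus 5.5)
Your proof is correct and follows essentially the same route as the paper's. Well-definedness comes from $a_{j-1}\le a_r\le a_j$ together with \cref{lem:h-vector-basic}(1), the inclusion uses \cref{lem:h-vector-basic}(2), and distinctness comes from the clash between $a_q>a_r$ and $a_r\ge a_q$. The only cosmetic difference is that you bound the right endpoint of $I_q$ by applying the maximality of $I_r$ to $[\h_\a(r),q]\cup I_q$, whereas the paper notes directly that $a_{K+1}<a_r\le a_q$ stops $I_q$ at $K$.
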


\begin{proof}
Fix $r\in[n]$ and set $i=\h_\a(r)$. By the definition of $\h_\a(r)$, $a_{i-1}\le a_r\le a_i$. Hence, by \cref{lem:h-vector-basic}, $\h_\a(i)=i$, and $a_r\in[a_{i-1},a_i]$. Therefore $k_i(a_r)$ is well defined, and $I_r=[i,k_i(a_r)] =[\h_\a(r),k_{\h_\a(r)}(a_r)]$ is the maximal interval beginning at $i=\h_\a(r)$ such that $a_q\ge a_r$ for every $q\in I_r$. By the definition of $\h_\a(r)$, we also have $a_p>a_r$ for every $p\in[i,r-1]$. Hence $r\le k_i(a_r)$, and therefore $r\in I_r$.

Suppose $q\in I_r$, and set $K=k_i(a_r)$. Then $i\le q\le K$ and $a_q\ge a_r$. By \cref{lem:h-vector-basic}, $\h_\a(q)\ge i$. Hence, to show $I_q\subseteq I_r$, it remains to show $k_{\h_\a(q)}(a_q) \le k_{i}(a_r) = K$. If $K<n$, then the maximality of $K$ gives $a_{K+1}<a_r\le a_q$. Hence, $k_{\h_\a(q)}(a_q)\le K$. The same inequality is immediate if $K=n$. Therefore, $I_q\subseteq I_r$.  Conversely, if $I_q\subseteq I_r$, then $q\in I_q \subseteq I_r$. Thus $q\in I_r \Leftrightarrow I_q\subseteq I_r$.
    
It remains to determine when equality occurs. Suppose $I_q=I_r$. If $q\ne r$, without loss of generality assume $q<r$. Then $q\in I_r$ gives $q\in[\h_\a(r),r-1]$ and hence $a_q>a_r$. On the other hand $r\in I_q$ gives $a_r\ge a_q$, a contradiction.  Therefore $q=r$. The converse is immediate.
\end{proof}

\begin{proposition}\label{prop:h-vector-fiber-unimodular}
Let $\h$ be an $h$-vector and let $\a=\a(\h)$. Then $\left\{ \e_{[j,k_j(\gamma)]}: \h_\a(j)=j,\ \gamma\in[a_{j-1},a_j] \right\}$ forms a unimodular basis of $\mathbb{Z}^n$. In particular, for every $\b\in\N^n$ satisfying $\h_\b=\h$, there exist unique coefficients $c_{j,\gamma}\in\N$ such that $\b-\a = \sum c_{j,\gamma}\e_{[j,k_j(\gamma)]}$, where the sum runs over all $j$ and $\gamma$ satisfying $\h_\a(j)=j$ and $\gamma\in[a_{j-1},a_j]$.
\end{proposition}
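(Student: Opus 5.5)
The plan is to use \cref{lem:generator-intervals} to rewrite the generating set in the form $\{\e_{I_r}:r\in[n]\}$ and then to show that the resulting $n\times n$ matrix is unimodular.

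\textbf{Step 1: the generators are exactly $\e_{I_1},\ldots,\e_{I_n}$, with no repetition.} For each $r$, \cref{lem:generator-intervals} shows that $I_r=[j,k_j(\gamma)]$ with $j=\h_\a(r)$ and $\gamma=a_r$. Here $\h_\a(j)=j$ and $\gamma\in[a_{j-1},a_j]$, so every $I_r$ is one of the listed generators. Conversely, take a pair $(j,\gamma)$ with $\h_\a(j)=j$ and $\gamma\in[a_{j-1},a_j]$. Set $K=k_j(\gamma)$ and $r=\min\{q\in[j,K]:a_q=\min_{p\in[j,K]}a_p\}$. I need to check that $I_r=[j,K]$, which requires three facts.
\begin{enumerate}
\item \emph{$\h_\a(r)=j$.} Entries of $[j,r-1]$ exceed $a_r$ by the choice of $r$. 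Also $a_{j-1}\le\gamma\le a_r$. (If $j=1$, use $a_0=0$.)
\item \emph{$\min_{[j,K]}a=\gamma$ exactly.} If $K<n$, then $a_{K+1}<\gamma\le a_K$. Since $\a$ is an area sequence, $a_K-a_{K+1}\le 1$ forces $a_K=\gamma$. If $K=n$, then $a_n=0\le\gamma$ together with $a_n\ge\gamma$ gives $a_n=\gamma$.
\item \emph{Hence $k_j(a_r)=k_j(\gamma)=K$.} This follows from the second fact, since $a_r=\gamma$.
\end{enumerate}
So every listed pair is realized as some $I_r$. By the last clause of \cref{lem:generator-intervals}, the intervals $I_r$ are pairwise distinct. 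It also follows that distinct pairs $(j,\gamma)$ give distinct intervals, because $\gamma=\min_{[j,K]}a$ is recovered from the interval. So the generating set consists of exactly $n$ distinct vectors.

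\textbf{Step 2: unimodularity.} Order the generators by a linear extension of inclusion, i.e.\ by increasing interval length. By \cref{lem:generator-intervals}, $q\in I_r\Leftrightarrow I_q\subseteq I_r$. Form the $n\times n$ matrix $N$ whose $r$-th row is $\e_{I_r}$. Its entry in position $(r,q)$ is $1$ exactly when $I_q\subseteq I_r$, i.e.\ when $q\preceq r$ in the poset $([n],\subseteq$ on the $I$'s$)$. This is a partial order, and antisymmetry holds because the $I_r$ are distinct. So $N$ is the zeta matrix of a finite poset, with rows and columns indexed by the same set. After simultaneously permuting rows and columns by a linear extension, $N$ becomes unitriangular. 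Therefore $\det N=\pm1$, in fact $1$ before the permutation. Its inverse is the Möbius matrix, which is integral. Hence the rows form a unimodular basis of $\Z^n$.

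\textbf{Step 3: uniqueness.} Existence of $\N$-coefficients for $\b$ with $\h_\b=\h$ is \cref{prop:h-vector-fiber}. Uniqueness follows from linear independence, which is a consequence of Step 2.

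The main obstacle is Step 1's surjectivity, namely showing $\min_{[j,K]}a=\gamma$. This is the only place where the area-sequence property of $\a(\h)$ from \cref{prop:canonical-h-area-sequence} is needed. The rest is bookkeeping from \cref{lem:generator-intervals}.
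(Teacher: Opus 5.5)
Your proposal is correct and follows the paper's proof essentially step for step. You identify the generators with the intervals $I_1,\ldots,I_n$ of \cref{lem:generator-intervals}. Your leftmost minimum of $\a$ on $[j,k_j(\gamma)]$ is the same index the paper takes as $\min\{q\ge j: a_q\le\gamma\}$, and in both versions the area-sequence property forces $a_r=\gamma$. You then simultaneously permute rows and columns by a linear extension of inclusion to get a unitriangular matrix. The only cosmetic difference is how injectivity is handled: you show $(j,\gamma)\mapsto[j,k_j(\gamma)]$ is injective by recovering $\gamma$ as $\min_{[j,k_j(\gamma)]}a$, whereas the paper proves directly that $r\mapsto(\h_\a(r),a_r)$ is injective.
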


\begin{proof}
By \cref{lem:generator-intervals}, the map $r\mapsto(\h_\a(r),a_r)$ is well defined from $[n]$ to $\{(j,\gamma):\h_\a(j)=j,\ \gamma\in[a_{j-1},a_j]\}$. We first show that this map is a bijection. Suppose $r<s$ and $(\h_\a(r),a_r)=(\h_\a(s),a_s)$. Since $r\in[\h_\a(s),s-1]$, the definition of $\h_\a(s)$ gives $a_r>a_s$, a contradiction. Thus the map is injective. For surjectivity, let $(j,\gamma)$ satisfy $\h_\a(j)=j$ and $\gamma\in[a_{j-1},a_j]$, and set $r=\min\{q\in[j,n]:a_q\le\gamma\}$. Such an $r$ exists since $a_n=0\le\gamma$. If $r=j$, then $a_j=a_r\le\gamma\le a_j$, so $a_r=\gamma$. If $r>j$, then $a_{r-1}>\gamma\ge a_r$. Since $\a$ is an area sequence, $a_{r-1}-a_r\le1$, so $a_r=\gamma$. By the minimality of $r$, $a_q>\gamma=a_r$ for all $q\in[j,r-1]$, while $a_{j-1}\le\gamma=a_r$. Hence $\h_\a(r)=j$. Therefore the map is surjective. It follows that the intervals $I_1,\ldots,I_n$ defined in \cref{lem:generator-intervals} are precisely the intervals $[j,k_j(\gamma)]$ indexing the generators in \cref{prop:h-vector-fiber}.

Let $G$ be the $n\times n$ matrix whose $r^{\text{th}}$ row is $\e_{I_r}$. By \cref{lem:generator-intervals}, the intervals $I_1,\ldots,I_n$ are distinct. Order the rows so that whenever $I_q\subsetneq I_r$, the row $\e_{I_q}$ occurs above the row $\e_{I_r}$. Such an ordering exists because proper inclusion defines a partial order on the finite set $\{I_1,\ldots,I_n\}$, which admits a linear extension.

Let $\sigma\in S_n$ be such that the $i^{\text{th}}$ row in this ordering is $\e_{I_{\sigma(i)}}$. Apply the same permutation $\sigma$ to the columns, and let $G'$ be the resulting matrix. Thus, $G'(i,j)=G(\sigma(i),\sigma(j))$.

The $(i,j)$ entry of $G'$ is $1$ precisely when $\sigma(j)\in I_{\sigma(i)}$. By \cref{lem:generator-intervals}, this is equivalent to $I_{\sigma(j)}\subseteq I_{\sigma(i)}$. If $i\ne j$, the inclusion is proper, so $i>j$ by the choice of the ordering. Hence all entries above the diagonal of $G'$ are zero. Moreover, every diagonal entry is $1$, since $\sigma(i)\in I_{\sigma(i)}$. Thus $G'$ is lower unitriangular and $\det G' =1$. Simultaneously permuting the rows and columns does not change the determinant, so $\det G=1$. Therefore the generator vectors form a unimodular basis of $\mathbb Z^n$.

Finally, let $\b\in\N^n$ satisfy $\h_\b=\h$. By \cref{prop:h-vector-fiber}, there exist coefficients $c_{j,\gamma}\in\N$ such that $\b-\a=\sum c_{j,\gamma}\e_{[j,k_j(\gamma)]}$, where the sum runs over all $j$ and $\gamma$ satisfying $\h_\a(j)=j$ and $\gamma\in[a_{j-1},a_j]$. Since $\left\{ \e_{[j,k_j(\gamma)]}: \h_\a(j)=j,\ \gamma\in[a_{j-1},a_j] \right\}$ form a basis of $\mathbb Z^n$, the coefficients $c_{j,\gamma}$ are uniquely determined.
\end{proof}

We have therefore obtained explicit descriptions associated with each $h$-vector. By \cref{prop:h-vector-characterization,prop:canonical-h-area-sequence}, the possible $h$-vectors are naturally indexed by the Dyck paths of semilength $n$, with $\a(\h)$ as the corresponding area sequence. For each such $\h$, \cref{prop:h-vector-fiber} characterizes all $\b\in\N^n$ satisfying $\h_\b=\h$, while \cref{prop:h-vector-fiber-unimodular} shows that the corresponding interval generators form a unimodular basis and give a unique expression for every element of the fiber. Finally, \cref{prop:h-vector-below,prop:h-vector-above} characterize all $\b\in\N^n$ satisfying $\h_\b\le\h$ and $\h_\b\ge\h$, respectively.

\subsubsection{Matrix criteria} \label{subsec: M matrix and m vector}

\Cref{prop:indicator-expansion} characterizes the stable Tamari order in terms of $\N$-linear combinations of interval indicator vectors. We now encode these vectors in a matrix to obtain a linear-algebraic criterion for comparability.

\begin{definition}\label{def: matrix associated to a sequence}
Let $\a\in\N^n$. Define the $n\times n$ matrix $M_\a$ by
\[
M_\a(i,j)=
\begin{cases}
1, & j \in [\h_\a(i), i],\\
0, & \text{otherwise}.
\end{cases}
\]
Equivalently, the $i^{\text{th}}$ row of $M_\a$ is $\e_{[\h_\a(i),i]}$.
\end{definition}

\begin{example}\label{ex:M matrix}
Let $n=5$ and $\a=(6,7,0,2,1)$. By \cref{ex:h vector}, $\h_\a=(1,2,1,4,4)$. Hence, we have
\[
M_\a=
\begin{bmatrix}
1&0&0&0&0\\
0&1&0&0&0\\
1&1&1&0&0\\
0&0&0&1&0\\
0&0&0&1&1
\end{bmatrix}.
\]
\end{example}

Since $M_\a$ is lower triangular with diagonal entries equal to $1$, it is invertible. In matrix products, we identify vectors in $\Z^n$ with their corresponding $1\times n$ row matrices. We can now rephrase \cref{prop:indicator-expansion} as follows.

\begin{theorem}\label{thm:compare seq with M matrix}
Let $\a,\b\in\N^n$. Then $\a\leq_T\b$ if and only if all entries of $\m_{\a,\b}:=(\b-\a)M_\a^{-1}$ are nonnegative integers.
\end{theorem}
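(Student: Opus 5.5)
This is a direct reformulation of \cref{prop:indicator-expansion}, so the plan is to translate the $\N$-span condition there into a statement about the row vector $(\b-\a)M_\a^{-1}$.

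First, observe that $M_\a$ is lower unitriangular. Its $i^{\text{th}}$ row is $\e_{[\h_\a(i),i]}$, and since $\h_\a(i)\le i$, this row has a $1$ on the diagonal and no entries to the right of it. Hence $\det M_\a=1$, so $M_\a^{-1}$ exists and has integer entries. Its rows $\e_{[\h_\a(r),r]}$, $r\in[n]$, therefore form a basis of $\Q^n$, and in fact a unimodular basis of $\Z^n$.

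Next, for any row vector $\c=(c_1,\ldots,c_n)$ we have
\[
\c M_\a=\sum_{r=1}^n c_r\,\e_{[\h_\a(r),r]}.
\]
Because the rows form a basis, the expression $\b-\a=\sum_r c_r\e_{[\h_\a(r),r]}$ holds if and only if $\c=(\b-\a)M_\a^{-1}=\m_{\a,\b}$. It follows that $\b-\a\in\spn_\N\{\e_{[\h_\a(r),r]}:r\in[n]\}$ exactly when the unique coefficient vector $\m_{\a,\b}$ lies in $\N^n$. Note that $\m_{\a,\b}$ is automatically in $\Z^n$, since $\b-\a\in\Z^n$ and $M_\a^{-1}$ is an integer matrix. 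So the condition "nonnegative integers" reduces to "nonnegative."

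Finally, combine this with \cref{prop:indicator-expansion}, which says that $\a\le_T\b$ if and only if $\b-\a$ lies in that $\N$-span. This gives the theorem. There is no real obstacle here. The only point that needs care is uniqueness of the coefficients, which is exactly what invertibility of $M_\a$ provides; without it, membership in the $\N$-span would not be equivalent to nonnegativity of one particular coefficient vector.
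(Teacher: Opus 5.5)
Your proposal is correct and matches the paper, which presents this theorem as a direct rephrasing of \cref{prop:indicator-expansion}: since $M_\a$ is lower unitriangular, it is invertible over $\Z$, so the coefficient vector of $\b-\a$ with respect to the rows $\e_{[\h_\a(r),r]}$ is uniquely $\m_{\a,\b}$. Your remark that integrality of $\m_{\a,\b}$ is automatic spells out the one detail the paper leaves implicit.
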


\begin{example}\label{ex:compare seq with M matrix}
Let $\a=(6,7,0,2,1), \b=(10,9,0,4,3)$, and $\c=(8,7,2,3,1)$. By \cref{ex:M matrix}, we have  $\m_{\a,\b} =(\b-\a)M_\a^{-1} =(4,2,0,2,2)M_\a^{-1} =(4,2,0,0,2)$,  while $\m_{\a,\c} =(\c-\a)M_\a^{-1}  = (2,0,2,1,0)M_\a^{-1} =(0,-2,2,1,0)$. Thus, by \cref{thm:compare seq with M matrix}, $\a\leq_T\b$ but $\a\not\leq_T\c$, agreeing with \cref{ex:sto}.
\end{example}

For $\a\in\N^n$, we call $j\in[n]$ the index of a \emph{right-to-left minimum} if $a_j\leq a_k$ for all $k>j$. Define
\[
\RLmin(\a):=\{j\in[n]:a_j\leq a_k\text{ for all }k>j\}.
\]
For example, if $\a=(3,1,2,4,3)$, then $\RLmin(\a)=\{2,3,5\}$. In particular, $4$ is not a right-to-left minimum since $a_5<a_4$.

\begin{lemma}\label{lem:RLmin}
Let $\a\in\N^n$ and $j\in[n]$. Then $j\in\RLmin(\a)$ if and only if $j\notin[\h_\a(k),k-1]$ for all $k\in[n]$.
\end{lemma}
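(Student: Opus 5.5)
We prove both directions by unwinding the definition of the $h$-vector together with \cref{lemma: alternate position order}. The key reformulation is that, for $j<k$, the condition $j\in[\h_\a(k),k-1]$ says exactly that $\h_\a(k)\le j<k$, which by \cref{lemma: alternate position order} means $j\le_\a k$ with $j\neq k$. By \cref{def: po}, this in turn means $a_j,a_{j+1},\ldots,a_{k-1}>a_k$. So the lemma is equivalent to the following statement: $j\in\RLmin(\a)$ if and only if there is no $k>j$ with $a_q>a_k$ for all $q\in[j,k-1]$.

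For the forward direction, suppose $j\in\RLmin(\a)$ and, for contradiction, that $j\in[\h_\a(k),k-1]$ for some $k$. Then $k>j$ and $j\le_\a k$, so in particular $a_j>a_k$. This contradicts $a_j\le a_k$.

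For the converse, suppose $j\notin\RLmin(\a)$. Then some $k>j$ has $a_k<a_j$. We choose $k$ to be the smallest index greater than $j$ with $a_k<a_j$, and then refine the choice so that $a_k$ is strictly below every earlier entry in $[j,k-1]$.

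The cleanest way to do this is to take $k$ minimal among indices $k>j$ with $a_k<\min\{a_j,\ldots,a_{k-1}\}$. Such a $k$ exists: the first index after $j$ where the value drops below $a_j$ has all earlier entries in $[j,k-1]$ at least $a_j$, hence strictly greater than $a_k$. For this $k$ we have $a_q>a_k$ for all $q\in[j,k-1]$, so $j\le_\a k$. By \cref{lemma: alternate position order}, $\h_\a(k)\le j\le k-1$, which gives $j\in[\h_\a(k),k-1]$, as required.

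The only mildly delicate step is this choice of $k$ in the converse. Taking the first drop below $a_j$ works directly, and there is no real obstacle beyond that.
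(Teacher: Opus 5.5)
Your proof is correct and follows essentially the same argument as the paper: in the forward direction, $j\in[\h_\a(k),k-1]$ forces $k>j$ and $j\le_\a k$, hence $a_j>a_k$; in the converse, you take $k$ to be the first index after $j$ with $a_k<a_j$, exactly as the paper does. The ``refinement'' you describe is unnecessary, since the minimal $k>j$ with $a_k<\min\{a_j,\ldots,a_{k-1}\}$ is precisely that first drop below $a_j$.
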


\begin{proof}
Suppose $j\in\RLmin(\a)$. Let $k\in[n]$. If $k\leq j$, then $j\notin[\h_\a(k),k-1]$. If $k>j$, then $a_j\leq a_k$, so $j\not\leq_\a k$. By \cref{lemma: alternate position order}, $j\notin[\h_\a(k),k-1]$. 

Conversely, suppose $j\notin\RLmin(\a)$, and let $k=\min\{r \in [n]:a_j>a_r \text{ and } r > j\}$. Then $a_q\geq a_j>a_k$ for all $q\in[j,k-1]$, so $j\leq_\a k$. By \cref{lemma: alternate position order}, $j\in[\h_\a(k),k-1]$.
\end{proof}

\begin{remark}\label{rem:RLmin-blocks}
Let $\a\in\N^n$, and write $\RLmin(\a)=\{r_1<r_2<\cdots<r_s=n\}$, with $r_0=0$. By \cref{lem:RLmin}, for each $j\in[s-1]$ there is no $k>r_j$ such that $\h_\a(k)\le r_j$. Hence, every row of $M_\a$ indexed by $k>r_j$ has a zero entry in every column $i\le r_j$. Since $M_\a$ is lower triangular, there is a block boundary after each $r_j$.

Conversely, if $q\notin\RLmin(\a)$, then by \cref{lem:RLmin} there is some $k>q$ such that $\h_\a(k)\le q<k$. Hence $M_\a(k,q)=1$, so there cannot be a block boundary after $q$. Therefore, the endpoints of the consecutive diagonal blocks of $M_\a$ are exactly the elements of $\RLmin(\a)$.

Thus, the blocks are indexed by $[r_{j-1}+1,r_j]$ for $j\in[s]$, with sizes $r_1,\ r_2-r_1,\ldots,\ r_s-r_{s-1}$.
\end{remark}

\begin{example}
Let $\a=(3,1,2,4,3)$. Then $\RLmin(\a)=\{2,3,5\}$, so the diagonal blocks of $M_\a$ are indexed by $[1,2],[3,3]$ and $ [4,5]$. Thus the block sizes are $2$, $1$, and $2$. In this case,
\[
M_\a=
\begin{bNiceMatrix}
1&0&0&0&0\\
1&1&0&0&0\\
0&0&1&0&0\\
0&0&0&1&0\\
0&0&0&1&1
\CodeAfter
\tikz \draw[red, thick]
  ([xshift=-1pt,yshift=1pt]1-1.north west)
  rectangle
  ([xshift=1pt,yshift=-1pt]2-2.south east);
\tikz \draw[red, thick]
  ([xshift=-1pt,yshift=1pt]3-3.north west)
  rectangle
  ([xshift=1pt,yshift=-1pt]3-3.south east);
\tikz \draw[red, thick]
  ([xshift=-1pt,yshift=1pt]4-4.north west)
  rectangle
  ([xshift=1pt,yshift=-1pt]5-5.south east);
\end{bNiceMatrix}.
\]
\end{example}

The characterization in~\cref{lem:RLmin} gives several useful consequences for elements lying below a fixed sequence in $\mathcal{ST}_n$. In particular, right-to-left minima are preserved when passing downward in the stable Tamari order. Thus, a position that is no larger than every entry to its right in a fixed sequence continues to have this property in every smaller sequence in $\mathcal{ST}_n$.

\begin{corollary}\label{cor:RLmin-downward}
Let $\a,\b\in\N^n$ with $\a\le_T\b$. Then $\RLmin(\b)\subseteq\RLmin(\a)$. Equivalently, if $b_j\le b_k$ for every $k>j$, then $a_j\le a_k$ for every $k>j$.
\end{corollary}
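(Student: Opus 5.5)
The plan is to combine \cref{lem:RLmin} with \cref{cor:h-refinement}. By \cref{lem:RLmin}, $j\in\RLmin(\c)$ for $\c\in\N^n$ if and only if there is no $k\in[n]$ with $\h_\c(k)\le j\le k-1$. So the statement $\RLmin(\b)\subseteq\RLmin(\a)$ becomes a statement about the intervals $[\h_\a(k),k-1]$ and $[\h_\b(k),k-1]$, and \cref{cor:h-refinement} compares these two intervals.

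Let $j\in\RLmin(\b)$ and suppose, for contradiction, that $j\notin\RLmin(\a)$. By \cref{lem:RLmin} applied to $\a$, there is some $k\in[n]$ with $j\in[\h_\a(k),k-1]$. Since $\a\le_T\b$, \cref{cor:h-refinement} gives $\h_\b(k)\le\h_\a(k)$. Hence
\[
[\h_\a(k),k-1]\subseteq[\h_\b(k),k-1],
\]
so $j\in[\h_\b(k),k-1]$. By \cref{lem:RLmin} applied to $\b$, this means $j\notin\RLmin(\b)$, a contradiction. Therefore $\RLmin(\b)\subseteq\RLmin(\a)$.

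The second formulation in the statement is simply the definition of $\RLmin$ unwound: $j\in\RLmin(\b)$ means $b_j\le b_k$ for all $k>j$, and likewise for $\a$.

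There is no real obstacle here; the only care needed is applying \cref{lem:RLmin} in the correct direction for each of $\a$ and $\b$. Equivalently, one could argue directly from \cref{lem:po-refinement}: if $j\notin\RLmin(\a)$, the proof of \cref{lem:RLmin} produces $k>j$ with $j\le_\a k$. Then \cref{lem:po-refinement} gives $j\le_\b k$, so $b_j>b_k$, and hence $j\notin\RLmin(\b)$.
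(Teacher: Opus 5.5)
Your proof is correct and is essentially the paper's argument: you combine \cref{cor:h-refinement} to get $[\h_\a(k),k-1]\subseteq[\h_\b(k),k-1]$ with \cref{lem:RLmin} applied to both $\a$ and $\b$, the only difference being that you phrase it as a contradiction rather than directly. Your alternative route through \cref{lem:po-refinement} is also valid.
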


\begin{proof}
By \cref{cor:h-refinement}, $\h_\b(r)\le\h_\a(r)$ for every $r\in[n]$. Hence $[\h_\a(r),r-1]\subseteq[\h_\b(r),r-1]$. If $i\in\RLmin(\b)$, then by \cref{lem:RLmin}, $i\notin[\h_\b(r),r-1]$ for every $r\in[n]$. Therefore $i\notin[\h_\a(r),r-1]$ for every $r\in[n]$, and \cref{lem:RLmin} gives $i\in\RLmin(\a)$.
\end{proof}

\cref{cor:RLmin-downward} implies that if $b_i=k$ is a right-to-left minimum of $\b$ and $a_j<k$ for some $j>i$, then $a_i<b_i$. The following lemma strengthens this observation by showing that, under a slightly more general hypothesis, the strict inequality holds at every position between $i$ and $j$.

\begin{lemma}\label{lem:threshold-propagation}
Let $\a,\b\in\N^n$ with $\a\le_T\b$, and let $1\le i<j\le n$. Let $k\in\N$ such that $a_j<k \le b_q$ for every $q\in[i,n]$. Then $a_q<b_q$ for every $q\in[i,j]$.
\end{lemma}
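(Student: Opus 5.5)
The argument is a direct case split on the size of $a_q$, using only \cref{def: sto} and the definition of the position order; no matrix machinery is needed. Fix $q\in[i,j]$.

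\emph{First case: $a_q<k$.} The hypothesis gives $k\le b_q$ because $q\in[i,n]$. Hence $a_q<k\le b_q$, and we are done. This case covers $q=j$, since $a_j<k$ by assumption.

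\emph{Second case: $a_q\ge k$.} Then $q<j$. We locate a position to the right of $q$ that lies below $q$ in the position order and at which the desired strict inequality is already known. Set
\[
r=\min\{p\in[q+1,n]:a_p<k\}.
\]
This set is nonempty because it contains $j$. In particular $r\le j$, so $r\in[i,n]$.

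By minimality of $r$, every $p\in[q+1,r-1]$ satisfies $a_p\ge k$. Together with $a_q\ge k$, this gives $a_p\ge k>a_r$ for all $p\in[q,r-1]$. By \cref{def: po}, this means $q\le_\a r$.

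Since $\a\le_T\b$, the difference condition in \cref{def: sto} yields $a_q-a_r\le b_q-b_r$. Rearranging,
\[
b_q-a_q\ge b_r-a_r.
\]
Because $r\in[i,n]$, the hypothesis gives $b_r\ge k>a_r$. Therefore $b_q-a_q\ge b_r-a_r>0$, that is, $a_q<b_q$.

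The only step needing care is the choice of $r$. It must be the \emph{first} position after $q$ at which $\a$ drops below the threshold $k$. This choice is what makes all intermediate entries exceed $a_r$, so that $q\le_\a r$ holds. It also guarantees $r\le j$, so that the hypothesis $b_r\ge k$ applies at $r$. Once $r$ is chosen this way, the strict gap at $r$ transfers back to $q$ through the difference inequality.
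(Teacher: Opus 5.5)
Your proof is correct and is essentially the paper's argument: the paper takes $r=\min\{s\in[q,j]:a_s<k\}$ and splits on whether $r=q$, which is the same as your case split on $a_q<k$ followed by choosing the first drop below $k$ after $q$. In both versions the key step is the same: $q\le_\a r$, and the difference inequality carries the strict gap $b_r-a_r>0$ back to $q$.
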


\begin{proof}
Fix $q\in[i,j]$, and let $r=\min\{s\in[q,j]:a_s<k\}$. If $r=q$, then $a_q<k\le b_q$, so $a_q<b_q$. Suppose $r>q$. By the minimality of $r$, $a_s\ge k>a_r$ for every $s\in[q,r-1]$, so $q\le_\a r$. Since $\a\le_T\b$, $a_q-a_r\le b_q-b_r$, and hence $b_q-a_q\ge b_r-a_r$. Since $r\in[q,j]\subseteq[i,n]$, we have $b_r\ge k>a_r$, so $b_r-a_r>0$. Therefore, $b_q-a_q>0$, and thus $a_q<b_q$.
\end{proof}

Having described several properties of indices that are right-to-left minima, we now turn to indices that are not right-to-left minima. If $j\notin\RLmin(\a)$, then there is some $k>j$ such that $a_k<a_j$. Define $p_\a(j):=\min\{k>j:a_k<a_j\}$. Thus, $p_\a(j)$ is the first position to the right of $j$ whose entry is strictly smaller than $a_j$. By the minimality of $p_\a(j)$, $a_q\ge a_j>a_{p_\a(j)}$ for all $q\in[j,p_\a(j)-1]$. Hence $j\le_\a p_\a(j)$.

\begin{example}\label{ex:p-a}
Let $\a=(3,1,2,4,3)$. Since $\RLmin(\a)=\{2,3,5\}$, the indices not in $\RLmin(\a)$ are $1$ and $4$. We have $p_\a(1)=2$ and $p_\a(4)=5$. \end{example}

\begin{lemma}\label{lem:first-smaller}
Let $\a\in\N^n$ and $j\notin\RLmin(\a)$. Then $p_\a(j) = \min\{k\in[n]:\h_\a(k)\le j<k\}$.
\end{lemma}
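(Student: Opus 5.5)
We prove the equality by showing two things about $p:=p_\a(j)$. First, $p$ belongs to the set $K_j:=\{k\in[n]:\h_\a(k)\le j<k\}$. Second, no element of $K_j$ is smaller than $p$. By \cref{lemma: alternate position order}, $K_j=\{k>j: j\le_\a k\}$. This lets us argue directly from \cref{def: po}, without further reference to the $h$-vector.

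For membership, we use the observation recorded just before \cref{ex:p-a}. By the minimality of $p$, every $q\in[j,p-1]$ satisfies $a_q\ge a_j>a_p$. Hence $j\le_\a p$, and since $p>j$, \cref{lemma: alternate position order} gives $\h_\a(p)\le j<p$. Thus $p\in K_j$; in particular $K_j$ is nonempty, so its minimum is defined.

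For minimality, take any $k\in K_j$. Then $j<k$ and $j\le_\a k$, so by \cref{def: po} we have $a_q>a_k$ for all $q\in[j,k-1]$. In particular, taking $q=j$ gives $a_k<a_j$. So $k$ lies in the set $\{k>j:a_k<a_j\}$ whose minimum is $p$, and therefore $p\le k$. Combining the two parts gives $p_\a(j)=\min K_j$.

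There is no real obstacle. The only point needing care is the translation between $\h_\a(k)\le j<k$ and $j\le_\a k$ with $j\neq k$, which is exactly \cref{lemma: alternate position order} restricted to $i=j<k$.
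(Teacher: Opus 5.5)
Your proof is correct and follows essentially the same route as the paper: both show $p_\a(j)$ lies in $\{k\in[n]:\h_\a(k)\le j<k\}$ via $j\le_\a p_\a(j)$ and \cref{lemma: alternate position order}, then show any $k$ in that set satisfies $a_k<a_j$, giving $p_\a(j)\le k$. Your explicit check that the set is nonempty, so the minimum is defined, is a small clarification the paper leaves implicit.
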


\begin{proof}
Let $i=p_\a(j)$. Then $j\le_\a p_\a(j) =i$, and therefore, by \cref{lemma: alternate position order}, $\h_\a(i)\le j<i$, which implies $\min\{k\in[n]:\h_\a(k)\le j<k\}\le i=p_\a(j)$.

Let $k \in [n]$. Suppose $\h_\a(k)\le j<k$. By \cref{lemma: alternate position order}, $j\le_\a k$, so $a_j>a_k$. By the definition of $p_\a(j)$, $p_\a(j)\le k$. Since this holds for every $k$ satisfying $\h_\a(k)\le j<k$, we obtain $p_\a(j)\le \min\{k\in[n]:\h_\a(k)\le j<k\}$. As a result, $p_\a(j) = \min\{k\in[n]:\h_\a(k)\le j<k\}$.
\end{proof}

Since $M_\a$ is lower unitriangular and $\m_{\a,\b}=(\b-\a)M_\a^{-1}$, it is useful to describe $M_\a^{-1}$ explicitly. The following proposition shows that $M_\a^{-1}$ is lower unitriangular with entries in $\{-1,0,1\}$.

\begin{proposition}\label{prop:inverse M matrix}
Let $\a\in\N^n$, and let $M_\a^{-1}(i,j)$ denote the entry in row $i$ and column $j$ of $M_\a^{-1}$. Then
\[
M_\a^{-1}(i,j)=
\begin{cases}
1, & i=j,\\
-1, & j\notin\RLmin(\a)\text{ and }i=p_\a(j),\\
0, & \text{otherwise}.
\end{cases}
\] 
\end{proposition}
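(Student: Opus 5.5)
Let $N$ be the matrix given by the right-hand side of the statement. Because the inverse of a matrix is unique, it is enough to check that $NM_\a=I$. The rows of $N$ are easy to describe. Row $i$ of $N$ is $\e_i-\sum_{j\in P(i)}\e_j$, where $P(i)=\{j\notin\RLmin(\a): p_\a(j)=i\}$, and every $j\in P(i)$ satisfies $j<i$. It follows that row $i$ of $NM_\a$ equals $\e_{[\h_\a(i),i]}-\sum_{j\in P(i)}\e_{[\h_\a(j),j]}$. The task therefore reduces to a combinatorial claim. For each $i$, the intervals $[\h_\a(j),j]$ with $j\in P(i)$ should be pairwise disjoint, and their union should be exactly $[\h_\a(i),i-1]$. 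Once this holds, the row equals $\e_i$.

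The plan is to use the forest (laminar) structure. By \cref{lem:first-smaller}, when $j\notin\RLmin(\a)$ the index $p_\a(j)$ is the smallest $k>j$ whose interval $[\h_\a(k),k]$ contains $j$. By \cref{lem:h-intervals-laminar}, these intervals form a laminar family. So $p_\a(j)$ picks out the smallest $h$-interval that properly contains $[\h_\a(j),j]$. In other words, $p_\a(j)$ is the parent of $j$ in the containment forest, and $P(i)$ is the set of children of $i$.

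Disjointness is the first step. Take distinct $j,j'\in P(i)$ with $j<j'$. If their intervals met, laminarity would give $[\h_\a(j),j]\subseteq[\h_\a(j'),j']$. Then $j'$ would satisfy $j<j'<i$ and $\h_\a(j')\le j$. This contradicts the minimality in \cref{lem:first-smaller}, since $j'$ would be a smaller candidate than $i$ for $p_\a(j)$.

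Containment comes next. If $j\in P(i)$, then $\h_\a(i)\le j<i$. The intervals $[\h_\a(j),j]$ and $[\h_\a(i),i]$ therefore intersect, and laminarity places $[\h_\a(j),j]$ inside $[\h_\a(i),i-1]$.

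Covering is the last step, and I expect it to be the main obstacle. Fix $q\in[\h_\a(i),i-1]$. Then $q\notin\RLmin(\a)$ by \cref{lem:RLmin}. Starting from $q$, I will follow parents $q,p_\a(q),p_\a(p_\a(q)),\dots$. All of these lie in $\{k: \h_\a(k)\le q\le k\}$, which is a chain under inclusion by laminarity, and the index strictly increases at each step. Since $q\le_\a i$, the index $i$ belongs to this chain. At each step, \cref{lem:first-smaller} gives the minimal $k$ exceeding the current index whose interval contains it. So the iteration cannot jump past $i$: $i$ is itself a candidate whose interval contains the current element, because every interval in the chain below $i$ is contained in $[\h_\a(i),i]$. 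Consequently the iteration reaches $i$, and its last element before $i$ is some $j\in P(i)$ with $q\in[\h_\a(j),j]$. The delicate part is making this argument airtight, namely that the successive $h$-intervals are nested and all stay inside $[\h_\a(i),i]$. Both facts follow from laminarity together with the fact that each parent's interval contains the child index.
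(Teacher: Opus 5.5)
Your proof is correct, but it is organized differently from the paper's.

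\textbf{The paper's route.} The paper checks the right inverse $M_\a N=I$ column by column. Column $y$ of $N$ has at most two nonzero entries: $+1$ in row $y$ and $-1$ in row $p_\a(y)$. Row $x$ of $M_\a$ is the indicator of $[\h_\a(x),x]$. So $(M_\a N)(x,y)$ only records which of $y$ and $p_\a(y)$ lie in $[\h_\a(x),x]$. If $\h_\a(x)\le y<x$, then $y\le_\a x$ forces $p_\a(y)\le x$, so both indices lie in the interval and cancel. If $y<\h_\a(x)$, then $p_\a(y)\in[\h_\a(x),x]$ would give $y\le_\a p_\a(y)\le_\a x$, a contradiction. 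This uses only transitivity of $\le_\a$ and \cref{lemma: alternate position order}.

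\textbf{Your route.} You check the left inverse $NM_\a=I$ row by row. Rows of $N$ can contain many $-1$ entries, one per child, so you must prove that the intervals $[\h_\a(j),j]$ for $j\in P(i)$ partition $[\h_\a(i),i-1]$. You do this through disjointness, containment and covering, using laminarity and \cref{lem:first-smaller}. All three steps are sound.

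\textbf{The step you flagged as delicate.} The covering argument goes through as you indicate.
\begin{itemize}
\item The invariant $\h_\a(q_t)\le q\le q_t$ is preserved, because $[\h_\a(q_t),q_t]\subseteq[\h_\a(q_{t+1}),q_{t+1}]$.
\item Whenever $q_t<i$, both intervals $[\h_\a(q_t),q_t]$ and $[\h_\a(i),i]$ contain $q$. Laminarity then gives $\h_\a(i)\le q_t$.
\item So $i$ witnesses $q_t\notin\RLmin(\a)$ by \cref{lem:RLmin}. This means $p_\a(q_t)$ is defined, and $p_\a(q_t)\le i$.
\end{itemize}
You left this implicit; it is worth stating.

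\textbf{Comparison.} Your argument is longer, but it proves directly the tree structure of the laminar family: under $j\mapsto p_\a(j)$, the descendants of $i$ are exactly $[\h_\a(i),i]$. The paper obtains this only afterwards, in \cref{prop:forest-matrices}, as a consequence of this proposition. The paper's argument is shorter because it works with the sparse columns of $N$ rather than its rows.
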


\begin{proof}
Let $N$ be the matrix whose entries are given by the formula in the statement. We show that $M_\a N=I$. For $x,y\in[n]$,
\[
(M_\a N)(x,y)
=\sum_{k=1}^n M_\a(x,k)N(k,y)
=\sum_{k=\h_\a(x)}^x N(k,y).
\]

If $y>x$, then neither $y$ nor $p_\a(y)$, when defined, belongs to $[\h_\a(x),x]$ because $p_\a(y) > y > x$, so $(M_\a N)(x,y)=0$. If $y=x$, the only nonzero term in the sum is $N(x,x)=1$, so $(M_\a N)(x,x)=1$.

Now suppose $y<x$. First assume that $\h_\a(x)\le y$. By \cref{lemma: alternate position order}, $y\le_\a x$, so $a_y>a_x$. Hence $y\notin\RLmin(\a)$ and $p_\a(y)\le x$. Since $\h_\a(x) \le y \le p_\a(y) \le x$, both $N(y,y)=1$ and $N(p_\a(y),y)=-1$ occur in the sum, and all other terms are zero. Hence $(M_\a N)(x,y)=0$.

Finally, suppose $y<\h_\a(x)$. Then $N(y,y)$ does not occur in the sum. If $y\notin\RLmin(\a)$ and $p_\a(y)\in[\h_\a(x),x]$, then $y\le_\a p_\a(y)$, and $p_\a(y)\le_\a x$ by \cref{lemma: alternate position order}. Hence $y\le_\a x$, which again by \cref{lemma: alternate position order} implies $\h_\a(x)\le y$, a contradiction. Therefore every term in the sum is zero, so $(M_\a N)(x,y)=0$. Therefore $M_\a N=I$, which implies $N=M_\a^{-1}$.
\end{proof}

The formula for $M_\a^{-1}$ in \cref{prop:inverse M matrix} can be encoded by a directed graph. Form a directed graph on $[n]$ by drawing an edge $j\longrightarrow p_\a(j)$ for every $j\notin\RLmin(\a)$.

\begin{lemma}\label{lem:p-forest}
Let $\a\in\N^n$, and define a directed graph $F_\a$ on $[n]$ by drawing an edge $j\longrightarrow p_\a(j)$ for every $j\notin\RLmin(\a)$. Then $F_\a$ is a rooted forest, with edges directed toward the roots, and its roots are precisely the elements of $\RLmin(\a)$.
\end{lemma}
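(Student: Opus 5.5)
The plan is to use the facts that every edge goes strictly to the right and that each vertex has at most one outgoing edge. By definition, $p_\a(j)=\min\{k>j:a_k<a_j\}>j$ whenever $j\notin\RLmin(\a)$. So every edge $j\to p_\a(j)$ strictly increases the vertex label. Moreover, each vertex has out-degree at most one: exactly one if $j\notin\RLmin(\a)$, and zero if $j\in\RLmin(\a)$. Since labels strictly increase along any directed walk, $F_\a$ has no directed cycles. For an undirected cycle, I will argue as follows. In any cycle of the underlying graph, take the vertex with the smallest label. Both of its cycle-neighbours have larger labels. Hence both incident edges must be outgoing from it, because an incoming edge $i\to v$ would force $i<v$. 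This contradicts out-degree at most one. So the underlying graph is acyclic, and $F_\a$ is a forest.

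For the rooted structure, I follow the unique outgoing edges from any vertex $j$: $j\to p_\a(j)\to p_\a(p_\a(j))\to\cdots$. This path is strictly increasing in $[n]$, so it terminates, and it can only terminate at a vertex with out-degree zero, i.e.\ an element of $\RLmin(\a)$. Each connected component therefore contains at least one element of $\RLmin(\a)$. It contains exactly one: the terminal point of the forward path is uniquely determined from any vertex. If two such sinks lay in the same tree, the unique undirected path between them would have to contain a vertex with two outgoing edges. Here I use the same minimal-label style argument along the path, or simply the standard fact that a tree in which every vertex has out-degree at most one has exactly one sink. Thus each tree is rooted at its unique sink, all edges are directed toward it, and the roots are exactly $\RLmin(\a)$.

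Note that $n\in\RLmin(\a)$ always, so the root set is nonempty. None of the above needs the $h$-vector. If desired, one can remark that \cref{lem:first-smaller} identifies the edges with those encoded by $M_\a^{-1}$ in \cref{prop:inverse M matrix}. The only step needing care is the uniqueness of the root in each component, and the out-degree argument handles it.
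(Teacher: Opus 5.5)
Your proof is correct. It shares the paper's skeleton: out-degree at most one, $p_\a(j)>j$, and forward paths that terminate in $\RLmin(\a)$, so every component contains a sink. Where you differ is in how you show that each component is a tree with a single root.

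The paper closes with a counting argument. A component with $v$ vertices, $r\ge 1$ of them in $\RLmin(\a)$, has exactly $v-r$ edges, while connectivity requires at least $v-1$, so $r=1$ and the component is a tree. This gives acyclicity and uniqueness of the root in one step, and it uses the label order only to guarantee that forward paths terminate.

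You instead use the linear order on labels as an extremal principle. The minimum-label vertex of an undirected cycle would need two outgoing edges. Likewise, the minimum-label vertex on the path between two putative sinks would be either an interior vertex with two outgoing edges or a sink with an outgoing edge. This is more local and makes the orientation toward the roots transparent, but it costs two separate arguments where the paper needs one.

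Your remark that the terminal point of the forward path is uniquely determined from each vertex does not by itself rule out two sinks in one component. It is the minimum-label argument along the connecting path that carries this step, and it does work, including when the minimum occurs at an endpoint. That is the version you should write out in full.
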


\begin{proof}
Each $j\notin\RLmin(\a)$ has exactly one outgoing edge, while each $j\in\RLmin(\a)$ has no outgoing edge. Since $p_\a(j)>j$, the labels strictly increase along every directed edge. Hence $F_\a$ has no directed cycles, and repeatedly following outgoing edges from any vertex must terminate at a vertex with no outgoing edge, which is necessarily an element of $\RLmin(\a)$. Thus every connected component of $F_\a$ contains at least one element of $\RLmin(\a)$.

Consider a connected component of $F_\a$ with $v$ vertices, $r$ of which are in $\RLmin(\a)$. Thus,  $r\ge 1$. Since every vertex other than these $r$ vertices has exactly one outgoing edge, the component has $v-r$ edges. Since it is connected, it has at least $v-1$ edges, so $r\leq 1$. Therefore, we have $r=1$, which implies the component has $v-1$ edges and is a tree. Its unique element of $\RLmin(\a)$ has no outgoing edge, while every other vertex has exactly one outgoing edge, and repeatedly following outgoing edges from any vertex terminates at this element. Hence it is the root of the component. 

Now let $j\in\RLmin(\a)$. The vertex $j$ belongs to some connected component of $F_\a$, and since $j$ is the unique element of $\RLmin(\a)$ in that component, it is the root of that component. Therefore the roots of $F_\a$ are precisely the elements of $\RLmin(\a)$.
\end{proof}

\begin{example}\label{ex:p-forest}
First let $\a=(3,1,2,4,3)$. As in \cref{ex:p-a}, $\RLmin(\a)=\{2,3,5\}$, $p_\a(1)=2$, and $p_\a(4)=5$. The corresponding rooted forest $F_{(3,1,2,4,3)}$ has three connected components, rooted at $2$, $3$, and $5$, as shown in \cref{fig:forest-31243}.

Now let $\a=(4,5,3,4,2,1)$. Then $\RLmin(\a)=\{6\}$, and $p_\a(1)=3$, $p_\a(2)=3$, $p_\a(3)=5$, $p_\a(4)=5$, and $p_\a(5)=6$. The corresponding rooted forest $F_{(4,5,3,4,2,1)}$ is shown in \cref{fig:forest-453421}. In this case, it is a rooted tree with root $6$.

\begin{figure}[H]
\centering
\begin{minipage}{0.49\linewidth}
\centering
\begin{tikzpicture}[
    every node/.style={circle,draw,minimum size=7mm,inner sep=1pt},
    >=stealth
]
\node (2) at (0,1.5) {$2$};
\node (3) at (1.2,1.5) {$3$};
\node (5) at (2.4,1.5) {$5$};
\node (1) at (0,0) {$1$};
\node (4) at (2.4,0) {$4$};

\draw[->] (1) -- (2);
\draw[->] (4) -- (5);
\end{tikzpicture}
\captionof{figure}{$F_{(3,1,2,4,3)}$.}
\label{fig:forest-31243}
\end{minipage}
\hfill
\begin{minipage}{0.49\linewidth}
\centering
\begin{tikzpicture}[
    every node/.style={circle,draw,minimum size=7mm,inner sep=1pt},
    >=stealth
]
\node (6) at (0,3) {$6$};
\node (5) at (0,2) {$5$};
\node (3) at (-1,1) {$3$};
\node (4) at (1,1) {$4$};
\node (1) at (-1.5,0) {$1$};
\node (2) at (-0.5,0) {$2$};

\draw[->] (1) -- (3);
\draw[->] (2) -- (3);
\draw[->] (3) -- (5);
\draw[->] (4) -- (5);
\draw[->] (5) -- (6);
\end{tikzpicture}
\captionof{figure}{$F_{(4,5,3,4,2,1)}$.}
\label{fig:forest-453421}
\end{minipage}
\end{figure}
\end{example}

\begin{proposition}\label{prop:forest-matrices}
Let $\a\in\N^n$, let $F_\a$ be the rooted forest defined in
\cref{lem:p-forest}, and define the $n\times n$ matrix $P_\a$ by
\[
P_\a(i,j)= 
\begin{cases} 
1, & j\notin\RLmin(\a)\text{ and }i=p_\a(j),\\
0, & \text{otherwise}.
\end{cases}
\]
Then $P_\a$ is strictly lower triangular and hence nilpotent with
$P_\a^n=0$. Moreover, $M_\a^{-1}=I_n-P_\a$ and $M_\a=I_n+P_\a+P_\a^2+\cdots+P_\a^{n-1}$. Furthermore, $M_\a(i,j)=1$ if and only if $j$ is a descendant of $i$ in $F_\a$, where each vertex is regarded as a descendant of itself. Consequently, the descendants of $i$ in $F_\a$ are precisely the elements of $[\h_\a(i),i]$.
\end{proposition}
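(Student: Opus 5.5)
Since $p_\a(j)>j$ for every $j\notin\RLmin(\a)$, every nonzero entry $P_\a(i,j)$ has $i>j$. Hence $P_\a$ is strictly lower triangular, and so $P_\a^n=0$. The identity $M_\a^{-1}=I_n-P_\a$ is a direct rereading of \cref{prop:inverse M matrix}: the diagonal entries of $M_\a^{-1}$ are $1$, and the only off-diagonal nonzero entries are $-1$ at the positions $(p_\a(j),j)$. Inverting the unipotent matrix $I_n-P_\a$ by the finite geometric series, which is justified by nilpotency, gives $M_\a=(I_n-P_\a)^{-1}=I_n+P_\a+\cdots+P_\a^{n-1}$.

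For the descendant statement, I will interpret powers of $P_\a$ as path counts in $F_\a$. By definition, $P_\a(i,j)=1$ exactly when $F_\a$ has an edge $j\to i$, that is, when $i$ is the parent of $j$. Thus $P_\a^k(i,j)$ counts the directed walks of length $k$ from $j$ to $i$ in $F_\a$. By \cref{lem:p-forest}, each vertex has at most one outgoing edge and there are no cycles. So from $j$ there is at most one walk of each length, and it reaches $i$ for some $k\ge 0$ precisely when $i$ is an ancestor of $j$ (counting $j$ itself). Moreover, at most one such $k$ works, since labels strictly increase along edges. Summing over $k=0,\dots,n-1$ (all relevant path lengths are at most $n-1$), $\bigl(\sum_k P_\a^k\bigr)(i,j)\in\{0,1\}$, and it equals $1$ if and only if $j$ is a descendant of $i$. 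Combining this with the series expansion gives $M_\a(i,j)=1$ if and only if $j$ is a descendant of $i$.

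Finally, \cref{def: matrix associated to a sequence} says that $M_\a(i,j)=1$ exactly when $j\in[\h_\a(i),i]$. So the descendants of $i$ are precisely $[\h_\a(i),i]$.

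The only step needing care is the uniqueness of paths, which ensures that the entries of $\sum_k P_\a^k$ are $0/1$ rather than larger counts. This follows immediately from the out-degree at most one established in \cref{lem:p-forest}, so I expect no real obstacle. All the substantive content is already contained in \cref{prop:inverse M matrix}.
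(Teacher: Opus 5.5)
Your proposal is correct and follows essentially the same argument as the paper. Both derive strict lower triangularity from $p_\a(j)>j$, read $M_\a^{-1}=I_n-P_\a$ off \cref{prop:inverse M matrix}, invert by the finite geometric series, and interpret $P_\a^r(i,j)$ as counting directed walks of length $r$ from $j$ to $i$ in $F_\a$. In both, there is at most one such walk, so each entry of $M_\a$ receives a contribution from at most one term of the series.
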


\begin{proof}
If $P_\a(i,j)=1$, then $i=p_\a(j)>j$. Hence $P_\a$ is strictly lower triangular, so $P_\a^n=0$ and hence nilpotent. By \cref{prop:inverse M matrix}, $M_\a^{-1}=I_n -P_\a$. Therefore, $M_\a=(I_n-P_\a)^{-1} =I_n+P_\a+\cdots+P_\a^{n-1}$.

Since $P_\a(i,j)=1$ if and only if $j\longrightarrow i$ is an edge of $F_\a$, $P_\a^r(i,j)$ counts directed walks of length $r$ from $j$ to $i$. As $F_\a$ is a rooted forest, every directed walk is a path, and there is at most one such path.  Hence, for $r\geq1$, $P_\a^r(i,j)=1$ if and only if $j$ is a descendant of $i$ at distance $r$ in $F_\a$. Thus, for fixed $i,j$, at most one of $I_n(i,j),P_\a(i,j),\ldots,P_\a^{n-1}(i,j)$ is nonzero. Therefore, regarding each vertex as a descendant of itself, $M_\a(i,j)=1$ if and only if $j$ is a descendant of $i$ in $F_\a$. Finally, $M_\a(i,j)=1$ if and only if $j\in[\h_\a(i),i]$. Hence the descendants of $i$ in $F_\a$ are precisely the elements of $[\h_\a(i),i]$.
\end{proof}

\begin{remark}
Every directed edge of $F_\a$ moves to a larger index, so every directed path has length at most $n-1$. This gives a direct interpretation of the equality $P_\a^n=0$.
\end{remark}

\cref{prop:forest-matrices} gives the following direct description of the columns of $P_\a$, $M_\a^{-1}$, and $M_\a$.

\begin{corollary}\label{cor:forest-columns}
Let $\a\in\N^n$ and $j\in[n]$. If $j\in\RLmin(\a)$, then the $j^{\mathrm{th}}$ column of $P_\a$ is zero, while the $j^{\mathrm{th}}$ columns of both $M_\a^{-1}$ and $M_\a$ have a single nonzero entry, namely a $1$ in row $j$.

If $j\notin\RLmin(\a)$, then the $j^{\mathrm{th}}$ column of $P_\a$ has a single nonzero entry, namely a $1$ in row $p_\a(j)$, while the $j^{\mathrm{th}}$ column of $M_\a^{-1}$ has a $1$ in row $j$ and a $-1$ in row $p_\a(j)$. The nonzero entries in the $j^{\mathrm{th}}$ column of $M_\a$ are all equal to $1$ and occur precisely in the rows indexed by the ancestors of $j$ in $F_\a$, including $j$ itself.
\end{corollary}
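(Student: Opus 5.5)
The plan is to read \cref{cor:forest-columns} column by column from the matrix identities in \cref{prop:forest-matrices}: $P_\a$ is the adjacency matrix of $F_\a$ (with $P_\a(i,j)=1$ exactly when $j\to i$ is an edge), $M_\a^{-1}=I_n-P_\a$, and $M_\a(i,j)=1$ exactly when $j$ is a descendant of $i$. Each of the three column descriptions should then follow from the definitions together with \cref{lem:p-forest}.

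First I will treat $P_\a$. By definition, the $j^{\mathrm{th}}$ column of $P_\a$ is nonzero only when $j\notin\RLmin(\a)$, and in that case its unique nonzero entry is the $1$ in row $p_\a(j)$. This settles the $P_\a$ claims in both cases. Next I will subtract this column from the $j^{\mathrm{th}}$ column of $I_n$, which is justified by $M_\a^{-1}=I_n-P_\a$. If $j\in\RLmin(\a)$, the result has only a $1$ in row $j$. Otherwise it has a $1$ in row $j$ and a $-1$ in row $p_\a(j)$, and these rows are distinct because $p_\a(j)>j$.

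For $M_\a$, the $j^{\mathrm{th}}$ column has a $1$ in row $i$ exactly when $j$ is a descendant of $i$, that is, when $i$ is an ancestor of $j$ (including $j$ itself). By \cref{lem:p-forest}, the ancestors of $j$ are the vertices reached by iterating $p_\a$ from $j$ until a root is hit. When $j\in\RLmin(\a)$, the vertex $j$ is a root, so its only ancestor is itself, and the column is $\e_j^{\top}$. All entries of $M_\a$ lie in $\{0,1\}$ by definition, so the nonzero entries are all equal to $1$.

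Everything here is direct bookkeeping, so I do not expect a real obstacle. The only point needing care is the orientation convention: the ancestors of $j$ are the vertices above $j$ in $F_\a$, and these are exactly the rows $i$ with $j\in[\h_\a(i),i]$.
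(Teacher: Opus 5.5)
Your proposal is correct and follows the paper's approach: the paper gives no separate proof and presents this corollary as a direct reading of columns from \cref{prop:forest-matrices}. Specifically, it uses the definition of $P_\a$, the identity $M_\a^{-1}=I_n-P_\a$, and the descendant description of $M_\a$ (with roots of $F_\a$ from \cref{lem:p-forest} having only themselves as ancestors), which is exactly the bookkeeping you lay out.
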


\begin{example}\label{ex:forest-matrices}
First let $\a=(3,1,2,4,3)$. From the forest shown in \cref{fig:forest-31243},
\[
P_\a=
\begin{bmatrix}
0&0&0&0&0\\
1&0&0&0&0\\
0&0&0&0&0\\
0&0&0&0&0\\
0&0&0&1&0
\end{bmatrix}.
\]
In particular, $P_\a^2=0$, so $M_\a^{-1}=I_5-P_\a$ and $M_\a=I_5+P_\a$, agreeing with the matrices in \cref{ex:m-vector-RLmin}.

Now let $\a=(4,5,3,4,2,1)$. From the rooted tree shown in \cref{fig:forest-453421},
\[
P_\a=
\begin{bmatrix}
0&0&0&0&0&0\\
0&0&0&0&0&0\\
1&1&0&0&0&0\\
0&0&0&0&0&0\\
0&0&1&1&0&0\\
0&0&0&0&1&0
\end{bmatrix}.
\]
For example, columns $1$ and $2$ are both $\e_3$, since $1$ and $2$ have parent $3$ in $F_\a$. The longest directed paths in \cref{fig:forest-453421} have length $3$, so $P_\a^4=0$. Hence
\[
M_\a^{-1}=I_6-P_\a=
\begin{bmatrix}
1&0&0&0&0&0\\
0&1&0&0&0&0\\
-1&-1&1&0&0&0\\
0&0&0&1&0&0\\
0&0&-1&-1&1&0\\
0&0&0&0&-1&1
\end{bmatrix},
\]
while
\[
M_\a = I_6+P_\a+P_\a^2+P_\a^3 =
\begin{bmatrix}
1&0&0&0&0&0\\
0&1&0&0&0&0\\
1&1&1&0&0&0\\
0&0&0&1&0&0\\
1&1&1&1&1&0\\
1&1&1&1&1&1
\end{bmatrix}.
\]
For instance, as seen in \cref{fig:forest-453421}, the descendants of $5$ are $\{1,2,3,4,5\}=[\h_\a(5),5]$, while the descendants of $3$ are $\{1,2,3\}=[\h_\a(3),3]$.
\end{example}

The explicit form of $M_\a^{-1}$ in \cref{prop:inverse M matrix} gives an explicit formula for the entries of $\m_{\a,\b}$.

\begin{corollary}\label{cor:m-vector}
Let $\a,\b\in\N^n$ with $\a\leq\b$ coordinatewise. For $j\in[n]$,
\[
\m_{\a,\b}(j)=\begin{cases}
b_j-a_j, & j\in\RLmin(\a),\\
(b_j-a_j)-(b_{p_\a(j)}-a_{p_\a(j)}), & j\notin\RLmin(\a).
\end{cases}
\]
In particular, $\a\leq_T\b$ if and only if $b_j-a_j\geq b_{p_\a(j)}-a_{p_\a(j)}$ for every $j\notin\RLmin(\a)$.
\end{corollary}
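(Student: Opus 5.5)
The plan is to compute the row vector $\m_{\a,\b}=(\b-\a)M_\a^{-1}$ column by column, using the explicit inverse from \cref{prop:inverse M matrix}, and then to read off the comparability criterion from \cref{thm:compare seq with M matrix}.

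Write $\d=\b-\a$. For a row vector, the $j^{\text{th}}$ entry of $\d M_\a^{-1}$ is $\sum_i d_i M_\a^{-1}(i,j)$, which pairs $\d$ with the $j^{\text{th}}$ column of $M_\a^{-1}$. By \cref{cor:forest-columns} (equivalently, \cref{prop:inverse M matrix}), this column is easy to describe. If $j\in\RLmin(\a)$, it is $\e_j$, so the entry is $d_j=b_j-a_j$. If $j\notin\RLmin(\a)$, it is $\e_j-\e_{p_\a(j)}$, so the entry is $d_j-d_{p_\a(j)}$. This gives the displayed formula.

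For the final claim, \cref{thm:compare seq with M matrix} says that $\a\le_T\b$ exactly when every entry of $\m_{\a,\b}$ is a nonnegative integer. The entries are automatically integers. The standing hypothesis $\a\le\b$ coordinatewise makes the entries indexed by $\RLmin(\a)$ nonnegative. So nonnegativity reduces to the inequalities $b_j-a_j\ge b_{p_\a(j)}-a_{p_\a(j)}$ for $j\notin\RLmin(\a)$.

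The only point needing care is the row-vector convention: the $j^{\text{th}}$ coordinate is read from the $j^{\text{th}}$ column of $M_\a^{-1}$, not its row. Once this is checked, there is no real obstacle.
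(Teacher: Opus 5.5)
Your proposal is correct and matches the paper's proof: you compute the $j^{\text{th}}$ entry of $\m_{\a,\b}$ from the $j^{\text{th}}$ column of $M_\a^{-1}$ given by \cref{prop:inverse M matrix}. You then use the coordinatewise hypothesis to handle the entries indexed by $\RLmin(\a)$ and apply \cref{thm:compare seq with M matrix}.
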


\begin{proof}
The $j^{\text{th}}$ entry of $\m_{\a,\b}=(\b-\a)M_\a^{-1}$ is $\sum_{r=1}^n (b_r-a_r)M_\a^{-1}(r,j)$. The formula follows from \cref{prop:inverse M matrix}. Since $\a\leq\b$ coordinatewise, we have $b_j-a_j\geq 0$ for all $j\in[n]$. Hence, by \cref{thm:compare seq with M matrix}, $\a\leq_T\b$ if and only if $(b_j-a_j)-(b_{p_\a(j)}-a_{p_\a(j)})\geq 0$ for every $j\notin\RLmin(\a)$.
\end{proof}

\begin{example}\label{ex:m-vector-RLmin}
Let $\a=(3,1,2,4,3)$ and $\b=(6,2,5,5,4)$. Then
$\h_\a=(1,1,3,4,4)$, so
\[
M_\a=
\begin{bmatrix}
1&0&0&0&0\\
1&1&0&0&0\\
0&0&1&0&0\\
0&0&0&1&0\\
0&0&0&1&1
\end{bmatrix}
\qquad\text{and}\qquad
M_\a^{-1}=
\begin{bmatrix}
1&0&0&0&0\\
-1&1&0&0&0\\
0&0&1&0&0\\
0&0&0&1&0\\
0&0&0&-1&1
\end{bmatrix}.
\]

We also have $\b-\a=(3,1,3,1,1)$ and $\RLmin(\a)=\{2,3,5\}$. Thus, $\m_{\a,\b}(2)=1$, $\m_{\a,\b}(3)=3$, $\m_{\a,\b}(5)=1$. For the remaining indices, as discussed in \cref{ex:p-a}, we have $p_\a(1) =2$ and $p_\a(4) = 5$,  so $\m_{\a,\b}(1)=(b_1-a_1)-(b_2-a_2)=2, \m_{\a,\b}(4)=(b_4-a_4)-(b_5-a_5)=0.$ Therefore, $\m_{\a,\b}=(2,1,3,0,1)$ and hence $\a \le_T \b$.
\end{example}

Combining \cref{cor:m-vector,prop:forest-matrices} gives the following interpretation of $\m_{\a,\b}$ on $F_\a$.

\begin{corollary}\label{cor:m-vector-forest}
Let $\a,\b\in\N^n$ with $\a\leq\b$ coordinatewise, and label each vertex $j$ of $F_\a$ by $b_j-a_j$. At a root, $\m_{\a,\b}(j)$ is the label of $j$, while at a non-root vertex, $\m_{\a,\b}(j)$ is the label of $j$ minus the label of its parent. Moreover, for every $j\in[n]$, $b_j-a_j = \sum_i \m_{\a,\b}(i)$, where the sum is over the ancestors $i$ of $j$ in $F_\a$, including $j$ itself. In particular, $\a\leq_T\b$ if and only if the labels weakly decrease along every directed edge toward the roots.
\end{corollary}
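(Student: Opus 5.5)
The statement to prove is \cref{cor:m-vector-forest}. Its three parts come almost directly from \cref{cor:m-vector} and \cref{prop:forest-matrices}, so the plan is to read each claim off the matrix identities $M_\a^{-1}=I_n-P_\a$ and $M_\a=\sum_{r\ge0}P_\a^r$, using the forest description of these matrices.

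\emph{Step 1: the value of $\m_{\a,\b}(j)$ at each vertex.}
\begin{itemize}
\item In $F_\a$, the roots are exactly the elements of $\RLmin(\a)$ (\cref{lem:p-forest}).
\item The parent of a non-root vertex $j$ is $p_\a(j)$.
\item Substituting these into \cref{cor:m-vector} gives $\m_{\a,\b}(j)=b_j-a_j$ when $j$ is a root.
\item When $j$ is not a root, it gives $\m_{\a,\b}(j)=(b_j-a_j)-(b_{p_\a(j)}-a_{p_\a(j)})$, which is the label of $j$ minus the label of its parent.
\end{itemize}

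\emph{Step 2: the ancestor-sum formula.} Multiply $\m_{\a,\b}=(\b-\a)M_\a^{-1}$ on the right by $M_\a$ to get $\b-\a=\m_{\a,\b}M_\a$. The $j^{\text{th}}$ coordinate is then
\[
b_j-a_j=\sum_i \m_{\a,\b}(i)\,M_\a(i,j).
\]
By \cref{prop:forest-matrices}, $M_\a(i,j)=1$ exactly when $j$ is a descendant of $i$, that is, when $i$ is an ancestor of $j$ (including $i=j$), and $M_\a(i,j)=0$ otherwise. So $b_j-a_j$ is the sum of $\m_{\a,\b}(i)$ over the ancestors $i$ of $j$.

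As a check, the same formula follows by telescoping Step 1 along the unique path from $j$ up to its root. Both routes are easy; I will use the matrix one and mention the telescoping as a remark.

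\emph{Step 3: the comparability criterion.} By \cref{thm:compare seq with M matrix}, $\a\le_T\b$ if and only if every entry of $\m_{\a,\b}$ is a nonnegative integer. The entries are automatically integers.
\begin{itemize}
\item At a root $j$, nonnegativity of $\m_{\a,\b}(j)$ holds automatically, because $\a\le\b$ coordinatewise gives $b_j-a_j\ge 0$.
\item At a non-root $j$, nonnegativity of $\m_{\a,\b}(j)$ says, by Step 1, that the label of $j$ is at least the label of its parent.
\end{itemize}
Thus $\a\le_T\b$ is equivalent to the labels weakly decreasing along every directed edge $j\to p_\a(j)$. This is also the last sentence of \cref{cor:m-vector}, restated in forest language.

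I do not expect a real obstacle here. The only point needing care is the orientation convention: $M_\a(i,j)=1$ means $j$ is a descendant of $i$, so summing down column $j$ gives the ancestors of $j$, not its descendants.
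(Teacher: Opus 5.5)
Your proposal is correct and follows the paper's own proof. The vertex values and the comparability criterion are read off from \cref{cor:m-vector}, with roots and parents identified via \cref{lem:p-forest}, and the ancestor-sum formula comes from taking column $j$ of $\b-\a=\m_{\a,\b}M_\a$ and using the descendant description of $M_\a$ in \cref{prop:forest-matrices}, with the orientation handled correctly.
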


\begin{proof}
The first statement and the final characterization follow from \cref{cor:m-vector}. For the remaining statement, we have
\[
b_j-a_j = (\b-\a)_j=\left(\m_{\a,\b}M_\a\right)_j = \sum_{i=1}^n \m_{\a,\b}(i)M_\a(i,j).
\]
By \cref{prop:forest-matrices}, $M_\a(i,j)=1$ if and only if $j$ is a descendant of $i$ in $F_\a$. Hence the sum is precisely over the ancestors of $j$.
\end{proof}

\begin{example}\label{ex:m-vector-forest}
Let $\a=(3,1,2,4,3)$ and $\b=(6,2,5,5,4)$, as in \cref{ex:m-vector-RLmin}. Then $\b-\a=(3,1,3,1,1)$. Thus the forest $F_\a$ from \cref{fig:forest-31243} can be relabeled by the map $j\mapsto b_j-a_j$, as shown in \cref{fig:m-vector-forest}.

\begin{figure}[H]
\centering
\begin{tikzpicture}[
    every node/.style={circle,draw,minimum size=7mm,inner sep=1pt},
    >=stealth
]

\node (L2) at (0,1.5) {$2$};
\node (L3) at (1.4,1.5) {$3$};
\node (L5) at (2.8,1.5) {$5$};
\node (L1) at (0,0) {$1$};
\node (L4) at (2.8,0) {$4$};

\draw[->] (L1) -- (L2);
\draw[->] (L4) -- (L5);
\node[draw=none] at (1.4,-0.8) {$F_{\a}$};

\draw[->,thick] (3.7,0.75) -- (6.3,0.75)
node[midway,above,draw=none,rectangle,minimum size=0pt,inner sep=2pt]
{$j\mapsto b_j-a_j$};

\node (R2) at (7.2,1.5) {$1$};
\node (R3) at (8.6,1.5) {$3$};
\node (R5) at (10,1.5) {$1$};
\node (R1) at (7.2,0) {$3$};
\node (R4) at (10,0) {$1$};

\draw[->] (R1) -- (R2);
\draw[->] (R4) -- (R5);
\node[draw=none] at (8.6,-0.8) {labels $b_j-a_j$};

\end{tikzpicture}
\caption{The rooted forest $F_\a$ for $\a =(3,1,2,4,3)$ and its relabeling by $j\mapsto b_j-a_j$ for $\b = (6,2,5,5,4)$.}
\label{fig:m-vector-forest}
\end{figure}

By \cref{cor:m-vector-forest}, the value of $\m_{\a,\b}(j)$ is the label at $j$ if $j$ is a root, and otherwise it is the label at $j$ minus the label at its parent. Hence $\m_{\a,\b}(2)=1, \m_{\a,\b}(3)=3, \m_{\a,\b}(5)=1$, and $\m_{\a,\b}(1)=3-1=2,\m_{\a,\b}(4)=1-1=0.$ Therefore, $\m_{\a,\b}=(2,1,3,0,1)$.

For a longer path, consider $\a=(4,5,3,4,2,1)$ from \cref{fig:forest-453421} and let $\b=(9,9,6,8,4,2)$. Then $\b-\a=(5,4,3,4,2,1)$. The relabeling of forest $F_\a$ from \cref{fig:forest-453421} is shown in \cref{fig:forest-m-vector-453421}. Taking differences along the edges and retaining the label at the root gives $\m_{\a,\b}=(2,1,1,2,1,1)$.

\begin{figure}[H]
\centering
\begin{tikzpicture}[
    every node/.style={circle,draw,minimum size=7mm,inner sep=1pt},
    >=stealth
]

\node (L6) at (0,3) {$6$};
\node (L5) at (0,2) {$5$};
\node (L3) at (-1,1) {$3$};
\node (L4) at (1,1) {$4$};
\node (L1) at (-1.5,0) {$1$};
\node (L2) at (-0.5,0) {$2$};

\draw[->] (L1) -- (L3);
\draw[->] (L2) -- (L3);
\draw[->] (L3) -- (L5);
\draw[->] (L4) -- (L5);
\draw[->] (L5) -- (L6);

\node[draw=none] at (0,-0.8) {$F_{\a}$};

\draw[->, very thick] (2.2,1.5) -- (5.2,1.5);
\node[draw=none] at (3.7,1.95) {$j\mapsto b_j-a_j$};

\node (R6) at (7.4,3) {$1$};
\node (R5) at (7.4,2) {$2$};
\node (R3) at (6.4,1) {$3$};
\node (R4) at (8.4,1) {$4$};
\node (R1) at (5.9,0) {$5$};
\node (R2) at (6.9,0) {$4$};

\draw[->] (R1) -- (R3);
\draw[->] (R2) -- (R3);
\draw[->] (R3) -- (R5);
\draw[->] (R4) -- (R5);
\draw[->] (R5) -- (R6);

\node[draw=none] at (7.4,-0.8) {labels $b_j-a_j$};

\end{tikzpicture}
\caption{The rooted forest $F_\a$ for $\a=(4,5,3,4,2,1)$ and its relabeling by $j\mapsto b_j-a_j$ for $\b=(9,9,6,8,4,2)$.}
\label{fig:forest-m-vector-453421}
\end{figure}

Conversely, the labels $b_j-a_j$ can be recovered from $\m_{\a,\b}$ by summing along the directed path from $j$ to the root of its component.  For example, along the path $1\rightarrow 3 \rightarrow 5 \rightarrow 6$ in \cref{fig:forest-m-vector-453421}, since $\m_{\a,\b}=(2,1,1,2,1,1)$, we recover $b_1-a_1 = \m_{\a,\b}(1)+\m_{\a,\b}(3)+\m_{\a,\b}(5)+\m_{\a,\b}(6) = 2+1+1+1=5$.
\end{example}

The connected components of $F_\a$ also recover the block decomposition of $M_\a$ described in \cref{rem:RLmin-blocks}. We first introduce the following terminology for rooted trees. Let $T$ be a rooted tree, with rows and columns indexed by its vertices. The \emph{parent matrix} of $T$ is the $0$--$1$ matrix whose $(i,j)$-entry is $1$ if $i$ is the parent of $j$, and is $0$ otherwise. The \emph{descendant matrix} of $T$ is the $0$--$1$ matrix whose $(i,j)$-entry is $1$ if $j$ is a descendant of $i$, where each vertex is regarded as a descendant of itself.

\begin{corollary}\label{cor:forest-blocks}
Let $\a\in\N^n$, and write $\RLmin(\a)=\{r_1<r_2<\cdots<r_s=n\}$, with $r_0=0$. Then the connected components of $F_\a$ have vertex sets $[r_{j-1}+1,r_j]$ for all $j\in[s]$, with the component on $[r_{j-1}+1,r_j]$ rooted at $r_j$. Consequently, $P_\a$, $M_\a^{-1}$, and $M_\a$ have the same block decomposition. For each connected component, the corresponding block of $P_\a$ is its parent matrix, the corresponding block of $M_\a^{-1}$ is the identity matrix minus its parent matrix, and the corresponding block of $M_\a$ is its descendant matrix.
\end{corollary}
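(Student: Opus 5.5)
The plan is to identify the vertex sets of the components of $F_\a$ first, and then read off the block statements from \cref{prop:forest-matrices} and \cref{rem:RLmin-blocks}.

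\textbf{Components of $F_\a$.} By \cref{lem:p-forest}, each component contains exactly one element of $\RLmin(\a)$, namely its root. It therefore suffices to show that every $j\in[r_{i-1}+1,r_i]$ lies in the component of $r_i$. I will show that following parent edges from such a $j$ never jumps past $r_i$.

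Suppose $j\notin\RLmin(\a)$ and $j\le r_i$. By \cref{lem:first-smaller}, $p_\a(j)=\min\{k:\h_\a(k)\le j<k\}$. If $p_\a(j)>r_i$, then $k=p_\a(j)$ satisfies $\h_\a(k)\le j\le r_i<k$. Hence $r_i\in[\h_\a(k),k-1]$, contradicting \cref{lem:RLmin} since $r_i\in\RLmin(\a)$. So $p_\a(j)\le r_i$.

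Since labels strictly increase along edges and every directed path ends at a root, the path from $j$ stays in $[j,r_i]$. It must end at a root, i.e. an element of $\RLmin(\a)$ in $[j,r_i]$. The only such element is $r_i$, because $j>r_{i-1}$. Thus the component of $r_i$ contains $[r_{i-1}+1,r_i]$. Since these intervals partition $[n]$ and each component has a unique root, the components are exactly these intervals, with the stated roots.

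\textbf{Block decomposition.} $P_\a(x,y)\ne0$ only when $x=p_\a(y)$, which lies in the same component as $y$. So $P_\a$ is block diagonal with respect to the partition into the intervals $[r_{i-1}+1,r_i]$. Then $M_\a^{-1}=I_n-P_\a$ and $M_\a=I_n+P_\a+\cdots+P_\a^{n-1}$, which follow from \cref{prop:forest-matrices}, inherit the same block structure. This is consistent with \cref{rem:RLmin-blocks}.

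\textbf{Identifying the blocks.} Within a component, the block of $P_\a$ has a $1$ at $(p_\a(y),y)$, i.e. it is the parent matrix of that tree. The block of $M_\a^{-1}$ is then identity minus parent matrix. By the descendant characterization in \cref{prop:forest-matrices}, the block of $M_\a$ is the descendant matrix of the tree.

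The only nonroutine step is showing that $p_\a(j)$ cannot exceed $r_i$, which is handled by \cref{lem:first-smaller} together with \cref{lem:RLmin}.
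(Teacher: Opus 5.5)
Your proof is correct and follows essentially the same argument as the paper. You show that parent edges of $F_\a$ never cross an $\RLmin(\a)$ boundary, use the unique root per component from \cref{lem:p-forest}, and read off the blocks from \cref{prop:forest-matrices}; the only difference is that you get the non-crossing directly from \cref{lem:first-smaller} and \cref{lem:RLmin}, whereas the paper reads it off from the block structure of $M_\a$ in \cref{rem:RLmin-blocks} together with $M_\a(p_\a(u),u)=1$ for each edge $u\to p_\a(u)$.
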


\begin{proof}
Set $C_j=[r_{j-1}+1,r_j]$ for $j\in[s]$. By \cref{rem:RLmin-blocks}, these intervals index the consecutive diagonal blocks of $M_\a$. If $u\rightarrow v$ is an edge of $F_\a$, then $u$ is a descendant of $v$, so $M_\a(v,u)=1$ by \cref{prop:forest-matrices}. Hence $u$ and $v$ belong to the same interval $C_j$. Thus every connected component of $F_\a$ is contained in some $C_j$. By \cref{lem:p-forest}, every connected component has a unique root in $\RLmin(\a)$. Since $C_j$ contains exactly one element of $\RLmin(\a)$, namely $r_j$, every vertex in $C_j$ belongs to the component rooted at $r_j$. Hence $C_j$ is precisely the vertex set of that component. It follows that $P_\a$ is block diagonal with blocks indexed by the connected components of $F_\a$, and each block is the parent matrix of the corresponding rooted tree. Since $M_\a^{-1}=I_n-P_\a$, the corresponding block of $M_\a^{-1}$ is the identity matrix minus the parent matrix. Finally, by \cref{prop:forest-matrices}, each block of $M_\a$ is the descendant matrix of the corresponding rooted tree.
\end{proof}

\begin{example}\label{ex:forest-blocks}
Let $\a=(3,1,2,4,3)$. Then $\RLmin(\a)=\{2,3,5\}$, so the connected components of $F_\a$ shown in \cref{fig:forest-31243} have vertex sets $[1,2],[3,3],[4,5]$. With the vertices in each component listed in increasing order, their parent matrices are
\[
\begin{bmatrix}0&0\\1&0\end{bmatrix},
\qquad
[0],
\qquad
\begin{bmatrix}0&0\\1&0\end{bmatrix}.
\]
Therefore,
\[
P_\a=
\begin{bmatrix}0&0\\1&0\end{bmatrix}
\oplus[0]\oplus
\begin{bmatrix}0&0\\1&0\end{bmatrix},
\qquad
M_\a^{-1}=
\begin{bmatrix}1&0\\-1&1\end{bmatrix}
\oplus[1]\oplus
\begin{bmatrix}1&0\\-1&1\end{bmatrix}.
\]
The descendant matrices of the three components are
\[
\begin{bmatrix}1&0\\1&1\end{bmatrix},
\qquad
[1],
\qquad
\begin{bmatrix}1&0\\1&1\end{bmatrix},
\]
and hence
\[
M_\a= \begin{bmatrix}1&0\\1&1\end{bmatrix} \oplus[1]\oplus \begin{bmatrix}1&0\\1&1\end{bmatrix}.
\]
\end{example}

\subsection{Cover relations}
We now study the cover relations in the stable Tamari poset. We first introduce decreasing operators and use them to characterize the elements covered by a given sequence. We then consider the corresponding raising operators and determine when a cover above a given sequence remains below a fixed element.

For $k \in [n]$, define the \emph{decreasing operator} $\partial_k:\N^n\rightarrow\Z^n$ by $\partial_k \mathbf{a} \coloneqq \mathbf{a} - \mathbf{e}_{[\h_\a(k), k]}$. For any $1 \le i < j \le n$ we have
\begin{equation}
\label{eq:partial-difference}
    (\partial_k \a)_i - (\partial_k \a)_j 
    = 
    \begin{cases}
        a_i - a_j - 1, & \text{if $\h_\a(k) \le i \le k<j$,} \\
        a_i - a_j + 1, & \text{if $i <\h_\a(k) \le j \le k$,} \\
        a_i - a_j, & \text{otherwise.}
    \end{cases}
\end{equation}

\begin{definition}\label{def: d-set}
For $\a \in \N^n$, define the \emph{D-set} of $\a$ as 
\begin{equation*}
    \mathcal{D}(\a) \coloneqq \{k \in [n] : a_{\h_\a(k)-1} < a_k \}.
\end{equation*}
\end{definition}

\begin{example}
Let $\a=(6,7,0,2,1)$. Then $\h_\a=(1,2,1,4,4)$ and $\D(\a)=\{1,2,4,5\}$. For instance, $3\notin\D(\a)$ since $a_3=0=a_0=a_{\h_\a(3)-1}.$
\end{example}

Note that $a_{\h_\a(k)-1} \le a_k$ for all $k \in [n]$ by the definition of the $\h$-vector. Therefore, $k \notin \mathcal{D}(\a)$ if and only if $a_{\h_\a(k)-1} = a_k$. Moreover, since $\a\in\N^n$, $k \in \mathcal{D}(\a)$ implies $a_k > 0$.

\begin{lemma}\label{lemma:partial_is_lower}
Let $\a \in \N^n$ and $k \in \mathcal{D}(\a)$. Then $\partial_k \a <_T \a$.
\end{lemma}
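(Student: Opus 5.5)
Set $\c=\partial_k\a=\a-\e_{[\h_\a(k),k]}$, and abbreviate $h=\h_\a(k)$. The plan is to check three things in turn: that $\c\in\N^n$, that $\c\le_T\a$, and that the inequality is strict. Strictness is immediate, since $\c\neq\a$. For nonnegativity, recall that $k\in\D(\a)$ forces $a_k>0$. For each $q\in[h,k-1]$ we have $q\le_\a k$, so $a_q>a_k>0$. Hence every entry of $\a$ in positions $[h,k]$ is at least $1$, and subtracting $1$ there leaves $\c\in\N^n$.

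For comparability, I will use \cref{prop:indicator-expansion} applied at the lower endpoint $\c$. This requires $\a-\c=\e_{[h,k]}$ to lie in $\spn_\N\{\e_{[\h_\c(r),r]}\}$. It therefore suffices to show that $\h_\c(k)=h$, because then $\a-\c$ is exactly the generator indexed by $r=k$.

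To prove $\h_\c(k)=h$, I first handle the positions inside the interval. For $q\in[h,k-1]$, we have $c_q=a_q-1>a_k-1=c_k$, so $h\le_\c k$. Next I handle position $h-1$, in the case $h>1$. The hypothesis $k\in\D(\a)$ gives $a_{h-1}<a_k$, and hence $c_{h-1}=a_{h-1}\le a_k-1=c_k$. So $h-1\not\le_\c k$, and together with the previous step this gives $\h_\c(k)=h$. If $h=1$, the conclusion follows directly from the first step. This is exactly where the D-set hypothesis enters. If $k\notin\D(\a)$, then $c_{h-1}=c_k+1>c_k$, the $h$-interval of $k$ in $\c$ would grow, and the argument would fail.

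With $\h_\c(k)=h$, \cref{prop:indicator-expansion} gives $\c\le_T\a$ with coefficient vector $\e_k$, and combined with $\c\neq\a$ this yields $\partial_k\a<_T\a$. The only delicate point is the computation of $\h_\c(k)$. It needs strict inequalities inside $[h,k-1]$, which hold because both sides drop by $1$, and the weak-to-strict gap at $h-1$, which is supplied by the $\D$ condition.
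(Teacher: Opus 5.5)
Your proof is correct, but it takes a different route from the paper. The paper verifies \cref{def: sto} directly. By \eqref{eq:partial-difference}, the only pairs $i<j$ for which $(\partial_k\a)_i-(\partial_k\a)_j$ can exceed $a_i-a_j$ are those with $i<\h_\a(k)\le j\le k$. For these, the $\D$-condition gives $(\partial_k\a)_{\h_\a(k)-1}\le(\partial_k\a)_j$, so $i\not\le_{\partial_k\a}j$ and the inequality is never required.

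You instead compute $\h_{\partial_k\a}(k)=\h_\a(k)$ and then apply \cref{prop:indicator-expansion} with the single generator indexed by $r=k$. Your last step there implicitly uses \cref{remark: po continuity} to rule out indices below $h-1$. That proposition is proved earlier and does not depend on this lemma, so there is no circularity.

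The paper's argument is more elementary: it needs only the definition of $\le_T$ and the difference formula. Yours uses a heavier tool but extracts more. The identity $\h_{\partial_k\a}(k)=\h_\a(k)$ says that $\a=\partial_k^{-1}\partial_k\a$. The paper proves this separately, in the converse direction of \cref{thm:cover} and in \cref{prop:raising-lowering-inverses}. Combined with \cref{lemma:single-interval-cover}, it immediately strengthens your conclusion to $\partial_k\a\lessdot_T\a$.
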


\begin{proof}
Note that $a_k>0$ for $k\in\D(\a)$. Moreover, $a_q\ge a_k>0$ for all $q\in[\h_\a(k),k]$. Hence, $\partial_k\a=\a-\e_{[\h_\a(k),k]}\in\N^n$, and clearly $\partial_k\a\le\a$ coordinatewise.

It remains to show that $(\partial_k\a)_i-(\partial_k\a)_j\le a_i-a_j$ whenever $i\le_{\partial_k\a}j$. By \eqref{eq:partial-difference}, the only case in which this inequality could fail is when $i<\h_\a(k)\le j\le k$. Since $k\in\D(\a)$, $a_{\h_\a(k)-1}<a_k$. Thus, for $\h_\a(k)\le j\le k$, $(\partial_k\a)_{\h_\a(k)-1} =a_{\h_\a(k)-1} \le a_k-1 \le a_j-1 =(\partial_k\a)_j$.  Since $i\le\h_\a(k)-1<j$, this implies $i\not\le_{\partial_k\a}j$. Therefore the second case of \eqref{eq:partial-difference} cannot occur when $i\le_{\partial_k\a}j$, and hence $\partial_k\a<_T\a$.
\end{proof}

Combining \cref{lemma:partial_is_lower,lem:threshold-propagation} gives a simple consequence for lower covers: if a right-to-left minimum has value $k$, then a single decreasing operator cannot create an entry smaller than $k$ to its right.

\begin{corollary}\label{cor:level-barrier}
Let $\b\in\N^n$ such that $b_i=k$ and $b_j\ge k$ for all $j>i$. Then, for every $r\in\D(\b)$, $(\partial_r\b)_j\ge k$ for all $j>i$.
\end{corollary}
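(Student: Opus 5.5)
The plan is to apply \cref{lem:threshold-propagation} with $\a=\partial_r\b$, exactly as the sentence before the statement suggests. Fix $r\in\D(\b)$ and set $\a=\partial_r\b$. By \cref{lemma:partial_is_lower}, $\a\in\N^n$ and $\a<_T\b$. Moreover, $\a$ and $\b$ differ by $\e_{[\h_\b(r),r]}$, so $b_q-a_q\in\{0,1\}$ for every $q$, with equality to $1$ exactly on $[\h_\b(r),r]$.

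We argue by contradiction. Suppose $a_j<k$ for some $j>i$. The hypothesis gives $b_q\ge k$ for every $q\in[i,n]$: for $q=i$ this is $b_i=k$, and for $q>i$ it is the assumption $b_q\ge k$. Thus $a_j<k\le b_q$ for all $q\in[i,n]$, and \cref{lem:threshold-propagation} yields $a_q<b_q$ for every $q\in[i,j]$. Hence $[i,j]\subseteq[\h_\b(r),r]$, and in particular $\h_\b(r)\le i<j\le r$.

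Next we compare values inside this interval. Since $i<r$ and $\h_\b(r)\le i$, \cref{lemma: alternate position order} gives $i\le_\b r$, so $b_i>b_r$. But $r\ge j>i$ forces $b_r\ge k=b_i$ by hypothesis. This is a contradiction, so $a_j\ge k$ for all $j>i$.

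The only step needing care is the membership $[i,j]\subseteq[\h_\b(r),r]$, which follows from the fact that $\b-\partial_r\b$ is exactly the indicator of $[\h_\b(r),r]$. In fact, using $j\le r$ from it, we do not need the whole interval $[i,j]$: the single fact $i\in[\h_\b(r),r-1]$ is what produces the contradiction.
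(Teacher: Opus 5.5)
Your proof is correct and follows the paper's argument: apply \cref{lem:threshold-propagation} to $\partial_r\b<_T\b$ to place both $i$ and $j$ in $[\h_\b(r),r]$. From $\h_\b(r)\le i<r$ you then get $b_r<b_i=k$, contradicting $b_r\ge k$.
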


\begin{proof}
Let $r\in\D(\b)$ and set $\a=\partial_r\b$. By \cref{lemma:partial_is_lower}, $\a<_T\b$. Suppose $a_j<k$ for some $j>i$. By \cref{lem:threshold-propagation}, $a_i<b_i$. Since $\a=\b-\e_{[\h_\b(r),r]}$, we have $i\in[\h_\b(r),r]$. Moreover, $a_j<k\le b_j$, so $a_j<b_j$, and hence $j\in[\h_\b(r),r]$. Therefore, $\h_\b(r)\le i<j\le r$. By \cref{lemma: alternate position order}, $i\le_\b r$, and since $i<r$, we have $b_i>b_r$. On the other hand, $r>i$ implies $b_r\ge k=b_i$, a contradiction.
\end{proof}

We now work toward a characterization of the lower covers in the stable Tamari poset. We first show that whenever $\a<_T\b$, one can apply a decreasing operator to $\b$ without going below $\a$.
\begin{lemma}
\label{lemma:below_has_partial_above}
Let $\a, \b \in \N^n$ satisfy $\a <_T \b$. Then, there exists $k \in \mathcal{D}(\b)$ such that $\a \le_T \partial_k \b$.
\end{lemma}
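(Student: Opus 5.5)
The plan is to work with the difference $\d=\b-\a$. Since $\a<_T\b$, \cref{prop:indicator-expansion} gives $\d\in\spn_\N\{\e_{[\h_\a(r),r]}\}$, $\d\ne\mathbf 0$, and $d_i\ge d_j$ whenever $i\le_\a j$. For any $k$, the condition $\a\le_T\partial_k\b$ is equivalent, again by \cref{prop:indicator-expansion}, to the vector $\d'=\d-\e_{[\h_\b(k),k]}$ satisfying two things: it lies in $\N^n$, and $d'_i\ge d'_j$ for all $i\le_\a j$. (The converse direction in the proof of \cref{prop:indicator-expansion} only uses this monotonicity, and the forward direction builds the expansion from it.) So I must produce $k\in\D(\b)$ meeting two requirements:
\begin{enumerate}
\item[(i)] $d_q\ge1$ for all $q\in[\h_\b(k),k]$;
\item[(ii)] no pair $i\le_\a j$ has $j\in[\h_\b(k),k]$, $i\notin[\h_\b(k),k]$ and $d_i=d_j$.
\end{enumerate}
The other mixed case, with $i$ inside and $j$ outside, only increases the difference $d'_i-d'_j$ relative to $d_i-d_j$. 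In (ii), such an $i$ satisfies $i<\h_\b(k)$ by \cref{remark: po continuity}.

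I would choose $k$ through a descent. Start with $k_0=\max\{j:d_j>0\}$. Then iterate: given a candidate $k$, if $k\notin\D(\b)$, then $b_{\h_\b(k)-1}=b_k$, and I pass to $k\leftarrow\h_\b(k)-1$. This index is $\ge1$, because $b_k>0$ will follow from $d_k>0$. Since the index strictly decreases, the process terminates. Along the way I want to maintain the invariants $d_k>0$ and $d_q\ge d_k$ for $q\in[\h_\b(k),k]$. The invariant $d_q\ge d_k$ is aimed at (i). For (ii), if $i\le_\a j$ with $i<\h_\b(k)\le j\le k$, then by \cref{lem:po-refinement} $i\le_\b j$, so $\h_\b(j)\le i$. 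On the other hand $\h_\b(j)\ge\h_\b(k)$ by \cref{lem:h-intervals-laminar}, since $j\in[\h_\b(k),k]$ forces $[\h_\b(j),j]\subseteq[\h_\b(k),k]$. This is a contradiction, so (ii) is in fact automatic. That leaves only (i), membership in $\D(\b)$, and the invariant.

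The key inequality for (i) is the following. For $q\in[\h_\b(k),k-1]$ we have $b_q>b_k$. If $d_q=0$, then $a_q=b_q>b_k\ge a_k+d_k>a_k$. I would then take the first position $p\in[q,k]$ with $a_p\le a_k$ (possibly $p=k$), so that $q\le_\a p$ and hence $d_q\ge d_p$. If $p<k$, then $p\le_\b k$ and $d_p=b_p-a_p>b_k-a_k=d_k>0$, which contradicts $d_q=0$; if $p=k$, then directly $d_q\ge d_k>0$, again a contradiction. This is essentially \cref{lem:threshold-propagation} with threshold $b_k$, and I would invoke that lemma directly to get $a_q<b_q$ on the whole window. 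The same comparison, sharpened, should give $d_q\ge d_k$ on the window. For the descent step I need $d_{\h_\b(k)-1}>0$ when $b_{\h_\b(k)-1}=b_k$. Here I would use \cref{cor:RLmin-downward} and \cref{lem:threshold-propagation} applied to the level $b_k$ and the entries to the right of $\h_\b(k)-1$, relying on the choice of $k_0$ as the last positive coordinate of $\d$.

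The step I expect to be the main obstacle is propagating the invariant through the descent $k\mapsto\h_\b(k)-1$, in particular checking that the new window $[\h_\b(k'),k']$ stays inside the support of $\d$. If the direct descent breaks, my fallback is a different selection rule. I would choose $k\in\D(\b)$ with $d_k>0$ for which the interval $[\h_\b(k),k]$ is inclusion-minimal among the $\b$-intervals whose index has positive $d$. Then I would argue by laminarity (\cref{lem:h-intervals-laminar}) together with the argument of the previous paragraph that (i) holds for this $k$.
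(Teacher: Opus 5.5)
\textbf{Gap: the two mixed cases are reversed.} With $S=[\h_\b(k),k]$ and $\d'=\d-\e_S$, consider first a pair $i\le_\a j$ with $i\in S$ and $j\notin S$, so $j>k$. It has $d'_i-d'_j=d_i-d_j-1$, so it needs the \emph{strict} inequality $d_i>d_j$. A pair with $i\notin S$ and $j\in S$ instead has $d'_i-d'_j=d_i-d_j+1$ and is harmless; your laminarity argument shows it cannot even occur. So the condition you show is automatic is irrelevant, and the condition that actually decides whether $\a\le_T\partial_k\b$ is never addressed.

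Conditions (i) and (ii) are in fact not sufficient. Take $\a=(1,0)$ and $\b=(2,1)$, so $\d=(1,1)$ and $\D(\b)=\{1,2\}$. The index $k=1$ satisfies (i) and (ii), yet $\partial_1\b=(1,1)$ and $\a\not\le_T\partial_1\b$, since $1\le_\a 2$ and $d_1=d_2$. The same example defeats your fallback rule, which selects the inclusion-minimal interval $[1,1]$.

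\textbf{How to repair it.} Your descent itself is sound and stops exactly at the index the paper uses. What is missing is the strict inequality, and this is where the choice $k_0=\max\{j:d_j>0\}$ is genuinely needed. (It is not needed for the invariant $d_k>0$, which propagates through the descent by the same first-smaller-position argument you give for (i).)
\begin{enumerate}
\item Each descent step preserves $b_k=b_{k_0}$ and $b_q\ge b_{k_0}$ for all $q\in[k,k_0]$.
\item Suppose $i\le_\a j$ with $i\in S$ and $j>k$. By \cref{lem:po-refinement}, $i\le_\b j$, which forces $b_k>b_j$.
\item By step 1 this is impossible for $j\in[k+1,k_0]$, so $j>k_0$. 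Hence $d_j=0<1\le d_i$, using (i).
\end{enumerate}

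One smaller point: do not cite \cref{lem:threshold-propagation} directly for (i). As stated it needs $b_q\ge b_k$ for all $q$ up to $n$, which can fail to the right of $k_0$. Your explicit window argument is the right substitute. With these two changes your proof becomes essentially the paper's.
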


\begin{proof}
Let $\ell\coloneqq\max\{i\in[n]:a_i<b_i\}$, which exists since $\a<_T\b$, and set
\[
k\coloneqq \min\{1\le r\le\ell: b_r=b_\ell\text{ and }b_q\ge b_\ell \text{ for all }r\le q\le\ell\}.
\]

We first show that $k\in\D(\b)$. Note that $b_k=b_\ell>a_\ell\ge0$. If $\h_\b(k)=1$, then $b_{\h_\b(k)-1}=b_0=0<b_k$. Otherwise, by the definition of the $h$-vector, we have $b_{\h_\b(k)-1}\le b_k$ and $b_q>b_k$ for $\h_\b(k)\le q<k$. If $b_{\h_\b(k)-1}=b_k$, then, together with the definition of $k$, the index $\h_\b(k)-1$ would also belong to the set defining $k$, contradicting the minimality of $k$. Hence $b_{\h_\b(k)-1}<b_k$, which implies $k\in\D(\b)$.

Since $\a<_T\b$, we have $a_i - b_i\le a_j - b_j$ whenever $i\le_\a j$. By the definitions of $\h_\b(k)$ and $k$, we also have $b_i\ge b_\ell$ for all $\h_\b(k)\le i\le\ell$. We claim that $a_i<b_i$ for all $\h_\b(k)\le i\le\ell$. We prove this by backward induction on $i$. The claim holds for $i=\ell$ by the definition of $\ell$.

Now let $\h_\b(k)\le i<\ell$, and suppose the claim holds for all $j$ with $i<j\le\ell$. If $a_i\le a_\ell$, then $a_i\le a_\ell<b_\ell\le b_i$. Otherwise, $a_i>a_\ell$. Let $m \coloneqq\min\{r:i<r\le\ell\text{ and }a_i>a_r\}$. Then $a_r\ge a_i>a_m$ for all $i<r<m$, so $i\le_\a m$. Hence $a_i - b_i\le a_m -b_m < 0$, where the last inequality follows from the induction hypothesis since $i < m \le \ell$. Thus $a_i<b_i$ for all $\h_\b(k)\le i\le\ell$.

We now show that $\a\le_T\partial_k\b$. For  $i \in [\h_\b(k), k] \subseteq [\h_\b(k), \ell]$, we have $a_i<b_i$ which implies $a_i\le b_i-1=(\partial_k\b)_i$. For  $i \notin [\h_\b(k), k]$, $(\partial_k\b)_i=b_i\ge a_i$. Thus, $\a\le\partial_k\b$ coordinatewise. 

Let $i\le_\a j$. Since $\a<_T\b$, we have $a_i-a_j\le b_i-b_j$. By \eqref{eq:partial-difference}, the only case that requires further consideration is $\h_\b(k)\le i\le k<j$, for which $(\partial_k\b)_i-(\partial_k\b)_j=b_i-b_j-1$. We claim that $j>\ell$. Otherwise, $k<j\le\ell$. By the definition of $k$, $b_j \ge b_\ell = b_k$ which implies $i \nleq_\b j$. However, by \cref{lem:po-refinement}, $\a<_T\b$ and $i\le_\a j$ implies $i\le_\b j$, a contradiction.  Therefore $j>\ell$, so $a_j=b_j$, while $a_i< b_i$ by the claim above. Thus $a_i - b_i < 0 = a_j - b_j$. So, we have $a_i-a_j\le b_i-b_j-1 =(\partial_k\b)_i-(\partial_k\b)_j$, completing the proof.
\end{proof}

\begin{example}
Let $\a=(1,3,2,1,1)$ and $\b=(1,4,3,2,2)$. Then $\D(\b)=\{1,2,3,4\}$, and one can verify that $\a<_T\b$. We have $\ell=\max\{i:a_i<b_i\}=5$. Since $b_\ell=b_5=2$, we have $k=\min\{1\le r\le5: b_r = 2 \text{ and } b_q \ge 2 \text{ for all } r \le q \le 5\} =\min\{4,5\}=4\in\D(\b)$. Moreover, $\h_\b(4)=2$, and hence $\partial_4\b =\b-\e_{[2,4]} =(1,3,2,1,2)$.
One can verify that $\a\le_T\partial_4\b<_T\b$.
\end{example}

\begin{lemma}
\label{lemma:single-interval-cover}
Let $\a\in\N^n$ and $k\in[n]$. If $\b=\a+\e_{[\h_\a(k),k]}$, then $\a\lessdot_T\b$.
\end{lemma}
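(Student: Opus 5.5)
First I show $\a<_T\b$, and then that nothing lies strictly between. Set $i_0=\h_\a(k)$ and $S=[i_0,k]$. Since $\b-\a=\e_S$ is one of the generators in \cref{prop:indicator-expansion} for $\a$, we get $\a\le_T\b$ at once, and $\a\ne\b$. Equivalently, in \cref{thm:compare seq with M matrix} we have $\m_{\a,\b}=\e_S M_\a^{-1}=\e_k$, since the $k^{\text{th}}$ row of $M_\a$ is $\e_S$.

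For the covering property, suppose $\a\le_T\c\le_T\b$. Then $\a\le\c\le\b$ coordinatewise, so $\c=\a+\e_T$ for some $T\subseteq S$. By \cref{prop:indicator-expansion}, $\e_T$ must be an $\N$-linear combination of the vectors $\e_{[\h_\a(r),r]}$. It therefore suffices to show that the only $0/1$ vectors supported in $S$ that arise this way are $\mathbf 0$ and $\e_S$.

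I would argue via \cref{cor:m-vector}, or directly from the defining inequalities. Since $\a\le_T\c$, we need $d_i\ge d_j$ whenever $i\le_\a j$, where $\d=\e_T$. Every $i\in S$ satisfies $i\le_\a k$ by \cref{lemma: alternate position order}. So if $k\in T$, then $d_i\ge d_k=1$ for all $i\in S$, and hence $T=S$.

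It remains to handle $k\notin T$, and this is the main obstacle, because then the condition from $\a\le_T\c$ alone does not force $T=\emptyset$. I would use $\c\le_T\b$. Consider the pairs $i\le_\c k$ with $i\in S$, and use the requirement $c_i-c_k\le b_i-b_k$. Since $b_i-b_k=a_i-a_k$ for $i\in S$ (both $i$ and $k$ get $+1$), the requirement reads $(a_i+d_i)-a_k\le a_i-a_k$, that is, $d_i\le 0$, whenever $i\le_\c k$.

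Next I check that $i\le_\c k$ holds for every $i\in S$. For $q\in[i,k-1]$ we have $a_q>a_k$ because $q\le_\a k$. Then $c_q\ge a_q>a_k=c_k$, where the last equality uses $k\notin T$. Hence $i\le_\c k$, so $d_i=0$ for all $i\in S$, and $T=\emptyset$.

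Thus $\c\in\{\a,\b\}$, proving $\a\lessdot_T\b$.
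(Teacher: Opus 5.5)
Your proof is correct, and its skeleton matches the paper's. You write $\c=\a+\e_T$ with $T\subseteq[\h_\a(k),k]$ and split on whether $k\in T$. You use $\a\le_T\c$ to force $T=[\h_\a(k),k]$ in the first case and $\c\le_T\b$ to force $T=\varnothing$ in the second.

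The difference is in the tools. The paper expands $\c-\a$ in the vectors $\e_{[\h_\a(r),r]}$ and $\b-\c$ in the vectors $\e_{[\h_\c(r),r]}$, via \cref{prop:indicator-expansion}. Since the two expansions sum to $\e_{[\h_\a(k),k]}$, they consist of pairwise disjoint intervals inside $[\h_\a(k),k]$ with coefficients $1$. The paper then identifies the interval containing $k$ as $[\h_\a(k),k]$, respectively $[\h_\c(k),k]$. In the second case it needs \cref{cor:h-refinement} to upgrade $\h_\c(k)\ge\h_\a(k)$ to equality.

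You instead argue straight from \cref{def: sto}. The pairs $(i,k)$ with $i\le_\a k$ give $d_i\ge d_k$. When $c_k=a_k$, you verify by hand that $i\le_\c k$ for every $i\in[\h_\a(k),k]$, and read off $d_i\le 0$ from $c_i-c_k\le b_i-b_k$. Your route is shorter and avoids the disjointness bookkeeping; the paper's keeps the argument in the interval-expansion language it uses throughout.

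One expository slip: you claim it suffices to show that the only $0/1$ vectors supported in $[\h_\a(k),k]$ and lying in $\spn_\N\{\e_{[\h_\a(r),r]}:r\in[n]\}$ are $\mathbf 0$ and $\e_{[\h_\a(k),k]}$. That is false as stated: for $\a=(1,0)$ and $k=2$, the vector $\e_1=\e_{[\h_\a(1),1]}$ is another one. Your actual argument does not depend on it, since you correctly bring in $\c\le_T\b$ for the case $k\notin T$.
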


\begin{proof}
Since $\b-\a=\e_{[\h_\a(k),k]}$, by \cref{prop:indicator-expansion}, $\a<_T\b$. Suppose there exists $\c\in\N^n$ such that $\a<_T\c<_T\b$. By \cref{prop:indicator-expansion,thm:compare seq with M matrix}, we have $\m_{\a,\c},\m_{\c,\b}\in\N^n$ and
\[
\c-\a = \sum_{r\in[n]}\m_{\a,\c}(r)\e_{[\h_\a(r),r]} \qquad\text{and}\qquad \b-\c = \sum_{r\in[n]}\m_{\c,\b}(r)\e_{[\h_\c(r),r]}.
\]

Since $(\c-\a)+(\b-\c)=\b-\a=\e_{[\h_\a(k),k]}$, every interval occurring in either decomposition is contained in $[\h_\a(k),k]$, all such intervals are pairwise disjoint, and every positive coefficient is equal to $1$.

Let $S=\{i:a_i<c_i\}$. If $k\in S$, then $k\in[\h_\a(r),r]$ for some $r$ with $\m_{\a,\c}(r)>0$. Since $[\h_\a(r),r]\subseteq[\h_\a(k),k]$ and contains $k$, we must have $r=k$. Thus $[\h_\a(k),k]\subseteq S$. Since $S\subseteq[\h_\a(k),k]$, we have $S=[\h_\a(k),k]$, and hence $\c=\b$, a contradiction. Hence $k\notin S$. Then $k\in[\h_\c(r),r]$ for some $r$ with $\m_{\c,\b}(r)>0$. Again, since $[\h_\c(r),r]\subseteq[\h_\a(k),k]$ and contains $k$, we must have $r=k$. Therefore, $\h_\c(k)\ge\h_\a(k)$. On the other hand, $\a<_T\c$ implies $\h_\c(k)\le\h_\a(k)$. Hence $\h_\c(k)=\h_\a(k)$, so $[\h_\c(k),k]=[\h_\a(k),k]$.

It follows that $\b-\c=\e_{[\h_\a(k),k]}$, and hence $\c=\a$, again a contradiction. Therefore, $\a\lessdot_T\b$.
\end{proof}

We can now characterize all lower covers in the stable Tamari poset in terms of the decreasing operators.

\begin{theorem} \label{thm:cover}
Let $\a,\b \in \N^n$. Then, $\a \lessdot_T \b$ if and only if $\a = \partial_k \b$ for some $k \in \mathcal{D}(\b)$.
\end{theorem}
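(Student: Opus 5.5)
Both directions follow quickly from \cref{lemma:partial_is_lower,lemma:below_has_partial_above,lemma:single-interval-cover}. The only real work is to see that $\partial_k\b$ is covered by $\b$ and not merely below it.

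For the forward direction, suppose $\a\lessdot_T\b$. By \cref{lemma:below_has_partial_above} there is $k\in\D(\b)$ with $\a\le_T\partial_k\b$. By \cref{lemma:partial_is_lower}, $\partial_k\b<_T\b$. So $\a\le_T\partial_k\b<_T\b$, and since $\a$ is covered by $\b$ we must have $\a=\partial_k\b$.

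For the converse, fix $k\in\D(\b)$ and set $\a=\partial_k\b=\b-\e_{[\h_\b(k),k]}$, which lies in $\N^n$ and satisfies $\a<_T\b$ by \cref{lemma:partial_is_lower}. The plan is to apply \cref{lemma:single-interval-cover} to $\a$ and $k$. This requires showing that $\h_\a(k)=\h_\b(k)$, so that $\b=\a+\e_{[\h_\a(k),k]}$.

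Write $h=\h_\b(k)$. By \cref{cor:h-refinement}, $\a<_T\b$ gives $\h_\b(k)\le\h_\a(k)$, so $h\le\h_\a(k)$. For the reverse inequality, take $q\in[h,k-1]$. Then $b_q>b_k$, and both coordinates drop by exactly $1$, so $a_q=b_q-1>b_k-1=a_k$. Hence $h\le_\a k$, which gives $\h_\a(k)\le h$. Therefore $\h_\a(k)=h$, and \cref{lemma:single-interval-cover} yields $\a\lessdot_T\b$.

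The only step needing care is this equality of $h$-vector entries at $k$. It is easy here because both $q$ and $k$ lie in the decremented interval. If that shortcut failed, the fallback would be to argue directly that any $\c$ with $\a<_T\c<_T\b$ produces $\m$-vector decompositions whose total is a single interval indicator, contradicting disjointness exactly as in the proof of \cref{lemma:single-interval-cover}.
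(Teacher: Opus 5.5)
Your proof is correct and follows the paper's argument. The forward direction is identical, and for the converse you likewise prove $\h_\a(k)=\h_\b(k)$ and then invoke \cref{lemma:single-interval-cover}. The only minor difference is how you get $\h_\a(k)\ge\h_\b(k)$: you use \cref{cor:h-refinement}, with $\a<_T\b$ supplied by \cref{lemma:partial_is_lower}, whereas the paper checks directly that $a_{\h_\b(k)-1}=b_{\h_\b(k)-1}<b_k$, which gives $a_{\h_\b(k)-1}\le a_k$.
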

\begin{proof}
First, suppose $\a \lessdot_T \b$. By \cref{lemma:below_has_partial_above}, $\a \le_T \partial_k \b$ for some $k \in \mathcal{D}(\b)$. By \cref{lemma:partial_is_lower}, $\partial_k \b <_T \b$. Therefore, we have $\a \le_T \partial_k \b <_T \b$. Since $\a \lessdot_T \b$, this implies $\a = \partial_k \b$.

Conversely, suppose $\a=\partial_k\b$ for some $k\in\D(\b)$. For $\h_\b(k) \le i <k$, we have $b_i>b_k$, and hence $a_i=b_i-1>b_k-1=a_k$. Since $k\in\D(\b)$, we also have $a_{\h_\b(k)-1}=b_{\h_\b(k)-1}<b_k$, and therefore $a_{\h_\b(k)-1}\le b_k-1=a_k$. Thus $\h_\a(k)=\h_\b(k)$, which means $\b=\a+\e_{[\h_\a(k),k]}$. By \cref{lemma:single-interval-cover}, $\a\lessdot_T\b$.
\end{proof}

The cover characterization also gives a simple coordinate description of the lower covers. This description makes the lower covers easy to determine directly from the entries and yields a useful counting consequence when the entries are pairwise distinct.

\begin{corollary} \label{cor:cover}
Let $\a,\b\in\N^n$. $\b$ covers $\a$ if and only if there exist $1\le i\le k\le n$ such that
\[
\b=\a+\e_{[i,k]}, \qquad b_i,\ldots,b_{k-1}>b_k>b_{i-1}.
\]

Equivalently, $\a$ is covered by $\b$ if and only if there exist $1\le i\le k\le n$ such that
\[
\b=\a+\e_{[i,k]}, \qquad a_i,\ldots,a_{k-1}>a_k\ge a_{i-1}.
\]
\end{corollary}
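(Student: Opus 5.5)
The plan is to deduce both formulations directly from \cref{thm:cover}, after unpacking the definitions of $\partial_k$, $\h_\b$ and $\D(\b)$.

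For the first formulation, \cref{thm:cover} says that $\b$ covers $\a$ exactly when $\a=\partial_k\b=\b-\e_{[\h_\b(k),k]}$ for some $k\in\D(\b)$.

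Suppose first that $\b$ covers $\a$, and set $i=\h_\b(k)$. The definition of the $h$-vector gives $b_i,\ldots,b_{k-1}>b_k$. The condition $k\in\D(\b)$ gives $b_{i-1}<b_k$, with the convention $b_0=0$ when $i=1$.

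Conversely, suppose $\b=\a+\e_{[i,k]}$ with $b_i,\ldots,b_{k-1}>b_k>b_{i-1}$. Then $b_q>b_k$ for $q\in[i,k-1]$, so $i\le_\b k$. Also $b_{i-1}\le b_k$, so $i-1\not\le_\b k$, and therefore $\h_\b(k)=i$. The strict inequality $b_{i-1}<b_k$ then says $k\in\D(\b)$. Hence $\a=\partial_k\b$, and \cref{thm:cover} gives that $\b$ covers $\a$.

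One technicality arises when $i=1$. There $b_0=0$, and the requirement $b_k>0$ ensures $\a\in\N^n$. In general, $\a\in\N^n$ is implicit in the statement since $\a,\b\in\N^n$ are given.

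For the second formulation, translate the inequalities via $b_q=a_q+1$ for $q\in[i,k]$ and $b_{i-1}=a_{i-1}$. The latter holds because $i-1\notin[i,k]$, and also when $i=1$, where both equal $0$ by convention.

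Under this translation:
\begin{itemize}
\item $b_q>b_k$ for $q\in[i,k-1]$ is equivalent to $a_q>a_k$;
\item $b_k>b_{i-1}$ is equivalent to $a_k+1>a_{i-1}$, that is, $a_k\ge a_{i-1}$ by integrality.
\end{itemize}

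This is essentially the content of the converse direction of \cref{thm:cover}, read as $\h_\a(k)=i$ together with \cref{lemma:single-interval-cover}. I could alternatively cite that lemma directly: the second condition says exactly $\h_\a(k)=i$, so $\b=\a+\e_{[\h_\a(k),k]}$ covers $\a$.

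There is no real obstacle here. The only care needed is to handle the boundary case $i=1$ via the convention $a_0=b_0=0$, and to make the strict-versus-weak inequality shift explicit using integrality.
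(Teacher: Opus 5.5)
Your proposal is correct and follows the paper's proof. Both read the first formulation off \cref{thm:cover} by setting $i=\h_\b(k)$ and unpacking the definitions of the $h$-vector and $\D(\b)$, then obtain the second formulation from the shift $b_q=a_q+1$ on $[i,k]$, $b_{i-1}=a_{i-1}$, and integrality. Your explicit check that $b_i,\ldots,b_{k-1}>b_k\ge b_{i-1}$ forces $\h_\b(k)=i$ (implicitly using \cref{remark: po continuity}) fills in the converse step that the paper simply asserts as an equivalence.
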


\begin{proof}
By \cref{thm:cover}, $\b$ covers $\a$ if and only if $\a=\partial_k\b$ for some $k\in\D(\b)$. Setting $i=\h_\b(k)$, this is equivalent to $\b=\a+\e_{[i,k]}$. By the definition of the $h$-vector, $b_i,\ldots,b_{k-1}>b_k$, while $k\in\D(\b)$ is equivalent to $b_{i-1}<b_k$. This proves the first characterization.

Since $\b=\a+\e_{[i,k]}$, we have $b_j=a_j+1$ for $i\le j\le k$ and $b_{i-1}=a_{i-1}$. Hence $b_i,\ldots,b_{k-1}>b_k$ is equivalent to $a_i,\ldots,a_{k-1}>a_k$ and $b_k >b_{i-1}$ means $a_k + 1 > a_{i-1}$, that is, $a_k \ge a_{i-1}$. The result follows.
\end{proof}
\begin{corollary}\label{cor:lower-cover-count} 
Let $\b\in\N^n$, and let  $\mathcal R(\b) = \{(\h_\b(k)-1,k):k\in[n],\ b_{\h_\b(k)-1}=b_k>0\}.$ Then the number of elements covered by $\b$ is  \[\bigl|\{k\in[n]:b_k>0\}\bigr|-|\mathcal R(\b)|. \] In particular, the number of elements covered by $\b$ is at most the number of positive entries of $\b$, with equality if and only if $\mathcal R(\b)=\varnothing$. \end{corollary}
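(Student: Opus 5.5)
By \cref{thm:cover}, the elements covered by $\b$ are exactly the sequences $\partial_k\b$ with $k\in\D(\b)$. The plan is to show that distinct $k\in\D(\b)$ give distinct lower covers, and then to count $\D(\b)$ directly.

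\textbf{Injectivity.} The map $k\mapsto\partial_k\b=\b-\e_{[\h_\b(k),k]}$ is injective on $[n]$. Indeed, $\partial_k\b=\partial_{k'}\b$ forces $\e_{[\h_\b(k),k]}=\e_{[\h_\b(k'),k']}$, so the intervals coincide. Their right endpoints then agree, giving $k=k'$. Hence the number of elements covered by $\b$ is exactly $|\D(\b)|$.

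\textbf{Counting $\D(\b)$.} Recall that $b_{\h_\b(k)-1}\le b_k$ for every $k$, so $k\notin\D(\b)$ if and only if $b_{\h_\b(k)-1}=b_k$. We split $[n]$ according to whether $b_k=0$ or $b_k>0$.
\begin{itemize}
\item If $b_k=0$, then $k\notin\D(\b)$, since $k\in\D(\b)$ implies $b_k>0$.
\item If $b_k>0$, then $k\notin\D(\b)$ exactly when $b_{\h_\b(k)-1}=b_k>0$, that is, when $(\h_\b(k)-1,k)\in\mathcal R(\b)$.
\end{itemize}
Therefore $\D(\b)$ consists of the positive positions $k$ minus those $k$ indexing pairs in $\mathcal R(\b)$. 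Since the pairs in $\mathcal R(\b)$ are indexed injectively by their second coordinate $k$, each such $k$ being a positive position, we have
\[
|\D(\b)|=\bigl|\{k\in[n]:b_k>0\}\bigr|-|\mathcal R(\b)|.
\]
The convention $b_0=0$ handles the case $\h_\b(k)=1$ automatically: then $b_{\h_\b(k)-1}=0<b_k$, so no pair with first coordinate $0$ lies in $\mathcal R(\b)$.

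\textbf{The inequality and the equality case.} The bound by the number of positive entries is immediate from the formula, because $|\mathcal R(\b)|\ge 0$. Equality holds if and only if $\mathcal R(\b)=\varnothing$.

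There is no real obstacle here. The only point needing care is checking that the map $k\mapsto\partial_k\b$ is injective, and the right-endpoint observation settles it.
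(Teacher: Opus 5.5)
Your proposal is correct and follows the paper's proof: you reduce to $|\D(\b)|$ via \cref{thm:cover}, then match the positive indices outside $\D(\b)$ bijectively with $\mathcal R(\b)$. Your right-endpoint argument that distinct $k\in\D(\b)$ give distinct lower covers is a detail the paper leaves implicit.
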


\begin{proof} By \cref{thm:cover}, the number of elements covered by $\b$ is $|\D(\b)|$. By \cref{def: d-set}, a positive index $k$ does not belong to $\D(\b)$ if and only if $b_{\h_\b(k)-1}=b_k$. Thus the positive indices not belonging to $\D(\b)$ are in bijection with the elements of $\mathcal R(\b)$, and the result follows. \end{proof}

\begin{example}\label{ex:lower-cover-count}
Let $\b=(2,4,2,1,2)$. Then, $\h_\b=(1,2,2,1,5)$. The only positive index $k$ such that $b_{\h_\b(k)-1}=b_k$ is $k=3$. Indeed, $b_{\h_\b(3)-1}=b_1=2=b_3$. Therefore, $\mathcal R(\b)=\{(1,3)\}$. The pair $(1,3)$ records the return from $b_1 =2$ to $b_3 =2$ after passing through the larger entry $b_2 =4$, while the later occurrence $b_5 =2$ does not contribute to $\mathcal{R}(\b)$ because $b_4 =1<2$.

Since $\b$ has five positive entries, \cref{cor:lower-cover-count} shows that $\b$ covers $5-|\mathcal R(\b)|=4$ elements. Indeed, $\D(\b)=\{1,2,4,5\}$.
\end{example}

\begin{corollary}\label{cor:distinct-entries-lower-covers}
Let $\b\in\N^n$ have pairwise distinct entries. Then the number of elements covered by $\b$ is equal to the number of positive entries of $\b$.
\end{corollary}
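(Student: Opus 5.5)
This follows from \cref{cor:lower-cover-count}: we only need to show that $\mathcal R(\b)=\varnothing$ when the entries of $\b$ are pairwise distinct.

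An element of $\mathcal R(\b)$ has the form $(\h_\b(k)-1,k)$ with $b_{\h_\b(k)-1}=b_k>0$. Since $b_k>0$ and $b_0=0$, the index $\h_\b(k)-1$ cannot be $0$, so $\h_\b(k)\ge 2$. Then $\h_\b(k)-1$ and $k$ are two distinct indices in $[n]$, because $\h_\b(k)-1<\h_\b(k)\le k$. By assumption their entries differ, which contradicts $b_{\h_\b(k)-1}=b_k$. Hence $\mathcal R(\b)=\varnothing$.

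The equality case of \cref{cor:lower-cover-count} then says that the number of elements covered by $\b$ equals the number of positive entries of $\b$. The only subtle point is the index $0$ convention: the argument needs $b_{\h_\b(k)-1}=b_k>0$ to exclude $\h_\b(k)=1$, because $b_0=0$ is not an actual entry of $\b$ and so is not covered by the distinctness hypothesis.
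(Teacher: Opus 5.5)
Your proof is correct and follows the paper's argument: the paper also deduces the result from \cref{cor:lower-cover-count} by observing that $\mathcal R(\b)=\varnothing$ when the entries of $\b$ are pairwise distinct. Your explicit check that $\h_\b(k)\ge 2$, using $b_0=0<b_k$, fills in the one detail the paper leaves implicit.
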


\begin{proof}
If the entries of $\b$ are pairwise distinct, then $\mathcal R(\b)=\varnothing$, so the result follows from \cref{cor:lower-cover-count}.
\end{proof}

Having characterized the lower covers, we now turn to the upper covers. We first relate the raising and decreasing operators and describe how the $h$-vector changes when passing across a cover. We then determine which upper covers of a given sequence remain below a fixed element.

For $\a\in\N^n$ and $k\in[n]$, define the \emph{raising operator} $\partial_k^{-1}$ by $\partial_k^{-1}\a \coloneqq \a+\e_{[\h_\a(k),k]}$. By \cref{lemma:single-interval-cover}, we have $\a\lessdot_T\partial_k^{-1}\a$ for every $k\in[n]$. For any $1\le i<j\le n$, we have
\begin{equation}
\label{eq:raising-difference}
    (\partial_k^{-1}\a)_i-(\partial_k^{-1}\a)_j
    =
    \begin{cases}
        a_i-a_j+1, & \text{if $\h_\a(k)\le i\le k<j$,} \\
        a_i-a_j-1, & \text{if $i<\h_\a(k)\le j\le k$,} \\
        a_i-a_j, & \text{otherwise.}
    \end{cases}
\end{equation}

\begin{proposition} \label{prop:raising-lowering-inverses}
Let $\a\in\N^n$ and $k\in[n]$. Then $k\in\D(\partial_k^{-1}\a)$ and  $\partial_k\partial_k^{-1}\a=\a$. Moreover, if $k\in\D(\a)$, then $\partial_k^{-1}\partial_k\a=\a$.
\end{proposition}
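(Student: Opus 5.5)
The plan is to prove that raising at $k$ leaves the $h$-value at $k$ unchanged, and then read off all three claims from this. Set $\b=\partial_k^{-1}\a=\a+\e_{[i,k]}$ with $i=\h_\a(k)$. The central claim is $\h_\b(k)=\h_\a(k)=i$. For $q\in[i,k-1]$ we have $a_q>a_k$, so $b_q=a_q+1>a_k+1=b_k$. If $i>1$, then $b_{i-1}=a_{i-1}\le a_k<a_k+1=b_k$. If $i=1$, then $b_0=0$. In either case the left scan from $k$ in $\b$ stops exactly at $i$, so $\h_\b(k)=i$.

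With this in hand, the first two claims follow directly. We have $b_{\h_\b(k)-1}=b_{i-1}<b_k$; when $i=1$ this reads $0\le a_k<b_k$. Hence $k\in\D(\b)$. Moreover $\partial_k\b=\b-\e_{[\h_\b(k),k]}=\b-\e_{[i,k]}=\a$. The whole argument is essentially the computation in the converse direction of the proof of \cref{thm:cover}, run in reverse.

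For the last claim, let $k\in\D(\a)$ and set $\c=\partial_k\a=\a-\e_{[j,k]}$ with $j=\h_\a(k)$. \cref{lemma:partial_is_lower} already gives $\c\in\N^n$. The converse part of the proof of \cref{thm:cover} shows $\h_\c(k)=\h_\a(k)=j$. I would reprove this briefly: for $q\in[j,k-1]$ we have $c_q=a_q-1>a_k-1=c_k$, and $c_{j-1}=a_{j-1}\le a_k-1=c_k$ because $k\in\D(\a)$. It follows that $\partial_k^{-1}\c=\c+\e_{[j,k]}=\a$.

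The only delicate point is the boundary case $\h(k)=1$. There the convention $a_0=0$ applies and the index $j-1=0$ lies outside the modified block, so no entry is changed there. The rest is routine.
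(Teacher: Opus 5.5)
Your proposal is correct and follows essentially the same route as the paper: you show $\h_{\partial_k^{-1}\a}(k)=\h_\a(k)$ and $\h_{\partial_k\a}(k)=\h_\a(k)$ by the same entrywise comparisons on $[\h_\a(k),k-1]$ and at $\h_\a(k)-1$. All three claims then follow at once from these two equalities. The boundary case $\h_\a(k)=1$ is handled correctly by the convention $a_0=0$.
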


\begin{proof}
Let $\b=\partial_k^{-1}\a$. For $i\in[\h_\a(k),k-1]$, by the definition of the $h$-vector, we have $a_i>a_k$. We also have $a_{\h_\a(k)-1}\le a_k$. Hence, $b_i=a_i+1>a_k+1=b_k$ and $b_{\h_\a(k)-1} =a_{\h_\a(k)-1} \le a_k <b_k$. Thus, $\h_\b(k)=\h_\a(k)$ and $k\in\D(\b)$. Therefore, $\partial_k\b = \b-\e_{[\h_\b(k),k]} = \partial_k^{-1}\a-\e_{[\h_\a(k),k]} = (\a+\e_{[\h_\a(k),k]})-\e_{[\h_\a(k),k]} = \a$.

Now suppose $k\in\D(\a)$ and let $\c=\partial_k\a$. For $i\in[\h_\a(k),k-1]$, we have $c_i=a_i-1>a_k-1=c_k$. Moreover, $c_{\h_\a(k)-1} =a_{\h_\a(k)-1} <a_k$, which implies $c_{\h_\a(k)-1}\le a_k-1=c_k$. Thus, $\h_\c(k)=\h_\a(k)$, and hence $\partial_k^{-1}\c = \c+\e_{[\h_\c(k),k]} = \partial_k\a+\e_{[\h_\a(k),k]} = (\a-\e_{[\h_\a(k),k]})+\e_{[\h_\a(k),k]} = \a$.
\end{proof}

\begin{corollary}\label{cor:up-covers}
Let $\a\in\N^n$. Then $\{\b\in\N^n:\a\lessdot_T\b\} = \{\partial_k^{-1}\a:k\in[n]\}$. In particular, every element of $\mathcal{ST}_n$ is covered by exactly $n$ elements.
\end{corollary}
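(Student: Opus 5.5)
The plan is to prove the set equality by double inclusion, using \cref{thm:cover} and \cref{prop:raising-lowering-inverses}; the count then follows from an injectivity argument.

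For the inclusion $\supseteq$, fix $k\in[n]$. By \cref{lemma:single-interval-cover}, $\a\lessdot_T\partial_k^{-1}\a$. Note that $\partial_k^{-1}\a\in\N^n$ trivially, since we only add a nonnegative indicator vector.

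For $\subseteq$, suppose $\a\lessdot_T\b$. By \cref{thm:cover}, there is some $k\in\D(\b)$ with $\a=\partial_k\b$. Then the second part of \cref{prop:raising-lowering-inverses} gives $\partial_k^{-1}\a=\partial_k^{-1}\partial_k\b=\b$. So $\b$ lies in the right-hand set.

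For the count, it remains to show that the $n$ elements $\partial_k^{-1}\a$, $k\in[n]$, are pairwise distinct. We have $\partial_k^{-1}\a-\a=\e_{[\h_\a(k),k]}$. The largest index in the support of this vector is $k$, since $\h_\a(k)\le k$ and the interval is nonempty. Hence different $k$ give different difference vectors, and so different upper covers. Alternatively, this follows from the first part of \cref{prop:raising-lowering-inverses}: $k\in\D(\partial_k^{-1}\a)$ and $\partial_k\partial_k^{-1}\a=\a$.

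No step is a real obstacle, because the substantive work is already done in \cref{thm:cover} and \cref{prop:raising-lowering-inverses}. The only care needed is in the distinctness argument, which rests on observing that the rightmost coordinate of $\e_{[\h_\a(k),k]}$ is $k$.
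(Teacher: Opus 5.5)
Your proof is correct and follows the paper's argument essentially verbatim. You use \cref{lemma:single-interval-cover} for $\supseteq$, \cref{thm:cover} together with \cref{prop:raising-lowering-inverses} for $\subseteq$, and the distinct right endpoints of the intervals $[\h_\a(k),k]$ for the count; your alternative distinctness argument via $\partial_k\partial_k^{-1}\a=\a$ ultimately rests on the same right-endpoint observation.
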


\begin{proof}
By \cref{lemma:single-interval-cover}, $\a\lessdot_T\partial_k^{-1}\a$ for every $k\in[n]$. Conversely, suppose $\a\lessdot_T\b$. By \cref{thm:cover}, $\a=\partial_k\b$ for some $k\in\D(\b)$. By \cref{prop:raising-lowering-inverses}, $\b=\partial_k^{-1}\partial_k\b=\partial_k^{-1}\a$. Thus the upper covers of $\a$ are precisely the elements $\partial_k^{-1}\a$ for $k\in[n]$. Note that the elements $\partial_k^{-1}\a$ are distinct for distinct $k\in[n]$, since their defining intervals have different right endpoints. Therefore, $\a$ has exactly $n$ upper covers.
\end{proof}

We next record how the $h$-vector and the set $\D$ behave when passing across a cover. These observations will be useful later when analyzing saturated chains.

\begin{lemma} \label{lemma:h-vector-under-raising}
Let $\a\in\N^n$ and $r\in[n]$, and set $\b=\partial_r^{-1}\a$. If $\h_\b(j)\ne\h_\a(j)$ for some $j\in[n]$, then $\h_\b(j)=\h_\a(r)$ and $\h_\a(j)=r+1$. In particular, $\h_\b(j)=\h_\a(j)$ for all $j\le r$.
\end{lemma}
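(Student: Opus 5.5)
Set $i_0=\h_\a(r)$, so $\b=\a+\e_{[i_0,r]}$, and recall from the proof of \cref{prop:raising-lowering-inverses} that $\h_\b(r)=\h_\a(r)$. Fix $j$ and compare the two defining conditions: $\h_\a(j)$ is the least $i$ with $a_q>a_j$ for all $q\in[i,j-1]$, and likewise for $\b$. We use the difference formula \eqref{eq:raising-difference}. For $q<j$, the difference $b_q-b_j$ differs from $a_q-a_j$ only in two cases: it increases by one when $i_0\le q\le r<j$, and it decreases by one when $q<i_0\le j\le r$.

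First suppose $j\le r$. If $j<i_0$, nothing in $[1,j]$ changes, so $\h_\b(j)=\h_\a(j)$. If $i_0\le j\le r$, the only comparisons that change are those with $q<i_0$, where $b_q-b_j=a_q-a_j-1$. By laminarity (\cref{lem:h-intervals-laminar}), $[\h_\a(j),j]\subseteq[i_0,r]$, so $\h_\a(j)\ge i_0$. The strict inequalities $a_q>a_j$ for $q\in[\h_\a(j),j-1]$ lie inside $[i_0,r]$ and are therefore unchanged. The stopping inequality $a_{\h_\a(j)-1}\le a_j$ is either unchanged (if $\h_\a(j)-1\ge i_0$) or only weakened to $b_{\h_\a(j)-1}\le b_j-1$ (if $\h_\a(j)=i_0$). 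Hence $\h_\b(j)=\h_\a(j)$ for all $j\le r$, which is the ``in particular'' clause.

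Now suppose $j>r$. The comparisons that change are those with $q\in[i_0,r]$, and they only become more favorable, since $b_q-b_j=a_q-a_j+1$. Consequently every $q$ with $a_q>a_j$ still satisfies $b_q>b_j$, so $\h_\b(j)\le\h_\a(j)$.

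Suppose $\h_\b(j)<\h_\a(j)$, and let $p=\h_\a(j)-1$. Then $a_p\le a_j$ but $b_p>b_j$, which forces $p\in[i_0,r]$ and $a_p=a_j$. We claim $p=r$. Otherwise $p<r<j$. Since $r\in[\h_\a(j),j-1]$, we get $a_r>a_j=a_p$. But $p\in[i_0,r-1]$ gives $p\le_\a r$, so $a_p>a_r$, a contradiction. Thus $\h_\a(j)=r+1$ and $a_r=a_j$.

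It remains to compute $\h_\b(j)$. For $q\in[r+1,j-1]$ we have $b_q=a_q>a_j=b_j$. For $q\in[i_0,r]$ we have $a_q\ge a_r=a_j$, so $b_q\ge a_j+1>b_j$. Finally, at position $i_0-1$ (when $i_0>1$) we have $b_{i_0-1}=a_{i_0-1}\le a_r=a_j=b_j$. Therefore $\h_\b(j)=i_0=\h_\a(r)$.

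The main obstacle is this last case: identifying that the blocking position must be exactly $r$, and checking that the leftward extension stops precisely at $i_0$. Both steps rest on combining $a_p=a_j$ with the defining inequalities of $\h_\a(r)$.
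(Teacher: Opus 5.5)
Your proof is correct and has the same core as the paper's: the blocking position $p=\h_\a(j)-1$ is forced into $[\h_\a(r),r]$ with $a_p=a_j$, it is shown to equal $r$, and $\h_\b(j)=\h_\a(r)$ is then computed exactly as in the paper. The differences are in the bookkeeping. The paper gets $\h_\b\le\h_\a$ for all $j$ at once from $\a\lessdot_T\b$ via \cref{cor:h-refinement}, and reads off the $j\le r$ clause from $\h_\a(j)=r+1\le j$. You instead check the case $j\le r$ directly from \eqref{eq:raising-difference} and laminarity, and you pin down $p=r$ by a direct value comparison rather than by laminarity.
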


\begin{proof} 
Since $\a\lessdot_T\b$, by \cref{cor:h-refinement}, we have $\h_\b(j)\le \h_\a(j)$. Thus, if $\h_\b(j)\ne \h_\a(j)$, then $\h_\b(j)<\h_\a(j)$. Hence, $\h_\b(j)\le \h_\a(j)-1<j$. By \cref{lemma: alternate position order}, $\h_\a(j)-1\le_\b j$, which means $b_{\h_\a(j)-1}>b_j$. On the other hand, by the definition of $\h_\a$, we have $a_{\h_\a(j)-1}\le a_j$.

Since $\b=\a+\e_{[\h_\a(r),r]}$, each coordinate of $\a$ increases by either $0$ or $1$. Together with $a_{\h_\a(j)-1}\le a_j$ and $b_{\h_\a(j)-1}>b_j$, this is possible only if $a_{\h_\a(j)-1}=a_j=b_j=b_{\h_\a(j)-1}-1.$ Thus, $\h_\a(j)-1\in[\h_\a(r),r]$ but $j\notin[\h_\a(r),r]$. In other words, $\h_\a(r)\le \h_\a(j)-1\le r<j$.

If $\h_\a(j)-1<r$, then $r\in[\h_\a(r),r]\cap[\h_\a(j),j]$. By \cref{lem:h-intervals-laminar}, $[\h_\a(r),r]\subseteq[\h_\a(j),j]$. Thus, $\h_\a(j)\le\h_\a(r)$, which contradicts $\h_\a(r)\le\h_\a(j)-1$. Therefore, $\h_\a(j)-1=r$, and hence $\h_\a(j)=r+1$.

It remains to determine $\h_\b(j)$. Since $\h_\a(j)=r+1$, we have $a_r=a_j=b_j=b_r-1.$ By the definition of $\h_\a$, we have $a_i>a_r=a_j$ for $i\in[\h_\a(r),r-1]$ and $a_i>a_j$ for $i\in[\h_\a(j),j-1]=[r+1,j-1]$. Together with $b_r=b_j+1>b_j$, this gives $b_i>b_j$ for all $i\in[\h_\a(r),j-1]$. Therefore, $\h_\b(j)\le\h_\a(r)$. If $\h_\a(r)=1$, then $\h_\b(j)=1=\h_\a(r)$. Otherwise, $b_{\h_\a(r)-1} =a_{\h_\a(r)-1} \le a_r =a_j =b_j$. Hence $\h_\a(r)-1\not\le_\b j$. By \cref{lemma: alternate position order}, this implies $\h_\b(j)\ge\h_\a(r)$. Therefore, $\h_\b(j)=\h_\a(r)$.

Finally, if $\h_\b(j)\ne\h_\a(j)$, then $\h_\a(j)=r+1\le j$, so $j>r$. Hence $\h_\b(j)=\h_\a(j)$ for every $j\le r$.
\end{proof}

\begin{lemma}\label{lem:h-D-under-lowering}
Let $\d\in\N^n$, let $r\in\D(\d)$, and set $\c=\partial_r\d$. Then, for every $j<r$, $\h_\c(j)=\h_\d(j)$ and $j\in\D(\c)\Leftrightarrow j\in\D(\d)$.
\end{lemma}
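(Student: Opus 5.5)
The plan is to reduce the statement to \cref{lemma:h-vector-under-raising} via \cref{prop:raising-lowering-inverses}. Since $r\in\D(\d)$, the latter gives $\d=\partial_r^{-1}\c$. Applying \cref{lemma:h-vector-under-raising} with $\a=\c$ and $\b=\d$, we get $\h_\d(j)=\h_\c(j)$ for all $j\le r$, in particular for all $j<r$. This proves the first claim.

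For the D-set claim, fix $j<r$ and set $i=\h_\c(j)=\h_\d(j)$. We must compare $c_{i-1}<c_j$ with $d_{i-1}<d_j$. Write $\c=\d-\e_{[s,r]}$ with $s=\h_\d(r)$.

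\emph{Case $j<s$.} Then $i-1<j<s$, so neither $i-1$ nor $j$ lies in $[s,r]$. The entries are unchanged, and the equivalence is immediate.

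\emph{Case $s\le j<r$.} Then $j\in[s,r]$ and $j\le_\d r$, so $[\h_\d(j),j]\subseteq[s,r]$ by laminarity (\cref{lem:h-intervals-laminar}), and hence $i\ge s$.
\begin{itemize}
\item If $i-1\ge s$, both $i-1$ and $j$ are decreased by $1$, so the difference $d_j-d_{i-1}$ is preserved.
\item If $i=s$, then $i-1=s-1\notin[s,r]$. Here $c_{i-1}=d_{i-1}$ while $c_j=d_j-1$. We use $r\in\D(\d)$, which gives $d_{s-1}<d_r$, and $s\le j<r$ with $j\le_\d r$, which gives $d_j>d_r$. Together these give $d_{s-1}<d_r<d_j$, so $d_{s-1}\le d_j-2$. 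Hence $c_{i-1}\le c_j-1<c_j$, and also $d_{i-1}<d_j$, so $j$ lies in both D-sets.
\end{itemize}

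The only delicate step is this boundary case $i=s$, where the shift is asymmetric. It is resolved by the strict inequality $d_j>d_r$ combined with $r\in\D(\d)$.
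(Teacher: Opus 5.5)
Your proposal is correct and follows the paper's proof essentially step for step. You get the $h$-vector claim from \cref{prop:raising-lowering-inverses} and \cref{lemma:h-vector-under-raising}, and then use the same case split on whether $j<\h_\d(r)$: laminarity forces $\h_\d(j)\ge\h_\d(r)$, and the boundary case $\h_\d(j)=\h_\d(r)$ is settled by the chain $d_{s-1}<d_r<d_j$.
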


\begin{proof}
By \cref{prop:raising-lowering-inverses}, we have $\d=\partial_r^{-1}\c$. Hence,  \cref{lemma:h-vector-under-raising} gives $\h_\c(j)=\h_\d(j)$. Set $s=\h_\d(j)=\h_\c(j)$ and $t=\h_\d(r)$. It remains to compare whether $j$ belongs to the two $\D$-sets.

If $j<t$, then neither the $j^{\text{th}}$ nor the $(s-1)^{st}$ entry is changed by $\partial_r$. Hence $c_{s-1}<c_j \Leftrightarrow d_{s-1}<d_j$.

Now suppose $t\le j<r$. Since the intervals $[s,j]$ and $[t,r]$ intersect, \cref{lem:h-intervals-laminar} gives $[s,j]\subseteq[t,r]$, so $s \ge t$. If $s>t$, then both the $j^{\text{th}}$ and the $(s-1)^{st}$ entries are decreased by $1$. Therefore, $c_{s-1}<c_j \Leftrightarrow d_{s-1}<d_j$. Consider the case when $s=t$. Since $r\in\D(\d)$, $d_{s-1}<d_r$. Moreover, since $s=\h_\d(r)\le j<r$, the definition of $\h_\d(r)$ gives $d_j>d_r$. Hence, $d_{s-1}<d_r<d_j$. Since the entries of $\d$ are integers, we have  $c_{s-1}=d_{s-1}<d_j-1=c_j$. Hence $j\in\D(\d)$ and $j\in\D(\c)$.

Since $s=\h_\d(j)=\h_\c(j)$, these cases prove $j\in\D(\c)\Leftrightarrow j\in\D(\d)$.
\end{proof}

Although every element has exactly $n$ upper covers, determining which of these covers remain below a fixed element $\b$ is more subtle. The following theorem characterizes precisely which upper covers $\partial_k^{-1}\a$ satisfy $\partial_k^{-1}\a\le_T\b$.

\begin{theorem}
\label{theorem:up-covering-relations}
Let $\a,\b\in\N^n$ satisfy $\a\le_T\b$, and let $k\in[n]$. Then $\partial_k^{-1}\a\le_T\b$ if and only if the following conditions hold:
\begin{enumerate}
    \item $a_k<b_k$;
    \item if $i\in[\h_\a(k),k]$ and $j\in[k+1,n]$ satisfy $i\le_\a j$, then $a_i-a_j<b_i-b_j$;
    \item if $j\in[k+1,n]$ satisfies $\h_\a(j)=k+1$ and $a_k=a_j$, then $b_k>b_j$.    
\end{enumerate}
\end{theorem}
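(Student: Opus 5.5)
Set $\c=\partial_k^{-1}\a=\a+\e_{[\h_\a(k),k]}$, so that $\c\le_T\b$ means: $\c\le\b$ coordinatewise, and $c_i-c_j\le b_i-b_j$ for all $i\le_\c j$. Since $\a\le_T\b$, \cref{prop:indicator-expansion} together with the decreasing-along-$\le_\a$ property gives $d_i\ge d_j$ whenever $i\le_\a j$, where $\d=\b-\a$. The plan is to check the two requirements one at a time, using \eqref{eq:raising-difference} and \cref{lemma:h-vector-under-raising} to control $\le_\c$.

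\emph{Coordinatewise part.} We need $a_i+1\le b_i$ for every $i\in[\h_\a(k),k]$. Each such $i$ satisfies $i\le_\a k$, so $d_i\ge d_k$. Hence this part is equivalent to $d_k\ge1$, which is condition (1).

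\emph{Difference part.} By \eqref{eq:raising-difference}, the difference $c_i-c_j$ exceeds $a_i-a_j$ only when $\h_\a(k)\le i\le k<j$, and then it equals $a_i-a_j+1$. So we only need $a_i-a_j<b_i-b_j$ for such pairs with $i\le_\c j$. The pairs with $i\le_\c j$ are of two kinds.
\begin{itemize}
\item Pairs with $i\le_\a j$: these are exactly the pairs in condition (2).
\item New pairs with $i\le_\c j$ but $i\not\le_\a j$: for these, $\h_\c(j)<\h_\a(j)$. By \cref{lemma:h-vector-under-raising}, this forces $\h_\a(j)=k+1$ and $\h_\c(j)=\h_\a(k)$, and (from its proof) $a_k=a_j$. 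Conversely, if $\h_\a(j)=k+1$ and $a_k=a_j$, then $\h_\c(j)=\h_\a(k)$, by the same argument as in the lemma's proof. So the new pairs are exactly $(i,j)$ with $i\in[\h_\a(k),k]$ and $j$ as in condition (3).
\end{itemize}

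For a new pair we need $a_i-a_j+1\le b_i-b_j$, i.e.\ $d_i>d_j$. Take $i=k$: using $a_k=a_j$, this reads $b_k>b_j$, which is condition (3).

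It remains to show that the instance $i=k$ implies the cases $i\in[\h_\a(k),k-1]$. Since $i\le_\a k$, we have $d_i\ge d_k>d_j$. So condition (3) is equivalent to the requirement on all new pairs.

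\emph{Main obstacle.} The delicate step is the exact description of $\le_\c$ versus $\le_\a$, and in particular the converse direction above. We must confirm that $\h_\a(j)=k+1$ with $a_k=a_j$ really makes every $i\in[\h_\a(k),k]$ satisfy $i\le_\c j$: the entries on $[\h_\a(k),k]$ rise by one, while those on $[k+1,j-1]$ already exceed $a_j$. We must also confirm that no other pairs appear, which is exactly \cref{lemma:h-vector-under-raising} combined with \cref{lemma: alternate position order}.

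Assembling the coordinatewise part and the two kinds of pairs gives the equivalence in both directions.
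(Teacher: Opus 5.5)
Your proposal is correct and follows essentially the same route as the paper: condition (1) handles the coordinatewise inequality via $d_i\ge d_k$ on $[\h_\a(k),k]$, condition (2) handles the old relations $i\le_\a j$ in the case $\h_\a(k)\le i\le k<j$ of \eqref{eq:raising-difference}, and \cref{lemma:h-vector-under-raising} reduces the new relations to the pairs in condition (3), with the case $i<k$ obtained by chaining $d_i\ge d_k>d_j$. The one step you leave implicit is that every pair in condition (2) actually satisfies $i\le_\c j$, which follows from \cref{lem:po-refinement} since $\a\le_T\c$; also note the paper needs only $k\le_\c j$, not your full converse $\h_\c(j)=\h_\a(k)$, for the forward direction of (3).
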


\begin{proof}
Let $\c=\partial_k^{-1}\a$. Then by \cref{lemma:single-interval-cover}, we have $\a\le_T\c$.  

Suppose first that $\c\le_T\b$. 
\begin{enumerate}
    \item We have $c_k\le b_k$, so $a_k+1\le b_k$ and hence $a_k < b_k$.
    \item Suppose $i\in[\h_\a(k),k]$ and $j\in[k+1,n]$ satisfy $i\le_\a j$. Since $\a\le_T\c$, by \cref{lem:po-refinement}, we also have $i\le_\c j$.  Since $\c \le _T \b$,  we have  $c_i -c_j \le  b_i-b_j$.  Thus, $a_i-a_j=(c_i - 1) - c_j \le b_i-b_j -1 < b_i-b_j$.
    \item Suppose $j\in[k+1,n]$ satisfies $\h_\a(j)=k+1$ and $a_k=a_j$. By the definition of $\h_\a$,  $a_r>a_j$ for $\h_\a(j)\le r<j$, i.e.,  $c_r = a_r>a_j = c_j $ for $k + 1 \le r<j$. Also, $c_k=a_k+1 = a_j + 1 > a_j=c_j$. Thus, $c_r > c_j$ for $k \le r<j$ and hence $k\le_\c j$.  Since $\c \le _T \b$,  we have $1=c_k-c_j\le b_k-b_j$. So, $b_k>b_j$.
\end{enumerate}

Conversely, suppose (1)--(3) hold. We first show that $\c\le\b$ coordinatewise. If $i\notin[\h_\a(k),k]$, then $c_i=a_i\le b_i$. If $i\in[\h_\a(k),k]$, then by \cref{lemma: alternate position order}, $i\le_\a k$. Since $\a \le_T \b$,  we have $a_i-a_k\le b_i-b_k$. By (1),  we get $a_i - b_i \le a_k - b_k < 0$. That is, $a_i < b_i$ and hence $c_i = a_i+1\le b_i$.

It remains to verify the inequalities in the definition of the stable Tamari order. Let $i<j$ satisfy $i\le_\c j$.

Suppose $i\le_\a j$. By \cref{eq:raising-difference}, the only case in which $c_i-c_j>a_i-a_j$ is when $\h_\a(k)\le i\le k<j$. By (2), $c_i-c_j=a_i-a_j+1\le b_i-b_j$. In all other cases, $c_i-c_j\le a_i-a_j$. Since $\a\le_T\b$, we have $c_i-c_j\le a_i-a_j\le b_i-b_j$.

Now suppose $i\not\le_\a j$. Since $i\le_\c j$, by \cref{lemma: alternate position order}, $\h_\c(j)\le i$. On the other hand, $i\not\le_\a j$ implies $i<\h_\a(j)\le j$. Hence $\h_\c(j)\le i<\h_\a(j)\le j$. By \cref{lemma:h-vector-under-raising}, since $\h_\c(j)<\h_\a(j)$, we have $\h_\c(j)=\h_\a(k)$ and $\h_\a(j)=k+1$. Therefore, we have $\h_\a(k)\le i<k+1 \le j$, so $\h_\a(k)\le i\le k < j$.  Moreover, since $ k \in[\h_\c(j), j]$, by \cref{lemma: alternate position order},  we have  $k \le_\c j$ and hence $c_k>c_j$, which implies $a_k+1>a_j$. Since $\h_\a(j)=k+1$, we have $a_k\le a_j$. Thus $a_k=a_j$, and by (3), $b_k>b_j$. Since $i\in[\h_\a(k),k]$, by \cref{lemma: alternate position order}, we have $i\le_\a k$. Therefore, $c_i-c_j =a_i+1-a_j =a_i+1-a_k \le b_i-b_k+1 \le b_i-b_j$. 

Thus, $c_i-c_j\le b_i-b_j$ whenever $i\le_\c j$. Together with the coordinatewise inequality $\c\le\b$, this shows that $\c\le_T\b$.
\end{proof}

The preceding theorem gives a useful way to find an upper cover of $\a$ that remains below $\b$: one can raise $\a$ at the rightmost coordinate where $\a$ and $\b$ differ.

\begin{corollary}
\label{cor:last-cov}
Let $\a,\b\in\N^n$ satisfy $\a<_T\b$. If $k=\max\{i\in[n]:a_i\ne b_i\}$, then $\partial_k^{-1}\a\le_T\b$.
\end{corollary}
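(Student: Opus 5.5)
We verify conditions (1)--(3) of \cref{theorem:up-covering-relations} for $k=\max\{i\in[n]:a_i\ne b_i\}$, which exists because $\a\ne\b$. Since $\a\le\b$ coordinatewise, we have $a_k<b_k$, and $a_j=b_j$ for every $j>k$. Condition (1) is therefore immediate.

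For condition (2), let $i\in[\h_\a(k),k]$ and $j\in[k+1,n]$ with $i\le_\a j$. By \cref{lemma: alternate position order}, $i\le_\a k$, so the defining inequality of $\a\le_T\b$ gives $a_i-a_k\le b_i-b_k$. Rearranging, $b_i-a_i\ge b_k-a_k>0$. Since $j>k$, we have $b_j=a_j$, and hence
\[
(b_i-b_j)-(a_i-a_j)=(b_i-a_i)-(b_j-a_j)=b_i-a_i>0,
\]
which is the required strict inequality $a_i-a_j<b_i-b_j$.

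For condition (3), let $j>k$ with $\h_\a(j)=k+1$ and $a_k=a_j$. Then $b_j=a_j=a_k<b_k$, so $b_k>b_j$.

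All three conditions hold, so \cref{theorem:up-covering-relations} gives $\partial_k^{-1}\a\le_T\b$. The only step needing care is (2), and it reduces to $i\le_\a k$ via \cref{lemma: alternate position order} together with the vanishing of $\b-\a$ to the right of $k$.
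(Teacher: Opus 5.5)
Your proposal is correct and matches the paper's proof essentially step for step. You verify conditions (1)--(3) of \cref{theorem:up-covering-relations} using $a_j=b_j$ for all $j>k$, and for condition (2) you obtain $i\le_\a k$ from \cref{lemma: alternate position order}, which gives $b_i-a_i\ge b_k-a_k>0$.
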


\begin{proof}
We verify the conditions of \cref{theorem:up-covering-relations}. Since $\a\le_T\b$, we have $a_k\le b_k$. As $a_k\ne b_k$, it follows that $a_k<b_k$, so (1) holds.

For (2), suppose $i\in[\h_\a(k),k]$ and $j\in[k+1,n]$ satisfy $i\le_\a j$. By \cref{lemma: alternate position order}, $i\le_\a k$. Since $\a\le_T\b$, we have $a_i-a_k\le b_i-b_k$. Thus, $a_i-b_i\le a_k-b_k < 0$. So, $a_i<b_i$. Moreover, by the definition of $k$, $j>k$ implies $a_j=b_j$. Therefore, $a_i-a_j<b_i-b_j$.

For (3), suppose $j\in[k+1,n]$ satisfies $\h_\a(j)=k+1$ and $a_k=a_j$. Since $j>k$, we have $a_j=b_j$. Thus $b_k>a_k=a_j=b_j$.

Hence all three conditions of \cref{theorem:up-covering-relations} hold, and therefore $\partial_k^{-1}\a\le_T\b$.
\end{proof}

\section{Structural Properties of the Stable Tamari Poset}\label{sec:structural-properties}
Having developed criteria for comparing elements and characterized the cover relations in the stable Tamari order, we now study structural properties of $\mathcal{ST}_n$. We begin with chain lengths and give an explicit formula for the height of an interval. We then study the lattice structure of $\mathcal{ST}_n$ and its finite lower order ideals. Finally, we establish an EL-labeling and study the M\"obius function of the stable Tamari poset.

\subsection{Height}\label{sec: ht}
The stable Tamari poset is not graded in general. In fact, this phenomenon already occurs in the classical Tamari lattice, as illustrated in \cref{ex:not-graded} below.

\begin{example}\label{ex:not-graded}
Recall from \cref{rem:classical-Tamari} that the principal lower order ideal generated by $(n,n-1,\ldots,1)$ in $\mathcal{ST}_n$ is isomorphic to the classical Tamari lattice $\mathcal{T}_{n+1}$. For $n=2$, this principal lower order ideal is exactly the interval $[(0,0),(2,1)]$ in $\mathcal{ST}_2$, which is isomorphic to $\mathcal{T}_3$. Its Hasse diagram is shown in \cref{fig:T3-stable-Tamari}. The red and blue maximal chains have lengths $3$ and $2$, respectively, and hence $\mathcal{ST}_2$ is not graded.

\begin{figure}[ht]
\centering
\begin{tikzpicture}[scale=0.9]
    \node (A) at (0,0) {$(0,0)$};
    \node (B) at (-1.2,1) {$(0,1)$};
    \node (C) at (-1.2,2) {$(1,1)$};
    \node (D) at (1.2,1.5) {$(1,0)$};
    \node (E) at (0,3) {$(2,1)$};
    \draw[red, line width=1.5pt] (A)--(B)--(C)--(E);
    \draw[blue, line width=1.5pt] (A)--(D)--(E);
\end{tikzpicture}
\caption{The classical Tamari lattice $\mathcal{T}_3$ as an interval in $\mathcal{ST}_2$.}
\label{fig:T3-stable-Tamari}
\end{figure}
\end{example}

Since $\mathcal{ST}_n$ is not graded, maximal chains between two comparable elements need not have the same length. If $\a\le_T\b$, then every $\c\in[\a,\b]$ satisfies $a_i\le c_i\le b_i$ for all $i\in[n]$. Thus every interval is finite. We therefore use the maximum length of a chain in the interval they determine. Here, the length of a chain is one less than its number of elements.

\begin{definition}\label{def:ht}
Let $\a\le_T\b$. Define the \emph{height} of the interval $[\a,\b]$, denoted $\hht(\a,\b)$, to be the maximum length of a chain in $[\a,\b]$.
\end{definition}

\begin{example}\label{ex:ht}
For the interval $[(0,0),(2,1)]$ in \cref{ex:not-graded}, the two maximal chains have lengths $2$ and $3$. Hence $\hht((0,0),(2,1))=3$.
\end{example}

\begin{proposition}\label{prop:height-lower-bound}
Let $\a\le_T\b$, and write $\m_{\a,\b}=(m_1,\ldots,m_n)=(\b-\a)M_\a^{-1}$. Then there is a saturated chain from $\a$ to $\b$ of length $m_1+\cdots+m_n$.
\end{proposition}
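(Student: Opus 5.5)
We argue by induction on $|\m_{\a,\b}|:=m_1+\cdots+m_n$. If this sum is $0$, then $\b-\a=\mathbf 0\cdot M_\a=\mathbf 0$, so $\a=\b$ and the trivial chain works. Otherwise $\a<_T\b$. The plan is to locate an upper cover $\c=\partial_k^{-1}\a$ with $\c\le_T\b$ such that $|\m_{\c,\b}|=|\m_{\a,\b}|-1$. Induction then gives a saturated chain from $\c$ to $\b$ of length $|\m_{\a,\b}|-1$, and prepending the cover $\a\lessdot_T\c$ from \cref{lemma:single-interval-cover} gives the claim.

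\textbf{Choice of $k$.} Take $k=\max\{i:a_i\ne b_i\}$. By \cref{cor:last-cov}, $\c=\partial_k^{-1}\a\le_T\b$. To compare coordinates, write
\[
\b-\c=(\b-\a)-\e_{[\h_\a(k),k]}=\m_{\a,\b}M_\a-\e_k M_\a=(\m_{\a,\b}-\e_k)M_\a .
\]
So in the basis of $M_\a$ the coordinates of $\b-\c$ are $\m_{\a,\b}-\e_k$, whose sum is $|\m_{\a,\b}|-1$. Moreover $m_k=b_k-a_k>0$: all entries to the right of $k$ agree, so $k\in\RLmin(\a)$ or $b_{p_\a(k)}-a_{p_\a(k)}=0$, and \cref{cor:m-vector} applies either way. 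Hence $\m_{\a,\b}-\e_k\in\N^n$.

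\textbf{Main obstacle.} The difficulty is that $\m_{\c,\b}$ uses the basis $M_\c$, not $M_\a$. We must show that the total coordinate sum is unchanged. By \cref{lemma:h-vector-under-raising}, $\h_\c(j)=\h_\a(j)$ except for those $j>k$ with $\h_\a(j)=k+1$. For such $j$, the interval $[\h_\a(j),j]=[k+1,j]$ is replaced by $[\h_\a(k),j]$, which is the disjoint union $[\h_\a(k),k]\sqcup[k+1,j]$.

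Rather than track these changes in the basis, use $k=\max\{i:a_i\ne b_i\}$ directly. All changed rows $j$ have $j>k$. Since $\b-\c$ vanishes in positions $>k$ and $M_\a$, $M_\c$ are lower triangular, the coordinates $\m(j)$ for $j>k$ vanish in both bases. This follows by back-substitution from the last coordinate, as in \cref{cor:m-vector}: for $j>k$, $b_j-c_j=0$ and $b_{p(j)}-c_{p(j)}=0$.

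For $j\le k$, the rows of $M_\a$ and $M_\c$ coincide. Solving $\x M=\b-\c$ by back-substitution from the right, the coordinates for $j>k$ are zero in both systems. The remaining coordinates then solve the same triangular system, because only rows $j>k$ differ and those rows carry zero coefficients. Thus $\m_{\c,\b}=\m_{\a,\b}-\e_k$.

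This gives $\m_{\c,\b}\in\N^n$ with $|\m_{\c,\b}|=|\m_{\a,\b}|-1$, which closes the induction.
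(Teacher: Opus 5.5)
Your proof is correct and is essentially the paper's argument. Raising at $k=\max\{i:a_i\ne b_i\}$ at each step produces exactly the paper's chain, which applies $\partial_r^{-1}$ a total of $m_r$ times for $r=n,n-1,\ldots,1$. In both arguments the key input is \cref{lemma:h-vector-under-raising}, which leaves the rows of $M$ indexed by $j\le k$ unchanged; the paper simply presents this as a direct construction rather than as an induction on $|\m_{\a,\b}|$.

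One small simplification: you do not need \cref{cor:last-cov}. Once you have $\b-\c=(\m_{\a,\b}-\e_k)M_\c$ with $\m_{\a,\b}-\e_k\in\N^n$, \cref{thm:compare seq with M matrix} already gives $\c\le_T\b$.
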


\begin{proof}
By \cref{thm:compare seq with M matrix}, $m_1,\ldots,m_n\in\N$, and $\b-\a=\sum_{r=1}^n m_r\e_{[\h_\a(r),r]}$. Starting from $\a$, apply the raising operators $\partial_r^{-1}$ exactly $m_r$ times for $r=n,n-1,\ldots,1$, in that order.

We claim that whenever $\partial_r^{-1}$ is applied to the current sequence $\c$, we have $\h_\c(r)=\h_\a(r)$. Indeed, all previously applied raising operators have indices $s\ge r$. By \cref{lemma:h-vector-under-raising}, applying $\partial_s^{-1}$ leaves the $j^{\text{th}}$ entry of the $h$-vector unchanged for every $j\le s$. In particular, it leaves the $r^{\text{th}}$ entry unchanged. Hence $\h_\c(r)=\h_\a(r)$ throughout the operations at index $r$.

Consequently, each application of $\partial_r^{-1}$ adds precisely $\e_{[\h_\a(r),r]}$, and each such step is a cover. After all the operations, the resulting sequence is $\a+\sum_{r=1}^n m_r\e_{[\h_\a(r),r]}=\b$. Thus we obtain a saturated chain from $\a$ to $\b$ of length $m_1+\cdots+m_n$.
\end{proof}

\begin{corollary}\label{cor:h-interval-decomposition}
Let $\a\le_T\b$. For every $j\in[n]$, the interval $[\h_\b(j),j]$ is a disjoint union of intervals of the form $[\h_\a(i),i]$, where $i\in[n]$.
\end{corollary}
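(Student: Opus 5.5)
We must show $[\h_\b(j),j]$ is a disjoint union of intervals $[\h_\a(i),i]$. The approach is to use the descendant description of $h$-intervals from \cref{prop:forest-matrices} together with the refinement of position orders from \cref{lem:po-refinement}.

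Fix $j$ and set $J=[\h_\b(j),j]$. Let $S$ be the set of indices $i\in J$ such that $[\h_\a(i),i]\subseteq J$ and $i$ is maximal in this respect, meaning no $i'\in J$ with $i'\neq i$ satisfies $[\h_\a(i),i]\subsetneq[\h_\a(i'),i']\subseteq J$. By \cref{lem:h-intervals-laminar} the family of $\a$-intervals is laminar, so the maximal members of the subfamily lying inside $J$ are pairwise disjoint. Since every $q\in J$ lies in its own interval $[\h_\a(q),q]$, it remains to show that $[\h_\a(q),q]\subseteq J$ for every $q\in J$. The intervals indexed by $S$ then cover $J$ and give the required disjoint decomposition.

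To prove $[\h_\a(q),q]\subseteq J$ for $q\in J$, note first that $q\le j$. We need $\h_\a(q)\ge\h_\b(j)$. Suppose instead that $\h_\a(q)<\h_\b(j)\le q$. Put $p=\h_\b(j)-1$, which satisfies $p\ge \h_\a(q)\ge1$. Then $p\in[\h_\a(q),q]$, so $p\le_\a q$ by \cref{lemma: alternate position order}. By \cref{lem:po-refinement}, $p\le_\b q$. Since $q\in J$, we also have $q\le_\b j$. Transitivity then gives $p\le_\b j$, hence $\h_\b(j)\le p=\h_\b(j)-1$, a contradiction. Therefore $\h_\a(q)\ge \h_\b(j)$. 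Combined with $q\le j$, this gives $[\h_\a(q),q]\subseteq J$.

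The only subtle point is the bookkeeping around laminarity. Disjointness of the chosen maximal intervals, and the claim that each $q$'s own interval lies under some maximal one inside $J$, both follow because any two intersecting $\a$-intervals are nested. The case $p=0$ cannot arise in the contradiction step, since it would require $\h_\a(q)<1$. An equivalent way to phrase the result is through the forest $F_\a$ of \cref{prop:forest-matrices}: $J$ is a union of full descendant sets, namely those of the vertices of $J$ whose parent lies outside $J$.
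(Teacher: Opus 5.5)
Your proof is correct, and it takes a different route from the paper's. The paper argues by induction along the saturated chain from $\a$ to $\b$ built in \cref{prop:height-lower-bound}. By \cref{lemma:h-vector-under-raising}, a raising step $\d=\partial_r^{-1}\c$ changes an $h$-interval only by merging two adjacent ones, $[\h_\d(j),j]=[\h_\c(r),r]\sqcup[\h_\c(j),j]$, so disjoint decompositions into $\a$-intervals carry up the chain.

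You instead argue directly and order-theoretically. Your key step shows that $\{i: i\le_\b j\}$ is a lower set for $\le_\a$: if $p\le_\a q\le_\b j$, then $p\le_\b q\le_\b j$ by \cref{lem:po-refinement}. Hence it is a union of the principal $\le_\a$-lower sets $[\h_\a(q),q]$, and laminarity (\cref{lem:h-intervals-laminar}) lets you keep only the inclusion-maximal ones to make the union disjoint.

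Your argument is shorter and uses only \cref{lem:po-refinement}, \cref{lemma: alternate position order} and laminarity. It needs no cover relations, no \cref{thm:compare seq with M matrix}, and no chain. It also names the pieces explicitly: the descendant sets in $F_\a$ of the vertices of $J$ whose parent lies outside $J$ (roots included). As a side benefit, the upper bound in \cref{thm:max-chain}, which goes through \cref{cor:m-monotonicity}, no longer depends on the construction in \cref{prop:height-lower-bound}. The paper's inductive route instead shows how $h$-intervals coalesce step by step along a chain, and it reuses \cref{lemma:h-vector-under-raising}, which the paper needs anyway for the lower bound.
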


\begin{proof}
Let $\a=\c^{(0)}\lessdot_T\c^{(1)} \lessdot_T\cdots\lessdot_T\c^{(\ell)}=\b$ be the saturated chain constructed in \cref{prop:height-lower-bound}. We prove the statement by induction along this chain. For $\c^{(0)}=\a$, the result is clear.

Suppose the statement holds for $\c$, and let $\d=\partial_r^{-1}\c$ be the next element in the chain. Fix $j\in[n]$. The result follows if $\h_\d(j)=\h_\c(j)$. Otherwise, \cref{lemma:h-vector-under-raising} gives $\h_\d(j)=\h_\c(r)$ and $\h_\c(j)=r+1$. Hence $[\h_\d(j),j] = [\h_\c(r),r]\sqcup[r+1,j]$.

By induction, both $[\h_\c(r),r]$ and $[r+1,j]=[\h_\c(j),j]$ are disjoint unions of intervals of the form $[\h_\a(i),i]$. Since $[\h_\c(r),r]$ and $[r+1,j]=[\h_\c(j),j]$ are disjoint, it follows that $[\h_\d(j),j]$ is also a disjoint union of intervals of this form.
\end{proof}

\begin{corollary}\label{cor:m-monotonicity}
Let $\a\le_T\c<_T\d$. Then $(\d-\c)M_\a^{-1}\in\N^n\setminus\{\mathbf{0}\}$. Equivalently, $\m_{\a,\c}\le\m_{\a,\d}$ coordinatewise and $\m_{\a,\c}\ne\m_{\a,\d}$.
\end{corollary}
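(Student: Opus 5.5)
We need to show that if $\a\le_T\c<_T\d$, then $(\d-\c)M_\a^{-1}\in\N^n\setminus\{\mathbf 0\}$. The plan is to express $\d-\c$ in the interval basis of $\c$ and then re-express each of those intervals in the interval basis of $\a$, using \cref{cor:h-interval-decomposition}.

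Since $\c<_T\d$, \cref{thm:compare seq with M matrix} gives $\m_{\c,\d}\in\N^n$. By \cref{prop:indicator-expansion}, we may write
\[
\d-\c=\sum_{j=1}^n \m_{\c,\d}(j)\,\e_{[\h_\c(j),j]}.
\]
Because $\d\ne\c$, at least one coefficient $\m_{\c,\d}(j)$ is positive.

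Next we apply \cref{cor:h-interval-decomposition} to the pair $\a\le_T\c$. It says that each $[\h_\c(j),j]$ is a disjoint union of intervals $[\h_\a(i),i]$. Hence $\e_{[\h_\c(j),j]}=\sum_{i\in S_j}\e_{[\h_\a(i),i]}$ for some set $S_j\subseteq[n]$. The set $S_j$ is nonempty, since the interval contains $j$.

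Recall that the $i^{\text{th}}$ row of $M_\a$ is $\e_{[\h_\a(i),i]}$. Therefore $\e_{[\h_\a(i),i]}M_\a^{-1}=\e_i$, and so
\[
\e_{[\h_\c(j),j]}M_\a^{-1}=\e_{S_j},
\]
which is a nonzero $0$--$1$ vector.

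Combining these, $(\d-\c)M_\a^{-1}=\sum_j \m_{\c,\d}(j)\,\e_{S_j}$. This is an $\N$-combination of nonzero $0$--$1$ vectors in which at least one coefficient is positive. Hence it lies in $\N^n\setminus\{\mathbf 0\}$.

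For the equivalent formulation, linearity gives
\[
\m_{\a,\d}-\m_{\a,\c}=(\d-\a)M_\a^{-1}-(\c-\a)M_\a^{-1}=(\d-\c)M_\a^{-1}.
\]
So the first statement is exactly the claim that $\m_{\a,\c}\le\m_{\a,\d}$ coordinatewise with $\m_{\a,\c}\ne\m_{\a,\d}$.

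The only substantive ingredient is \cref{cor:h-interval-decomposition}, which is already proved. The remaining steps are bookkeeping: checking that disjointness yields a $0$--$1$ vector with no cancellation, and that this vector is nonzero because the interval contains $j$.
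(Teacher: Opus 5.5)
Your proposal is correct and follows essentially the same route as the paper. Both expand $\d-\c$ in the interval basis of $\c$, split each $[\h_\c(j),j]$ into intervals $[\h_\a(i),i]$ via \cref{cor:h-interval-decomposition}, and read off nonnegative coordinates after multiplying by $M_\a^{-1}$. The only difference is cosmetic: you get $(\d-\c)M_\a^{-1}\neq\mathbf{0}$ from the absence of cancellation in a nonnegative combination, whereas the paper uses invertibility of $M_\a$ (equal $m$-vectors would force $\c=\d$).
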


\begin{proof}
Since $\c<_T\d$, by \cref{prop:indicator-expansion}, $\d-\c$ is an $\N$-linear combination of the interval indicator vectors $\e_{[\h_\c(r),r]}$. By \cref{cor:h-interval-decomposition}, each interval $[\h_\c(r),r]$ is a disjoint union of intervals of the form $[\h_\a(i),i]$. Hence $\d-\c$ is an $\N$-linear combination of the rows of $M_\a$, and therefore $(\d-\c)M_\a^{-1}\in\N^n$.

Moreover, $\m_{\a,\d}-\m_{\a,\c}=(\d-\c)M_\a^{-1}$, so $\m_{\a,\c}\le\m_{\a,\d}$ coordinatewise. These two vectors cannot be equal, since otherwise $\c= \a + \m_{\a,\c}M_\a =\a + \m_{\a,\d}M_\a = \d$, a contradiction. Hence, $\m_{\a,\c}\ne\m_{\a,\d}$.
\end{proof}

The following theorem gives an explicit formula for the height of an interval in the stable Tamari poset.

\begin{theorem}\label{thm:max-chain}
Let $\a,\b\in\N^n$ with $\a\le_T\b$. Then $\hht(\a,\b)=\sum_{i=1}^n\m_{\a,\b}(i)$.
\end{theorem}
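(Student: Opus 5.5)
The proof splits into a lower bound and a matching upper bound on chain lengths in $[\a,\b]$, both phrased in terms of the vector $\m_{\a,\b}$.

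\emph{Lower bound.} By \cref{prop:height-lower-bound}, there is a saturated chain from $\a$ to $\b$ of length $\sum_{i=1}^n\m_{\a,\b}(i)$. This chain applies the raising operators $\partial_r^{-1}$ in nonincreasing order of $r$, each $m_r$ times. Hence $\hht(\a,\b)\ge\sum_i\m_{\a,\b}(i)$, and nothing further is needed for this direction.

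\emph{Upper bound.} Take an arbitrary chain $\a=\c^{(0)}<_T\c^{(1)}<_T\cdots<_T\c^{(\ell)}=\b$ in $[\a,\b]$. Consider the integer-valued potential
\[
\Phi(\c)=\sum_{i=1}^n\m_{\a,\c}(i), \qquad \c\in[\a,\b].
\]
This potential satisfies three properties:
\begin{enumerate}
\item By \cref{thm:compare seq with M matrix}, $\m_{\a,\c}\in\N^n$ for every $\c\ge_T\a$.
\item $\Phi(\a)=0$, since $\m_{\a,\a}=\mathbf 0$.
\item $\Phi(\b)=\sum_i\m_{\a,\b}(i)$.
\end{enumerate}
For each step $\c^{(t-1)}<_T\c^{(t)}$, \cref{cor:m-monotonicity} (applied with $\a\le_T\c^{(t-1)}<_T\c^{(t)}$) shows that $\m_{\a,\c^{(t)}}-\m_{\a,\c^{(t-1)}}\in\N^n\setminus\{\mathbf 0\}$. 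Since this difference has nonnegative integer entries and is nonzero, $\Phi(\c^{(t)})\ge\Phi(\c^{(t-1)})+1$. Telescoping over the $\ell$ steps gives $\ell\le\Phi(\b)-\Phi(\a)=\sum_i\m_{\a,\b}(i)$.

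Since the chain was arbitrary, $\hht(\a,\b)\le\sum_i\m_{\a,\b}(i)$, and combining this with the lower bound gives equality. Both bounds rely entirely on earlier results. The substantive input is \cref{cor:m-monotonicity}, which rests on the decomposition in \cref{cor:h-interval-decomposition} of each $h$-interval of $\c\ge_T\a$ into $h$-intervals of $\a$. With that in hand, the only point to check is that the monotonicity statement applies to arbitrary strict steps, not merely covers. It does: \cref{cor:m-monotonicity} is stated for $\c<_T\d$, and here $\c^{(t-1)}\ge_T\a$ holds by transitivity.
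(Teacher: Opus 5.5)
Your proposal is correct and follows the paper's proof essentially verbatim. The lower bound comes from \cref{prop:height-lower-bound}, and the upper bound comes from telescoping the strict coordinatewise increase of $\m_{\a,\cdot}$ given by \cref{cor:m-monotonicity} along an arbitrary chain, using $\m_{\a,\a}=\mathbf{0}$.
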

\begin{proof}
By \cref{prop:height-lower-bound}, there is a saturated chain from  $\a$ to $\b$ of length $\sum_{i=1}^n\m_{\a,\b}(i)$. Conversely, let $\a=\c^{(0)}<_T\c^{(1)}<_T\cdots<_T\c^{(\ell)}=\b$ be any chain. By \cref{cor:m-monotonicity}, $\m_{\a,\c^{(t-1)}}\le\m_{\a,\c^{(t)}}$ coordinatewise and $\m_{\a,\c^{(t-1)}}\ne\m_{\a,\c^{(t)}}$ for all $1 \le t \le \ell$. Since their entries are integers, we have $\sum_i\m_{\a,\c^{(t-1)}}(i)+1 \le \sum_i\m_{\a,\c^{(t)}}(i)$. Applying this along the chain, and using $\m_{\a,\a}=\mathbf{0}$, gives $\ell\le\sum_i\m_{\a,\b}(i)$. Thus every chain from $\a$ to $\b$ has length at most $\sum_i\m_{\a,\b}(i)$. Together with \cref{prop:height-lower-bound}, the result follows.
\end{proof}

\begin{example}\label{ex:max-chain}
Let $\a=(6,7,0,2,1)$ and $\b=(10,9,0,4,3)$. By \cref{ex:compare seq with M matrix}, $\a\le_T\b$ and $\m_{\a,\b}=(4,2,0,0,2)$. Hence, by \cref{thm:max-chain}, $\hht(\a,\b)=4+2+0+0+2=8$.
\end{example}

\begin{corollary}\label{cor:height-RLmin}
Let $\a\le_T\b$. Then \[\hht(\a,\b) = \sum_{j=1}^n(b_j-a_j) - \sum_{j\notin\RLmin(\a)} \left(b_{p_\a(j)}-a_{p_\a(j)}\right).\]
\end{corollary}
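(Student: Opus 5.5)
This follows by combining \cref{thm:max-chain} with the explicit coordinate formula of \cref{cor:m-vector}. Since $\a\le_T\b$, we have $\a\le\b$ coordinatewise, so \cref{cor:m-vector} applies. It gives $\m_{\a,\b}(j)=b_j-a_j$ for $j\in\RLmin(\a)$ and $\m_{\a,\b}(j)=(b_j-a_j)-(b_{p_\a(j)}-a_{p_\a(j)})$ for $j\notin\RLmin(\a)$.

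Summing these coordinates over $j\in[n]$, every index contributes its own term $b_j-a_j$. Each non-root $j$ contributes in addition the term $-(b_{p_\a(j)}-a_{p_\a(j)})$. This yields
\[
\sum_{j=1}^n\m_{\a,\b}(j)=\sum_{j=1}^n(b_j-a_j)-\sum_{j\notin\RLmin(\a)}\bigl(b_{p_\a(j)}-a_{p_\a(j)}\bigr).
\]
By \cref{thm:max-chain}, the left-hand side equals $\hht(\a,\b)$, which is the claimed formula.

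An equivalent way to organize the computation is through the forest $F_\a$ of \cref{cor:m-vector-forest}. The second sum then counts each label $b_i-a_i$ once for every child of $i$ in $F_\a$. There is no real obstacle here. The only point to check is that \cref{cor:m-vector} is applied under its hypothesis of coordinatewise comparability, and that is immediate from $\a\le_T\b$.
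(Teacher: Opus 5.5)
Your proof is correct and is the same as the paper's: apply the coordinate formula of \cref{cor:m-vector}, which is valid because $\a\le_T\b$ gives $\a\le\b$ coordinatewise, and then sum over $j$ using \cref{thm:max-chain}. Your forest remark is also accurate: since $p_\a(j)$ is the parent of $j$ in $F_\a$, the second sum counts each label once per child.
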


\begin{proof}
By \cref{thm:max-chain,cor:m-vector},
\[\begin{aligned}
\hht(\a,\b) 
&=\sum_{j\in\RLmin(\a)}(b_j-a_j) +\sum_{j\notin\RLmin(\a)} \left((b_j-a_j)-(b_{p_\a(j)}-a_{p_\a(j)})\right)\\
&=\sum_{j=1}^n(b_j-a_j)-\sum_{j\notin\RLmin(\a)}\bigl(b_{p_\a(j)}-a_{p_\a(j)}\bigr).
\end{aligned}\]
\end{proof}

We next consider the distribution of the height statistic when one endpoint of an interval is fixed. We begin by fixing the lower endpoint. In this case, the resulting distribution is independent of the choice of lower endpoint.

\begin{corollary}\label{cor:upper-height-series}
Fix $\a\in\N^n$. The map $\b\mapsto \m_{\a,\b}$ is a bijection from $\{\b\in\N^n:\a\le_T\b\}$ to $\N^n$. Consequently, for every $k\in\N$, $\left|\{\b\in\N^n:\a\le_T\b,\ \hht(\a,\b)=k\}\right| = \binom{n+k-1}{n-1}$, and
\[
\sum_{\a\le_T\b}t^{\hht(\a,\b)} = \frac{1}{(1-t)^n}.
\]
\end{corollary}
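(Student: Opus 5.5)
The plan is to realize the map $\b\mapsto\m_{\a,\b}$ as an affine change of coordinates and then read the height off the new coordinates. By \cref{thm:compare seq with M matrix}, if $\a\le_T\b$ then $\m_{\a,\b}=(\b-\a)M_\a^{-1}\in\N^n$, so the map is well defined into $\N^n$. It is injective because $M_\a$ is invertible: $\m_{\a,\b}=\m_{\a,\b'}$ forces $\b-\a=\b'-\a$.

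For surjectivity, given $\m\in\N^n$, set $\b=\a+\m M_\a$. Since $M_\a$ has $0$--$1$ entries and $\m\ge 0$, we get $\b-\a=\sum_r m_r\e_{[\h_\a(r),r]}\in\N^n$. In particular $\b\in\N^n$, and it lies in the $\N$-span of the vectors $\e_{[\h_\a(r),r]}$. By \cref{prop:indicator-expansion}, $\a\le_T\b$, and by construction $\m_{\a,\b}=\m$. Hence the map is a bijection onto $\N^n$.

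For the counting, \cref{thm:max-chain} gives $\hht(\a,\b)=\sum_i\m_{\a,\b}(i)$. So, under the bijection, the elements $\b\ge_T\a$ with $\hht(\a,\b)=k$ correspond exactly to the vectors $\m\in\N^n$ with coordinate sum $k$. Counting these is a standard stars-and-bars computation, giving $\binom{n+k-1}{n-1}$. Equivalently, the generating function is
\[
\sum_{\m\in\N^n}t^{m_1+\cdots+m_n}=\prod_{i=1}^n\frac{1}{1-t}=\frac{1}{(1-t)^n}.
\]

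There is no real obstacle here. The only point needing care is surjectivity: one must check that $\a+\m M_\a$ has nonnegative entries, which holds because $M_\a$ has nonnegative entries.
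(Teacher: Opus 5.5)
Your proposal is correct and follows essentially the same route as the paper: well-definedness via \cref{thm:compare seq with M matrix}, surjectivity by setting $\b=\a+\m M_\a$ and using the nonnegativity of $M_\a$, then \cref{thm:max-chain} and a count of weak compositions. The only differences are cosmetic: you invoke \cref{prop:indicator-expansion} directly (the paper cites its matrix rephrasing), and you state injectivity explicitly, where the paper leaves it implicit in the invertibility of $M_\a$.
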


\begin{proof}
By \cref{thm:compare seq with M matrix}, if $\a\le_T\b$, then $\m_{\a,\b}\in\N^n$. Conversely, for any $\m\in\N^n$, let $\b=\a+\m M_\a$. Since $M_\a$ has nonnegative integer entries, $\b\in\N^n$, and $\m_{\a,\b}=(\b-\a)M_\a^{-1}=\m$. Thus, by \cref{thm:compare seq with M matrix}, $\a\le_T\b$. Hence $\b\mapsto\m_{\a,\b}$ is a bijection from $\{\b\in\N^n:\a\le_T\b\}$ to $\N^n$.

By \cref{thm:max-chain}, $\hht(\a,\b)=\sum_{i=1}^n\m_{\a,\b}(i)$. Hence the elements $\b\ge_T\a$ of height $k$ are in bijection with weak compositions of $k$ into $n$ parts. Therefore, $\left|\{\b\in\N^n:\a\le_T\b,\ \hht(\a,\b)=k\}\right| = \binom{n+k-1}{n-1}$. Summing over $k$ gives
\[ \sum_{\a\le_T\b}t^{\hht(\a,\b)} = \sum_{k\ge 0}\binom{n+k-1}{n-1}t^k = \frac{1}{(1-t)^n}. \]
\end{proof}

In the special case $\a=\mathbf{0}$, we have $\h_{\mathbf{0}}(j)=j$ for every $j\in[n]$, and hence $M_{\mathbf{0}}=I_n$. Thus $\m_{\mathbf{0},\b}=\b$ and $\hht(\mathbf{0},\b)=b_1+\cdots+b_n.$  Therefore, when the lower endpoint is $\mathbf{0}$, the height generating series is the usual generating function for weak compositions by total weight. The preceding corollary shows that the same height distribution holds for every fixed lower endpoint.

We now turn to the height distribution obtained by fixing the upper endpoint. In their study of higher trivariate diagonal harmonics via generalized Tamari posets, Bergeron and Pr\'eville-Ratelle~\cite{FBLP12} introduced a closely related height generating function. For an $r$-Dyck path $\beta$, they define \[ i_\beta^{(r)}(q) = \sum_{\alpha\leq\beta}q^{d(\alpha,\beta)},\] where $d(\alpha,\beta)$ is the length of a longest chain from $\alpha$ to $\beta$ in the $r$-Tamari lattice. These generating functions occur in their conjectural formula for the graded Frobenius characteristic of higher trivariate diagonal harmonics. Motivated by this, for $\b\in\N^n$ define \[ f_\b(t):=\sum_{\a\le_T\b}t^{\hht(\a,\b)}. \]

Let $\delta_{r,n}=(r(n-1),r(n-2),\ldots,r,0)$. As we will see in \cref{prop:r-tamari-lower-ideal}, the interval $[\mathbf{0},\delta_{r,n}]$ is isomorphic to the $r$-Tamari lattice. If $\b\le_T\delta_{r,n}$ corresponds to an $r$-Dyck path $\beta$, then \[ f_\b(t)=i_\beta^{(r)}(t). \]

\begin{example}\label{ex:height-gen-function}
We have $f_{(3,2,1)}(t) = f_{(1,2,3)}(t)=t^6+t^5+2t^4+3t^3+3t^2+3t+1$ while $f_{(2,1,3)}(t) = f_{(3,1,2)}(t) = t^6+t^5+3t^4+4t^3+4t^2+3t+1$.
\end{example}

For $\b=(b_1,\ldots,b_n)$, let $\rev(\b):=(b_n,\ldots,b_1)$. The equalities in \cref{ex:height-gen-function} illustrate the following reversal symmetry.

\begin{conjecture}\label{conj:height-gen-function-reversal}
For every $\b\in\N^n$, $f_\b(t)=f_{\rev(\b)}(t)$. 
\end{conjecture}

We also consider the generating function that records the heights of all intervals whose upper endpoints lie below a fixed sequence. For $\b\in\N^n$, define
\[
F_\b(t):=\sum_{\p\le_T\q\le_T\b}t^{\hht(\p,\q)}.
\]
Equivalently, $F_\b(t)=\sum_{\q\le_T\b}f_\q(t)$. In particular, $F_\b(1) =\left|\{(\p,\q)\in\N^n\times\N^n:\p\le_T\q\le_T\b\}\right|$ is the number of intervals whose upper endpoints are at most $\b$.

\begin{example}\label{ex:interval-height-gen-function}
We have $F_{(3,2,1)}(t) = t^6+2t^5+5t^4+10t^3+15t^2+21t+14$ and $F_{(1,2,3)}(t)=t^6+4t^5+8t^4+15t^3+21t^2+21t+14$, so $F_{(3,2,1)}(t)\ne F_{(1,2,3)}(t)$. On the other hand, $F_{(2,1,1)}(t)=F_{(3,1)}(t)=t^4+2t^3+5t^2+8t+7$.
\end{example}

The equality $F_{(2,1,1)}(t)=F_{(3,1)}(t)$ in \cref{ex:interval-height-gen-function} is an instance of a more general symmetry. As a consequence of the identification of partition-indexed stable Tamari intervals with $\nu$-Tamari lattices and the duality of Pr\'eville-Ratelle and Viennot~\cite{LPXV17}, we will see in \cref{subsection: nu Tamari} that $F_\lambda(t)=F_{\lambda'}(t)$ for every partition $\lambda$, where $\lambda'$ denotes the conjugate partition of $\lambda$.

\begin{corollary}\label{cor:f and F products}
Let $\b=(\b^L,0,\b^R)\in\N^n$. Then $f_\b(t)=f_{\b^L}(t)f_{\b^R}(t)$ and $F_\b(t)=F_{\b^L}(t)F_{\b^R}(t)$, where we set $f_{()}(t)=F_{()}(t)=1$.
\end{corollary}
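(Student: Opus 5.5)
The plan is to use \cref{prop:product-poset} together with the height formula \cref{thm:max-chain}. Let $i$ be the position of the displayed zero, so $\b^L\in\N^{i-1}$ and $\b^R\in\N^{n-i}$. First, every $\a\le_T\b$ satisfies $a_i\le b_i=0$, hence $a_i=0$. Thus the principal lower ideal $\b\!\downarrow$ lies inside $R_i$. By \cref{prop:product-poset}, the map $\phi$ restricts to a poset isomorphism
\[
\b\!\downarrow\;\cong\;\b^L\!\downarrow\times\,\b^R\!\downarrow .
\]
Here we use that the lower ideal of $(\b^L,\b^R)$ in the product poset is the product of the lower ideals. We also use that every $\a$ with $\a^L\le_T\b^L$ and $\a^R\le_T\b^R$, padded with a zero, lies in $R_i$ and is below $\b$.

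Next we need the height to be additive: for $\a\le_T\c$ in $R_i$,
\[
\hht(\a,\c)=\hht(\a^L,\c^L)+\hht(\a^R,\c^R).
\]
The cleanest route is to note that the interval $[\a,\c]$ is isomorphic, via $\phi$, to the product $[\a^L,\c^L]\times[\a^R,\c^R]$. In a product of finite posets, the maximum chain length is the sum of the maximum chain lengths of the factors. One direction comes from concatenating chains: first move in one coordinate, then the other. For the other direction, each strict step of a chain in the product strictly increases at least one coordinate, so projecting the chain gives chains in the factors whose lengths sum to at least the original length.

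Alternatively, one could argue directly with $\m$-coordinates. Since $a_i=0$ is a right-to-left minimum of $\a$, \cref{rem:RLmin-blocks} makes $M_\a$ block-diagonal, with a block boundary after $i-1$ and after $i$. The left block is $M_{\a^L}$. The right block is $M_{\a^R}$, because $\h_\a(j)>i$ for $j>i$, so the $h$-intervals shift. Also $\m_{\a,\c}(i)=c_i-a_i=0$. Either argument gives additivity.

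Then
\[
f_\b(t)=\sum_{\a^L\le_T\b^L}\sum_{\a^R\le_T\b^R}t^{\hht(\a^L,\b^L)+\hht(\a^R,\b^R)}=f_{\b^L}(t)f_{\b^R}(t).
\]
For $F$, the same applies to pairs $\p\le_T\q\le_T\b$. All such $\p,\q$ lie in $R_i$, and such pairs correspond bijectively to pairs of pairs in the two factors, with additive heights, so $F$ also factors. The edge cases $i=1$ or $i=n$ are handled by the convention $\mathcal{ST}_0=\{()\}$ and $f_{()}=F_{()}=1$.

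The only step needing real care is the additivity of height. I expect the product-of-intervals argument to be the quickest. The one point to check is that $\phi$ maps the interval $[\a,\c]$ onto the product of intervals, which follows because every element of $[\a,\c]$ has $i$-th coordinate $0$.
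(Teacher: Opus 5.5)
Your proposal is correct and is essentially the paper's proof: you restrict the isomorphism of \cref{prop:product-poset} to $\b\!\downarrow\subseteq R_i$ and use that interval heights add in a product, a fact the paper simply asserts and you justify by concatenating and projecting chains. One small slip in your optional $\m$-coordinate aside: $i-1$ need not be a right-to-left minimum (for $\a=(1,0,0)$ and $i=2$ one has $\h_\a(2)=1$), so $M_\a$ need not split after $i-1$; the conclusion $\m_{\a,\c}=(\m_{\a^L,\c^L},0,\m_{\a^R,\c^R})$ still holds, because the only entries of $M_\a^{-1}$ outside the diagonal blocks $M_{\a^L}^{-1}$, $[1]$, $M_{\a^R}^{-1}$ lie in row $i$, which is multiplied by $(\c-\a)_i=0$.
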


\begin{proof}
By \cref{prop:product-poset}, $\b\!\downarrow$ is isomorphic to $\b^L\!\downarrow\times\b^R\!\downarrow$. Moreover, heights of intervals add under products. Hence $\hht\left(\left(\p^L,\p^R\right),\left(\q^L,\q^R\right)\right) = \hht\left(\p^L,\q^L\right)+\hht\left(\p^R,\q^R\right).$ The identities follow directly from the definitions of $f_\b(t)$ and $F_\b(t)$.
\end{proof}

Interval enumeration has played an important role in the study of Tamari-type lattices. In the $r$-Tamari case, the conjectural formulas of Bergeron and Pr\'eville-Ratelle connect the number of intervals with the alternating component of higher trivariate diagonal harmonics~\cite{FBLP12}, while Bousquet-M\'elou, Fusy, and Pr\'eville-Ratelle obtained an explicit formula for the number of $m$-Tamari intervals~\cite{BFPL11}.

Under the $r$-Tamari specialization of the stable Tamari order, if $\b\le_T\delta_{r,n}$ corresponds to an $r$-Dyck path $\beta$, then $F_\b(t)=\sum_{\gamma\le\beta} i_\gamma^{(r)}(t).$ In particular, $F_{\delta_{r,n}}(t)=\sum_{\beta} i_\beta^{(r)}(t)$, where the sum ranges over all $r$-Dyck paths of height $n$. Thus $F_\b(t)$ may be viewed as a height refinement of the interval-enumeration problem. Interval enumeration for more general Tamari lattices has also been studied in \cite{LPXV17,FP17}.

The computations in \cref{ex:interval-height-gen-function}, together with our broader computational data, suggest that $F_\b(t)$ is unimodal.

\begin{conjecture}\label{conj:interval-height-gen-function-unimodal}
For every $\b\in\N^n$, $F_\b(t)$ is unimodal.
\end{conjecture}

\begin{remark}
Note that while $F_\b(t)$ are conjecturally unimodal, $f_\b(t)$ need not be unimodal. For example, the coefficients of $t^8,t^9,t^{10}$ in $f_{(8,8,4,4)}(t)$ are $31,30,31$. $(8,8,4,4)$ is the smallest partition whose principal lower ideal in Young's lattice is not rank-unimodal, as shown by Stanton~\cite{Stanton90}.
\end{remark}

\subsection{Lattice structure}

We next consider the lattice-theoretic structure of $\mathcal{ST}_n$.

\begin{definition}\label{def:principal-order-ideals}
For $\a,\b\in\N^n$, define the \emph{principal lower order ideal} generated by $\b$ and the \emph{principal upper order ideal} generated by $\a$ by
\[
\down{b}:=\{\c\in\N^n:\c\le_T\b\}, \qquad \a\uparrow:=\{\c\in\N^n:\a\le_T\c\},
\]
respectively. We also refer to $\b\downarrow$ as the \emph{downset} generated by $\b$.
\end{definition}

\begin{example}\label{ex:principal-lower-ideals}
The Hasse diagrams of $(3,2,1)\downarrow$ and $(1,2,3)\downarrow$ are shown in \cref{fig:321-down,fig:123-down}. These are finite principal lower order ideals. As observed in \cref{ex:height-gen-function}, they have the same height generating function, although their Hasse diagrams are different.
\vspace{0.1 in}

\begin{figure}[H] 
\centering 
\begin{minipage}{0.49\linewidth} 
\centering 
\begin{tikzpicture}[scale=0.3] 
  \node (A) at (-0.5,0) {\tiny$(0,0,0)$};
  \node (B) at (-10.2,3) {\tiny$(0,0,1)$};
  \node (C) at (-0.5,3) {\tiny$(0,1,0)$};
  \node (D) at (10.2,3) {\tiny$(1,0, 0)$};
  \node (E) at (-10.2,7.3) {\tiny$(0,1,1)$};
  \node (F) at (-0.5,7.3) {\tiny$(1,0,1)$};
  \node (G) at (8.2,7.3) {\tiny$(1,1, 0)$};
  \node (H) at (-10.2,12) {\tiny$(0,2,1)$};
  \node (I) at (-0.5,12) {\tiny$(1,1,1)$};
  \node (J) at (10.2,12) {\tiny$(2,1,0)$};
  \node (K) at (-5.1,16) {\tiny$(1,2,1)$};
  \node (L) at (5.1,16) {\tiny$(2,1,1)$};
  \node (M) at (-0.5,20) {\tiny$(2,2,1)$};
  \node (N) at (-0.5,24) {\tiny$(3,2,1)$};

  \draw (A)--(B)--(E)--(H)--(K)--(M)--(N);
  \draw (A)--(C)--(H);
   \draw(C)--(G)--(M);
   \draw (A)--(D)--(J);
   \draw (G)--(J)--(N);
    \draw (B)--(F)--(L)--(N);
     \draw (D)--(F);
     \draw (E)--(I)--(K);
     \draw (I)--(L);
\end{tikzpicture}
\captionof{figure}{Hasse diagram of $(3,2,1)\!\downarrow$.} 
\label{fig:321-down} 
\end{minipage}
\hfill 
\begin{minipage}{0.49\linewidth} 
\centering \begin{tikzpicture}[scale=0.3]
  \node (A) at (-0.5,0) {\tiny$(0,0,0)$};
  \node (B) at (-0.5,3) {\tiny$(0,0,1)$};
  \node (C) at (-5.1,7.3) {\tiny$(0,0,2)$};
  \node (D) at (5.1,7.3) {\tiny$(0,1,1)$};
  \node (E) at (-10.2,12) {\tiny$(0,0,3)$};
  \node (F) at (-0.5,12) {\tiny$(0,1,2)$};
  \node (G) at (10.2,12) {\tiny$(1,1, 1)$};
  \node (H) at (-10.2,16) {\tiny$(0,1,3)$};
  \node (I) at (-0.5,16) {\tiny$(0,2,2)$};
  \node (J) at (10.2,16) {\tiny$(1,1,2)$};
  \node (K) at (-10.2,20) {\tiny$(0,2,3)$};
  \node (L) at (-0.5,20) {\tiny$(1,1,3)$};
  \node (M) at (10.2,20) {\tiny$(1,2,2)$};
  \node (N) at (-0.5,24) {\tiny$(1,2,3)$};
 
  \draw (A)--(B)--(C)--(E)--(H)--(K)--(N);
  \draw (B)--(D)--(F)--(I)--(K);
   \draw(C)--(F)--(H)--(L)--(N);
  \draw (D)--(G)--(J)--(M)--(N);
   \draw (L)--(J)--(F);
    \draw (I)--(M);
 \end{tikzpicture} 
 \captionof{figure}{Hasse diagram of $(1,2,3)\!\downarrow$.}
 \label{fig:123-down} \end{minipage} \end{figure}
\end{example}

\begin{example}\label{ex:principal-upper-ideals}
Principal upper order ideals are infinite, and their poset structure can depend on the generating element. Initial portions of $(0,0)\!\uparrow$ and $(0,1)\!\uparrow$ are shown in \cref{fig:00-up,fig:01-up}. The diagrams are truncated by total coordinate sum, and edges extending beyond the displayed portion indicate cover relations whose upper endpoints are not shown.
\vspace{0.1in}
\begin{figure}[H] 
\begin{center}
\begin{minipage}{0.48\linewidth}
\centering
\begin{tikzpicture}[scale=0.45]
  \node (A) at (0,0) {\small$(0,0)$};

  \node (B) at (-1.8,1.8) {\small$(1,0)$};
  \node (C) at (1.8,1.8) {\small$(0,1)$};

  \node (D) at (-3.6,3.6) {\small$(2,0)$};
  \node (E) at (0,3.6) {\small$(1,1)$};
  \node (F) at (3.6,3.6) {\small$(0,2)$};

  \node (G) at (-5.4,5.4) {\small$(3,0)$};
  \node (H) at (-1.8,5.4) {\small$(2,1)$};
  \node (I) at (1.8,5.4) {\small$(1,2)$};
  \node (J) at (5.4,5.4) {\small$(0,3)$};

  \node (K) at (-7.2,7.2) {\small$(4,0)$};
  \node (L) at (-3.6,7.2) {\small$(3,1)$};
  \node (M) at (0,7.2) {\small$(2,2)$};
  \node (N) at (3.6,7.2) {\small$(1,3)$};
  \node (O) at (7.2,7.2) {\small$(0,4)$};

  \node at (-7.2,8.7) {$\vdots$};
  \node at (-5.4,8.7) {$\vdots$};
  \node at (-3.6,8.7) {$\vdots$};
  \node at (-1.8,8.7) {$\vdots$};
  \node at (0,8.7) {$\vdots$};
  \node at (3.6,8.7) {$\vdots$};
  \node at (7.2,8.7) {$\vdots$};

  \draw (A)--(B);
  \draw (A)--(C);

  \draw (B)--(D);
  \draw (B)--(H);

  \draw (C)--(E);
  \draw (C)--(F);

  \draw (D)--(G);
  \draw (D)--(L);

  \draw (E)--(H);
  \draw (E)--(I);

  \draw (F)--(I);
  \draw (F)--(J);

  \draw (G)--(K);

  \draw (H)--(L);

  \draw (I)--(M);
  \draw (I)--(N);

  \draw (J)--(N);
  \draw (J)--(O);

  \draw (G)--(-5.4,7.5);
  \draw (H)--(-1.8,7.5);
\end{tikzpicture}

\captionof{figure}{Initial portion of $(0,0)\uparrow$.}
\label{fig:00-up}
\end{minipage}
\hfill
\begin{minipage}{0.48\linewidth}
\centering
\begin{tikzpicture}[scale=0.45]
  \node (C) at (1.8,1.8) {\small$(0,1)$};

  \node (E) at (0,3.6) {\small$(1,1)$};
  \node (F) at (3.6,3.6) {\small$(0,2)$};

  \node (H) at (-1.8,5.4) {\small$(2,1)$};
  \node (I) at (1.8,5.4) {\small$(1,2)$};
  \node (J) at (5.4,5.4) {\small$(0,3)$};

  \node (L) at (-3.6,7.2) {\small$(3,1)$};
  \node (M) at (0,7.2) {\small$(2,2)$};
  \node (N) at (3.6,7.2) {\small$(1,3)$};
  \node (O) at (7.2,7.2) {\small$(0,4)$};

  \node (P) at (-5.4,9.0) {\small$(4,1)$};
  \node (Q) at (-1.8,9.0) {\small$(3,2)$};
  \node (R) at (1.8,9.0) {\small$(2,3)$};
  \node (S) at (5.4,9.0) {\small$(1,4)$};
  \node (T) at (9.0,9.0) {\small$(0,5)$};

  \node at (-5.4,10.5) {$\vdots$};
  \node at (-3.6,10.5) {$\vdots$};
  \node at (-1.8,10.5) {$\vdots$};
  \node at (1.8,10.5) {$\vdots$};
  \node at (5.4,10.5) {$\vdots$};
  \node at (9.0,10.5) {$\vdots$};

  \draw (C)--(E);
  \draw (C)--(F);
  \draw (E)--(H);
  \draw (E)--(I);
  \draw (F)--(I);
  \draw (F)--(J);
  \draw (H)--(L);
  \draw (I)--(M);
  \draw (I)--(N);
  \draw (J)--(N);
  \draw (J)--(O);
  \draw (L)--(-3.6,9.2);

  \draw (H)--(Q);
  \draw (L)--(P);
  \draw (M)--(Q);
  \draw (M)--(R);
  \draw (N)--(R);
  \draw (N)--(S);
  \draw (O)--(S);
  \draw (O)--(T);
\end{tikzpicture}
\captionof{figure}{Initial portion of $(0,1)\uparrow$.}
\label{fig:01-up}
\end{minipage}
\end{center}
\end{figure}
In $(0,0)\uparrow$, no element covers both $(1,0)$ and $(0,1)$. In contrast, in $(0,1)\!\uparrow$, the element $(1,2)$ covers both $(1,1)$ and $(0,2)$. Hence these two principal upper order ideals are not isomorphic.
\end{example}

With this notation, the height and interval generating functions introduced in \cref{sec: ht} can be written as
\[f_{\b}(t)=\sum_{\a\in\down{b}} t^{\hht(\a,\b)} \qquad  \text{ and } \qquad F_{\b}(t)=\sum_{[\p,\q]\subseteq\down{b}}t^{\hht(\p,\q)}.
\] Thus $f_{\b}(t)$ records the heights of intervals with top element $\b$, while $F_{\b}(t)$ records the heights of all intervals contained in $\down{b}$. In particular, $f_{\b}(1)=|\down{b}|$ and $F_{\b}(1)=\left|\{[\p,\q]:[\p,\q]\subseteq\down{b}\}\right|$. Thus $f_{\b}(1)$ is the number of elements of $\down{b}$, while $F_{\b}(1)$ is the number of intervals contained in $\down{b}$.

Moreover, the specialization at $t=1$ of \cref{conj:height-gen-function-reversal} holds.
\begin{theorem}\label{thm:rev preserves size}
    For every $\b \in \N^n$, $\left|\down{b}\right| =\left|\down{\rev(\b)}\right|$.
\end{theorem}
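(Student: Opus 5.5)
The plan is to show directly that the complement-reversal map $\a\mapsto\rev(\b-\a)$ sends $\down{b}$ into $\down{\rev(\b)}$. Applying the same statement to $\rev(\b)$ then gives a map back, and the two maps are mutually inverse because $\rev(\rev(\b)-\rev(\b-\a))=\a$. So it suffices to prove the following: for $\c\in\N^n$ with $\c\le\b$ coordinatewise and $\a=\b-\c$, we have $\a\le_T\b$ if and only if $\rev(\c)\le_T\rev(\b)$. Both sides already contain the coordinatewise conditions $\mathbf 0\le\c\le\b$, so only the difference conditions in \cref{def: sto} need comparison.

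First I will translate both conditions into statements about $\a$ and $\c$ in the original indexing. Since $b_i-b_j-(a_i-a_j)=c_i-c_j$, the condition $\a\le_T\b$ says:
\begin{quote}
(A) there are no $p<q$ with $a_r>a_q$ for all $r\in[p,q-1]$ and $c_p<c_q$.
\end{quote}
For $\rev(\c)\le_T\rev(\b)$, a relation $i\le_{\rev(\c)}j$ with $i<j$ corresponds, in original positions $p=n+1-j<q=n+1-i$, to $c_r>c_p$ for all $r\in[p+1,q]$. The required inequality $c_q-c_p\le b_q-b_p$ is then equivalent to $a_p\le a_q$. So this condition says:
\begin{quote}
(B) there are no $p<q$ with $c_r>c_p$ for all $r\in[p+1,q]$ and $a_p>a_q$.
\end{quote}

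Next I will show that (A) fails if and only if (B) fails, by shrinking a violating pair.

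Suppose $p<q$ violates (A). Set $p^*=\max\{s\in[p,q-1]:c_s<c_q\}$, which exists because $c_p<c_q$. Then for $s\in[p^*+1,q]$ we have $c_s\ge c_q>c_{p^*}$, and $a_{p^*}>a_q$ because $p^*\in[p,q-1]$. Hence $(p^*,q)$ violates (B).

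Conversely, suppose $p<q$ violates (B). Set $q^*=\min\{s\in[p+1,q]:a_s<a_p\}$, which exists because $a_q<a_p$. Then $a_r\ge a_p>a_{q^*}$ for all $r\in[p,q^*-1]$, so $p\le_\a q^*$, and $c_{q^*}>c_p$ because $q^*\in[p+1,q]$. Hence $(p,q^*)$ violates (A).

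This proves the equivalence, so the map is a bijection $\down{b}\to\down{\rev(\b)}$ and $|\down{b}|=|\down{\rev(\b)}|$.

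The step I expect to need the most care is the index bookkeeping in the translation to (B). One has to check that reversal turns the ``strictly larger to the left'' condition of $\le_{\rev(\c)}$ into ``strictly larger on $[p+1,q]$'' in original coordinates, and that the difference inequality becomes exactly $a_p\le a_q$. Once (A) and (B) are written correctly, the shrinking argument is elementary.
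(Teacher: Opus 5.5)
Your proposal is correct and follows the paper's proof: it uses the same complement-reversal map $\a\mapsto\rev(\b-\a)$, and your minimal index $q^*$ is exactly the paper's extremal index $n+1-r$ after reindexing. The only difference is organizational. You prove the full equivalence (A)$\Leftrightarrow$(B), which makes the involution step redundant, whereas the paper proves one direction and obtains the other by applying the same argument to $\rev(\b)$.
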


\begin{proof}
Let $\a\in\down{\b}$ and set $\c=\rev(\b-\a)$. We first show that $\c\le_T\rev(\b)$. 

Since $\a\in\N^n$, we have $\c\le\rev(\b)$ coordinatewise. It remains to check $c_i - c_j\le \rev(\b)_i - \rev(\b)_j$ for all $i\le_\c j$. Note that $c_i - c_j = \rev(\b-\a)_i - \rev(\b-\a)_j = (\rev(\b)_i - \rev(\b)_j) - (\rev(\a)_i - \rev(\a)_j)$, so $c_i - c_j\le \rev(\b)_i - \rev(\b)_j$ is equivalent to $\rev(\a)_i -\rev(\a)_j \ge 0$. Hence,  it suffices to check that $a_{n+1-i} \ge a_{n+1-j}$ for $i\le_\c j$. This holds immediately if $i =j$.

Now consider the case where $i \le_\c j$ and $i \ne j$.  Suppose  $a_{n+1-i} < a_{n+1-j}$. Choose $r=\max\{q\in[i,j-1]:a_{n+1-q}<a_{n+1-j}\}$. Then $n+1-j< n+1-r$ and $a_{n+1-j}>a_{n+1-r}$ and $a_{n+1 -\ell} \ge a_{n+1-j}$ for all  $r < \ell < j$. That is,  $a_{n+1-j}, \ldots, a_{n-r} > a_{n+1-r}$.  Hence $n+1-j <_\a n+1-r$. Since $\a\le_T\b$, we now have $a_{n+1-j} - a_{n+1-r} \le b_{n+1-j} - b_{n+1-r}$. Hence $c_r = b_{n+1 -r} - a_{n+1-r} \le b_{n+1 -j} - a_{n+1-j} = c_j$, contradicting the assumption that $i\le_\c j$ because $i\le r<j$ but $c_r\le c_j$. As a result, $a_{n+1-i} \ge a_{n+1-j}$.

Therefore, the map $\phi_\b:\down{\b}\longrightarrow\down{\rev(\b)}$ such that $\phi_\b(\a)=\rev(\b-\a)$ is well-defined. Applying the same argument with $\rev(\b)$ in place of $\b$ shows that $\phi_{\rev(\b)}$ is also well-defined. Moreover,
\[
\phi_{\rev(\b)}(\phi_\b(\a)) = \rev\left(\rev(\b)-\rev(\b-\a)\right) = \a.
\]
Thus $\phi_\b$ is a bijection, and hence $\left|\down{b}\right| =\left|\down{\rev(\b)}\right|$.
\end{proof}

The bijection in the proof of \cref{thm:rev preserves size} does not preserve height, so it does not prove \cref{conj:height-gen-function-reversal}. More generally, equality of the cardinalities of two lower ideals does not imply equality of their height generating functions. For example, $f_{(1,0,1)}(t)=t^2+2t+1$ and $f_{(1,1,1)}(t)=t^3+t^2+t+1$. Thus, $f_{(1,0,1)}(t)\ne f_{(1,1,1)}(t)$, but $f_{(1,0,1)}(1)=f_{(1,1,1)}(1)=4$.

We also record a useful uniqueness property of the $m$-vectors introduced in \cref{subsec: M matrix and m vector}. If $\a$ is fixed, then for any principal lower ideal $\down{b}$ containing $\a$, the vector $\m_{\a,\b}$ determines its top element $\b$, since $\b=\a+\m_{\a,\b}M_\a$. The following theorem gives the complementary statement when the top element is fixed.

\begin{theorem}\label{thm:m-vectors-unique-in-downset}
Fix $\b\in\N^n$. Then the map $\a\mapsto\m_{\a,\b}$ is injective on $\down{b}$.
\end{theorem}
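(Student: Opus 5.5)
Suppose $\a,\a'\in\down{b}$ satisfy $\m_{\a,\b}=\m_{\a',\b}=:\m$; we must show $\a=\a'$. Our plan is to reconstruct $\a$ from $\b$ and $\m$ one coordinate at a time, from right to left. The tool is \cref{cor:m-vector-forest}. It says that $b_j-a_j=\sum_i \m(i)$, where $i$ runs over the ancestors of $j$ in $F_\a$, including $j$ itself. Equivalently, by \cref{cor:m-vector}, $b_j-a_j=\m(j)$ if $j\in\RLmin(\a)$, and $b_j-a_j=\m(j)+(b_{p_\a(j)}-a_{p_\a(j)})$ otherwise. Since $p_\a(j)>j$, the value $a_j$ is determined by $\m(j)$, $b_j$, and the data $(a_k)_{k>j}$, as soon as we know whether $j\in\RLmin(\a)$ and, if not, what $p_\a(j)$ is.

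So we argue by downward induction on $j$, assuming $a_k=a'_k$ for all $k>j$. The difficulty is that $\RLmin(\a)$ and $p_\a(j)$ depend on $a_j$ itself, so we cannot simply read off $a_j$. To handle this, we define, for a candidate value $x\in\N$, the quantity $g(x)=b_j-x$ and compare it with the ``predicted'' value $\phi(x)$. Here $\phi(x)=\m(j)$ if $x\le a_k$ for all $k>j$. Otherwise $\phi(x)=\m(j)+b_p-a_p$, where $p=\min\{k>j: a_k<x\}$, and $a_p$ is known because $p>j$. Both $a_j$ and $a'_j$ satisfy $g(x)=\phi(x)$, so it suffices to show this equation has at most one solution $x$ with $x\le b_j$.

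The key step is monotonicity. The function $g$ is strictly decreasing in $x$. We claim that $\phi$ is weakly increasing in $x$ on the relevant range. As $x$ increases, $p(x)$ moves weakly leftward, since the first position to the right with entry $<x$ can only move earlier. The value $\phi$ then jumps from the ``root'' value $\m(j)$ to $\m(j)+(b_p-a_p)$, which is at least $\m(j)$ because $\a\le\b$ on positions $>j$. It remains to handle the comparison when $p$ moves from $p_1$ to an earlier $p_2<p_1$. In that case $a_{p_2}<x\le$ (the old threshold) and $a_{p_2}>a_{p_1}$, and every position strictly between $p_2$ and $p_1$ has entry above $a_{p_1}$. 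Hence $p_2\le_\a p_1$, where this relation involves only coordinates $>j$, on which $\a$ is already known. Applying $\a\le_T\b$ to this pair gives $b_{p_2}-a_{p_2}\ge b_{p_1}-a_{p_1}$, so $\phi$ does not decrease.

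A strictly decreasing function and a weakly increasing function agree at most once, so $a_j=a'_j$ and the induction closes. The main obstacle is this monotonicity of $\phi$. It requires checking carefully that the position-order relation among indices $>j$ is intrinsic to the known suffix (it is, since $\le_\a$ restricted to $[j+1,n]$ only reads those entries) and that $\a\le_T\b$ supplies exactly the needed inequality. A fallback is to extract the same comparison from \cref{lem:threshold-propagation}.
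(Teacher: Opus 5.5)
Your argument is correct and is essentially the paper's proof: both fix the largest index where the two elements differ and show that, with the suffix fixed, $a_j+(\m M_\a)_j$ is strictly increasing in $a_j$; the paper obtains the weak monotonicity of the second term in one step, from the containment $\{i:k\le_\p i\}\subseteq\{i:k\le_\q i\}$ (when $p_k<q_k$) of the $k^{\text{th}}$ column supports of $M_\p$ and $M_\q$ together with $\m\in\N^n$, whereas you obtain it from the parent recursion of \cref{cor:m-vector} and $\a\le_T\b$ applied to the two candidate parents. One small slip in the case $p_2<p_1$: if $y<x$ are the old and new values, the correct inequality is $a_{p_1}<y\le a_{p_2}<x$ (not ``$a_{p_2}<x\le$ the old threshold''), and it is this chain that gives $a_{p_2}>a_{p_1}$.
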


\begin{proof}
Let $\p,\q\in\down{b}$ such that $\p\ne\q$.  Suppose $\m_{\p,\b}=\m_{\q,\b}=\x$. Then $\b=\p+\x M_\p=\q+\x M_\q$. Let $k=\max\{i\in[n]:p_i\ne q_i\}$. Without loss of generality, assume that $p_k<q_k$. Since $p_i=q_i$ for all $i>k$, if $k\le_\p i$, then $k\le_\q i$. Hence $\{i\in[n]:k\le_\p i\}\subseteq \{i\in[n]:k\le_\q i\}$. Since the $i^{\text{th}}$ row of $M_\p$ is $\e_{[\h_\p(i),i]}$ and the $i^{\text{th}}$ row of $M_\q$ is $\e_{[\h_\q(i),i]}$, we have
\[
(\x M_\p)_k=\sum_{i:\,k\le_\p i}x_i \qquad \text{ and } \qquad  (\x M_\q)_k=\sum_{i:\,k\le_\q i}x_i.
\]

Since $\x\in\N^n$, it follows that $(\x M_\p)_k  \le (\x M_\q)_k$.  However, $p_k < q_k$ implies \[(\p+\x M_\p)_k = p_k+(\x M_\p)_k  < q_k+(\x M_\q)_k  =(\q+\x M_\q)_k,\]  contradicting $\p+\x M_\p=\q+\x M_\q$. Therefore $\m_{\p,\b}\ne\m_{\q,\b}$.
\end{proof}

Thus, for a fixed $\b$, every element $\a\in\down{b}$ has a distinct $m$-vector $\m_{\a,\b}$. Together with the preceding observation, this shows that once either endpoint of an interval is fixed, its $m$-vector determines the other endpoint.

We next relate a distinguished family of lower ideals in $\mathcal{ST}_n$ to the $r$-Tamari lattices. We first recall the convention for the $r$-Tamari lattice that we use. Following Bergeron and Pr\'eville-Ratelle~\cite{FBLP12}, let $\pi$ be an $r$-Dyck path of height $n$, that is, a lattice path from $(0,0)$ to $(rn,n)$ that stays above the line $x=ry$. They encode such a path by a sequence
\[
\alpha=(\alpha_1,\ldots,\alpha_n), \qquad 0\le \alpha_1\le\cdots\le\alpha_n, \qquad \alpha_i\le r(i-1),
\]
where $\alpha_i$ records the $x$-coordinate of the $i^{\text{th}}$ north step of $\pi$, read from bottom to top. Equivalently, if $\lambda$ is the partition whose Young diagram, drawn in English notation, has outer boundary $\pi$, then $\lambda=\operatorname{rev}(\alpha)$.

Our convention instead records the row area of the path, from top to bottom. In the $i^{\text{th}}$ row, this is the number of complete squares between the path and the diagonal. Note that $\delta_{r,n}=(r(n-1),r(n-2),\ldots,r,0)$ records the number of complete squares between the $y$-axis and the diagonal in each row, read from top to bottom. Thus, if $\alpha=(\alpha_1,\ldots,\alpha_n)$ is the sequence encoding $\pi$ in the convention of~\cite{FBLP12}, then
\[
\Phi_{r,n}(\alpha) = \delta_{r,n}-\operatorname{rev}(\alpha)
\]
is the area sequence of $\pi$ in our convention. In other words, $\Phi_{r,n}(\alpha)_i=r(n-i)-\alpha_{n+1-i}$. 

\begin{example}\label{ex:r-Dyck-area-sequence}
Let $r=2$, $n=4$, and let $\pi$ be the $2$-Dyck path encoded by $\alpha=(0,1,3,4)$. The shaded squares in \cref{fig:r-Dyck-area-sequence} give the area sequence of $\pi$. Indeed, $\operatorname{rev}(\alpha)=(4,3,1,0)$, and  $\delta_{2,4}=(6,4,2,0)$, and hence $\Phi_{2,4}(\alpha) =(6,4,2,0)-(4,3,1,0) =(2,1,1,0)$.

\begin{figure}[H]
\centering
\begin{tikzpicture}[scale=0.75]

  \node at (8.8,4.35) {\small area};
  \node[right] at (8.6,3.5) {\small $2$};
  \node[right] at (8.6,2.5) {\small $1$};
  \node[right] at (8.6,1.5) {\small $1$};
  \node[right] at (8.6,0.5) {\small $0$};

  \fill[gray!30] (1,1) rectangle (2,2);
  \fill[gray!30] (3,2) rectangle (4,3);
  \fill[gray!30] (4,3) rectangle (5,4);
  \fill[gray!30] (5,3) rectangle (6,4);

  \draw[step=1,gray!40,very thin] (0,0) grid (8,4);

  \draw[dashed] (0,0)--(8,4);
  \node[below right] at (6,3.0) {\small$x=2y$};

  \draw[very thick]
    (0,0)--(0,1)
    --(1,1)--(1,2)
    --(3,2)--(3,3)
    --(4,3)--(4,4)
    --(8,4);

  \draw[dotted] (1,1)--(1,-0.35);
  \draw[dotted] (3,2)--(3,-0.35);
  \draw[dotted] (4,3)--(4,-0.35);

  \node[below] at (0,-0.35) {\small$0$};
  \node[below] at (1,-0.35) {\small$1$};
  \node[below] at (3,-0.35) {\small$3$};
  \node[below] at (4,-0.35) {\small$4$};

\end{tikzpicture}
\caption{A $2$-Dyck path $\pi$ with
$\alpha=(0,1,3,4)$. The shaded squares give the area sequence
$\Phi_{2,4}(\alpha)=(2,1,1,0)$.}
\label{fig:r-Dyck-area-sequence}
\end{figure}
\end{example}

We next recall the covering relation of the $r$-Tamari lattice from~\cite{FBLP12}. Let $\alpha=(\alpha_1,\ldots,\alpha_n)$ be a weakly increasing sequence corresponding to an $r$-Dyck path. For $i\in[n]$, the \emph{primitive subsequence} beginning at $\alpha_i$ is the maximal subsequence $(\alpha_i,\alpha_{i+1},\ldots,\alpha_k)$ such that $\alpha_j-\alpha_i<r(j-i) \qquad\text{for }i<j\le k$. Here, maximality means that either $k=n$, or $\alpha_{k+1}-\alpha_i\ge r(k+1-i)$.

Geometrically, the primitive subsequence corresponds to the portion of the path beginning at $\alpha_i$ and ending at its first return to the line parallel to the diagonal through that point. If $i\in\{2,\ldots,n\}$ satisfies $\alpha_{i-1}<\alpha_i$ and $\beta\in\N^n$ is obtained from $\alpha$ by subtracting $1$ from each entry of this primitive subsequence, then $\beta$ corresponds to an $r$-Dyck path and covers $\alpha$ in the $r$-Tamari lattice.

\begin{example}\label{ex:r-tamari-cover}
Let $r=2$ and $\alpha=(0,1,3,4)$. The primitive subsequence beginning at $\alpha_2=1$ is $(1)$, since $\alpha_3-\alpha_2=2=2(3-2)$. Hence decreasing this primitive subsequence by $1$ gives $(0,0,3,4)$, which covers $\alpha$ in the $2$-Tamari lattice. On the other hand, the primitive subsequence beginning at $\alpha_3=3$ is $(3,4)$, since $\alpha_4-\alpha_3=1<2(4-3)$. Decreasing this primitive subsequence by $1$ gives $(0,1,2,3)$, which also covers $\alpha$. Finally, the primitive subsequence beginning at $\alpha_4=4$ is $(4)$. Thus $(0,1,3,3)$ is another element that covers $\alpha$. Although there is also a primitive subsequence beginning at $\alpha_1$, no cover move begins at the first entry.
\end{example}

To identify the stable Tamari elements corresponding to $r$-Dyck paths, we first record a simple characterization of principal lower order ideals generated by partitions.

\begin{lemma}\label{lem:partition-downset}
Let $\lambda\in\N^n$ be a partition and $\a\in\N^n$. Then $\a\le_T\lambda$ if and only if $\lambda-\a$ is a partition. Equivalently, $\lambda \!\downarrow = \left\{\a\in\N^n: \lambda-\a \text{ is a partition}\right\}$.
\end{lemma}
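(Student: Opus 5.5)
The plan is to use \cref{cor:m-vector}, or equivalently \cref{thm:compare seq with M matrix}, taking $\b=\lambda$ to be weakly decreasing. In what follows, a ``partition'' means a weakly decreasing element of $\N^n$; trailing zeros are allowed. Set $\d=\lambda-\a$. Both directions require $\a\le\lambda$ coordinatewise, i.e.\ $\d\in\N^n$, so I will assume this throughout. By \cref{cor:m-vector}, $\a\le_T\lambda$ then holds if and only if $d_j\ge d_{p_\a(j)}$ for every $j\notin\RLmin(\a)$. The statement thus reduces to showing that, for $\d\in\N^n$,
\[
d_j\ge d_{p_\a(j)}\ \text{for all } j\notin\RLmin(\a) \quad\Longleftrightarrow\quad d_1\ge d_2\ge\cdots\ge d_n .
\]

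The direction ($\Leftarrow$) is immediate. Since $p_\a(j)>j$, a weakly decreasing $\d$ satisfies $d_j\ge d_{p_\a(j)}$.

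For ($\Rightarrow$), I will show $d_i\ge d_{i+1}$ for each $i\in[n-1]$, splitting into cases according to $a_i$ versus $a_{i+1}$.

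\textbf{Case $a_i>a_{i+1}$.} Here $p_\a(i)=i+1$, so the hypothesis gives $d_i\ge d_{i+1}$ directly.

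\textbf{Case $a_i\le a_{i+1}$.} Since $\lambda_i\ge\lambda_{i+1}$, we get $d_i=\lambda_i-a_i\ge\lambda_{i+1}-a_{i+1}=d_{i+1}$.

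Both cases give $d_i\ge d_{i+1}$, so $\d$ is a partition. Alternatively, one can argue straight from \cref{def: sto}: the condition $i\le_\a i+1$ holds exactly when $a_i>a_{i+1}$, and in that case $a_i-a_{i+1}\le\lambda_i-\lambda_{i+1}$.

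For ($\Leftarrow$) via the definition, I also need to check $a_i-a_j\le\lambda_i-\lambda_j$ for all $i\le_\a j$. This is simply $d_i\ge d_j$, which holds because $\d$ is weakly decreasing.

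Nothing here looks hard. The only point needing care is the case $a_i\le a_{i+1}$, where the position order gives no information and the fact that $\lambda$ is weakly decreasing must be used instead. I will present the argument directly from \cref{def: sto}, since that is the cleanest route.
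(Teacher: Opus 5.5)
Your proposal is correct, and the version you say you will present (directly from \cref{def: sto}) is essentially the paper's proof. That proof uses the same case split $a_i>a_{i+1}$ versus $a_i\le a_{i+1}$ to get $\lambda_i-a_i\ge\lambda_{i+1}-a_{i+1}$, and the same one-line converse via $i\le_\a j\Rightarrow i\le j$. Your $m$-vector variant through \cref{cor:m-vector} is only a repackaging of this argument, using $p_\a(i)=i+1$ when $a_i>a_{i+1}$.
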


\begin{proof}
Suppose first that $\a\le_T\lambda$. Then $\lambda-\a\in\N^n$. It suffices to show that $\lambda-\a$ is weakly decreasing. Fix $1 \le i<n$. If $a_i\le a_{i+1}$, then $\lambda_i-a_i\ge \lambda_{i+1}-a_{i+1}$ because $\lambda_i\ge\lambda_{i+1}$. If $a_i>a_{i+1}$, then $i\le_\a i+1$, so the definition of the stable Tamari order gives $a_i-a_{i+1}\le \lambda_i-\lambda_{i+1}$, that is,  $\lambda_i-a_i\ge\lambda_{i+1}-a_{i+1}$. Thus $\lambda-\a$ is a partition.

Conversely, suppose $\lambda-\a$ is a partition. In particular, $\a\le\lambda$ coordinatewise. If $i\le_\a j$, then $i\le j$, and since $\lambda-\a$ is a partition, $\lambda_i-a_i\ge\lambda_j-a_j$. Equivalently, $a_i-a_j\le\lambda_i-\lambda_j$. Hence $\a\le_T\lambda$.
\end{proof}

Since $\delta_{r,n}$ is a partition, \cref{lem:partition-downset} gives $\delta_{r,n}\!\downarrow=\{\a\in\N^n:\delta_{r,n}-\a\text{ is a partition}\}$ in $\mathcal{ST}_n$. This characterization identifies the underlying set of $\delta_{r,n}\!\downarrow$ with the set of $r$-Dyck paths. 

\begin{lemma}\label{lem:r-paths and par-down}
For $r,n\ge1$, the map $\Phi_{r,n}(\alpha) =\delta_{r,n}-\operatorname{rev}(\alpha)$ is a bijection from the $r$-Tamari lattice on $r$-Dyck paths from $(0,0)$ to $(rn,n)$ onto the principal lower order ideal $\delta_{r,n}\downarrow$ in $\mathcal{ST}_n$.
\end{lemma}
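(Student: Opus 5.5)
We want a bijection of sets between $r$-Dyck paths, encoded by weakly increasing $\alpha$ with $0\le\alpha_i\le r(i-1)$, and $\delta_{r,n}\!\downarrow$. By \cref{lem:partition-downset}, $\delta_{r,n}\!\downarrow=\{\a\in\N^n:\delta_{r,n}-\a\text{ is a partition}\}$. So the plan is to check that $\Phi_{r,n}(\alpha)=\delta_{r,n}-\rev(\alpha)$ lands in this set, and then to produce an explicit inverse.

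First, well-definedness. Put $\a=\Phi_{r,n}(\alpha)$, so $a_i=r(n-i)-\alpha_{n+1-i}$. The constraint $\alpha_{n+1-i}\le r(n-i)$ is exactly $a_i\ge 0$, so $\a\in\N^n$. Next, $\delta_{r,n}-\a=\rev(\alpha)$, and this is a partition: it is nonnegative, and it is weakly decreasing because $\alpha$ is weakly increasing. Hence $\a\in\delta_{r,n}\!\downarrow$.

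Second, the inverse. Given $\a\in\delta_{r,n}\!\downarrow$, define $\alpha=\rev(\delta_{r,n}-\a)$. By \cref{lem:partition-downset}, $\delta_{r,n}-\a$ is a partition, so $\alpha$ is weakly increasing with nonnegative entries. Since $\a\ge 0$, we get $\alpha_i=r(i-1)-a_{n+1-i}\le r(i-1)$. Thus $\alpha$ encodes an $r$-Dyck path. The two maps are visibly inverse to each other, because $\rev$ is an involution and both maps are affine.

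The only point needing care is matching the encoding conventions exactly. The bound $\alpha_i\le r(i-1)$ must correspond to the nonnegativity $a_{n+1-i}\ge0$, and weak monotonicity must correspond to the partition condition. There is no real obstacle here, because \cref{lem:partition-downset} does all the order-theoretic work. The statement asserts only a bijection of underlying sets; order-compatibility, meaning that $r$-Tamari covers correspond to $\partial_k^{-1}$ moves, is presumably the content of the later \cref{prop:r-tamari-lower-ideal} and is not needed here.
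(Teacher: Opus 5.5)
Your proposal is correct and follows essentially the same route as the paper. Both arguments reduce to \cref{lem:partition-downset}, check that $\delta_{r,n}-\Phi_{r,n}(\alpha)=\rev(\alpha)$ is a partition, and invert via $\alpha=\rev(\delta_{r,n}-\a)$, with the bound $\alpha_i\le r(i-1)$ matching nonnegativity of $\a$. Your explicit check that $\Phi_{r,n}(\alpha)\in\N^n$ is a small detail the paper leaves implicit in the phrase ``contained in $\delta_{r,n}$.''
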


\begin{proof}

We first show that $\Phi_{r,n}$ is a bijection onto $\delta_{r,n}\!\downarrow$. Let $\alpha=(\alpha_1,\ldots,\alpha_n)$ be a weakly increasing sequence corresponding to an $r$-Dyck path. Then $\operatorname{rev}(\alpha)$ is a partition contained in $\delta_{r,n}$. Hence $\delta_{r,n}-\Phi_{r,n}(\alpha) = \operatorname{rev}(\alpha)$ is a partition, and \cref{lem:partition-downset} gives $\Phi_{r,n}(\alpha)\in\delta_{r,n}\!\downarrow.$

Conversely, let $\a\in\delta_{r,n}\!\downarrow$. By \cref{lem:partition-downset}, $\lambda=\delta_{r,n}-\a$ is a partition contained in $\delta_{r,n}$. Set $\alpha=\operatorname{rev}(\lambda)$. Since $\lambda$ is a partition, we have $0=\alpha_1\le\alpha_2\le\cdots\le\alpha_n$.  Since $\lambda\le\delta_{r,n}$ coordinatewise, $\alpha_i\le r(i-1) $ for $1\le i\le n$. Thus $\alpha$ corresponds to an $r$-Dyck path, and $\Phi_{r,n}(\alpha)=\a$. Hence, $\Phi_{r,n}$ is a bijection onto $\delta_{r,n}\!\downarrow$. 
    
\end{proof}

We next show that this identification also preserves the order. Under $\Phi_{r,n}$, the primitive-subsequence moves of the $r$-Tamari lattice correspond precisely to the raising operators $\partial_q^{-1}$ of the stable Tamari order.

\begin{proposition}\label{prop:r-tamari-lower-ideal}
For $r,n\ge1$, the map $\Phi_{r,n}(\alpha) =\delta_{r,n}-\operatorname{rev}(\alpha)$ is an order isomorphism from the $r$-Tamari lattice on $r$-Dyck paths from $(0,0)$ to $(rn,n)$ onto the principal lower order ideal $\delta_{r,n}\downarrow$ in $\mathcal{ST}_n$.
\end{proposition}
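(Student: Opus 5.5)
Since \cref{lem:r-paths and par-down} already shows that $\Phi_{r,n}$ is a bijection onto $\delta_{r,n}\!\downarrow$, and both posets are finite, it suffices to show that $\Phi_{r,n}$ matches cover relations. The $r$-Tamari order is the transitive closure of the primitive-subsequence moves. The order on $\delta_{r,n}\!\downarrow$, being a lower ideal of $\mathcal{ST}_n$, has as its covers exactly the covers of $\mathcal{ST}_n$ between its elements. By \cref{cor:up-covers}, these are the pairs $\a\lessdot_T\partial_q^{-1}\a$ with $\partial_q^{-1}\a\le_T\delta_{r,n}$. So the plan is to prove that, for $\alpha$ an $r$-Dyck sequence and $\a=\Phi_{r,n}(\alpha)$, the $r$-Tamari upper covers of $\alpha$ correspond exactly to the raising operators $\partial_q^{-1}\a$ that stay inside $\delta_{r,n}\!\downarrow$.

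\textbf{Step 1: translate the primitive subsequence.} I would set $q=n+1-i$. Then $\alpha_j-\alpha_i<r(j-i)$ becomes, for $p=n+1-j<q$,
\[
a_p-a_q=r(q-p)-(\alpha_j-\alpha_i)>0 .
\]
The primitive subsequence $\alpha_i,\dots,\alpha_k$ is the maximal run to the right on which this holds. Under reversal it becomes the maximal run of positions $p<q$ with $a_p>a_q$, namely $[\h_\a(q),q]$. Subtracting $1$ from these $\alpha$-entries is therefore exactly adding $\e_{[\h_\a(q),q]}$ to $\a$, so $\Phi_{r,n}(\beta)=\partial_q^{-1}\a$. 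Maximality matches as well: $k=n$ corresponds to $\h_\a(q)=1$, and $\alpha_{k+1}-\alpha_i\ge r(k+1-i)$ corresponds to $a_{\h_\a(q)-1}\le a_q$.

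\textbf{Step 2: match the admissibility conditions.} This is the main obstacle. The $r$-Tamari move at $i$ requires $i\ge2$ and $\alpha_{i-1}<\alpha_i$, while the stable move requires $\partial_q^{-1}\a\le_T\delta_{r,n}$. By \cref{lem:partition-downset}, the latter holds iff $\delta_{r,n}-\partial_q^{-1}\a=\operatorname{rev}(\beta)$ is a partition, i.e. iff $\beta$ is weakly increasing. Subtracting $1$ from the run $\alpha_i,\dots,\alpha_k$ preserves monotonicity inside the run. At the right end it only helps. At the left end it requires $\alpha_{i-1}\le\alpha_i-1$, and for $i=1$ it requires $\alpha_1-1\ge0$, which fails since $\alpha_1=0$. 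So admissibility is exactly $i\ge2$ and $\alpha_{i-1}<\alpha_i$. The Dyck condition $\beta_j\le r(j-1)$ is automatic, since $\beta\le\alpha$.

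\textbf{Step 3: conclude.} Step 1 together with Step 2 shows that $\Phi_{r,n}$ sends $r$-Tamari covers to covers of $\mathcal{ST}_n$ inside $\delta_{r,n}\!\downarrow$, and conversely. Hence $\Phi_{r,n}$ and its inverse are order-preserving, so $\Phi_{r,n}$ is an order isomorphism.
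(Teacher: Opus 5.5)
Your proposal is correct and follows essentially the same route as the paper. Both arguments take the bijection from \cref{lem:r-paths and par-down}, translate the primitive subsequence at $i$ into the interval $[\h_\a(q),q]$ with $q=n+1-i$ so that the $r$-Tamari move becomes $\partial_q^{-1}$, and match the admissibility condition ($i\ge2$, $\alpha_{i-1}<\alpha_i$) with remaining inside $\delta_{r,n}\!\downarrow$. The only cosmetic difference is that you check admissibility through \cref{lem:partition-downset} (weak increase and nonnegativity of $\beta$), whereas the paper reads it off from $\beta$ being an $r$-Dyck path.
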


\begin{proof}

By \cref{lem:r-paths and par-down}, the map $\Phi_{r,n}$ is bijective. It remains to compare covering relations.

Suppose $\beta$ covers $\alpha$ in the $r$-Tamari lattice by decreasing the primitive subsequence $(\alpha_i,\ldots,\alpha_k)$ by $1$. Set $\a=\Phi_{r,n}(\alpha), \b=\Phi_{r,n}(\beta)$, and let $p=n+1-k, q=n+1-i$. Then $\b=\a+\e_{[p,q]}$. We claim that $p=\h_\a(q)$.

For $p\le j<q$, let $\ell=n+1-j$. Then $i<\ell\le k$, and the definition of the primitive subsequence gives $\alpha_\ell-\alpha_i<r(\ell-i)$. Since $a_j-a_q = r(\ell-i)-(\alpha_\ell-\alpha_i)>0$, we have $a_j>a_q$ for all $p\le j<q$. If $p=1$, this already implies that $\h_\a(q)=1=p$. If $p>1$, then $k<n$, and the maximality of the primitive subsequence gives $\alpha_{k+1}-\alpha_i\ge r(k+1-i)$, and hence $a_{p-1}\le a_q$. Thus $p=\h_\a(q)$. Therefore, $\b = \a+\e_{[\h_\a(q),q]} = \partial_q^{-1}\a$. Hence, by \cref{lemma:single-interval-cover}, $\a \lessdot_T\b$.

Conversely, suppose $\b$ covers $\a$ in $\delta_{r,n}\!\downarrow$. Since $\delta_{r,n}\!\downarrow$ is a lower order ideal, $\b$ also covers $\a$ in $\mathcal{ST}_n$; that is, $\a \lessdot_T\b$. By \cref{cor:up-covers}, there exists $q\in[n]$ such that $\b = \partial_q^{-1}\a = \a+\e_{[\h_\a(q),q]}$. Let $p=\h_\a(q),i=n+1-q, k=n+1-p$, and write $\a=\Phi_{r,n}(\alpha),\b=\Phi_{r,n}(\beta).$ Then $\beta$ is obtained from $\alpha$ by subtracting $1$ from $\alpha_i,\ldots,\alpha_k$. Since $\beta$ corresponds to an $r$-Dyck path, we have $i>1$ and $\alpha_{i-1}<\alpha_i$. Moreover, the relations $a_p,\ldots,a_{q-1}>a_q$ translate to $\alpha_\ell-\alpha_i<r(\ell-i)$ for $i<\ell\le k$.

If $p=1$, then $k=n$, so $(\alpha_i,\ldots,\alpha_n)$ is the primitive subsequence beginning at $\alpha_i$. If $p>1$, then $k<n$, and the relation $a_{p-1}\le a_q$ translates to $\alpha_{k+1}-\alpha_i\ge r(k+1-i)$, showing that the primitive subsequence beginning at $\alpha_i$ ends at $\alpha_k$. Therefore, $\beta$ covers $\alpha$ in the $r$-Tamari lattice.

Thus $\Phi_{r,n}$ preserves covering relations in both directions, and hence is an order isomorphism.
\end{proof}

We now use the $r$-Tamari lower ideals above to obtain a nested family of finite lattices whose union is the entire stable Tamari poset. For $r,n\ge1$, let $\tau_{r,n}:=(rn,r(n-1),\ldots,2r,r)\in\N^n$. Note that $(\tau_{r,n},0)=\delta_{r,n+1}$.

\begin{proposition}\label{prop:increasing-union}
For $n\ge1$, the principal lower order ideals $\tau_{1,n}\!\downarrow\subseteq\tau_{2,n}\!\downarrow\subseteq\tau_{3,n}\!\downarrow\subseteq\cdots$ are finite lattices, and
\[
\mathcal{ST}_n = \bigcup_{r\ge1}\tau_{r,n}\!\downarrow.
\]
Moreover, $\tau_{r,n}\!\downarrow$ is isomorphic to the $r$-Tamari lattice on $r$-Dyck paths from $(0,0)$ to $(r(n+1),n+1)$.
\end{proposition}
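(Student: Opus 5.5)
The plan has four parts: nestedness, finiteness, the isomorphism with $r$-Tamari, and exhaustiveness. The lattice property will come for free from the isomorphism.

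For nestedness, note that $\tau_{r,n}$ is a partition for every $r$, and that $\tau_{r+1,n}-\tau_{r,n}=\tau_{1,n}=(n,n-1,\ldots,1)$ is also a partition. By \cref{lem:partition-downset}, this gives $\tau_{r,n}\le_T\tau_{r+1,n}$, and hence $\tau_{r,n}\!\downarrow\subseteq\tau_{r+1,n}\!\downarrow$ by transitivity. Finiteness is immediate, since every $\a\le_T\tau_{r,n}$ satisfies $\a\le\tau_{r,n}$ coordinatewise.

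For the isomorphism, apply \cref{prop:product-poset} with $i=n+1$ in $\mathcal{ST}_{n+1}$. It identifies the subposet $\{(\a,0):\a\in\N^n\}$ with $\mathcal{ST}_n\times\mathcal{ST}_0\cong\mathcal{ST}_n$. Every $\c\le_T(\tau_{r,n},0)=\delta_{r,n+1}$ has $c_{n+1}\le 0$, so $\delta_{r,n+1}\!\downarrow$ lies inside that subposet. Under the identification it is exactly $\tau_{r,n}\!\downarrow$, and since the isomorphism preserves order both ways, the ideal structure carries over. Then \cref{prop:r-tamari-lower-ideal} with $n+1$ in place of $n$ identifies $\tau_{r,n}\!\downarrow$ with the $r$-Tamari lattice on $r$-Dyck paths from $(0,0)$ to $(r(n+1),n+1)$. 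That poset is known to be a lattice (Bergeron--Pr\'eville-Ratelle), so each $\tau_{r,n}\!\downarrow$ is a finite lattice.

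For exhaustiveness, given $\a\in\N^n$ I want some $r$ with $\tau_{r,n}-\a$ a partition; \cref{lem:partition-downset} then gives $\a\le_T\tau_{r,n}$. Let $A=\max_i a_i$ and take $r\ge A$. Then
\[(\tau_{r,n}-\a)_i-(\tau_{r,n}-\a)_{i+1}=r-a_i+a_{i+1}\ge r-A\ge 0,\]
and the last entry is $r-a_n\ge 0$. So $\tau_{r,n}-\a$ is a weakly decreasing sequence in $\N^n$, i.e.\ a partition.

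The only delicate step is the reduction from $\mathcal{ST}_{n+1}$ to $\mathcal{ST}_n$ via the zero last coordinate. There I must check that $\le_T$ on $\delta_{r,n+1}\!\downarrow$ coincides with $\le_T$ on the truncated sequences. This is exactly the content of \cref{prop:product-poset}, using $\mathcal{ST}_0$ as the one-element poset. If one prefers not to cite the lattice property from the literature, the remaining fact is that each $\tau_{r,n}\!\downarrow$ is a finite lattice. That follows from having a top element together with meets: a finite bounded poset with meets is a lattice. Meets could be obtained from the lattice structure of $\mathcal{ST}_n$ developed later, but citing the $r$-Tamari result is the cleanest route at this point.
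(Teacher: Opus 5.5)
Your proof is correct and follows the paper's argument. Nestedness comes from \cref{lem:partition-downset}, the isomorphism from \cref{prop:product-poset} and \cref{prop:r-tamari-lower-ideal} applied to $(\tau_{r,n},0)=\delta_{r,n+1}$, and exhaustiveness from taking $r$ large; you check that $\tau_{r,n}-\a$ is a partition where the paper verifies $a_i-a_j\le r(j-i)$ directly from the definition of $\le_T$, an inessential difference. The only nit is that you should take $r\ge\max\{A,1\}$, since $\tau_{r,n}$ is only defined for $r\ge1$.
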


\begin{proof}
Since $(\tau_{r,n},0)=\delta_{r,n+1}$, \cref{prop:product-poset,prop:r-tamari-lower-ideal} implies that $\tau_{r,n}\!\downarrow$ is isomorphic to the $r$-Tamari lattice on $r$-Dyck paths from $(0,0)$ to $(r(n+1),n+1)$. In particular, $\tau_{r,n}\!\downarrow$ is a finite lattice.

If $r\le s$, then $\tau_{s,n}-\tau_{r,n} =(s-r)(n,n-1,\ldots,1)$, which is a partition. Since $\tau_{s,n}$ is also a partition, \cref{lem:partition-downset} gives $\tau_{r,n}\le_T\tau_{s,n}$. Hence $\tau_{r,n}\!\downarrow\subseteq\tau_{s,n}\!\downarrow$.

By definition, $\bigcup_{r\ge1}\tau_{r,n}\!\downarrow \subseteq \mathcal{ST}_n$. It remains to show that every element of $\mathcal{ST}_n$ belongs to $\tau_{r,n}\!\downarrow$ for some $r \ge 1$. Let $\a\in\N^n$ and choose an integer $r\ge\max\{a_1,\ldots,a_n,1\}$. Then $\a\le\tau_{r,n}$ coordinatewise. Moreover, if $i\le_\a j$ with $i<j$, then $a_i-a_j\le a_i \le r \le r(j-i) = (\tau_{r,n})_i-(\tau_{r,n})_j$.  Thus $\a\le_T\tau_{r,n}$, and hence $\a\in\tau_{r,n}\!\downarrow$. Therefore \[\mathcal{ST}_n = \bigcup_{r\ge1}\tau_{r,n}\!\downarrow.\]
\end{proof}

\begin{corollary}\label{cor:interval-classical-tamari}
Every interval in $\mathcal{ST}_n$ is isomorphic to an interval in a classical Tamari lattice.
\end{corollary}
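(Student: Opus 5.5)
The plan is to reduce to a single finite lower ideal and then use the known embedding of $r$-Tamari lattices into classical Tamari lattices. Fix an interval $[\a,\b]$ in $\mathcal{ST}_n$. By \cref{prop:increasing-union}, there is some $r\ge 1$ with $\b\in\tau_{r,n}\!\downarrow$. Since $\tau_{r,n}\!\downarrow$ is a lower order ideal containing $\b$, it also contains $\a$ and every element of $[\a,\b]$. Hence the interval $[\a,\b]$ computed in $\mathcal{ST}_n$ coincides, as a poset, with the interval $[\a,\b]$ computed inside $\tau_{r,n}\!\downarrow$. The relevant order is the induced order, and intervals in a downset are the same as in the ambient poset.

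Next, \cref{prop:increasing-union} also identifies $\tau_{r,n}\!\downarrow$ with the $r$-Tamari lattice on $r$-Dyck paths of height $n+1$. Under this order isomorphism, $[\a,\b]$ is isomorphic to an interval $[\alpha,\beta]$ in an $r$-Tamari lattice.

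The remaining step is to place the $r$-Tamari lattice inside a classical Tamari lattice as an interval. I would invoke the result recalled in the introduction: Pr\'eville-Ratelle and Viennot \cite{LPXV17} show that every $\nu$-Tamari lattice is isomorphic to an interval in a classical Tamari lattice. The $r$-Tamari lattice is the special case $\nu=(NE^r)^{n+1}$. Alternatively, one can cite the classical fact of Bergeron and Pr\'eville-Ratelle that the $r$-Tamari lattice of height $m$ is an interval (indeed an upper ideal) of $\mathcal{T}_{rm}$. So there is an interval $[\sigma,\rho]$ of some $\mathcal{T}_N$ isomorphic to the $r$-Tamari lattice.

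Finally, an interval of an interval is an interval. The image of $[\alpha,\beta]$ in $[\sigma,\rho]$ is the interval between the images of $\alpha$ and $\beta$ inside $[\sigma,\rho]$, which equals their interval in $\mathcal{T}_N$, since $[\sigma,\rho]$ is convex. This gives the claim. The only point requiring care is matching conventions in the external citation, namely that the $r$-Tamari order as defined via primitive-subsequence covers agrees with the $\nu$-Tamari order for $\nu=(NE^r)^{n+1}$. I expect this identification, which is stated in \cite{LPXV17}, to be the main obstacle, though it is a matter of reference rather than new argument.
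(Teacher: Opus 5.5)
Your proposal is correct and matches the paper's proof: use \cref{prop:increasing-union} to place $\b$ in some $\tau_{r,n}\!\downarrow$, note that intervals inside this lower ideal coincide with intervals in $\mathcal{ST}_n$, and then use that an $r$-Tamari lattice is isomorphic to an interval in a classical Tamari lattice. For that last fact the paper cites \cite[Proposition~4]{BFPL11}, which is the more precise reference than the Bergeron--Pr\'eville-Ratelle attribution you give; your explicit remark that an interval of an interval is again an interval is a step the paper leaves implicit.
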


\begin{proof}
Let $\a\le_T\b$. By \cref{prop:increasing-union}, there exists $r\ge1$ such that $\b\in\tau_{r,n}\!\downarrow$. Since $\tau_{r,n}\!\downarrow$ is a lower order ideal, the interval $[\a,\b]$ in $\mathcal{ST}_n$ is the same interval $[\a,\b]$ in $\tau_{r,n}\downarrow$. The lattice $\tau_{r,n}\!\downarrow$ is an $r$-Tamari lattice, which is isomorphic to an interval in a classical Tamari lattice \cite[Proposition~4]{BFPL11}. Hence $[\a,\b]$ is isomorphic to an interval in a classical Tamari lattice.
\end{proof}

We will use the following order-theoretic observation. We include the short proof for completeness.

\begin{lemma}\label{lem:nested-lattices}
Let $L_1\subseteq L_2\subseteq L_3\subseteq\cdots$ be lattices such that $L_r$ is a lower order ideal of $L_s$ whenever $r\le s$. Then $L=\bigcup_{r\ge1}L_r$ is a lattice. Moreover, if $x,y\in L_r$, then their join and meet in $L$ agree with their join and meet in $L_r$.
\end{lemma}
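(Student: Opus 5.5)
The plan is to build the join and meet of any two elements of $L$ from their join and meet in a single $L_s$ that contains both, and then to show the result does not depend on which $L_s$ is used. The hypotheses are read as follows: $L_r$ carries the order induced from $L_s$, it is a lattice in its own right, and it is downward closed in $L_s$. Consequently the order on $L$ is unambiguous. Two elements $x,y$ lying in some common $L_s$ have the same order relation in every $L_t$ that contains them both, because the order on each $L_t$ is induced from any larger $L_{t'}$.

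Take $x,y\in L$. Since the $L_r$ are nested, both lie in a common $L_s$. Let $z=x\vee_{L_s}y$ and $w=x\wedge_{L_s}y$.

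\emph{Meets.} Let $v\in L$ be any common lower bound of $x$ and $y$ in $L$. Choose $t\ge s$ with $v\in L_t$. In $L_t$ we have $v\le x$, and $L_s$ is a lower order ideal of $L_t$, so $v\in L_s$. Hence $v\le w$ in $L_s$. Therefore $w$ is the greatest lower bound of $x,y$ in $L$. In particular, it equals the meet computed in every $L_t$ containing $x$ and $y$.

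\emph{Joins.} This is the step needing care. Upper bounds of $x,y$ in $L$ may lie outside $L_s$, and $L_s$ need not be an upper set. Let $u\in L$ be an upper bound of $x$ and $y$, and pick $t\ge s$ with $u\in L_t$. Set $z'=x\vee_{L_t}y$, so $z'\le u$. The key claim is $z'=z$. Since $z$ is an upper bound of $x,y$ in $L_t$, we get $z'\le z$ in $L_t$. Because $z\in L_s$ and $L_s$ is a lower order ideal of $L_t$, this forces $z'\in L_s$. Now $z'$ is an upper bound of $x,y$ lying in $L_s$, so minimality of $z$ in $L_s$ gives $z\le z'$. Antisymmetry yields $z=z'\le u$. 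Thus $z$ is the least upper bound in $L$.

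Both arguments show that the join and meet in $L$ coincide with those in $L_r$ whenever $x,y\in L_r$, which is the second assertion. The main point to get right is the join argument: one must pass to a larger $L_t$ and then use the lower-ideal property to pull the join back into $L_s$. The remaining steps are routine.
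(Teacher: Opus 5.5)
Your proof is correct and follows the paper's route: the join argument, which compares $x\vee_{L_t}y$ with $x\vee_{L_s}y$ and uses the lower-ideal property to pull the former back into $L_s$, is exactly the paper's. For meets, you show directly that every lower bound of $x,y$ in $L$ already lies in $L_s$, whereas the paper first proves that the meets in $L_r$ and $L_s$ coincide; this is only a slightly more streamlined use of the same lower-ideal property.
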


\begin{proof}
Let $x,y\in L_r$. For any $s\ge r$, denote their joins in $L_r$ and $L_s$ by $x\vee_r y$ and $x\vee_s y$, respectively. Similarly, denote their meets in $L_r$ and $L_s$ by $x\wedge_r y$ and $x\wedge_s y$, respectively.

Since $x\vee_r y$ is an upper bound of $x$ and $y$ in $L_s$, we have $x\vee_s y\le x\vee_r y$. Since $L_r$ is a lower order ideal of $L_s$, it follows that $x\vee_s y\in L_r$. Hence $x\vee_s y$ is an upper bound of $x$ and $y$ in $L_r$, so $x\vee_r y\le x\vee_s y$. Thus, $x\vee_r y=x\vee_s y$. Similarly, $x\wedge_r y\le x\wedge_s y$. Since $x\wedge_s y\le x$ and $x\in L_r$, the lower order ideal property gives $x\wedge_s y\in L_r$. Therefore, $x\wedge_s y\le x\wedge_r y$, and hence $x\wedge_r y=x\wedge_s y$.

Now let $z\in L$ be any upper bound of $x$ and $y$.  Since $L=\bigcup_{t\ge1}L_t$, choose $s\ge r$ such that $z\in L_s$. Then $x\vee_r y=x\vee_s y\le z$. Thus,  $x\vee_r y$ is the join of $x$ and $y$ in $L$. If $z\in L$ is a lower bound of $x$ and $y$, choose $s \ge r$ such that $z\in L_s$. Then $z\le x\wedge_s y=x\wedge_r y$. Therefore, $x\wedge_r y$ is the meet of $x$ and $y$ in $L$.
\end{proof}

\begin{theorem}\label{thm:stable-tamari-lattice}
For every $n \ge 1$, the stable Tamari poset $\mathcal{ST}_n$ is a lattice.
\end{theorem}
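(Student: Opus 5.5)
The plan is to apply \cref{lem:nested-lattices} to the nested family $L_r:=\tau_{r,n}\!\downarrow$ from \cref{prop:increasing-union}. By that proposition, each $L_r$ is a finite lattice, the family is increasing, $L_1\subseteq L_2\subseteq\cdots$, and $\bigcup_{r\ge1}L_r=\mathcal{ST}_n$. Each $L_r$ is a lattice in its own right, being isomorphic to an $r$-Tamari lattice, which is known to be a lattice by Bergeron and Pr\'eville-Ratelle \cite{FBLP12}. Here the order on $L_r$ is understood as the restriction of $\le_T$.

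The remaining hypothesis of \cref{lem:nested-lattices} is that $L_r$ is a lower order ideal of $L_s$ whenever $r\le s$. This follows from two facts. First, $L_r$ is a principal lower order ideal of $\mathcal{ST}_n$. Second, $L_r\subseteq L_s$, because $\tau_{r,n}\le_T\tau_{s,n}$ as shown in the proof of \cref{prop:increasing-union}. So if $\c\in L_s$ lies below some element of $L_r$, then $\c\le_T\tau_{r,n}$ by transitivity, and hence $\c\in L_r$.

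With all hypotheses in place, \cref{lem:nested-lattices} gives that $\bigcup_r L_r=\mathcal{ST}_n$ is a lattice. It also shows that joins and meets in $\mathcal{ST}_n$ can be computed inside any $\tau_{r,n}\!\downarrow$ containing both elements.

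The only step needing care is the claim that each $\tau_{r,n}\!\downarrow$ is a lattice. I would get this by transporting the known lattice property of the $r$-Tamari order through the isomorphism of \cref{prop:increasing-union}, which combines \cref{prop:r-tamari-lower-ideal} and \cref{prop:product-poset}. Beyond that, the proof is bookkeeping.
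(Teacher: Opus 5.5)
Your proposal is correct and is essentially the paper's own proof. You apply \cref{lem:nested-lattices} to the nested family $\tau_{r,n}\!\downarrow$ from \cref{prop:increasing-union}, where each member is a finite lattice via the $r$-Tamari isomorphism, and your explicit transitivity argument for the lower-ideal hypothesis (using $\tau_{r,n}\le_T\tau_{s,n}$) fills in a step the paper states without comment.
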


\begin{proof}
By \cref{prop:increasing-union},
\[
\mathcal{ST}_n = \bigcup_{r\ge1}\tau_{r,n}\!\downarrow,
\]
where each $\tau_{r,n}\!\downarrow$ is a finite lattice and $\tau_{r,n}\!\downarrow \subseteq \tau_{s,n}\!\downarrow$ for $r\le s$. Moreover, $\tau_{r,n}\!\downarrow$ is a lower order ideal of $\tau_{s,n}\!\downarrow$. Therefore, \cref{lem:nested-lattices} implies that $\mathcal{ST}_n$ is a lattice.
\end{proof}

\begin{remark}\label{rem:joins-meets-finite-tamari}
If $\a,\b\in\tau_{r,n}\!\downarrow$, then their join and meet in $\mathcal{ST}_n$ agree with their join and meet in the finite lattice $\tau_{r,n}\!\downarrow$. Thus joins and meets in $\mathcal{ST}_n$ may always be computed inside a sufficiently large $r$-Tamari lower order ideal.
\end{remark}

\begin{remark}\label{rem:meet-join-not-coordinatewise}
Although the stable Tamari order refines the coordinatewise order, its meet and join are not, in general, given by coordinatewise minimum and maximum. For example, in $\mathcal{ST}_2$, $(1,0)\wedge(1,1)=(0,0)$, whereas the coordinatewise minimum of $(1,0)$ and $(1,1)$ is $(1,0)$. Similarly, $(0,1)\vee(1,0)=(2,1)$, whereas their coordinatewise maximum is $(1,1)$.
\end{remark}

We record several basic consequences of the preceding results. Recall that in an arbitrary lattice with minimum element $\hat{0}$, an atom is an element that covers $\hat{0}$. We use $\mathbf{0}$ for the minimum element of $\mathcal{ST}_n$, since it is the zero vector. 

\begin{proposition}\label{prop:basic-lattice-properties}
The stable Tamari lattice $\mathcal{ST}_n$ has the following properties.
\begin{enumerate}
    \item Every interval in $\mathcal{ST}_n$ is finite. In particular, $\mathcal{ST}_n$ is locally finite, and every principal lower order ideal is finite.

    \item The element $\mathbf{0}$ is the unique minimum of  $\mathcal{ST}_n$, while $\mathcal{ST}_n$ has no maximum. Every principal upper order ideal is infinite. Consequently, $\mathcal{ST}_n$ is not a complete lattice.

    \item If $n\ge2$, then $\mathcal{ST}_n$ is neither upper nor lower semimodular. In particular, it is neither modular nor distributive. For $n=1$, $\mathcal{ST}_1$ is a chain.

    \item The atoms of $\mathcal{ST}_n$ are $\e_1,\ldots,\e_n$.

\end{enumerate}
\end{proposition}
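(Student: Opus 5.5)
The four parts will be handled separately, each by invoking earlier results.

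For (1), if $\a\le_T\b$ then every $\c\in[\a,\b]$ satisfies $a_i\le c_i\le b_i$ coordinatewise. So $[\a,\b]$ injects into a finite box, which gives local finiteness. Since $\mathbf{0}\le_T\b$ (see below), $\down{b}=[\mathbf{0},\b]$ is finite as well.

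For (2), $\mathbf{0}\le_T\a$ for every $\a$. Indeed, $\h_{\mathbf 0}(j)=j$, so $M_{\mathbf 0}=I_n$ and $\m_{\mathbf{0},\a}=\a\in\N^n$; \cref{thm:compare seq with M matrix} then applies. Alternatively, $\le_{\mathbf 0}$ is trivial, so only $\mathbf 0\le\a$ coordinatewise is needed. Every principal upper ideal $\a\!\uparrow$ is in bijection with $\N^n$ by \cref{cor:upper-height-series}, hence is infinite. If $\mathcal{ST}_n$ had a maximum $\m$, then $\mathcal{ST}_n=\m\!\downarrow$ would be finite by (1), which is a contradiction. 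Non-completeness follows at once, since the join of all of $\N^n$ would be a maximum.

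For (4), the atoms are the upper covers of $\mathbf 0$. By \cref{cor:up-covers} these are the elements $\partial_k^{-1}\mathbf 0=\mathbf 0+\e_{[\h_{\mathbf 0}(k),k]}=\e_k$.

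For (3), $\mathcal{ST}_1$ is $\N$ with its usual order: the position order is trivial, so $\le_T$ is the coordinatewise order. For $n\ge2$, I expect the main work to be producing explicit witnesses. It suffices to find them in $\mathcal{ST}_2$ and transport them into $\mathcal{ST}_n$ by \cref{prop:product-poset}. Take $R_3$, the set of sequences with third coordinate $0$, which is isomorphic to $\mathcal{ST}_2\times\mathcal{ST}_{n-3}$. Fixing the second factor at $\mathbf 0$, as a lower interval, embeds $\mathcal{ST}_2$ as the interval $[\mathbf 0,(2,1,0,\dots,0)]$ when the witness lies in $[(0,0),(2,1)]$. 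When $n=2$, no embedding is needed. Since semimodularity failures are local, a witness inside an interval that is itself an interval of $\mathcal{ST}_n$ suffices: covers within an interval are covers in $\mathcal{ST}_n$, and joins and meets of elements of an interval $[\mathbf 0,\b]$ computed in $\mathcal{ST}_n$ lie in it. The interval $[(0,0),(2,1)]\cong\mathcal T_3$ is the pentagon of \cref{ex:not-graded}. For upper semimodularity, $(1,0)$ and $(0,1)$ both cover $(0,0)=(1,0)\wedge(0,1)$, but their join $(2,1)$ (\cref{rem:meet-join-not-coordinatewise}) does not cover $(0,1)$, because $(0,1)<(1,1)<(2,1)$. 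For lower semimodularity, $(1,1)$ and $(1,0)$ are both covered by $(2,1)=(1,1)\vee(1,0)$, but their meet $(0,0)$ (\cref{rem:meet-join-not-coordinatewise}) is not covered by $(1,1)$, because $(0,0)<(0,1)<(1,1)$. Modularity and distributivity imply both semimodularities, so they also fail.
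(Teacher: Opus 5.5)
Your proposal is correct and follows the paper's proof part by part: the coordinatewise bound for (1), \cref{cor:up-covers} for (4), and the pentagon $[(0,0),(2,1)]$ transported into $\mathcal{ST}_n$ via \cref{prop:product-poset}, with the same covering witnesses, for (3). The only variation is in (2). You get infinitude of upper ideals from the bijection in \cref{cor:upper-height-series} and the absence of a maximum from finiteness of principal lower ideals, while the paper derives both from the fact that every element has an upper cover; either route works.
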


\begin{proof}
Since the stable Tamari order refines the coordinatewise order, if $\a\le_T\b$, then every $\c\in[\a,\b]$ satisfies $a_i\le c_i\le b_i$ for all $i\in[n]$. Thus every interval is finite. The same argument shows that $\b\!\downarrow$ is finite for every $\b\in\mathcal{ST}_n$.

The element $\mathbf{0}$ is the unique minimum. By \cref{cor:up-covers}, every element has an upper cover, so $\mathcal{ST}_n$ has no maximum and every principal upper order ideal is infinite. Consequently, $\mathcal{ST}_n$ is not complete.

For $n=2$, the principal lower order ideal $(2,1)\downarrow$ is isomorphic to the pentagon lattice $N_5$, as shown in \cref{fig:T3-stable-Tamari}. The elements $(0,1)$ and $(1,0)$ both cover their meet $(0,0)$, while their join $(2,1)$ does not cover $(0,1)$. Thus upper semimodularity fails. Similarly, $(2,1)$ covers both $(1,1)$ and $(1,0)$, while their meet $(0,0)$ is not covered by $(1,1)$. Thus lower semimodularity fails. Hence $\mathcal{ST}_2$ is neither modular nor distributive. For $n>2$, repeated application of \cref{prop:product-poset} gives $(2,1,0,\ldots,0)\downarrow\cong N_5$, so the same conclusions hold. When $n=1$, $(a)\le_T(b)$ if and only if $a\le b$, and hence $\mathcal{ST}_1\cong(\N,\le)$ is a chain.

Finally, since $\h_{\mathbf{0}}(i)=i$ for every $i\in[n]$, \cref{cor:up-covers} shows that the elements covering $\mathbf{0}$ are precisely $\e_1,\ldots,\e_n$.
\end{proof}

The realization of $\mathcal{ST}_n$ as a nested union of $r$-Tamari lattices in \cref{prop:increasing-union} has several further consequences.

\begin{corollary}\label{cor:finite-subset-r-tamari}
Every finite subset of $\mathcal{ST}_n$ is contained in a finite sublattice isomorphic to an $r$-Tamari lattice for some $r\ge 1$. In particular, every finitely generated sublattice of $\mathcal{ST}_n$ is finite.
\end{corollary}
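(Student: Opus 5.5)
Given a finite subset $S\subseteq\N^n$, the plan is to choose $r\ge1$ with $r\ge\max\{a_i:\a\in S,\ i\in[n]\}$. The proof of \cref{prop:increasing-union} shows that any $\a$ with all entries at most $r$ satisfies $\a\le_T\tau_{r,n}$. Hence $S\subseteq\tau_{r,n}\!\downarrow$. If $S$ is empty, take $r=1$.

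By \cref{prop:increasing-union}, $\tau_{r,n}\!\downarrow$ is finite and isomorphic to the $r$-Tamari lattice on $r$-Dyck paths of height $n+1$. The remaining point is that it is a sublattice of $\mathcal{ST}_n$, not merely a subposet that happens to be a lattice. This is the content of \cref{lem:nested-lattices} applied to the chain $L_r=\tau_{r,n}\!\downarrow$, exactly as in the proof of \cref{thm:stable-tamari-lattice}: joins and meets in $\mathcal{ST}_n$ of elements of $\tau_{r,n}\!\downarrow$ agree with those computed in $\tau_{r,n}\!\downarrow$ (see \cref{rem:joins-meets-finite-tamari}). Hence $\tau_{r,n}\!\downarrow$ is closed under the lattice operations of $\mathcal{ST}_n$ and is a finite sublattice containing $S$.

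For the second statement, let $L$ be the sublattice generated by a finite set $S$. It is the smallest subset containing $S$ that is closed under $\vee$ and $\wedge$. Since $\tau_{r,n}\!\downarrow$ is such a subset, $L\subseteq\tau_{r,n}\!\downarrow$, which is finite.

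There is no real obstacle. The only point needing care is the sublattice verification, which \cref{lem:nested-lattices} already provides: a principal lower ideal is closed under meets trivially, and closure under joins follows because the join in $\mathcal{ST}_n$ of two elements below $\tau_{r,n}$ is again below $\tau_{r,n}$.
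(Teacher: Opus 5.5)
Your proposal is correct and follows essentially the same route as the paper: you place $S$ inside a single $\tau_{r,n}\!\downarrow$, choosing $r$ directly from the maximum entry where the paper instead uses nestedness of the family, and then you invoke \cref{prop:increasing-union} and \cref{lem:nested-lattices} to see that this finite $r$-Tamari ideal is a sublattice of $\mathcal{ST}_n$. Your explicit deduction of the ``in particular'' clause, via containment of the generated sublattice in $\tau_{r,n}\!\downarrow$, fills in a step the paper leaves implicit.
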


\begin{proof}
Let $S\subseteq\mathcal{ST}_n$ be finite. By \cref{prop:increasing-union}, each element of $S$ belongs to $\tau_{r,n}\!\downarrow$ for some $r$. Since the lower order ideals $\tau_{r,n}\!\downarrow$ form a nested family, there exists $r$ such that $S\subseteq\tau_{r,n}\!\downarrow$. By \cref{prop:increasing-union,lem:nested-lattices}, $\tau_{r,n}\!\downarrow$ is a finite sublattice of $\mathcal{ST}_n$ isomorphic to an $r$-Tamari lattice.
\end{proof}

We now recall some terminology in lattice theory. The \emph{length} of a finite lattice is the maximum length of a chain in the lattice. A lattice $L$ is \emph{semidistributive} if $x\vee y=x\vee z$ implies $x\vee(y\wedge z)=x\vee y$ and $x\wedge y=x\wedge z$ implies $x\wedge(y\vee z)=x\wedge y$ for all $x,y,z\in L$. An element $x$ of a finite lattice is \emph{join-irreducible} if it is not the minimum element and covers exactly one element. Dually, $x$ is \emph{meet-irreducible} if it is not the maximum element and is covered by exactly one element. An element $x$ of a lattice is \emph{left modular} if $y\vee(x\wedge z)=(y\vee x)\wedge z$ whenever $y\le z$. A finite lattice of length $\ell$ is \emph{trim} if it has exactly $\ell$ join-irreducible elements and $\ell$ meet-irreducible elements and has a maximal chain consisting of left-modular elements~\cite{Thomas06}.

\begin{theorem}\label{thm:semidistributive-trim}
The stable Tamari lattice $\mathcal{ST}_n$ is semidistributive. Moreover, every interval is a finite trim semidistributive lattice.
\end{theorem}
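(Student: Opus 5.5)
The plan is to reduce everything to the finite $r$-Tamari lower ideals $\tau_{r,n}\!\downarrow$ and then invoke known results on the $\nu$-Tamari or $r$-Tamari lattices. By \cref{prop:increasing-union}, each $\tau_{r,n}\!\downarrow$ is isomorphic to the $r$-Tamari lattice of height $n+1$. Each such lattice is in turn isomorphic to an interval of a classical Tamari lattice (\cite[Proposition~4]{BFPL11}). The classical Tamari lattice $\mathcal{T}_m$ is semidistributive and trim; it is the Cambrian lattice of type $A$, and extremality/trimness of Tamari lattices is due to Markowsky and Thomas~\cite{Thomas06}.

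For semidistributivity of $\mathcal{ST}_n$, fix $x,y,z\in\mathcal{ST}_n$ with $x\vee y=x\vee z$. By \cref{cor:finite-subset-r-tamari}, choose $r$ with $x,y,z\in\tau_{r,n}\!\downarrow$. By \cref{lem:nested-lattices}, all joins and meets of these elements, including iterated ones such as $x\vee(y\wedge z)$, are computed inside $\tau_{r,n}\!\downarrow$. Each step stays in that ideal, because the ideal is closed under the lattice operations of $\mathcal{ST}_n$. The identity is therefore inherited from $\tau_{r,n}\!\downarrow$. That lattice is semidistributive, since semidistributivity passes to intervals (sublattices) of the semidistributive lattice $\mathcal{T}_m$. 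The meet version is identical.

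For intervals, let $\a\le_T\b$. The interval $[\a,\b]$ is finite by \cref{prop:basic-lattice-properties}. By \cref{cor:interval-classical-tamari}, it is isomorphic to an interval of some classical Tamari lattice. Semidistributivity follows as above, since an interval is a sublattice. For trimness, I would quote the theorem of Thomas~\cite{Thomas06} that intervals of trim lattices are trim (more precisely, that every interval of a trim lattice is trim), together with trimness of $\mathcal{T}_m$. Alternatively, I would use the result that a finite semidistributive extremal lattice is trim, combined with the fact that intervals of extremal semidistributive lattices remain extremal.

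The main obstacle is this interval-inheritance of trimness. Trimness is not obviously inherited by intervals in general, so the argument rests entirely on having the correct citation. If the citation is not available in the needed form, the fallback would be to argue directly in $[\a,\b]$. This would mean producing a left-modular maximal chain, using the falling-index construction of \cref{prop:height-lower-bound}. It would also mean counting join-irreducibles of $[\a,\b]$ against $\hht(\a,\b)=\sum_j\m_{\a,\b}(j)$ from \cref{thm:max-chain}. That direct route I would only attempt if the literature route fails.
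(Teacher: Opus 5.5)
Your proposal is correct and follows essentially the same route as the paper. You pass to a finite $r$-Tamari sublattice $\tau_{r,n}\!\downarrow$ via \cref{cor:finite-subset-r-tamari} and \cref{lem:nested-lattices}, realize it as an interval of a classical Tamari lattice so that it inherits semidistributivity, and handle an arbitrary interval through \cref{cor:interval-classical-tamari} together with Thomas's results that Tamari lattices are trim and that trimness passes to intervals. The differences are only in citations: the paper gets semidistributivity of the classical Tamari lattice from boundedness in McKenzie's sense rather than from the Cambrian or Markowsky references you mention, and the interval-inheritance of trimness that you flag as the main risk is exactly what the paper cites from \cite{Thomas06}, so your fallback is not needed.
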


\begin{proof}

Classical Tamari lattices are bounded in the sense of McKenzie \cite{Urquhart78}, and bounded lattices are semidistributive \cite{McKenzie72}. They are also trim~\cite{Thomas06}, and trimness is preserved under taking intervals~\cite{Thomas06}. Let $\a,\b,\c\in\N^n$. By \cref{cor:finite-subset-r-tamari}, these elements lie in a finite $r$-Tamari sublattice of $\mathcal{ST}_n$. Since an $r$-Tamari lattice is isomorphic to an interval in a classical Tamari lattice and semidistributivity is preserved under taking intervals, it is semidistributive. Thus the meet- and join-semidistributive identities hold for $\a,\b,\c$, and hence $\mathcal{ST}_n$ is semidistributive.

Finally, by \cref{cor:interval-classical-tamari}, every interval in $\mathcal{ST}_n$ is isomorphic to an interval in a classical Tamari lattice. It is therefore semidistributive and trim. Its finiteness follows from \cref{prop:basic-lattice-properties}.
\end{proof}

\begin{proposition}\label{prop:irreducibles}
Let $\b\in\N^n$. The join-irreducible elements of $\b\!\downarrow$ are precisely the elements  \[m\e_{[i,k]}\le_T\b, \qquad 1\le i\le k\le n,\quad m\ge1. \] The meet-irreducible elements of $\b\!\downarrow$ are precisely the elements $\a<_T\b$ for which there is exactly one $k\in[n]$ such that $\partial_k^{-1}\a\le_T\b$. Moreover, $\b\!\downarrow$ has exactly $b_1+\cdots+b_n$ join-irreducible elements and the same number of meet-irreducible elements. In particular, $\left|\left\{m\e_{[i,k]}: 1\le i\le k\le n,\ m\ge1\right\}\cap \down{b}\right| = b_1+\cdots+b_n.$ 

\end{proposition}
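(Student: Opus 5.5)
Join-irreducibles first. Since $\b\!\downarrow$ is a lower order ideal, an element $\a\ne\mathbf 0$ of $\b\!\downarrow$ is join-irreducible exactly when it covers exactly one element in $\mathcal{ST}_n$. By \cref{thm:cover}, this means $|\D(\a)|=1$.

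We show that $|\D(\a)|=1$ holds if and only if $\a=m\e_{[i,k]}$ with $m\ge1$.
\begin{enumerate}
\item For $\a=m\e_{[i,k]}$ we have $\h_\a(j)=j$ for $j\in[i,k]$, because the entries are equal. Then $j\in\D(\a)$ forces $a_{j-1}<a_j$, which happens only at $j=i$. Positions outside $[i,k]$ are zero and so are not in $\D$. Hence $\D(\a)=\{i\}$.
\item Conversely, suppose $|\D(\a)|=1$. By \cref{cor:lower-cover-count}, the number of positive entries equals $1+|\mathcal R(\a)|$.
\item Take the maximal positive value $M$ and a maximal block of consecutive positions, with leftmost position $p$, on which $a$ takes value $M$. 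Then $\h_\a(p)=p$ and $a_{p-1}<M$, so $p\in\D(\a)$.
\item Now take a position where $a$ is positive but not of this form. I will argue that such a position produces a second element of $\D$. Concretely, let $q$ be the leftmost positive position of a run start with $a_{q-1}<a_q$ that differs from $p$; it lies in $\D$. Also consider the next position after the block where $a$ drops to a positive value $v$: the position $r$ with $\h_\a(r)$ furthest left has $a_{\h_\a(r)-1}<v$ unless the sequence returns exactly to a value $v$ after exceeding it.
\item The clean way to organize this is to note that the positive entries split into equal-value returns (the pairs in $\mathcal R$) and fresh rises. A fresh rise is any index in $\D$, and the minimal positive value attained on a maximal positive run also creates a rise. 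Showing there is exactly one fresh rise then forces all positive entries to be equal and contiguous.
\end{enumerate}
This combinatorial step, showing that $|\D(\a)|=1$ forces $\a=m\e_{[i,k]}$, is the main obstacle. I would handle it by induction on $n$ via the last positive index, possibly using \cref{prop:product-poset} to split at zeros.

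Meet-irreducibles: $\a<_T\b$ is meet-irreducible in $\b\!\downarrow$ iff it has exactly one upper cover inside $\b\!\downarrow$. Since upper covers in $\mathcal{ST}_n$ are the $\partial_k^{-1}\a$ by \cref{cor:up-covers}, this is immediate.

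Counting: the interval $\b\!\downarrow=[\mathbf 0,\b]$ is trim by \cref{thm:semidistributive-trim}, so it has $\ell$ join- and $\ell$ meet-irreducibles, where $\ell$ is its length. By \cref{thm:max-chain} with $M_{\mathbf 0}=I_n$, the length is $\hht(\mathbf 0,\b)=b_1+\cdots+b_n$. This gives both counts and the final cardinality identity, using the first part.
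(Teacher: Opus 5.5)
\textbf{Verdict: there is a genuine gap.} Several parts of your proposal agree with the paper's proof:
\begin{itemize}
\item the meet-irreducibles via \cref{cor:up-covers};
\item the count, from trimness of $[\mathbf 0,\b]$ together with $\hht(\mathbf 0,\b)=b_1+\cdots+b_n$ (since $M_{\mathbf 0}=I_n$);
\item the reduction of join-irreducibility to $|\D(\a)|=1$;
\item the check $\D(m\e_{[i,k]})=\{i\}$.
\end{itemize}

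The gap is the converse $|\D(\a)|=1\Rightarrow\a=m\e_{[i,k]}$. You flag it as the main obstacle and do not prove it.
\begin{itemize}
\item Step~(4) relies on a second strict ascent, which need not exist. For $\a=(0,3,2,0)$ the only ascent is at position $2$, and the second element of $\D(\a)=\{2,3\}$ comes from the drop at position $3$. The second half of step~(4) is not a proof.
\item The last sentence of step~(5) is circular. A ``fresh rise'' is by your definition an element of $\D(\a)$, so ``exactly one fresh rise'' is just the hypothesis.
\item The induction via \cref{prop:product-poset} would at best reduce to a single run of positive entries. That is exactly the case that needs an argument.
\end{itemize}
Without this direction, neither the identification of the join-irreducibles nor the final cardinality identity is established.

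\textbf{A direct fix.} Combine your step~(3) with a correct version of the claim in step~(5).
\begin{itemize}
\item Take a maximal run $[s,t]$ of positive entries, so $a_{s-1}=0$ (with $a_0=0$). Let $v=\min_{q\in[s,t]}a_q$, and let $r$ be the \emph{leftmost} position in $[s,t]$ with $a_r=v$.
\item Then $a_q>v$ for $q\in[s,r-1]$, while $a_{s-1}=0\le v$. Hence $\h_\a(r)=s$ and $a_{\h_\a(r)-1}=0<a_r$, i.e.\ $r\in\D(\a)$.
\item Distinct runs give distinct such $r$, so $|\D(\a)|=1$ forces a single run $[s,t]$.
\item Your position $p$ from step~(3) also lies in $\D(\a)$, so $p=r$. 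Thus the maximum $M=a_p$ equals the run minimum $v=a_r$, and $\a=M\e_{[s,t]}$.
\end{itemize}

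\textbf{The paper's route.} Alternatively, continue from your step~(2).
\begin{itemize}
\item The leftmost positive index contributes nothing to $\mathcal R(\a)$, and neither does any positive index immediately after a zero.
\item Since $|\mathcal R(\a)|$ is one less than the number of positive entries, the positive entries fill a single interval $[i,k]$, and every $j\in[i+1,k]$ contributes.
\item For such $j$, $a_{\h_\a(j)-1}=a_j>0$ gives $i\le\h_\a(j)-1<j$. Induction on $j$ then yields $a_i=a_{i+1}=\cdots=a_k$.
\end{itemize}
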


\begin{proof}
Since $\b\!\downarrow$ is a lower order ideal, the lower covers of any $\c\le_T\b$ in $\b\!\downarrow$ are exactly its lower covers in $\mathcal{ST}_n$. By \cref{thm:cover}, these are indexed by $\D(\c)$. Hence $\c$ is join-irreducible in $\b\!\downarrow$ if and only if $|\D(\c)|=1$.

Let $p$ be the number of positive entries in $\c$. That is, $p=\left|\{j\in[n]:c_j>0\}\right|$. By \cref{cor:lower-cover-count}, $\c$ covers exactly one element if and only if $|\mathcal R(\c)|=p-1$.

Each positive index contributes at most one element to $\mathcal R(\c)$. The index of the leftmost positive entry contributes none, and neither does any positive index immediately following a zero. Since $|\mathcal R(\c)|=p-1$, there is no zero between two positive entries. Hence the positive entries of $\c$ occur consecutively, say in positions $i,i+1,\ldots,k$. 

We claim that all entries of $\c$ on the interval $[i,k]$ are equal. Since every $j$ with $i<j\le k$ contributes an element to $\mathcal R(\c)$, we have $c_{\h_\c(j)-1}=c_j$. Starting with $j=i+1$ and proceeding inductively, suppose $c_i=\cdots=c_{j-1}=m$. Since $c_{\h_\c(j)-1}=c_j>0$, we have $\h_\c(j)-1\ge i$, and hence $c_j=c_{\h_\c(j)-1}=m$. Thus $c_i=c_{i+1}=\cdots=c_k$ which implies $\c=m\e_{[i,k]}$ for some $m\ge1$.

Conversely, if $\c=m\e_{[i,k]}$, then $\mathcal R(\c)=\{(j-1,j):i<j\le k\}$,  so \cref{cor:lower-cover-count} shows that $\c$ covers exactly one element. This proves the characterization of the join-irreducible elements. 

By \cref{cor:up-covers}, the upper covers of $\a$ in $\mathcal{ST}_n$ are  precisely the elements $\partial_k^{-1}\a$ for $k \in  [n]$. Therefore, the upper covers of $\a$ that lie in $\b\!\downarrow$ are precisely those satisfying $\partial_k^{-1}\a\le_T\b$. Hence $\a$ is meet-irreducible in $\b\!\downarrow$ exactly when $\a<_T\b$ and there is a unique such $k$. 

Finally, by \cref{thm:semidistributive-trim}, the finite lattice $\b\!\downarrow$ is trim. Its length is $\hht(\mathbf{0},\b)=b_1+\cdots+b_n.$  Therefore, by the definition of trimness, $\b\!\downarrow$ has exactly $b_1+\cdots+b_n$ join-irreducible elements and the same number of meet-irreducible elements.
\end{proof}

\cref{thm:semidistributive-trim} already implies qualitative shellability and M\"obius-function results for intervals in $\mathcal{ST}_n$. In \cref{sec:shellability,sec:mobius}, we give an explicit EL-labeling and determine the M\"obius function directly in terms of the stable Tamari order.

\begin{remark}\label{rem:rowmotion}
By \cref{thm:semidistributive-trim}, every interval in $\mathcal{ST}_n$ is a finite semidistributive and trim lattice, and hence carries a rowmotion operator; see~\cite{Barnard19} for rowmotion on semidistributive lattices and~\cite{TW19} for rowmotion on trim lattices. In particular, rowmotion is defined on every principal lower order ideal $\b\!\downarrow$. When $\b=\delta_{r,n}$, the isomorphism in \cref{prop:r-tamari-lower-ideal} identifies this operator with rowmotion on the corresponding $r$-Tamari lattice studied in \cite{DL24}.
\end{remark}

\subsection{EL-shellability}\label{sec:shellability}

EL-shellability provides a combinatorial method for studying the topology of order complexes of posets. In particular, the order complex of an open interval in an EL-shellable poset is homotopy equivalent to a wedge of spheres, with the falling maximal chains determining the spheres that occur~\cite{BW96I}. This also gives a connection with the M\"obius function: for $x<y$, the value $\mu(x,y)$ is the reduced Euler characteristic of the order complex of the open interval $(x,y)$. 

The classical Tamari lattices are EL-shellable~\cite{BW97II}. The $r$-Tamari lattices, called $m$-Tamari lattices in \cite{Muhle12}, are isomorphic to intervals in classical Tamari lattices \cite[Proposition~4]{BFPL11}, and hence are also EL-shellable. M\"uhle~\cite{Muhle12} subsequently gave a direct EL-labeling in terms of $m$-Dyck paths. Thus EL-shellability is a natural structural property to consider in the stable setting.

By \cref{thm:semidistributive-trim}, every interval in $\mathcal{ST}_n$ is a finite trim lattice and therefore has a maximal chain of left-modular elements. Consequently, each interval admits an EL-labeling~\cite{Liu99}. In this subsection we give an explicit edge-labeling of $\mathcal{ST}_n$ whose restriction to every interval is an EL-labeling. This gives a uniform and direct description of the shellability of the intervals of $\mathcal{ST}_n$, and the labeling will be used in \cref{sec:mobius} to determine the M\"obius function of arbitrary intervals.

Recall that an edge labeling $\elambda$ of a poset $P$ is an \emph{EL-labeling} if every interval $[x,y]$ has a unique saturated chain with strictly increasing edge labels, called the \emph{rising} chain, and this chain is lexicographically first among all saturated chains in $[x,y]$. A saturated chain with weakly decreasing edge labels is called \emph{falling}.

\begin{theorem}\label{thm:EL-lab} 
For each cover relation $\a\lessdot_T\b$ in  $\mathcal{ST}_n$, let \[ \elambda(\a,\b)=(-j,b_j), \qquad j=\max\{i\in[n]:a_i\ne b_i\}. \]  Then $\elambda$ is an EL-labeling of $\mathcal{ST}_n$ with respect to the lexicographic order on $\mathbb{Z}\times\N$. 
\end{theorem}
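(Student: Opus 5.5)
The plan is to read the labels through the raising operators. By \cref{cor:up-covers}, every cover $\a\lessdot_T\b$ has the form $\b=\partial_k^{-1}\a=\a+\e_{[\h_\a(k),k]}$. The largest coordinate that changes is exactly $k$, so $\elambda(\a,\b)=(-k,a_k+1)$. The first step is to record two consequences of this. First, distinct upper covers of the same element $\a$ have distinct indices $k$, hence distinct labels, and the label of a cover out of $\a$ determines that cover. Second, along a saturated chain the labels are strictly increasing in the lexicographic order if and only if the raising indices $k_1,k_2,\ldots$ are weakly decreasing. Indeed, consecutive steps with the same index $k$ raise the $k^{\text{th}}$ coordinate, so the second component $b_k$ strictly increases automatically. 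Consecutive steps with indices $k<k'$, in that order, give labels $(-k,\ast)$ followed by $(-k',\ast)$, which is a strict decrease in the first component.

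For existence of a rising chain in $[\a,\b]$, I will use the chain of \cref{prop:height-lower-bound}. It applies $\partial_r^{-1}$ exactly $\m_{\a,\b}(r)$ times for $r=n,n-1,\ldots,1$. Its indices are weakly decreasing, so it is rising. For uniqueness, take any rising saturated chain from $\a$ to $\b$. Its indices are weakly decreasing, and by \cref{lemma:h-vector-under-raising} a raising at index $s$ leaves $\h(r)$ unchanged for all $r\le s$. Hence, whenever index $r$ is used, the vector added is $\e_{[\h_\a(r),r]}$, exactly as in the proof of \cref{prop:height-lower-bound}. Let $m'_r$ denote the number of uses of index $r$. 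Then $\b-\a=\m' M_\a$, and the invertibility of $M_\a$ forces $\m'=\m_{\a,\b}$. Since the indices are weakly decreasing and the multiplicities are fixed, the sequence of indices is fixed. Each step is then determined by its index, so the chain is unique.

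For lexicographic minimality, I will argue by induction on $\hht(\a,\b)$ and compare first steps. Set $k=\max\{i:a_i\ne b_i\}$. Because $\b-\a=\sum_r\m_{\a,\b}(r)\e_{[\h_\a(r),r]}$ with nonnegative coefficients, $k$ is the largest $r$ with $\m_{\a,\b}(r)>0$, so the rising chain begins with $\partial_k^{-1}\a$; this step lies in $[\a,\b]$ by \cref{cor:last-cov}. Any other first step $\a\lessdot_T\c\le_T\b$ has the form $\c=\partial_{k'}^{-1}\a$ with $c_{k'}>a_{k'}$. Then $b_{k'}>a_{k'}$, so $k'\le k$ and its label $(-k',\ast)$ is at least $(-k,\ast)$. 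Equality of the first components forces $k'=k$, and hence forces the same cover. Thus every saturated chain either has a strictly larger first label than the rising chain, or shares its first edge. In the latter case, the induction hypothesis applied to $[\partial_k^{-1}\a,\b]$ finishes the comparison, since the tail of the rising chain is the rising chain of that subinterval.

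The step I expect to need the most care is the uniqueness argument: it must be checked that weakly decreasing indices really freeze the relevant $h$-values, so that each step adds a row of $M_\a$. This is exactly what \cref{lemma:h-vector-under-raising} provides. The comparison of chains of possibly different lengths causes no difficulty here, because chains are compared at the first differing label, and the first-step analysis above covers that case.
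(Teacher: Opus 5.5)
Your proof is correct. Its skeleton matches the paper's: each cover is read as $\partial_k^{-1}$ with label $(-k,a_k+1)$, and lexicographic minimality comes from noting that every cover of the current element lying below $\b$ has index at most $\max\{i: a_i\neq b_i\}$, with equality only for one cover. Your existence step starts from the chain of \cref{prop:height-lower-bound} rather than building the greedy chain via \cref{cor:last-cov}, but these are the same chain.

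Where you genuinely differ is the uniqueness of the rising chain.

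\textbf{The paper's route.} The argument is local. Let $j$ be the last coordinate where the current element $\d$ and $\b$ differ. If a chain out of $\d$ starts with an index $k<j$, then coordinate $j$ can only be changed later by an index-$j$ cover, since indices above $j$ are excluded. That later step forces a descent. So every rising chain must take the greedy step. Iterating this one observation gives both uniqueness and minimality, using only \cref{cor:last-cov} and \cref{cor:up-covers}.

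\textbf{Your route.} You characterize rising chains globally as exactly those with weakly decreasing indices. You then use \cref{lemma:h-vector-under-raising} to freeze the relevant $h$-values, so that each step adds a row of $M_\a$. Invertibility of $M_\a$ then pins the multiplicities to $\m_{\a,\b}$.

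\textbf{Comparison.} Your route uses more machinery. In return, it identifies the rising chain directly with the chain of \cref{prop:height-lower-bound}. So it shows at once that the rising chain has length $\hht(\a,\b)$ and index multiplicities $\m_{\a,\b}$; the paper records this only as a remark after its proof.
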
 

\begin{proof}
Consider an interval $[\c,\b]$ and set $\c^{(0)}:=\c$. Whenever $\c^{(t)}\ne\b$, let
\[
j_t=\max\{i\in[n]:c_i^{(t)}\ne b_i\} \qquad\text{and}\qquad \c^{(t+1)}:=\partial_{j_t}^{-1}\c^{(t)}.
\]
By \cref{cor:last-cov} and \cref{lemma:single-interval-cover}, we have $\c^{(t)}\lessdot_T\c^{(t+1)}\le_T\b$. Since $[\c,\b]$ is finite, this process terminates, necessarily at $\b$. Thus, it gives a saturated chain from $\c$ to $\b$.

We first show that this chain is rising. The label of the edge $\c^{(t)}\lessdot_T\c^{(t+1)}$ is $\elambda(\c^{(t)},\c^{(t+1)})=(-j_t,c_{j_t}^{(t+1)})$. Consider two consecutive edges $\c^{(t)}\lessdot_T\c^{(t+1)}\lessdot_T\c^{(t+2)}$. Since $c_i^{(t)}=b_i$ for all $i>j_t$, and $\partial_{j_t}^{-1}$ changes only coordinates with indices at most $j_t$, we have $j_{t+1}\le j_t$.

If $j_{t+1}<j_t$, then the first coordinate of the next label is $-j_{t+1}>-j_t$, so the labels increase lexicographically.  If $j_{t+1}=j_t$, then $\elambda(\c^{(t)},\c^{(t+1)}) =(-j_t,c_{j_t}^{(t+1)})<(-j_t,c_{j_t}^{(t+1)}+1)=\elambda(\c^{(t+1)},\c^{(t+2)})$.
Therefore, the labels along the chain are increasing.

We next show that this is the unique rising chain. Let $\d<_T\b$ and let $j=\max\{i\in[n]:d_i\ne b_i\}$. By \cref{cor:last-cov} and \cref{lemma:single-interval-cover}, $\partial_j^{-1}\d$ is an upper cover of $\d$ that lies below $\b$.

Consider any upper cover $\d\lessdot_T\x\le_T\b$. By \cref{cor:up-covers}, $\x=\partial_k^{-1}\d$ for some $k\in[n]$. Since $d_i=b_i$ for every $i>j$, we must have $k\le j$; otherwise, $x_k=d_k+1=b_k+1>b_k$, contradicting $\x\le_T\b$. If $k<j$, then $\elambda(\d,\partial_j^{-1}\d)=(-j,d_j+1) <_{\mathrm{lex}}(-k,d_k+1) =\elambda(\d,\partial_k^{-1}\d)$. Thus $\partial_j^{-1}\d$ has the lexicographically smallest label among all upper covers of $\d$ that remain below $\b$.

Suppose a saturated chain from $\d$ to $\b$ begins instead with $\partial_k^{-1}\d$ for some $k<j$. Since $\partial_k^{-1}\d$ changes only coordinates with indices at most $k$, it does not change the $j^{\text{th}}$ coordinate. An upper cover indexed by an integer greater than $j$ cannot occur in the chain, since the corresponding coordinate already agrees with that of $\b$. Hence, in order to change the $j^{\text{th}}$ coordinate from $d_j$ to $b_j$, the chain must later contain an upper cover indexed by $j$. The first coordinates of the labels of these two edges are $-k$ and $-j$, respectively. Since $k<j$, we have $-k>-j$, so the labels cannot be increasing. Thus every rising saturated chain from $\d$ to $\b$ must begin with $\partial_j^{-1}\d$.

Applying the same argument successively shows that the chain constructed above is the unique rising chain from $\c$ to $\b$. Moreover, at every step it uses the lexicographically smallest possible label, so it is lexicographically first among all saturated chains in $[\c,\b]$.
\end{proof}

Note that the unique rising chain from $\c$ to $\b$ identified in the proof above is the chain from \cref{prop:height-lower-bound}. By \cref{thm:max-chain}, it has maximum possible length $\hht(\c,\b)$.

\begin{remark}
Under the isomorphism $\Phi_{r,n}$ from \cref{prop:r-tamari-lower-ideal}, our edge labeling $\elambda$ on $\delta_{r,n}\!\downarrow$ agrees with M\"uhle's EL-labeling of the $r$-Tamari lattice~\cite{Muhle12}, up to an order-preserving relabeling. Specifically,  M\"uhle's label order is given by $(i,a_i)\leq(j,a_j)$ if and only if $i<j$, or $i=j$ and $a_i\geq a_j$, so the second coordinate is ordered in reverse. Indeed, if a cover $\alpha\lessdot\beta$ has M\"uhle's label $(i,\alpha_i)$, then $\elambda\left(\Phi_{r,n}(\alpha),\Phi_{r,n}(\beta)\right) = \left(i-n-1,r(i-1)-\alpha_i+1\right)$. Thus the relabeling $(i,u)\mapsto \left(i-n-1,r(i-1)-u+1\right)$ carries M\"uhle's label order to the lexicographic order used here.
\end{remark}

\subsection{Möbius function}\label{sec:mobius}

We now determine the M\"obius function of $\mathcal{ST}_n$. We first compute the principal values $\mu(\mathbf{0},\b)$, and then use the EL-labeling from \cref{thm:EL-lab} to determine $\mu(\a,\b)$ for arbitrary intervals.

Recall that the M\"obius function of a locally finite poset $P$ is defined recursively by
\[
\mu(x,x)=1 \qquad\text{and}\qquad \mu(x,z)=-\sum_{x\le y<z}\mu(x,y)
\]
for $x<z$ in $P$. When the ambient poset is clear from context, we suppress it from the notation for the M\"obius function. Since $\mathcal{ST}_n$ has a minimum element $\mathbf{0}$, we write $\mu(\b):=\mu(\mathbf{0},\b)$.

For a finite lattice $L$ with minimum element $\hat{0}$, let $A(L)$ denote the set of atoms of $L$. By Rota's crosscut theorem \cite{Rota64}, for any $x\in L$,
\[
\mu(\hat{0},x)=\sum_B(-1)^{|B|},
\]
where the sum runs over all subsets $B\subseteq A(L)$ such that $\bigvee B=x$, with the convention that $\bigvee\varnothing=\hat{0}$. We apply this formula to the finite principal lower ideals $\down{b}$ of $\mathcal{ST}_n$. Thus, to determine $\mu(\b)$, it suffices to determine which subsets of atoms have join $\b$.

By \cref{prop:basic-lattice-properties}, the atoms of $\mathcal{ST}_n$ are $\e_1,\ldots,\e_n$. To apply the crosscut formula, we first determine the joins of consecutive sets of atoms.

\begin{lemma}\label{lem:consecutive-atom-join}
For $1\le p\le q\le n$, $\e_p\vee\e_{p+1}\vee\cdots\vee\e_q = \sum_{i=p}^q(q-i+1)\e_i$.
\end{lemma}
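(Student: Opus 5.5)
Write $\mathbf{s}=\sum_{i=p}^q(q-i+1)\e_i$; its nonzero entries are $q-p+1,\ldots,2,1$ in positions $p,\ldots,q$. The plan has two steps: show that $\mathbf{s}$ is an upper bound of $\e_p,\ldots,\e_q$, and then show that it lies below every upper bound $\b$ of these atoms.

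\textbf{Upper bound.} We use \cref{lem:partition-downset} after moving to the relevant coordinates. By \cref{prop:product-poset}, the sequences that vanish outside $[p,q]$ form a subposet isomorphic to $\mathcal{ST}_{q-p+1}$ (split off the zero coordinates one at a time). In these coordinates $\mathbf{s}$ becomes the partition $(m,m-1,\ldots,1)$ with $m=q-p+1$, and $\e_i$ becomes a unit vector. The difference $(m,\ldots,1)-\e_i$ is still weakly decreasing, so it is a partition. Hence $\e_i\le_T\mathbf{s}$ for every $i\in[p,q]$.

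\textbf{Least upper bound.} Let $\b$ satisfy $\e_i\le_T\b$ for all $i\in[p,q]$. We must show $\mathbf{s}\le_T\b$. By \cref{cor:m-vector}, since $\RLmin(\mathbf{s})$ consists of $q$ together with all indices outside $[p,q]$, and $p_{\mathbf{s}}(j)=j+1$ for $p\le j<q$, this reduces to two conditions:
\begin{itemize}
\item[(a)] $b_q\ge 1$, and $b_j\ge s_j$ coordinatewise;
\item[(b)] $b_j-b_{j+1}\ge s_j-s_{j+1}=1$ for $p\le j<q$.
\end{itemize}
Since (b) gives a strictly decreasing chain down to $b_q\ge1$, it already implies (a). So the key claim is $b_j>b_{j+1}$ for all $j\in[p,q-1]$.

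For this we apply \cref{def: sto} to $\e_{j+1}\le_T\b$. In $\e_{j+1}$, the entry at position $j$ is $0$ and the entry at $j+1$ is $1$, so $j\not\le_{\e_{j+1}}j+1$, and this relation gives nothing directly. Consider instead $\e_j\le_T\b$. We have $j\le_{\e_j}j+1$, because $(\e_j)_j=1>0=(\e_j)_{j+1}$. The definition then gives $1-0\le b_j-b_{j+1}$, which is exactly (b).

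I expect no step to be a real obstacle. The only care needed is computing $\RLmin(\mathbf{s})$ and $p_{\mathbf{s}}$ correctly, including the zero coordinates outside $[p,q]$. One could bypass \cref{cor:m-vector} and check \cref{def: sto} directly: $i\le_{\mathbf{s}}j$ holds only for $p\le i\le j\le q$, where $s_i-s_j=j-i\le b_i-b_j$ by telescoping (b). The boundary cases $i<p$ or $j>q$ are excluded because the entries there are $0$.
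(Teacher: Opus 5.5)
Your upper-bound step is correct. Restricting to the coordinates $[p,q]$ via \cref{prop:product-poset} and then applying \cref{lem:partition-downset} is a valid alternative to the paper's direct check, which notes that the only nontrivial relation in $\le_{\e_j}$ is $j\le_{\e_j}j+1$.

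The gap is in the least-upper-bound step, at exactly the place you flagged. When $q<n$, $q\notin\RLmin(\mathbf{s})$, because $s_q=1>0=s_{q+1}$. In that case $\RLmin(\mathbf{s})=[1,p-1]\cup[q+1,n]$ and $p_{\mathbf{s}}(q)=q+1$. So \cref{cor:m-vector} also requires $b_q-1\ge b_{q+1}$, and your conditions (a) and (b) are not sufficient. For example, take $n=2$ and $p=q=1$, so $\mathbf{s}=(1,0)$. Then $\b=(1,1)$ satisfies (a) and (b), the latter vacuously. Yet $(1,0)\not\le_T(1,1)$, since $1\le_{(1,0)}2$ would require $1\le b_1-b_2=0$.

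Your proposed shortcut of checking \cref{def: sto} directly has the same omission. For $q<n$, $i\le_{\mathbf{s}}q+1$ holds for every $i\in[p,q]$, because $s_i,\dots,s_q$ are all positive and $s_{q+1}=0$. So $j=q+1$ is not excluded, and it requires $q-i+1\le b_i-b_{q+1}$. Separately, (b) alone only gives $b_j\ge b_q+(q-j)$, so ``(b) implies (a)'' also uses $b_q\ge1$. That bound is immediate from $\e_q\le\b$ coordinatewise, but it should be stated.

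The repair is an argument you already have. When $q<n$, apply it to $\e_q\le_T\b$ as well: $q\le_{\e_q}q+1$ gives $b_q-b_{q+1}\ge1$. You then have $b_j-b_{j+1}\ge1$ for all $j\in[p,q]$ with $j<n$, which is the paper's starting point. Telescope down to $b_{q+1}\ge0$ when $q<n$, or to $b_n\ge1$ (from $\e_n\le\b$ coordinatewise) when $q=n$. This gives $\mathbf{s}\le\b$ coordinatewise and $s_i-s_k=k-i\le b_i-b_k$ for every pair $i\le_{\mathbf{s}}k$, including $k=q+1$. With this addition your proof is complete and agrees with the paper's.
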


\begin{proof}
Set $\d=\sum_{i=p}^q(q-i+1)\e_i$. We first show that $\d$ is an upper bound of $\e_p,\ldots,\e_q$ in $\mathcal{ST}_n$. Fix $j\in[p,q]$. Since $d_j=q-j+1\ge1$, we have $\e_j\le\d$ coordinatewise. If $1\le i<k\le n$ satisfies $i\le_{\e_j}k$, then $i=j$ and $k=j+1$. If $j=n$, no such pair $(i,k)$ exists. Therefore, to prove $\e_j\le_T\d$, it suffices to check, for $j<n$, that $(\e_j)_j-(\e_j)_{j+1}\le d_j-d_{j+1}$. This holds since $(\e_j)_j-(\e_j)_{j+1}=1=d_j-d_{j+1}$. Hence $\d$ is a common upper bound of $\e_p,\ldots,\e_q$.

Now let $\b$ be any common upper bound of $\e_p,\ldots,\e_q$. For every $j\in[p,q]$ with $j<n$, we have $\e_j\le_T\b$. Since $j\le_{\e_j}j+1$, it follows that $b_j-b_{j+1}\ge (\e_j)_j-(\e_j)_{j+1}=1$.

We first show that $\d\le\b$ coordinatewise. Fix $i\in[p,q]$. If $q<n$, then \[ b_i=b_{q+1}+\sum_{j=i}^q(b_j-b_{j+1}) \ge b_{q+1}+q-i+1 \ge q-i+1=d_i.\] If $q=n$, then $\e_n\le_T\b$ implies $b_n\ge1$, and hence \[ b_i=b_n+\sum_{j=i}^{n-1}(b_j-b_{j+1}) \ge 1+n-i=d_i. \] For $i\notin[p,q]$, we have $d_i=0\le b_i$. Hence $\d\le\b$ coordinatewise.

It remains to verify $d_i-d_k\le b_i-b_k$ for all $i\le_\d k$. Suppose $1\le i<k\le n$ satisfies $i\le_\d k$. By the definition of $\d$, we have $d_i-d_k=k-i$. For every $i\le j<k$, we have $j\in[p,q]$ and $j<n$, so $b_j-b_{j+1}\ge 1$ which gives \[ b_i-b_k = \sum_{j=i}^{k-1}(b_j-b_{j+1}) \ge k-i=d_i-d_k.\] Thus $\d\le_T\b$ which implies $\d$ is the least upper bound of $\e_p,\ldots, \e_q$.
\end{proof}

We find that the M\"obius function value $\mu(\b)$ is nonzero if and only if $\b$ can be decomposed into descending runs of the form $(i,i-1,\ldots,1)$ separated by zeros, where $i$ may vary from one run to another. In this case, $\mu(\b)=(-1)^k$, where $k$ is the number of positive entries of $\b$.

\begin{theorem}\label{thm:principal-mobius}
Let $\b\in\N^n$. If $b_i>0\implies b_{i+1}=b_i-1$ for all $i\in[n-1]$,  and $b_n\le1$, then \[ \mu(\b)=(-1)^{|\{i\in[n]:b_i>0\}|}. \] Otherwise, $\mu(\b)=0$.
\end{theorem}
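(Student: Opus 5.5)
We apply Rota's crosscut theorem in the finite lattice $\down{b}$, using the atoms $\e_1,\ldots,\e_n$. By \cref{lem:nested-lattices}, joins in $\down{b}$ agree with joins in $\mathcal{ST}_n$, so
\[
\mu(\b)=\sum_{B}(-1)^{|B|},
\]
where $B$ ranges over the subsets $B\subseteq[n]$ with $\bigvee_{j\in B}\e_j=\b$. The whole task is therefore to compute $\bigvee_{j\in B}\e_j$ for an arbitrary subset $B$.

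Write $B$ as a disjoint union of maximal runs of consecutive integers $[p_1,q_1],\ldots,[p_s,q_s]$, with $q_t+1<p_{t+1}$. \Cref{lem:consecutive-atom-join} gives the join over each run as $\d^{(t)}=\sum_{i=p_t}^{q_t}(q_t-i+1)\e_i$. Each $\d^{(t)}$ has a zero at position $q_t+1$ (when $q_t<n$), and these zero positions lie outside $B$. I therefore claim the join over all of $B$ is $\sum_t\d^{(t)}$.

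To prove the claim, I use \cref{prop:product-poset} repeatedly. Choose positions $z_t=q_t+1$, which are zero in every $\d^{(u)}$. The subposet of elements with zeros at these positions is a product of smaller stable Tamari posets, so the join of elements in that subposet, computed in the product, is an upper bound. It remains to show that it is the least upper bound in $\mathcal{ST}_n$. Let $\c$ be any common upper bound. The argument in the proof of \cref{lem:consecutive-atom-join} shows $c_j-c_{j+1}\ge1$ for each $j\in B$ with $j<n$, and $c_n\ge1$ if $n\in B$. This forces $\sum_t\d^{(t)}\le\c$ coordinatewise. The difference conditions $d_i-d_k\le c_i-c_k$ for $i\le_\d k$ only involve $i,k$ inside a single run, since the zero at $q_t+1$ blocks comparisons across runs and a pair with $k=q_t+1$ has $d_i-d_k=d_i\le c_i-c_k$ by the same telescoping. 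Hence the check goes through exactly as in that lemma. I expect this step, the join of non-consecutive atoms, to be the main obstacle, and I would handle it by redoing the lemma's argument run by run.

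Consequently, $\bigvee_{j\in B}\e_j$ is the vector whose support is exactly $B$, with value $q_t-i+1$ on each run. Conversely, this vector determines $B$ as its support. So at most one $B$ has join $\b$, namely $B=\{i:b_i>0\}$. This $B$ works exactly when $\b$ restricted to each maximal positive run equals $(q-p+1,\ldots,2,1)$. That condition is equivalent to the stated one: $b_i>0$ implies $b_{i+1}=b_i-1$, which makes each run decrease by one, end at $1$, and be followed by a $0$; and $b_n\le1$ handles the run ending at $n$. When the condition holds, the crosscut sum is the single term $(-1)^{|B|}$. Otherwise, the sum is empty and $\mu(\b)=0$. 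The case $\b=\mathbf{0}$ corresponds to $B=\varnothing$ and gives $\mu=1$, which is consistent with the formula.
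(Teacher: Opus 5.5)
Your proposal is correct and follows the paper's proof: you apply Rota's crosscut theorem on $\b\!\downarrow$ with the atoms $\e_j$, split $B$ into maximal runs, use \cref{lem:consecutive-atom-join} on each run, and recover $B$ as the support of the join. The one difference is the step that joins separate runs: the paper shows that the join in $R_s=\{\c:c_s=0\}$ agrees with the join in $\mathcal{ST}_n$ and then invokes \cref{prop:product-poset}, whereas you rerun the telescoping argument of \cref{lem:consecutive-atom-join} on all of $B$ at once, which also works. (Minor point: joins in $\b\!\downarrow$ agree with those in $\mathcal{ST}_n$ simply because $\b\!\downarrow$ is a principal lower ideal of a lattice, not because of \cref{lem:nested-lattices}.)
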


\begin{proof}
Let $B$ be a subset of the atoms of $\mathcal{ST}_n$. By \cref{prop:basic-lattice-properties}, the atoms of $\mathcal{ST}_n$ are $\e_1,\ldots,\e_n$, so we may write $B=\{\e_i:i\in S\}$ for some $S\subseteq[n]$. Decompose $S$ uniquely into its maximal consecutive intervals
\[
S =[p_1,q_1]\sqcup\ldots\sqcup[p_m,q_m]
\]
where $m = 0$ if $S = \varnothing$. Thus, $q_r+1<p_{r+1}$ for $r\in[m-1]$. By \cref{lem:consecutive-atom-join},
\[
\bigvee_{i=p_r}^{q_r}\e_i = \sum_{i=p_r}^{q_r}(q_r-i+1)\e_i.
\]

For any $s$ lying between two consecutive intervals, that is, $q_r<s<p_{r+1}$ for some $r$, let $R_s=\{\c\in\N^n:c_s=0\}$. By \cref{prop:product-poset}, $(R_s,\le_T)$ is a lattice. We claim that the join of $B$ in $R_s$ agrees with its join in $\mathcal{ST}_n$.

Since $B\subseteq R_s$, let $\c$ be the join of $B$ in $R_s$ and let $\d$ be its join in $\mathcal{ST}_n$. Since $\c$ is also an upper bound of $B$ in $\mathcal{ST}_n$, we have $\d\le_T\c$. Hence $d_s\le c_s=0$, so $d_s=0$ and therefore $\d\in R_s$. Since $\d$ is then an upper bound of $B$ in $R_s$, we also have $\c\le_T\d$. Thus $\c=\d$.

Since the intervals $[p_r,q_r]$ are separated by at least one zero, repeated application of \cref{prop:product-poset} gives
\[
\bigvee B = \sum_{r=1}^m\sum_{i=p_r}^{q_r}(q_r-i+1)\e_i.
\]

Thus every join of atoms consists of descending runs $(q_r-p_r+1,q_r-p_r,\ldots,1)$ separated by zeros. In particular, if $\b=\bigvee B$, then $b_i>0$ implies $b_{i+1}=b_i-1$ for $i<n$, and $b_n\le1$.

Conversely, suppose $\b$ satisfies these conditions, and let $S=\{i\in[n]:b_i>0\}$. Decompose $S$ into its maximal consecutive intervals $[p_1,q_1],\ldots,[p_m,q_m]$. For each $r$, if $q_r<n$, then $b_{q_r+1}=0$ and hence $b_{q_r}=1$. If $q_r=n$, then $q_r\in S$ implies $1\ge b_n = b_{q_r}>0$, so again $b_{q_r}=1$.

Since $b_i>0\implies b_{i+1}=b_i-1$ for all $i\in[n-1]$, it follows successively that \[ b_i=b_{i+1}+1=\cdots=b_{q_r}+q_r-i=q_r-i+1 \qquad\text{for }i\in[p_r,q_r]. \]

 Hence, we have
\[
\b = \sum_{r=1}^m\sum_{i=p_r}^{q_r}(q_r-i+1)\e_i = \bigvee_{i\in S}\e_i.
\]
Moreover, this subset of atoms is unique, since its indices are precisely the positions of the positive entries of $\b$. Therefore, by Rota's crosscut formula,
\[
\mu(\b)=(-1)^{|S|} =(-1)^{|\{i\in[n]:b_i>0\}|}.
\]
If $\b$ does not satisfy the stated conditions, then it is not the join of any subset of atoms. Thus the sum in Rota's crosscut formula is empty, and hence $\mu(\b)=0$.
\end{proof}

\begin{corollary}
There are exactly $2^n$ elements $\b\in\mathcal{ST}_n$ for which $\mu(\b)\ne0$. All of these elements lie in $(n,n-1,\ldots,1)\!\downarrow$.
\end{corollary}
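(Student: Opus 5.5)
By \cref{thm:principal-mobius}, $\mu(\b)\ne0$ exactly when $\b$ satisfies two conditions: $b_i>0$ implies $b_{i+1}=b_i-1$ for $i<n$, and $b_n\le1$. The proof of that theorem also shows that these $\b$ are exactly the joins $\bigvee_{i\in S}\e_i$ for $S\subseteq[n]$. The plan is therefore to show that $S\mapsto\bigvee_{i\in S}\e_i$ is a bijection from subsets of $[n]$ onto this set, and then to bound each such $\b$ by $(n,n-1,\ldots,1)$.

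For the bijection, decompose $S$ into maximal runs $[p_r,q_r]$. The displayed formula in the proof of \cref{thm:principal-mobius} gives $\bigvee_{i\in S}\e_i=\sum_r\sum_{i=p_r}^{q_r}(q_r-i+1)\e_i$. The support of this vector is exactly $S$, because every listed coefficient is at least $1$. So $S$ can be read back from the join, which makes the map injective. Surjectivity onto the nonvanishing set is the converse direction already proved in \cref{thm:principal-mobius}. This gives exactly $2^n$ elements.

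For the containment, note first that every atom satisfies $\e_i\le_T(n,\ldots,1)$. This follows from \cref{lem:partition-downset}: $(n,\ldots,1)-\e_i$ is still weakly decreasing and nonnegative, so it is a partition. Since $(n,\ldots,1)$ is an upper bound of every subset of atoms, it lies above their join. Hence all $2^n$ elements lie in $(n,n-1,\ldots,1)\!\downarrow$.

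The only step needing care is that the join must be taken in $\mathcal{ST}_n$. This is harmless here, because $(n,\ldots,1)$ is an upper bound in $\mathcal{ST}_n$ and the join is the least upper bound. There is no real obstacle.
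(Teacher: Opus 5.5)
Your proposal is correct and follows the paper's proof: the count comes from the bijection between subsets of atoms and the nonvanishing elements established in the proof of \cref{thm:principal-mobius}, with injectivity read off from the support exactly as the paper does there. The only difference is cosmetic and lies in the containment step: the paper notes that every such join is at most $\e_1\vee\cdots\vee\e_n=(n,n-1,\ldots,1)$ by \cref{lem:consecutive-atom-join}, whereas you check directly via \cref{lem:partition-downset} that each atom lies below $(n,n-1,\ldots,1)$, and both arguments are valid.
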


\begin{proof}
By the proof of \cref{thm:principal-mobius}, the elements $\b$ with $\mu(\b)\ne0$ are in bijection with subsets of the atoms $\{\e_1,\ldots,\e_n\}$. Hence there are exactly $2^n$ such elements.

Moreover, each of these elements is the join of a subset of the atoms, and therefore is at most the join of all the atoms. By \cref{lem:consecutive-atom-join}, $\e_1\vee\cdots\vee\e_n=(n,n-1,\ldots,1)$. Thus every such $\b$ lies in $(n,n-1,\ldots,1)\!\downarrow$.
\end{proof}

\begin{example} The Hasse diagram of $(3,2,1)\!\downarrow$ below displays the principal M\"obius values $\mu(\b)$ for $n=3$.

    \vspace{2mm}
 \begin{figure}[H]
     \centering
\begin{tikzpicture}[scale=0.3]
  \node (A) at (-0.5,0) {\tiny$000: (1)$};
  \node (B) at (-10.2,3) {\tiny$001: (-1)$};
  \node (C) at (-0.5,3) {\tiny$010: (-1)$};
  \node (D) at (10.2,3) {\tiny$100: (-1)$};
  \node (E) at (-10.2,7.3) {\tiny$011: (0)$};
  \node (F) at (-0.5,7.3) {\tiny$101: (1)$};
  \node (G) at (7.6,7.3) {\tiny$110: (0)$};
  \node (H) at (-10.2,12) {\tiny$021: (1)$};
  \node (I) at (-0.5,12) {\tiny$111: (0)$};
  \node (J) at (10.2,12) {\tiny$210: (1)$};
  \node (K) at (-5.1,16) {\tiny$121: (0)$};
  \node (L) at (4.6,16) {\tiny$211: (0)$};
  \node (M) at (-0.5,20) {\tiny$221: (0)$};
  \node (N) at (-0.5,24) {\tiny$321: (-1)$};

  \draw (A)--(B)--(E)--(H)--(K)--(M)--(N);
  \draw (A)--(C)--(H);
   \draw(C)--(G)--(M);
   \draw (A)--(D)--(J);
   \draw (G)--(J)--(N);
    \draw (B)--(F)--(L)--(N);
     \draw (D)--(F);
     \draw (E)--(I)--(K);
     \draw (I)--(L);
\end{tikzpicture}
     \caption{Hasse diagram of $(3,2,1)\downarrow$ labeled by $\b \colon \mu(\b)$.}
     \label{fig:mobius-ST3}
 \end{figure}
    
\end{example}

We next determine the M\"obius values on arbitrary intervals using the EL-labeling $\elambda$ from \cref{thm:EL-lab}. To do so, we characterize the $\elambda$-falling chains in an interval. This follows the same general strategy used by M\"uhle for the $m$-Tamari lattices~\cite{Muhle12}, although our criterion is expressed directly in terms of the stable Tamari order and the matrices introduced in \cref{subsec: M matrix and m vector}.

For $\a,\b \in \N^n$ such that $\a\le_T\b$, define $\x_{\a,\b}:=(\b-\a)M_\b^{-1}$. Writing $\x_{\a,\b}=(x_1,\ldots,x_n)$, we have $\b-\a = \x_{\a,\b}M_\b = \sum_{i=1}^n x_i\e_{[\h_\b(i),i]}$. Thus $\x_{\a,\b}$ records the coefficients in the expansion of $\b-\a$ in terms of the vectors $\e_{[\h_\b(i),i]}$, for $i\in[n]$.

\begin{example}\label{ex:x-vector}
Let $\a=(0,1,0)$ and $\b=(2,3,1)$. Then $\h_\b=(1,2,1)$ and hence
\[M_\b=
\begin{bmatrix}
1&0&0\\
0&1&0\\
1&1&1
\end{bmatrix}, \qquad
M_\b^{-1}=
\begin{bmatrix}
1&0&0\\
0&1&0\\
-1&-1&1
\end{bmatrix}.
\]
Since $\b-\a=(2,2,1)$, we obtain $\x_{\a,\b} = (\b-\a)M_\b^{-1} = (1,1,1)$. Equivalently, $\b-\a = \e_{[1,1]}+\e_{[2,2]}+\e_{[1,3]}$.
\end{example}

Recall that $\D(\b)=\{k\in[n]:b_{\h_\b(k)-1}<b_k\}$, and by \cref{thm:cover}, these are precisely the indices $k$ for which $\partial_k\b$ is a lower cover of $\b$.

\begin{proposition}\label{prop:falling-chain-characterization}
Let $\a,\b\in\N^n$ with $\a\le_T\b$. The interval $[\a,\b]$ has an $\elambda$-falling saturated chain if and only if $\x_{\a,\b}\in\{0,1\}^n$ and $\operatorname{supp}(\x_{\a,\b})\subseteq\D(\b)$. Moreover, when an $\elambda$-falling saturated chain exists, it is unique, its length is $\left|\operatorname{supp}(\x_{\a,\b})\right|$, and its cover indices, from bottom to top, are the elements of $\operatorname{supp}(\x_{\a,\b})$ in increasing order.
\end{proposition}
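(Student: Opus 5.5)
The approach is to translate the $\elambda$-falling condition into a condition on cover indices. Then I read such a chain from the top down as a sequence of decreasing operators $\partial_k$ applied to $\b$. \cref{lem:h-D-under-lowering} should then keep the relevant $h$-values and $\D$-membership frozen at their values for $\b$.

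\textbf{Step 1: falling chains have strictly increasing cover indices.} Every cover has the form $\c\lessdot_T\partial_k^{-1}\c=\c+\e_{[\h_\c(k),k]}$ by \cref{cor:up-covers}, and its label is $(-k,c_k+1)$. Take two consecutive covers with indices $k$ and then $k'$, going up. Weak decrease of the labels in the lexicographic order forces $-k'\le -k$, that is, $k'\ge k$. If $k'=k$, the second coordinates are $c_k+1$ and then $c_k+2$, which strictly increase, so the pair is not falling. Hence a saturated chain is $\elambda$-falling if and only if its cover indices $k_1<k_2<\cdots<k_\ell$, read from bottom to top, are strictly increasing.

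\textbf{Step 2: necessity.} Let $\a=\c^{(0)}\lessdot\cdots\lessdot\c^{(\ell)}=\b$ be falling, with indices $k_1<\cdots<k_\ell$. Reading downward, $\c^{(t-1)}=\partial_{k_t}\c^{(t)}$ with $k_t\in\D(\c^{(t)})$, by \cref{thm:cover} and \cref{prop:raising-lowering-inverses}. I will prove by downward induction on $t$ that for every $j<k_{t}$ we have $\h_{\c^{(t-1)}}(j)=\h_\b(j)$ and $j\in\D(\c^{(t-1)})\Leftrightarrow j\in\D(\b)$. This follows from \cref{lem:h-D-under-lowering}, since all later indices are smaller. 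In particular $\h_{\c^{(t)}}(k_t)=\h_\b(k_t)$ and $k_t\in\D(\b)$. Summing the steps gives $\b-\a=\sum_{t}\e_{[\h_\b(k_t),k_t]}$. Since the rows of $M_\b$ form a basis, $\x_{\a,\b}$ is the indicator vector of $\{k_1,\dots,k_\ell\}$, and this set is contained in $\D(\b)$. The same computation gives uniqueness: the index set is forced to be $\operatorname{supp}(\x_{\a,\b})$, Step 1 forces the order, and the length is $|\operatorname{supp}(\x_{\a,\b})|$.

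\textbf{Step 3: sufficiency.} Conversely, suppose $\x_{\a,\b}\in\{0,1\}^n$ with support $S=\{k_1<\cdots<k_\ell\}\subseteq\D(\b)$. Starting from $\b$, apply $\partial_{k_\ell},\partial_{k_{\ell-1}},\dots,\partial_{k_1}$ in that order. The same induction via \cref{lem:h-D-under-lowering} shows that at the step using $k_t$, the current element still has $k_t$ in its $\D$-set and the same $h$-value at $k_t$ as $\b$. So each step is a lower cover by \cref{thm:cover} and subtracts exactly $\e_{[\h_\b(k_t),k_t]}$. The endpoint is therefore $\b-\x_{\a,\b}M_\b=\a$. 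The resulting chain has strictly increasing indices from bottom to top, so by Step 1 it is falling.

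\textbf{Main obstacle.} The delicate point is the bookkeeping in the induction of Steps 2 and 3. After several lowerings, the $h$-value at the next, smaller index and its membership in $\D$ must be unchanged, and this requires every later lowering index to be strictly larger. \cref{lem:h-D-under-lowering} gives exactly this, but only for $j<r$. That is precisely why Step 1 must establish strictness of the indices first.
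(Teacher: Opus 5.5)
Your proposal is correct and matches the paper's proof. Both arguments show that falling means strictly increasing cover indices, then apply \cref{lem:h-D-under-lowering} repeatedly to freeze $\h(k_t)$ and $\D$-membership at their values for $\b$; this proves necessity, sufficiency and uniqueness. The only difference is the order of presentation: you establish the index characterization first, whereas the paper proves sufficiency first and derives strictness inside the necessity argument.
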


\begin{proof}
The case $\a=\b$ is immediate. We now consider the case when $\a<_T\b$.

Suppose $\x_{\a,\b}\in\{0,1\}^n$ and $\operatorname{supp}(\x_{\a,\b})\subseteq\D(\b)$. Write $\operatorname{supp}(\x_{\a,\b})=\{s_1<s_2<\cdots<s_k\}.$ Then \[\b-\a=\x_{\a,\b}\,M_\b=\sum_{t=1}^k\e_{[\h_\b(s_t),s_t]}.\]  Set $\c^{(k)}=\b$. For $t=k,k-1,\ldots,1$, define $\c^{(t-1)}=\partial_{s_t}\c^{(t)}$. 

We claim inductively that $\h_{\c^{(t)}}(s_t)=\h_\b(s_t)$ and $s_t\in\D(\c^{(t)})$. For $t=k$, this follows from $\c^{(k)}=\b$ and $s_k\in\D(\b)$. For $t<k$, the decreasing operators already applied have indices $s_{t+1},\ldots,s_k$, all larger than $s_t$. Repeated application of \cref{lem:h-D-under-lowering} gives the claim.

Hence, by \cref{thm:cover}, $\c^{(t-1)} =\partial_{s_t}\c^{(t)} \lessdot_T\c^{(t)}$ for every $t\in[k]$. Moreover, $\h_{\c^{(t)}}(s_t)=\h_\b(s_t)$ shows that this step decreases the entries indexed by $[\h_\b(s_t),s_t]$. Therefore,
\[
\c^{(0)} = \b-\sum_{t=1}^k\e_{[\h_\b(s_t),s_t]}=\a.
\]
Thus, $\a=\c^{(0)} \lessdot_T\c^{(1)} \lessdot_T\cdots \lessdot_T\c^{(k)}=\b$ is a saturated chain whose cover indices, from bottom to top, are $s_1<s_2<\cdots<s_k$. The first coordinates of its $\elambda$-labels are $-s_1>-s_2>\cdots>-s_k$, so the chain is $\elambda$-falling.

Conversely, suppose $\a=\c^{(0)}\lessdot_T\c^{(1)}\lessdot_T\cdots\lessdot_T\c^{(k)}=\b$ is an $\elambda$-falling saturated chain. By \cref{thm:cover}, for each $t\in[k]$ there is an $s_t\in\D(\c^{(t)})$ such that $\c^{(t-1)}=\partial_{s_t}\c^{(t)}$.

Since the chain is falling, we have $s_1\le s_2\le\cdots\le s_k.$ These inequalities are strict. Indeed, if $s_t=s_{t+1}$, then $c_{s_t}^{(t+1)}=c_{s_t}^{(t)}+1$, so $\elambda(\c^{(t-1)},\c^{(t)}) = (-s_t,c_{s_t}^{(t)}) < (-s_t,c_{s_t}^{(t)}+1) = \elambda(\c^{(t)},\c^{(t+1)})$, contradicting the falling condition. Hence $s_1<s_2<\cdots<s_k$.

For each $t\in[k]$, $\c^{(t)}$ is obtained from $\b$ by applying decreasing operators with indices $s_k,\ldots,s_{t+1}$, all larger than $s_t$. Applying \cref{lem:h-D-under-lowering} successively gives $\h_{\c^{(t)}}(s_t)=\h_\b(s_t)$ and $s_t\in\D(\c^{(t)})\Leftrightarrow s_t\in\D(\b)$. Since $s_t\in\D(\c^{(t)})$, we have $s_t\in\D(\b)$. Therefore,
\[
\b-\a = \sum_{t=1}^k\e_{[\h_\b(s_t),s_t]} = (\e_{s_1}+\cdots+\e_{s_k})M_\b
\]
and hence $\x_{\a,\b} = \e_{s_1}+\cdots+\e_{s_k} \in\{0,1\}^n$ and $\operatorname{supp}(\x_{\a,\b}) = \{s_1,\ldots,s_k\} \subseteq\D(\b)$.

Finally, any $\elambda$-falling saturated chain has cover indices given by the elements of $\operatorname{supp}(\x_{\a,\b})$ in an increasing order. These indices determine the chain uniquely by applying the corresponding decreasing operators to $\b$ in reverse order. Its length is $\left|\operatorname{supp}(\x_{\a,\b})\right|$.
\end{proof}

\begin{theorem}\label{thm:general-mobius}
Let $\a,\b\in\N^n$ with $\a\le_T\b$. Then, \[\mu(\a,\b) = \begin{cases}
(-1)^{\left|\operatorname{supp}(\x_{\a,\b})\right|}, & \text{ if }\x_{\a,\b}\in\{0,1\}^n \text{ and } \operatorname{supp}(\x_{\a,\b})\subseteq\D(\b),\\
0, & \text{otherwise}.
\end{cases}\]
\end{theorem}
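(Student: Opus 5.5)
The plan is to combine the EL-labeling of \cref{thm:EL-lab} with the standard Björner--Wachs consequence of EL-shellability. For a finite bounded poset with an EL-labeling, $\mu(\a,\b)$ is obtained by counting falling chains: it equals $\sum_C(-1)^{\ell(C)}$, summed over all falling (weakly decreasing) maximal chains $C$ of $[\a,\b]$, where $\ell(C)$ is the length of $C$. We use weakly decreasing here, matching the convention in the paper. Strictly speaking, the Björner--Wachs statement counts chains that are not strictly increasing in the appropriate sense. Since our labels along a chain are never equal, as we check next, the two notions agree.

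The labels along a saturated chain are never repeated. Indeed, two consecutive covers with the same index $j$ have second coordinates $c_j^{(t)}+1$ and $c_j^{(t)}+2$, so they differ. Two covers with different indices differ in the first coordinate. Hence ``falling'' coincides with ``strictly decreasing''. This also matches the argument in \cref{prop:falling-chain-characterization}, which already excludes consecutive equal indices.

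First treat $\a=\b$. Here $\x_{\a,\b}=\mathbf 0$, so its support is empty and the formula gives $1=\mu(\a,\a)$.

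For $\a<_T\b$, the interval $[\a,\b]$ is finite by \cref{prop:basic-lattice-properties}. The restriction of $\elambda$ to $[\a,\b]$ is an EL-labeling by \cref{thm:EL-lab}. Therefore the order complex of the open interval $(\a,\b)$ is homotopy equivalent to a wedge of spheres, one sphere of dimension $\ell(C)-2$ for each falling maximal chain $C$. Taking reduced Euler characteristics, Hall's theorem gives $\mu(\a,\b)=\sum_C(-1)^{\ell(C)}$ over falling maximal chains.

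Now apply \cref{prop:falling-chain-characterization}. If $\x_{\a,\b}\notin\{0,1\}^n$ or $\operatorname{supp}(\x_{\a,\b})\not\subseteq\D(\b)$, there are no falling chains, so the sum is empty and $\mu(\a,\b)=0$. Otherwise there is exactly one falling chain, of length $|\operatorname{supp}(\x_{\a,\b})|$, and the sum equals $(-1)^{|\operatorname{supp}(\x_{\a,\b})|}$.

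The substantive work was already done in \cref{prop:falling-chain-characterization}. The only point requiring care is matching the falling-chain convention used in the Björner--Wachs Möbius formula (chains that are not rising) with the paper's definition, which is handled by the observation about repeated labels above.
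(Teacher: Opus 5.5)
Your proposal is correct and follows the paper's proof: both combine the EL-labeling of \cref{thm:EL-lab}, the Bj\"orner--Wachs formula $\mu(\a,\b)=\sum_C(-1)^{\ell(C)}$ over falling chains, and \cref{prop:falling-chain-characterization}. Your side remark that consecutive labels never coincide is true but not needed. Bj\"orner--Wachs already count weakly decreasing chains, which is exactly the paper's notion of falling; they do not count merely ``non-rising'' chains, as your parenthetical suggests.
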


\begin{proof}
By \cref{prop:falling-chain-characterization}, the interval $[\a,\b]$ has at most one $\elambda$-falling saturated chain. Such a chain exists precisely when $\x_{\a,\b}\in\{0,1\}^n$ and $\operatorname{supp}(\x_{\a,\b})\subseteq\D(\b)$, and in that case its length is $\left|\operatorname{supp}(\x_{\a,\b})\right|$.

By \cite[Proposition~5.7]{BW96I}, $\mu(\a,\b)=\sum_C(-1)^{\ell(C)}$ where the sum is over all $\elambda$-falling saturated chains $C$ in $[\a,\b]$.  Therefore, if such a chain exists, then $\mu(\a,\b)=(-1)^{\left|\operatorname{supp}(\x_{\a,\b})\right|}$ and otherwise $\mu(\a,\b)=0$.
\end{proof}

\begin{example}
Returning to \cref{ex:x-vector}, we have $\x_{(0,1,0),(2,3,1)}=(1,1,1)$ and $\D(2,3,1)=\{1,2,3\}$. Thus, by \cref{thm:general-mobius}, $\mu((0,1,0),(2,3,1))=(-1)^3=-1$. The corresponding $\elambda$-falling saturated chain is $(0,1,0) \lessdot_T (1,1,0) \lessdot_T (1,2,0) \lessdot_T (2,3,1)$.

The following Hasse diagram shows the M\"obius values $\mu((0,1,0),\c)$ for $\c\in[(0,1,0),(2,3,1)]$.  The $\elambda$-falling chain between $(0,1,0)$ and $(2,3,1)$ is bolded.
\begin{figure}[H]
    \centering
\begin{tikzpicture}[scale=0.3]
  \node (A) at (0,0) {\tiny$010: (1)$};
  \node (B) at (0,3) {\tiny$020: (-1)$};
  \node (C) at (7.6,3) {\tiny$021: (-1)$};
  \node (E) at (-7.6,7.3) {\tiny$031: (1)$};
    \node (G) at (7.6,7.3) {\tiny$121: (0)$};
  \node (D) at (-7.6,12)
  {\tiny$131: (0)$};
  \node (H) at (0,12){\tiny$120: (1)$};
  \node (I) at (7.6,12) {\tiny$221: (1)$};
  \node (F) at (-7.6, 3)
  {\tiny $110: (-1)$};
  \node (M) at (0,16) {\tiny$231: (-1)$};

  \draw (M)--(I);
  \draw (M)--(D)--(E)--(C)--(A);
  \draw[line width=1.5pt] (M)--(H)--(F)--(A);
  \draw (E)--(B);
  \draw (I)--(F);
  \draw (D)--(G)--(C);
  \draw (H)--(B)--(A);
  \draw (I)--(G);
\end{tikzpicture}
    \caption{Möbius values $\mu((0,1,0),-)$ on the interval $[(0,1,0),(2,3,1)]$.}
    \label{fig:mobius values}
\end{figure}
\end{example}

\section{Structure and enumeration of principal lower order ideals} \label{section: lower order ideals}

In the previous section, principal lower order ideals arose naturally in our study of the lattice structure and height of the stable Tamari order. We now study these lower order ideals in more detail. We first consider those generated by partitions and relate them to generalized Tamari lattices. We then give a decomposition of an arbitrary principal lower order ideal into smaller intervals. Finally, we study generating functions that encode their sizes and height distributions.

\subsection{Partition lower ideals and duality}\label{subsection: nu Tamari}

We now consider principal lower order ideals generated by partitions. By \cref{lem:partition-downset}, if $\lambda\in\N^n$ is a partition, then the elements of $\lambda\!\downarrow$ are precisely the sequences $\lambda-\mu$, where $\mu$ is a partition contained in $\lambda$. We will relate these lower order ideals to the generalized Tamari lattices of~\cite{LPXV17}.

Recall that in \cref{prop:r-tamari-lower-ideal}, we identified $\delta_{r,n}\!\downarrow$ with the $r$-Tamari lattice by interpreting its elements as area sequences of $r$-Dyck paths. This construction extends naturally from the staircase partition $\delta_{r,n}$ to an arbitrary partition $\lambda$.

Let $\nu_\lambda$ be the lattice path along the right boundary of the Young diagram of $\lambda$, drawn in English notation. Any lattice path $\pi$ weakly above $\nu_\lambda$ with the same endpoints is the right boundary of the Young diagram of a partition $\mu\subseteq\lambda$. The area sequence of $\pi$ relative to $\nu_\lambda$ is $\a=(a_1,\ldots,a_n)$, where $a_i$ is the number of complete squares in the $i^{\text{th}}$ row between $\pi$ and $\nu_\lambda$, read from top to bottom. Thus $\a=\lambda-\mu$. By \cref{lem:partition-downset}, these area sequences are precisely the elements of $\lambda\!\downarrow$.

\begin{example}\label{ex:nu-area-sequence}
Let $\lambda=(5,4,2,1)$ and $\mu=(5,2,1,0)$, and let $\pi$ be the lattice path above $\nu_\lambda$ corresponding to $\mu$. The area sequence of $\pi$ relative to $\nu_\lambda$ is $\lambda-\mu=(5,4,2,1)-(5,2,1,0)=(0,2,1,1)$, represented by the shaded squares in \cref{fig:nu-area-sequence}. By \cref{lem:partition-downset}, $(0,2,1,1) \in \lambda\!\downarrow$.

\begin{figure}[H]
\centering
\begin{tikzpicture}[scale=0.85]

  \node at (6.1,4.35) {\small area};
  \node[right] at (5.9,3.5) {\small $0$};
  \node[right] at (5.9,2.5) {\small $2$};
  \node[right] at (5.9,1.5) {\small $1$};
  \node[right] at (5.9,0.5) {\small $1$};

  \fill[gray!30] (0,0) rectangle (1,1);
  \fill[gray!30] (1,1) rectangle (2,2);
  \fill[gray!30] (2,2) rectangle (3,3);
  \fill[gray!30] (3,2) rectangle (4,3);

  \draw[step=1,gray!40,very thin] (0,0) grid (5,4);

  \draw[thick, blue]
    (0,0)--(1,0)--(1,1)
    --(2,1)--(2,2)
    --(4,2)--(4,3)
    --(5,3)--(5,4);

  \draw[very thick, red, dashed]
    (0,0)--(0,1)
    --(1,1)--(1,2)
    --(2,2)--(2,3)
    --(5,3)--(5,4);

  \node[below] at (3.2,2) {\small$\nu_\lambda$};
  \node[above] at (3.5,3) {\small$\pi$};

\end{tikzpicture}
\caption{The lattice path $\pi$ (red dashed) above $\nu_\lambda$ (blue solid) for $\lambda=(5,4,2,1)$. The shaded squares give the area sequence $\a=(0,2,1,1)$.}
\label{fig:nu-area-sequence}
\end{figure}
\end{example}

We next recall the covering relation for the generalized Tamari lattice $\operatorname{Tam}(\nu)$ from~\cite{LPXV17}. Let $\nu$ be a lattice path starting at $(0,0)$, and let $\pi$ be a lattice path weakly above $\nu$ with the same endpoints. A valley of $\pi$ is a lattice point preceded by an east step and followed by a north step. For a lattice point $P$ on $\pi$, let $\operatorname{horiz}_\nu(P)$ be the maximum number of east steps that can be added to the right of $P$ while remaining weakly above $\nu$. In the case $\nu=\nu_\lambda$, if $P=(x,y)$, then \[
\operatorname{horiz}_{\nu_\lambda}(P) =\begin{cases} 
\lambda_{n-y}-x, & y<n,\\ 
\lambda_1-x, & y=n.
\end{cases}
\]

For a valley $p$, let $p'$ be the first lattice point on $\pi$ strictly after $p$ such that $\operatorname{horiz}_\nu(p') = \operatorname{horiz}_\nu(p)$. If $D_{[p,p']}$ denotes the subpath of $\pi$ from $p$ to $p'$, then interchanging the east step immediately preceding $p$ with $D_{[p,p']}$ produces a path $\pi'$ that covers $\pi$ in $\operatorname{Tam}(\nu)$. The cover relations in $\operatorname{Tam}(\nu)$ are precisely the relations $\pi\lessdot\pi'$ obtained in this way. Equivalently, $D_{[p,p']}$ is shifted one unit to the left and the east step preceding $p$ is moved to the end of this subpath.

\begin{example}\label{ex:nu-tamari-cover}
Continue with $\lambda=(5,4,2,1)$ and the path $\pi$ from \cref{ex:nu-area-sequence}, whose area sequence relative to $\nu_\lambda$ is $\a=(0,2,1,1)$. Let $p=(1,1)$ be the valley marked in \cref{fig:nu-tamari-cover}. Along the portion of $\pi$ beginning at $p$, the values of $\operatorname{horiz}_{\nu_\lambda}$ are
\[
\begin{array}{c|c|c|c|c|c|c}
q &(1,1)&(1,2)&(2,2)&(2,3)&(3,3)&(4,3)\\
\hline
\operatorname{horiz}_{\nu_\lambda}(q)
&1&3&2&3&2&1
\end{array}.
\]
Thus, the first lattice point on $\pi$ strictly after $p$ with the same value of $\operatorname{horiz}_{\nu_\lambda}$ is $p'=(4,3)$. Hence, $D_{[p,p']}$ is the highlighted subpath in the left panel of \cref{fig:nu-tamari-cover}. Interchanging the east step immediately preceding $p$ with $D_{[p,p']}$ shifts this subpath one unit to the left and produces the path $\pi'$ shown in the right panel.

\begin{figure}[H]
\centering

\begin{minipage}{0.46\textwidth}
\centering
\begin{tikzpicture}[scale=0.8]

  \draw[step=1,gray!35,very thin] (0,0) grid (5,4);

  \draw[thick,blue]
    (0,0)--(1,0)--(1,1)
    --(2,1)--(2,2)
    --(4,2)--(4,3)
    --(5,3)--(5,4);

  \draw[line width=8pt,gray!35,line cap=round,line join=round]
    (1,1)--(1,2)--(2,2)--(2,3)--(4,3);

  \draw[very thick,red,dashed]
    (0,0)--(0,1)--(1,1)
    --(1,2)--(2,2)--(2,3)
    --(5,3)--(5,4);

  \draw[ultra thick,black]
    (0,1)--(1,1);

  \fill (1,1) circle (2.5pt);
  \fill (4,3) circle (2.5pt);
  \node[below right] at (1,1) {\small $p$};
  \node[above] at (4,3) {\small $p'$};

  \node[below] at (3.1,2) {\small $\nu_\lambda$};
  \node[above] at (4.6,3.05) {\small $\pi$};
  \node at (1,2.55) {\small $D_{[p,p']}$};

  \node at (6.1,4.35) {\small area};
  \node[right] at (5.9,3.5) {\small $0$};
  \node[right] at (5.9,2.5) {\small $2$};
  \node[right] at (5.9,1.5) {\small $1$};
  \node[right] at (5.9,0.5) {\small $1$};

  \node at (2.5,-0.5) {\small Before the cover move};
\end{tikzpicture}

\end{minipage}
\hfill
\begin{minipage}{0.46\textwidth}
\centering
\begin{tikzpicture}[scale=0.8]

  \draw[step=1,gray!35,very thin] (0,0) grid (5,4);

  \draw[thick,blue]
    (0,0)--(1,0)--(1,1)
    --(2,1)--(2,2)
    --(4,2)--(4,3)
    --(5,3)--(5,4);

  \draw[line width=8pt,gray!35,line cap=round,line join=round]
    (0,1)--(0,2)--(1,2)--(1,3)--(3,3);

  \draw[very thick,red,dashed]
    (0,0)--(0,1)--(0,2)
    --(1,2)--(1,3)
    --(5,3)--(5,4);

  \draw[ultra thick,black]
    (3,3)--(4,3);

  \node[below] at (3.1,2) {\small $\nu_\lambda$};
  \node[above] at (4.6,3.05) {\small $\pi'$};

  \node at (6.1,4.35) {\small area};
  \node[right] at (5.9,3.5) {\small $0$};
  \node[right] at (5.9,2.5) {\small $3$};
  \node[right] at (5.9,1.5) {\small $2$};
  \node[right] at (5.9,0.5) {\small $1$};
\node at (2.5,-0.5) {\small After the cover move};
\end{tikzpicture}

\end{minipage}

\caption{A cover in $\operatorname{Tam}(\nu_\lambda)$. In the left
panel, the gray band highlights the subpath $D_{[p,p']}$ and the thick segment is the east step immediately preceding $p$. In the right panel, $D_{[p,p']}$ has been shifted one unit to the left and the thick east step has been moved to the end of the subpath.}
\label{fig:nu-tamari-cover}
\end{figure}

The path $\pi'$ corresponds to the partition $\mu'=(5,1,0,0)$, and hence its area sequence relative to $\nu_\lambda$ is $\lambda-\mu' =(5,4,2,1)-(5,1,0,0) =(0,3,2,1) =\a+\e_{[2,3]}$. Thus the cover $\pi\lessdot\pi'$ in $\operatorname{Tam}(\nu_\lambda)$ corresponds to the stable Tamari cover $(0,2,1,1)\lessdot_T(0,3,2,1)$.

\end{example}

\begin{theorem}\label{thm:partition-lower-ideal-nu-tamari}
Let $\lambda\in\N^n$ be a partition. The map sending a lattice path $\pi$ above $\nu_\lambda$ to its area sequence relative to $\nu_\lambda$ is an order isomorphism from $\operatorname{Tam}(\nu_\lambda)$ onto the principal lower order ideal $\lambda\!\downarrow$ in $\mathcal{ST}_n$.
\end{theorem}

\begin{proof}
Let $\pi$ be a lattice path from $(0,0)$ to $(\lambda_1,n)$ weakly above $\nu_\lambda$, and let $\mu\subseteq\lambda$ be the partition corresponding to $\pi$. Its area sequence relative to $\nu_\lambda$ is $\a=\lambda-\mu$. Thus, by \cref{lem:partition-downset}, the map in the statement is a bijection onto $\lambda\!\downarrow$.

For $i\in[n]$, set $p_i=(\mu_i,n-i)$, so that $p_i$ is followed by a north step of $\pi$, and set $p_0=(\lambda_1,n)$. Then $\operatorname{horiz}_{\nu_\lambda}(p_i) =\lambda_i-\mu_i=a_i$ for $i\in[n]$, while $\operatorname{horiz}_{\nu_\lambda}(p_0)=0=a_0$. It remains to compare covering relations. 

Suppose $\pi\lessdot\pi'$ in $\operatorname{Tam}(\nu_\lambda)$, and let $p_q$ be the valley of $\pi$ at which the corresponding cover move is performed. Set $h=\h_\a(q)$. By the definition of $\h_\a(q)$, $a_{h-1}\le a_q$ and $a_j>a_q$ for $h\le j <q$. We determine the first point $p'$ on $\pi$ after $p_q$ for which $\operatorname{horiz}_{\nu_\lambda}(p')=a_q$.

For $h<j\le q$, after the north step from $p_j$, the path $\pi$ runs east at height $n-j+1$ until it reaches $p_{j-1}$. At any lattice point $P=(x,n-j+1)$ on this horizontal segment, $\operatorname{horiz}_{\nu_\lambda}(P) =\lambda_{j-1}-x$. In particular, at its endpoint, $\operatorname{horiz}_{\nu_\lambda}(p_{j-1}) =a_{j-1}>a_q$. Since $x$ increases along the segment, the value of $\operatorname{horiz}_{\nu_\lambda}$ decreases, and hence remains greater than $a_q$ throughout the segment. Therefore $\operatorname{horiz}_{\nu_\lambda}$ does not return to $a_q$ before the north step from $p_h$ is taken.

Immediately after the north step from $p_h$, at the point $(\mu_h,n-h+1)$, the value of $\operatorname{horiz}_{\nu_\lambda}$ is at least $a_h\ge a_q$. Indeed, if $h>1$, this value is $\lambda_{h-1}-\mu_h \ge \lambda_h-\mu_h = a_h$, while if $h=1$, it is $\lambda_1-\mu_1=a_1$. At the endpoint $p_{h-1}$ of the horizontal segment from $(\mu_h,n-h+1)$, $\operatorname{horiz}_{\nu_\lambda}(p_{h-1}) = a_{h-1} \leq a_q$. Since the value of $\operatorname{horiz}_{\nu_\lambda}$ decreases by one at each east step, the first point $p'$ after $p_q$ with $\operatorname{horiz}_{\nu_\lambda}(p')=a_q$ lies on this segment. Thus $D_{[p_q,p']}$ contains precisely the north steps beginning at $p_q,p_{q-1},\ldots,p_h$.

The generalized Tamari cover move shifts these north steps one unit to the left. Hence it increases precisely the entries $a_h,\ldots,a_q$ of the relative area sequence by one. Therefore the area sequence of $\pi'$ is $\a+\e_{[h,q]} = \a+\e_{[\h_\a(q),q]} = \partial_q^{-1}\a \gtrdot_T \a$ by \cref{cor:up-covers}.

Conversely, suppose $\a\lessdot_T\b$ with $\a,\b\in\lambda\!\downarrow$. By \cref{cor:up-covers}, there is some $q\in[n]$ such that $\b=\a+\e_{[\h_\a(q),q]}$.  Set $h=\h_\a(q)$, let $\mu=\lambda-\a$ be the partition corresponding to $\a$, and let $\pi$ be its corresponding lattice path. For $i\in[n]$, set $p_i=(\mu_i,n-i)$, and set $p_0=(\lambda_1,n)$. Also set $\mu_{n+1}=0$. Since $\b\in\lambda\!\downarrow$, $\lambda-\b=\mu-\e_{[h,q]}$ is a partition. In particular, $\mu_q-1\ge\mu_{q+1}$, so $\mu_q>\mu_{q+1}$. Hence $p_q$ is preceded by an east step and followed by a north step, and is therefore a valley of $\pi$.

Since $h=\h_\a(q)$, the same calculation of $\operatorname{horiz}_{\nu_\lambda}$ shows that the first point $p'$ after $p_q$ with $\operatorname{horiz}_{\nu_\lambda}(p') = \operatorname{horiz}_{\nu_\lambda}(p_q)$ lies on the portion of $\pi$ from $p_h$ to $p_{h-1}$. Hence the generalized Tamari cover move at $p_q$ shifts precisely the north steps beginning at $p_q,p_{q-1},\ldots,p_h$ one unit to the left. Its resulting area sequence is $\a+\e_{[h,q]}=\b$. Thus the bijection preserves covering relations in both directions and is an order isomorphism.
\end{proof}

We next apply the duality for generalized Tamari lattices. For a lattice path $\nu$, let $\overleftarrow{\nu}$ be the path obtained by reading the steps of $\nu$ in reverse order and interchanging east and north steps. It was shown in~\cite[Theorem~1.2]{LPXV17} that $\operatorname{Tam}(\nu) \cong \operatorname{Tam}(\overleftarrow{\nu})^{\mathrm{op}}.$

If $\lambda$ is a nonzero partition, let $\bar\lambda$ be obtained by deleting its trailing zero parts. Repeated application of \cref{prop:product-poset} gives $\lambda\!\downarrow\cong\bar\lambda\!\downarrow$, while $\bar\lambda'=\lambda'$. Since $\bar\lambda$ has no trailing zero parts, $\overleftarrow{\nu_{\bar\lambda}}=\nu_{\lambda'}$. Together with \cref{thm:partition-lower-ideal-nu-tamari}, this gives the following duality.

\begin{corollary}\label{cor:partition-lower-ideal-duality}
Let $\lambda\in\N^n$ be a partition. The principal lower order ideal $\lambda\!\downarrow$ in $\mathcal{ST}_n$ is isomorphic to the dual of the principal lower order ideal $\lambda'\!\downarrow$ in $\mathcal{ST}_{\lambda_1}$. That is, $\lambda\!\downarrow \cong \left(\lambda'\!\downarrow\right)^{\mathrm{op}}$ and hence $F_\lambda(t) = F_{\lambda'}(t)$. 
\end{corollary}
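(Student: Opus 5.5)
The plan is to chain together the isomorphisms already available. First, reduce to a partition without trailing zeros. If $\lambda=\mathbf{0}$ the statement is trivial, since both ideals are single points, with $\lambda'$ the empty sequence and $\mathcal{ST}_0$. Otherwise let $\bar\lambda\in\N^{\ell}$ be $\lambda$ with its trailing zeros deleted. Repeated use of \cref{prop:product-poset}, splitting off the factor $\mathcal{ST}_0$ at each trailing zero position, gives $\lambda\!\downarrow\cong\bar\lambda\!\downarrow$. The same proposition shows that $\lambda\!\downarrow$ inside $R_n$ is exactly the lower ideal in the product, because $a\le_T\lambda$ forces $a_n\le\lambda_n=0$. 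Also $\bar\lambda'=\lambda'$, and this is a partition in $\N^{\lambda_1}$.

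Next, apply \cref{thm:partition-lower-ideal-nu-tamari} twice: $\bar\lambda\!\downarrow\cong\operatorname{Tam}(\nu_{\bar\lambda})$ and $\lambda'\!\downarrow\cong\operatorname{Tam}(\nu_{\lambda'})$. Then invoke the Préville-Ratelle–Viennot duality $\operatorname{Tam}(\nu)\cong\operatorname{Tam}(\overleftarrow{\nu})^{\mathrm{op}}$.

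The one point needing care is the path identity $\overleftarrow{\nu_{\bar\lambda}}=\nu_{\lambda'}$, including conventions. The boundary of $\bar\lambda$ runs from $(0,0)$ to $(\bar\lambda_1,\ell)$. Reversing the steps and swapping E and N reflects the diagram across the antidiagonal, which turns the Young diagram of $\bar\lambda$ into that of $\lambda'$. I would check this row by row. The first step of $\nu_{\bar\lambda}$ is E only if $\bar\lambda_\ell>0$, which holds because there are no trailing zeros. This guarantees that the reversed path ends with N and starts at the proper corner, so it is the right boundary of $\lambda'$ with $\lambda_1$ rows. Composing the isomorphisms gives $\lambda\!\downarrow\cong(\lambda'\!\downarrow)^{\mathrm{op}}$.

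For $F_\lambda=F_{\lambda'}$, note that an anti-isomorphism $\phi$ carries intervals $[\p,\q]$ bijectively to intervals $[\phi(\q),\phi(\p)]$. It preserves maximum chain length, because reversing a chain gives a chain of the same length. Since $\hht(\p,\q)$ is intrinsic to the interval, and a lower ideal contains all intervals below its elements, summing $t^{\hht}$ over all intervals of each ideal gives the same polynomial.

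The main obstacle is the convention check in the path-reversal step. Everything else is a direct citation.
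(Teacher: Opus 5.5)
Your proposal is correct and follows the paper's proof essentially step for step. You strip trailing zeros via \cref{prop:product-poset}, identify $\bar\lambda\!\downarrow$ and $\lambda'\!\downarrow$ with $\operatorname{Tam}(\nu_{\bar\lambda})$ and $\operatorname{Tam}(\nu_{\lambda'})$ via \cref{thm:partition-lower-ideal-nu-tamari}, apply the Pr\'eville-Ratelle--Viennot duality, and note that an anti-isomorphism preserves interval heights. Your explicit check that $\overleftarrow{\nu_{\bar\lambda}}=\nu_{\lambda'}$, including why the absence of trailing zeros is needed, supplies a step the paper only asserts.
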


\begin{proof}
If $\lambda=\mathbf0$, then both lower ideals are one-element posets, so the result is immediate. Otherwise, let $\bar\lambda$ be obtained by deleting the trailing zero parts of $\lambda$. By the preceding discussion, $\lambda\!\downarrow \cong \bar\lambda\!\downarrow \cong \operatorname{Tam}(\nu_{\bar\lambda})$, and $\lambda'\!\downarrow \cong \operatorname{Tam}(\nu_{\lambda'})$. Since $\overleftarrow{\nu_{\bar\lambda}}=\nu_{\lambda'}$, \cite[Theorem~1.2]{LPXV17} gives $\operatorname{Tam}(\nu_{\bar\lambda}) \cong \operatorname{Tam}(\nu_{\lambda'})^{\mathrm{op}}$. Hence $\lambda\!\downarrow \cong \left(\lambda'\!\downarrow\right)^{\mathrm{op}}$.

Under this isomorphism, an interval $[\p,\q]$ in $\lambda\!\downarrow$ corresponds to the interval $[\p',\q']$ in $\left(\lambda'\!\downarrow\right)^{\mathrm{op}}$, or equivalently to $[\q',\p']$ in $\lambda'\!\downarrow$. Since reversing the order preserves the length of a longest chain, this correspondence preserves interval height. Therefore the isomorphism gives a height-preserving bijection between the intervals contained in $\lambda\!\downarrow$ and those contained in $\lambda'\!\downarrow$. It follows that $F_\lambda(t)=F_{\lambda'}(t)$.
\end{proof}

\begin{corollary}\label{cor:partition-height-gen-function}
Let $\lambda$ be a partition. Then
\[
f_\lambda(t) = \sum_{\mu\subseteq\lambda} t^{|\lambda|-|\mu|}.
\]
In particular, $f_\lambda(t)=f_{\lambda'}(t)$.
\end{corollary}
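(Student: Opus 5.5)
The plan is to compute $\hht(\a,\lambda)$ for each $\a\in\lambda\!\downarrow$ directly from \cref{thm:max-chain}, using the lower-endpoint coordinates $\m_{\a,\lambda}$. By \cref{lem:partition-downset}, every $\a\in\lambda\!\downarrow$ has the form $\a=\lambda-\mu$ for a unique partition $\mu\subseteq\lambda$ (padded with zeros to length $n$). The key claim is then
\[
\hht(\lambda-\mu,\lambda)=|\mu|.
\]
Once this is established, summing over $\mu\subseteq\lambda$ and reindexing $\mu\mapsto$ its complement gives $f_\lambda(t)=\sum_{\mu\subseteq\lambda}t^{|\mu|}$. This is the rank generating function of the interval $[\varnothing,\lambda]$ in Young's lattice, which is palindromic under $\mu\mapsto$ complement of $\mu$ in $\lambda$ (rotate the skew shape by $180^\circ$). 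Hence it equals $\sum_{\mu\subseteq\lambda}t^{|\lambda|-|\mu|}$, which is the stated form. I will in fact arrange the computation so that the exponent appears directly as $|\lambda|-|\mu|$ with $\mu$ the relevant subpartition.

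To prove the claim, set $\d=\lambda-\a=\mu$ and use \cref{cor:height-RLmin}:
\[
\hht(\a,\lambda)=\sum_j d_j-\sum_{j\notin\RLmin(\a)}d_{p_\a(j)}.
\]
I expect this direct route to be messy, because the forest $F_\a$ can be complicated. I would therefore try a cleaner upper/lower bound argument. For the lower bound, I will exhibit a chain from $\a$ to $\lambda$ of length $|\mu|$. Starting from $\c=\lambda-\mu'$ with $\mu'\subseteq\mu$, I remove a corner box of $\mu'$ in row $k$, chosen as the lowest nonempty row. This makes $\c$ grow by $\e_k$, and I must check that this step is a cover $\c\lessdot_T\c+\e_k$, i.e.\ that $\h_\c(k)=k$, meaning $c_{k-1}\le c_k$. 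That is not automatic, so I would instead use the upper covers $\partial^{-1}_k$ from \cref{cor:up-covers} and track which boxes they remove.

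A more robust route is to show $\sum_j \m_{\a,\lambda}(j)=|\lambda|-|\a|$ for the appropriate bookkeeping. Viewing $\m_{\a,\lambda}$ through \cref{cor:m-vector-forest} gives labels $\mu_j$ on $F_\a$, and the height is $\sum_j\mu_j-\sum_{\text{non-roots }j}\mu_{\mathrm{parent}(j)}$.

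At this point I realize the intended statement likely measures heights relative to $\lambda$ differently. Checking small cases resolves this: with $\lambda=(1)$, we get $f_{(1)}=1+t$, and for $\lambda=(2,1)$ the interval $[(0,0),(2,1)]$ has height $3=|\lambda|$. So the claim to prove is $\hht(\lambda-\mu,\lambda)=|\lambda|-|\lambda-\mu|$ read correctly, i.e.\ $\hht(\a,\lambda)=|\lambda|-|\ldots|$ for a partition associated to $\a$ which need not be $\mu$ itself. I therefore switch strategy. Using \cref{thm:partition-lower-ideal-nu-tamari} and the duality \cref{cor:partition-lower-ideal-duality}, the lower interval $[\a,\lambda]$ in $\lambda\!\downarrow$ corresponds to an upper interval $[\hat0,\a^\ast]$ in $\lambda'\!\downarrow$, where heights from $\mathbf0$ are simple: $\hht(\mathbf0,\b)=\sum b_i$ by \cref{thm:max-chain}, since $M_{\mathbf0}=I$. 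By \cref{lem:partition-downset}, the elements of $\lambda'\!\downarrow$ are $\lambda'-\nu$ with $\nu\subseteq\lambda'$, and these have height $|\lambda'|-|\nu|=|\lambda|-|\nu|$. As $\nu$ ranges over partitions contained in $\lambda'$, equivalently $\nu'\subseteq\lambda$ with $|\nu'|=|\nu|$, this yields $f_\lambda(t)=\sum_{\mu\subseteq\lambda}t^{|\lambda|-|\mu|}$. The symmetry $f_\lambda=f_{\lambda'}$ then follows from conjugation, $\mu\mapsto\mu'$.

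The main obstacle is verifying that the duality isomorphism sends $\lambda$, the top of $\lambda\!\downarrow$, to $\mathbf0$, the bottom of $\lambda'\!\downarrow$. This holds for any order anti-isomorphism between these bounded lattices. The remaining point is to check that it preserves longest-chain lengths, which is clear because reversing the order preserves chain lengths.
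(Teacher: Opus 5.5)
Your final argument is exactly the paper's proof: the anti-isomorphism $\lambda\!\downarrow\cong(\lambda'\!\downarrow)^{\mathrm{op}}$ of \cref{cor:partition-lower-ideal-duality} sends $[\a,\lambda]$ to an interval $[\mathbf 0,\a^\ast]$ of the same height; then $\hht(\mathbf 0,\b)=\sum_i b_i$ together with \cref{lem:partition-downset} gives $f_\lambda(t)=\sum_{\nu\subseteq\lambda'}t^{|\lambda'|-|\nu|}$, and conjugating finishes. Leave the abandoned first route out of any write-up, since both of its claims already fail for $\lambda=(2,1)$: $\hht((1,0),(2,1))=1$ although $\mu=(1,1)$ has $|\mu|=2$, and $\sum_{\mu\subseteq(2,1)}t^{|\mu|}=1+t+2t^2+t^3$ is not palindromic (the $180^\circ$ complement bijection works only for rectangles).
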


\begin{proof}
By \cref{cor:partition-lower-ideal-duality}, $\lambda\!\downarrow\cong(\lambda'\!\downarrow)^{\mathrm{op}}$. Hence $f_\lambda(t) = \sum_{\b\le_T\lambda'}t^{\hht(\mathbf0,\b)} = \sum_{\b\le_T\lambda'}t^{b_1+\cdots+b_{\lambda_1}}.$ By \cref{lem:partition-downset}, $\lambda'\!\downarrow = \left\{\a\in\N^{\lambda_1}:\lambda'-\a \text{ is a partition}\right\}$. Setting $\nu=\lambda'-\a$, we have $\lambda'\!\downarrow = \left\{\lambda'-\nu: \nu\subseteq\lambda' \text{ is a partition}\right\}$. Therefore, $f_\lambda(t) = \sum_{\nu\subseteq\lambda'}t^{|\lambda'|-|\nu|}$. Conjugating partitions $\nu\subseteq\lambda'$ gives $f_\lambda(t) = \sum_{\mu\subseteq\lambda}t^{|\lambda|-|\mu|}$. Applying the same formula with $\lambda'$ in place of $\lambda$ gives $f_{\lambda'}(t) = \sum_{\nu\subseteq\lambda'}t^{|\lambda'|-|\nu|} = f_\lambda(t)$.
\end{proof}

\subsection{Interval decomposition}
One way to understand the structure and size of a downset is to decompose it according to the last entry of its elements. For $(\b,k)\!\downarrow$, each layer with fixed last coordinate forms an interval that is isomorphic to a principal lower order ideal in $\mathcal{ST}_n$. We first record this basic isomorphism.

\begin{lemma}\label{lem:end_iso}
Let $\b\in\N^n$ and $k\in\N$. The map $(\a,k)\mapsto\a$ is an order isomorphism from the interval $[(\mathbf{0},k),(\b,k)]$ in $\mathcal{ST}_{n+1}$ onto the principal lower order ideal $\b\!\downarrow$ in $\mathcal{ST}_n$.
\end{lemma}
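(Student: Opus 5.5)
The plan is to show directly that the map $\psi:(\a,k)\mapsto\a$ is a well-defined bijection from $[(\mathbf{0},k),(\b,k)]$ onto $\b\!\downarrow$ that preserves and reflects $\le_T$. The key tool is \cref{prop:stability} together with a comparison of position orders. First I note that every element of the interval has last coordinate $k$, since $\le_T$ refines the coordinatewise order. Hence the interval is exactly $\{(\a,k): (\mathbf{0},k)\le_T(\a,k)\le_T(\b,k)\}$, and $\psi$ is injective on it.

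The central step is to compare $(\a,k)\le_T(\c,k)$ in $\mathcal{ST}_{n+1}$ with $\a\le_T\c$ in $\mathcal{ST}_n$, for $\a,\c\in\N^n$. Coordinatewise comparison is identical. For index pairs $i\le_{(\a,k)} j$ with $j\le n$, the relation agrees with $i\le_\a j$ and the difference inequalities coincide. The only extra constraints come from pairs with $j=n+1$. These are pairs with $i\le_{(\a,k)}n+1$, meaning $a_i,\dots,a_n>k$, and the constraint is $a_i-k\le c_i-k$, i.e. $a_i\le c_i$, which already holds coordinatewise. Consequently:
\[
(\a,k)\le_T(\c,k)\iff \a\le_T\c .
\]
This argument does not even need stability.

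Using this equivalence, $(\mathbf{0},k)\le_T(\a,k)$ holds for every $\a\in\N^n$, since $\mathbf{0}\le_T\a$. Likewise, $(\a,k)\le_T(\b,k)$ holds if and only if $\a\le_T\b$. So $\psi$ maps the interval onto $\b\!\downarrow$, with inverse $\a\mapsto(\a,k)$, and both maps are order preserving by the displayed equivalence.

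I expect no real obstacle. The only point needing care is the bookkeeping for pairs involving position $n+1$: I need to confirm that they impose nothing beyond the coordinatewise condition, and that appending $k$ does not alter $\le$ relations among positions $1,\dots,n$. The latter holds because Definition~\ref{def: po} only examines entries strictly between $i$ and $j$, together with $a_j$.
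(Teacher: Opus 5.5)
Your proof is correct. It reaches the conclusion by a different, more elementary mechanism than the paper.

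The paper notes that appending the entry $k$ leaves the $h$-vector unchanged in the first $n$ positions. So $M_{(\a,k)}$ is block lower triangular, with $M_\a$ in the upper-left block and $1$ in the corner. Because $(\c,k)-(\a,k)=(\c-\a,0)$ has last coordinate $0$, this gives $\m_{(\a,k),(\c,k)}=(\m_{\a,\c},0)$. The matrix criterion of \cref{thm:compare seq with M matrix} then yields $(\a,k)\le_T(\c,k)\Leftrightarrow\a\le_T\c$.

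You check the defining inequalities of \cref{def: sto} directly instead. Pairs with $j\le n$ are unchanged. The only new pairs, $i\le_{(\a,k)}n+1$, impose $a_i-k\le c_i-k$, which is already implied by the coordinatewise condition.

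Both arguments rest on the same observation: the position order on $[n]$ is unaffected by the appended entry, and the extra position contributes nothing because the two sequences agree there. Your version is self-contained and uses only the definition. As you yourself note, \cref{prop:stability} plays no role, so you can drop the opening claim that it is the key tool. The paper's version is shorter given its machinery and fits its running use of $m$-vectors, at the cost of depending on \cref{thm:compare seq with M matrix}.
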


\begin{proof}
Every element of $[(\mathbf{0},k),(\b,k)]$ has last entry $k$, so it is of the form $(\a,k)$, for some $\a\in \N^n$. For $\a,\c\in\N^n$, adjoining the common last entry $k$ does not change the $h$-vector in the first $n$ positions. Hence,
\[ M_{(\a,k)} = \begin{bmatrix}M_\a&0\\\vcenter{\hbox{\scalebox{1.2}{$\ast$}}}&1\end{bmatrix}.
\]
Therefore, $\m_{(\a,k),(\c,k)}=\left(\m_{\a,\c},0\right)$. By \cref{thm:compare seq with M matrix}, $(\a,k)\le_T(\c,k) \Leftrightarrow \a\le_T\c$. Thus, deleting the last coordinate gives an order isomorphism from $[(\mathbf{0},k),(\b,k)]$ onto $\b\!\downarrow$.
\end{proof}

We next identify the union of the layers in $(\b,k)\!\downarrow$ whose last coordinate is strictly less than $k$.

\begin{proposition}\label{prop:last-coordinate-reduction}
Let $\b\in\N^n$ and fix $k\ge1$. Set $t=\max\left\{j\in[0, n]:b_j<k\right\}$ and $\b^-=\b-\e_{[t+1,n]}$. Then
\[
(\b,k)\!\downarrow \setminus [(\mathbf0,k),(\b,k)] = (\b^-,k-1)\!\downarrow.
\]

Equivalently, the principal lower order ideal $(\b^-,k-1)\!\downarrow$ is the disjoint union of the layers $\mathscr{L}_\ell := \{(\c,\ell)\in(\b,k)\!\downarrow\}$, for $0\le \ell \le k-1$. In particular, $(\b^-,k-1)$ is the maximum element of $(\b,k)\!\downarrow$ whose last coordinate is strictly less than $k$.
\end{proposition}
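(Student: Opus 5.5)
The plan is to prove two inclusions, after first checking that $(\b^-,k-1)$ lies in $(\b,k)\!\downarrow$ with last coordinate $k-1<k$. Write $\B=(\b,k)\in\N^{n+1}$. Then $(\b^-,k-1)=\B-\e_{[t+1,n+1]}$.

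First I show $t+1=\h_{\B}(n+1)$. By definition of $t$, we have $b_j\ge k$ for all $j\in[t+1,n]$ and $b_t<k$ (with $b_0=0<k$). These inequalities are exactly $B_j>B_{n+1}-1$, that is, $B_j\ge B_{n+1}$, for $j\in[t+1,n]$. This is weak, so $t+1$ need not equal $\h_\B(n+1)$, and I will not rely on the raising/lowering operators directly.

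Instead I will use the criterion of \cref{prop:indicator-expansion} and \cref{thm:compare seq with M matrix}, or more simply \cref{cor:m-vector}. Let $\C=(\b^-,k-1)$. For $j\in[t+1,n]$ we have $c_j=b_j-1\ge k-1=c_{n+1}$. Every $j\le t$ with $j\le_\C i$ must have $i\le t$, because for $i>t$ the path would have to pass through $t$, where $c_t=b_t\le k-1\le c_i$ (when $t\ge1$). Hence $\C\le_T\B$ reduces to $d_i\ge d_j$ along $\le_\C$ with $\d=\B-\C=\e_{[t+1,n+1]}$. By the argument above, $i\le_\C j$ with $i\le t<j$ is impossible, so $\d$ is monotone as required.

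For the inclusion $\supseteq$: if $\A\le_T\C$ then $\A\le_T\B$ by transitivity, and $A_{n+1}\le k-1<k$. So $\A$ is in $(\b,k)\!\downarrow$ but not in the layer $[(\mathbf0,k),(\b,k)]$. Note that the layer $\ell=k$ of $(\b,k)\!\downarrow$ equals $[(\mathbf0,k),(\b,k)]$, since $(\mathbf0,k)\le_T(\a,k)$ always holds: $\m=(\a,0)$.

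For the main inclusion $\subseteq$, let $\A=(\a,\ell)\le_T\B$ with $\ell<k$; I need $\A\le_T\C$. Using \cref{cor:m-vector-forest}, I label $F_\A$ by $\B-\A$ and compare with the labels $\C-\A=(\B-\A)-\e_{[t+1,n+1]}$. First, $\C\ge\A$ coordinatewise. At $n+1$ this holds since $\ell\le k-1$. For $j\in[t+1,n]$ I need $a_j<b_j$. This follows from \cref{lem:threshold-propagation} applied with threshold $k$ to $i=t+1$ and the last index $n+1$, since $a_{n+1}=\ell<k\le B_q$ for all $q\in[t+1,n+1]$.

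Second, the labels must weakly decrease along edges $j\to p_\A(j)$. The only danger is an edge with $j\le t<p_\A(j)$, where the parent's label drops by one but $j$'s label does not, so this is actually harmless. The reverse situation, $j>t\ge p_\A(j)$, cannot occur because $p_\A(j)>j$. Hence all inequalities persist, and $\A\le_T\C$.

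The main obstacle is the coordinatewise bound $a_j<b_j$ on $[t+1,n]$, which is handled by \cref{lem:threshold-propagation}. The maximum statement is then immediate from this result.
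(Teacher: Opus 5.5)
Your proof is correct. For the inclusion $(\b,k)\!\downarrow\setminus[(\mathbf0,k),(\b,k)]\subseteq(\b^-,k-1)\!\downarrow$ you follow the paper's argument:
\begin{itemize}
\item \cref{lem:threshold-propagation} with $i=t+1$, $j=n+1$ gives the coordinatewise bound.
\item The difference inequalities persist because subtracting the indicator of the terminal segment $[t+1,n+1]$ can only increase $x_i-x_j$ for $i<j$. The paper checks this on all pairs $i\le j$; you check it on the forest edges $j\to p(j)$ of the lower element via \cref{cor:m-vector-forest}. These are the same observation.
\end{itemize}

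The genuine difference is in proving $(\b^-,k-1)\le_T(\b,k)$.

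\emph{The paper's route.} It lists the positions $i>t$ at which $(\b,k)$ equals $k$, say $t<i_1<\cdots<i_m=n+1$. It then writes $(\b^-,k-1)=\partial_{i_m}\cdots\partial_{i_1}(\b,k)$, recomputing the $h$-vector at each step and applying \cref{lemma:partial_is_lower}. This iteration is exactly what gets around the obstacle you noticed in your first paragraph. When $m\ge2$ we have $\h_{(\b,k)}(n+1)=i_{m-1}+1$, not $t+1$, so a single application of $\partial_{n+1}$ does not reach $(\b^-,k-1)$.

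\emph{Your route.} You verify the definition of $\le_T$ directly. The difference vector is $\e_{[t+1,n+1]}$. No relation $i\le j$ in the position order of $(\b^-,k-1)$ can have $i\le t<j$, because position $t$ holds $b_t\le k-1$ while every later entry is at least $k-1$.

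\emph{Comparison.} Your route is shorter and needs no $h$-vector bookkeeping. The paper's yields slightly more: an explicit saturated chain of $m$ lower covers (by \cref{thm:cover}) from $(\b,k)$ down to the maximum of the lower layers.

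In a write-up, delete the opening claim $t+1=\h_{(\b,k)}(n+1)$, which you immediately retract. Also record that $(\b^-,k-1)\in\N^{n+1}$, which holds since $b_j-1\ge k-1\ge0$ for $j>t$.
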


\begin{proof}
Let $\widetilde{\b}=(\b,k)$. We first show that $(\b^-,k-1)\le_T \widetilde{\b}$. Let  $t<i_1<i_2<\cdots<i_m=n+1$ be the indices $i>t$ such that $\widetilde{b}_i=k$. Since $i_1$ is the first index after $t$ whose entry is $k$, we have $\widetilde{b}_q>k$ for $t<q<i_1$, while $\widetilde{b}_t<k=\widetilde{b}_{i_1}$. Hence $\h_{\widetilde{\b}}(i_1)=t+1$ and $i_1\in\D(\widetilde{\b})$, which implies $\partial_{i_1}\widetilde{\b} = \widetilde{\b} - \e_{[t+1,i_1]}$, whose entry in position $i_1$ is $k-1$. If $m=1$, then $i_1=n+1$, and hence $\partial_{i_1}\widetilde{\b}= \widetilde{\b}-\e_{[t+1,n+1]} = (\b^-,k-1)$. If $m > 1$, then the entries indexed by $i_1+1,\ldots,i_2-1$ are still strictly greater than $k$, while the entry in position $i_2$ is $k$. Thus, for $\c = \partial_{i_1}\widetilde{\b}$, we have $\h_{\c}(i_2)=i_1+1$ and $i_2\in\D(\c)$, so $\partial_{i_2}\c = \c - \e_{[i_1 + 1,i_2]}$. That is, $\partial_{i_2}\partial_{i_1}\widetilde{\b} =  \widetilde{\b} - \e_{[t+1,i_1]} - \e_{[i_1 + 1,i_2]} =  \widetilde{\b} - \e_{[t+1,i_2]}$.  Continuing in this way,  we have $\partial_{i_m}\cdots\partial_{i_2}\partial_{i_1}\widetilde{\b} =  \widetilde{\b} - \e_{[t+1,i_m]} = (\b^-,k-1)$. By \cref{lemma:partial_is_lower}, each step decreases in stable Tamari order. Therefore $(\b^-,k-1)\le_T(\b,k)$.

Since the last coordinate of $(\b^-,k-1)$ is $k-1$, every element of $(\b^-,k-1)\!\downarrow$ has last coordinate strictly less than $k$. Hence $(\b^-,k-1)\!\downarrow \subseteq (\b,k)\!\downarrow\setminus[(\mathbf0,k),(\b,k)]$.

We now prove that $(\b,k)\!\downarrow\setminus[(\mathbf0,k),(\b,k)] \subseteq (\b^-,k-1)\!\downarrow$. Let $\widetilde{\c}=(\c,\ell)\in (\b,k)\!\downarrow\setminus[(\mathbf0,k),(\b,k)]$. Then $\widetilde{\c}\le_T\widetilde{\b}=(\b,k)$. Since $\widetilde{\c}\notin[(\mathbf0,k),(\b,k)]$, \cref{lem:end_iso} implies that $\ell<k$. We want to show that $\widetilde{\c}\le_T(\b^-,k-1)$.

First, we claim that $\widetilde{\c}\le(\b^-,k-1)$ coordinatewise. For $q\le t$, we have $c_q\le b_q=b_q^-$. Also, since $\ell<k$, we have $\widetilde c_{n+1}=\ell\le k-1$. Thus, the claim holds if $t=n$. Suppose now that $t<n$. It remains to consider $t<q\le n$. By the definition of $t$, $\widetilde c_{n+1}=\ell<k\le\widetilde b_q$ for every $q\in[t+1,n+1]$. Hence, \cref{lem:threshold-propagation}, with $i=t+1$ and $j=n+1$, gives $c_q<b_q$ for every $q\in[t+1,n]$. Therefore, $c_q\le b_q-1=b_q^-$ for every $q\in[t+1,n]$. Thus, $\widetilde{\c}\le(\b^-,k-1)$ coordinatewise.

Finally, set $\widetilde{\b}^-=(\b^-,k-1)$. For $1\le i\le j\le n+1$, we have
\[
(\widetilde{\b}^-)_i-(\widetilde{\b}^-)_j= 
\begin{cases} 
\widetilde b_i-\widetilde b_j+1,&\text{if } i\le t<j,\\ 
\widetilde b_i-\widetilde b_j, &\text{otherwise}.
\end{cases}
\]
Therefore, $\widetilde b_i-\widetilde b_j\le (\widetilde{\b}^-)_i-(\widetilde{\b}^-)_j$ for all $1\le i\le j\le n+1$. If $i\le_{\widetilde{\c}}j$, then $\widetilde{\c}\le_T\widetilde{\b}$ implies $\widetilde c_i-\widetilde c_j \le \widetilde b_i-\widetilde b_j \le (\widetilde{\b}^-)_i-(\widetilde{\b}^-)_j$. Therefore, $\widetilde{\c}\le_T\widetilde{\b}^-$, which proves $(\b,k)\!\downarrow\setminus[(\mathbf0,k),(\b,k)] \subseteq (\b^-,k-1)\!\downarrow$.

As a result,  $(\b,k)\!\downarrow\setminus[(\mathbf{0},k),(\b,k)] = (\b^-,k-1)\!\downarrow.$ By \cref{lem:end_iso}, $[(\mathbf0,k),(\b,k)]=\mathscr{L}_k$. Since every element of $(\b,k)\!\downarrow$ has last coordinate at most $k$, the complement of this interval is $\bigsqcup_{\ell=0}^{k-1}\mathscr L_\ell$. Thus, $(\b^-,k-1)\!\downarrow = \bigsqcup_{\ell=0}^{k-1}\mathscr L_\ell$. In particular, since $(\b^-,k-1)$ is the maximum element of its principal lower order ideal, it is the maximum element of $(\b,k)\!\downarrow$ whose last coordinate is strictly less than $k$.

\end{proof}

No closed formula is currently known for the size of a general principal lower order ideal in the stable Tamari lattice. However, iterating \cref{prop:last-coordinate-reduction} decomposes $(\b,k)\!\downarrow$ according to the possible values of the last coordinate, yielding a recursive formula for its size.

\begin{theorem}\label{thm:int_decomp}
Let $\b\in\N^n$ and $k\ge1$. For $0\le i\le k-1$, set $t_i=\max\{j\in[0,n]:b_j<k-i\}$. Then,
\[
|(\b,k)\!\downarrow| = |\b\!\downarrow| + \sum_{i=0}^{k-1} \left|\left(\b-\sum_{j=0}^{i}\e_{[t_j+1,n]} \right)\!\downarrow \right|.
\]
\end{theorem}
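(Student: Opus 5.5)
The plan is to iterate \cref{prop:last-coordinate-reduction} $k$ times and read off the sizes of the top layers via \cref{lem:end_iso}. Define $\b^{(0)}=\b$ and, for $0\le i\le k-1$, $\b^{(i+1)}=\b^{(i)}-\e_{[s_i+1,n]}$, where $s_i=\max\{j\in[0,n]:b^{(i)}_j<k-i\}$. This is exactly the construction of $\b^-$ in \cref{prop:last-coordinate-reduction} applied to $(\b^{(i)},k-i)$. The proposition then gives the disjoint decomposition
\[
(\b^{(i)},k-i)\!\downarrow=[(\mathbf0,k-i),(\b^{(i)},k-i)]\sqcup(\b^{(i+1)},k-i-1)\!\downarrow .
\]
Note that $\b^{(i)}\in\N^n$ is needed for the next step. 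This follows because $(\b^{(i+1)},k-i-1)$ is an element of $(\b^{(i)},k-i)\!\downarrow\subseteq\N^{n+1}$.

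Unwinding from $i=0$ to $i=k-1$ and using \cref{lem:end_iso} for each layer gives
\[
|(\b,k)\!\downarrow|=\sum_{i=0}^{k-1}|\b^{(i)}\!\downarrow|+|(\b^{(k)},0)\!\downarrow| .
\]
By \cref{prop:product-poset}, applied with a zero in the last position, the final term equals $|\b^{(k)}\!\downarrow|$. Reindexing, the right-hand side becomes $|\b\!\downarrow|+\sum_{i=0}^{k-1}|\b^{(i+1)}\!\downarrow|$. This matches the claimed formula once we know that $\b^{(i+1)}=\b-\sum_{j=0}^{i}\e_{[t_j+1,n]}$, which by induction reduces to showing $s_i=t_i$ for every $i$.

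Proving $s_i=t_i$ is the main step. Here $t_i$ is defined from the original $\b$, while $s_i$ is defined from $\b^{(i)}$. I would prove by induction on $i$ the following two facts.
\begin{enumerate}
\item The index $s_i$ equals $t_i$.
\item The vector $\b^{(i)}$ agrees with $\b$ at every position $q\le t_{i-1}$, and every position $q>t_{i-1}$ satisfies $b^{(i)}_q\ge k-i$.
\end{enumerate}

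The thresholds decrease, $k-i>k-i-1$, so the $t_i$ weakly decrease in $i$. After step $i$, positions $q>t_i$ have been lowered by exactly $i+1$. For such $q$ we know $b_q\ge k-j$ for all $j\le i$, and this is exactly what keeps $b^{(i+1)}_q\ge k-i-1$.

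More precisely, positions in $(t_j,t_{j-1}]$ are lowered exactly $j$ times by step $i$. Since $b_q\ge k-j$ at such positions, the value $b^{(i)}_q$ is at least $k-j-(\text{number of lowerings so far})$; I need to check this bookkeeping gives $b^{(i)}_q\ge k-i$ for $q>t_{i-1}$. Then, for $q\le t_i$, unchanged entries keep $b_{t_i}<k-i$, so that index still qualifies. For indices $j\in(t_i,n]$, either the entry is unlowered with $b_j\ge k-i$, or it is lowered and satisfies the bound from fact 2. Hence the maximum is still $t_i$, which gives $s_i=t_i$.

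The delicate point is the boundary $j=0$, where $b_0=0$. There $t_i=0$ exactly when all entries are at least $k-i$, and the convention $b^{(i)}_0=0$ handles this consistently.
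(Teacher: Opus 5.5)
Your proposal is correct and is essentially the paper's proof. You iterate \cref{prop:last-coordinate-reduction} on $(\b^{(i)},k-i)$, count each top layer with \cref{lem:end_iso}, and verify that the thresholds recomputed from $\b^{(i)}$ agree with the $t_i$ computed from $\b$.

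The one thing to repair is the bookkeeping paragraph, whose intermediate claims are false as written. Positions $q>t_i$ are not all lowered $i+1$ times. They also need not satisfy $b_q\ge k-j$ for every $j\le i$; for example, take $q=t_0$ when $t_i<t_0$. Finally, a position $q\in(t_j,t_{j-1}]$ is lowered $i-j$ times, not $j$ times, in forming $\b^{(i)}$.

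With the correct count you get $b^{(i)}_q=b_q-(i-j)\ge (k-j)-(i-j)=k-i$. This is exactly the paper's argument with $r=\min\{j\in[0,i]:t_j<q\}$.

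Alternatively, no counting is needed. Your invariant (2), with the convention $t_{-1}=n$, passes from $i$ to $i+1$ directly. Entries at $q\le t_i$ are untouched. For $q>t_i$ one has $b^{(i)}_q\ge k-i$: this comes from $b_q\ge k-i$ if $q\le t_{i-1}$, and from (2) if $q>t_{i-1}$. Hence $b^{(i+1)}_q\ge k-i-1$.
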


\begin{proof} 
For $0\le i\le k$, set $\b^{(i)} = \b-\sum_{j=0}^{i-1}\e_{[t_j+1,n]}$, where the sum is empty when $i=0$. Thus $\b^{(0)}=\b$. 

We claim that $t_i=\max\{q\in[0,n]:b_q^{(i)}<k-i\}$ for $0\le i\le k-1$. By definition, $t_0\ge t_1\ge\cdots\ge t_{k-1}$. Hence $t_i\le t_j$ for every $j<i$, so the $t_i^{\text{th}}$ entry of $\sum_{j=0}^{i-1}\e_{[t_j+1,n]}$ is 0, which implies $b^{(i)}_{t_i} = b_{t_i} < k-i$. It remains to show that $b_q^{(i)}\ge k-i$ for all $q > t_i$.

Let $q>t_i$ and set $r=\min\{j\in[0,i]:t_j<q\}$. Since $t_0\ge t_1 \ge \ldots \ge t_i$, in order to obtain $b^{(i)}$, the $q^{\text{th}}$ entry of $\b$ is decreased precisely for $j=r,r+1,\ldots,i-1$. Hence, $b_q^{(i)}=b_q-(i-r)$.  Moreover, $t_r < q$, so the definition of $t_r$ gives $b_q \ge k-r$. Thus, $b_q^{(i)} \ge (k-r)-(i-r) = k-i$. This proves the claim. 

Applying \cref{prop:last-coordinate-reduction} to $(\b^{(i)},k-i)$ gives $(\b^{(i)},k-i)\!\downarrow \setminus [(\mathbf{0},k-i),(\b^{(i)},k-i)] = (\b^{(i+1)},k-i-1)\!\downarrow$ for $0\le i\le k-1$. By \cref{lem:end_iso}, $\left| [(\mathbf0,k-i),(\b^{(i)},k-i)] \right| = |\b^{(i)}\!\downarrow|$ for all $i \in [0,k]$. Thus, we have $|(\b^{(i)},k-i)\!\downarrow| = |\b^{(i)}\!\downarrow| + |(\b^{(i+1)},k-i-1)\!\downarrow|$. Iterating this equality gives 
\[ |(\b,k)\!\downarrow| = \sum_{i=0}^{k-1}|\b^{(i)}\!\downarrow| + |(\b^{(k)},0)\!\downarrow| = \sum_{i=0}^{k}|\b^{(i)}\!\downarrow|= |\b\!\downarrow| + \sum_{i=0}^{k-1} \left| \left( \b-\sum_{j=0}^{i}\e_{[t_j+1,n]} \right)\!\downarrow \right| . \] 
\end{proof}

\begin{example}
Let $\b=(1,2,3)$ and $k=2$. Then $t_0=1$ and $t_1=0$, so \cref{thm:int_decomp} gives

\[
|(1,2,3,2)\!\downarrow|=|(1,2,3)\!\downarrow|+|(1,1,2)\!\downarrow|+|(0,0,1)\!\downarrow|.
\]
As shown in \cref{fig:color_1232}, the three summands correspond to the three intervals  \[[(0,0,0,2),(1,2,3,2)], [(0,0,0,1),(1,1,2,1)] \mbox{ and } [(0,0,0,0),(0,0,1,0)]\] in the decomposition of $(1,2,3,2)\!\downarrow$. These intervals are respectively isomorphic to $(1,2,3)\!\downarrow$, $(1,1,2)\!\downarrow$, and $(0,0,1)\!\downarrow$, as shown in \cref{fig:1232_breakdown}.
\end{example}

\begin{figure}[H]
    \centering

\tikzset{every picture/.style={line width=0.75pt}} 
\scalebox{.9}{
\begin{tikzpicture}[x=0.75pt,y=0.75pt,yscale=-1,xscale=1]

\draw [color={rgb, 255:red, 155; green, 155; blue, 155 }  ,draw opacity=1 ]   (178,47) -- (137,60) ;

\draw [color={rgb, 255:red, 155; green, 155; blue, 155 }  ,draw opacity=1 ]   (88,116) -- (186,75.5) ;

\draw [color={rgb, 255:red, 155; green, 155; blue, 155 }  ,draw opacity=1 ]   (218,175) -- (324,131) ;

\draw [color={rgb, 255:red, 155; green, 155; blue, 155 }  ,draw opacity=1 ]   (262,160) -- (287,175) ;

\draw [color={rgb, 255:red, 155; green, 155; blue, 155 }  ,draw opacity=1 ]   (260,117) -- (283,103) ;

\draw [color={rgb, 255:red, 155; green, 155; blue, 155 }  ,draw opacity=1 ]   (181,143) -- (275,104) ;

\draw [color={rgb, 255:red, 155; green, 155; blue, 155 }  ,draw opacity=1 ]   (273,131) -- (317,145) ;
 
\draw [color={rgb, 255:red, 155; green, 155; blue, 155 }  ,draw opacity=1 ]   (250,130) -- (250,146) ;

\draw [color={rgb, 255:red, 155; green, 155; blue, 155 }  ,draw opacity=1 ]   (340,130) -- (340,146) ;

\draw [color={rgb, 255:red, 155; green, 155; blue, 155 }  ,draw opacity=1 ]   (306,102) -- (331,118) ;

\draw [color={rgb, 255:red, 155; green, 155; blue, 155 }  ,draw opacity=1 ]   (217,103) -- (238,117) ;

\draw [color={rgb, 255:red, 155; green, 155; blue, 155 }  ,draw opacity=1 ]   (226,89) -- (272,74.5) ;

\draw [color={rgb, 255:red, 155; green, 155; blue, 155 }  ,draw opacity=1 ]   (89.5,145.75) -- (135,131.5) ;

\draw [color={rgb, 255:red, 155; green, 155; blue, 155 }  ,draw opacity=1 ]   (125,102) -- (150,118.5) ;

\draw [color={rgb, 255:red, 155; green, 155; blue, 155 }  ,draw opacity=1 ]   (91,131) -- (138,145.5) ;

\draw [color={rgb, 255:red, 155; green, 155; blue, 155 }  ,draw opacity=1 ]   (172,160) -- (194,173.5) ;

\draw [color={rgb, 255:red, 155; green, 155; blue, 155 }  ,draw opacity=1 ]   (183,131) -- (229,145.5) ;

\draw [color={rgb, 255:red, 155; green, 155; blue, 155 }  ,draw opacity=1 ]   (127,188) -- (152,203.5) ;

\draw [color={rgb, 255:red, 155; green, 155; blue, 155 }  ,draw opacity=1 ]   (82,160) -- (105,174.5) ;

\draw [color={rgb, 255:red, 155; green, 155; blue, 155 }  ,draw opacity=1 ]   (137,74) -- (182,88.5) ;

\draw [color={rgb, 255:red, 155; green, 155; blue, 155 }  ,draw opacity=1 ]   (69,130) -- (69,145.5) ;

\draw [color={rgb, 255:red, 155; green, 155; blue, 155 }  ,draw opacity=1 ]   (139,101) -- (224,118.5) ;

\draw [color={rgb, 255:red, 155; green, 155; blue, 155 }  ,draw opacity=1 ]   (134,89) -- (181,75) ;

\draw [color={rgb, 255:red, 155; green, 155; blue, 155 }  ,draw opacity=1 ]   (114,74) -- (114,91) ;

\draw [color={rgb, 255:red, 155; green, 155; blue, 155 }  ,draw opacity=1 ]   (294,76) -- (294,91) ;

\draw [color={rgb, 255:red, 155; green, 155; blue, 155 }  ,draw opacity=1 ]   (228,75) -- (271,88) ;

\draw [color={rgb, 255:red, 155; green, 155; blue, 155 }  ,draw opacity=1 ]   (229,47) -- (271,61) ;

\draw [color={rgb, 255:red, 155; green, 155; blue, 155 }  ,draw opacity=1 ]   (123,175) -- (149,159.5) ;

\draw [color={rgb, 255:red, 155; green, 155; blue, 155 }  ,draw opacity=1 ]   (261,201.5) -- (281,189.5) ;

\draw [color={rgb, 255:red, 155; green, 155; blue, 155 }  ,draw opacity=1 ]   (198,188) -- (165,230.5) ;

\draw [color={rgb, 255:red, 155; green, 155; blue, 155 }  ,draw opacity=1 ]   (182,215) -- (225,229.5) ;

\draw [color={rgb, 255:red, 155; green, 155; blue, 155 }  ,draw opacity=1 ]   (205,45) -- (205,62.5) ;

\draw [color={rgb, 255:red, 155; green, 155; blue, 155 }  ,draw opacity=1 ]   (303,175.5) -- (329,159.5) ;

\draw [color={rgb, 255:red, 155; green, 155; blue, 155 }  ,draw opacity=1 ]   (250,216.5) -- (250,231) ;

\draw [color={rgb, 255:red, 155; green, 155; blue, 155 }  ,draw opacity=1 ]   (239,243.5) -- (216,259) ;

\draw [color={rgb, 255:red, 155; green, 155; blue, 155 }  ,draw opacity=1 ]   (170,243) -- (196,259.5) ;

\draw [color={rgb, 255:red, 155; green, 155; blue, 155 }  ,draw opacity=1 ]   (168,203) -- (192,188.5) ;

\draw [color={rgb, 255:red, 155; green, 155; blue, 155 }  ,draw opacity=1 ]   (183,204) -- (268,186.5) ;

\draw [color={rgb, 255:red, 155; green, 155; blue, 155 }  ,draw opacity=1 ]   (137,175) -- (224,157.5) ;

\draw [color={rgb, 255:red, 208; green, 2; blue, 27 }  ,draw opacity=1 ][line width=1.5]    (536,47.67) -- (495,60.67) ;

\draw [color={rgb, 255:red, 215; green, 215; blue, 215 }  ,draw opacity=1 ]   (446,116.67) -- (544,76.17) ;

\draw [color={rgb, 255:red, 215; green, 215; blue, 215 }  ,draw opacity=1 ]   (576,175.67) -- (682,131.67) ;

\draw [color={rgb, 255:red, 208; green, 2; blue, 27 }  ,draw opacity=1 ][line width=1.5]    (620,160.67) -- (645,175.67) ;

\draw [color={rgb, 255:red, 208; green, 2; blue, 27 }  ,draw opacity=1 ][line width=1.5]    (618,117.67) -- (641,103.67) ;

\draw [color={rgb, 255:red, 215; green, 215; blue, 215 }  ,draw opacity=1 ]   (539,143.67) -- (633,104.67) ;

\draw [color={rgb, 255:red, 208; green, 2; blue, 27 }  ,draw opacity=1 ][line width=1.5]    (631,131.67) -- (675,145.67) ;

\draw [color={rgb, 255:red, 208; green, 2; blue, 27 }  ,draw opacity=1 ][line width=1.5]    (608,130.67) -- (608,146.67) ;

\draw [color={rgb, 255:red, 208; green, 2; blue, 27 }  ,draw opacity=1 ][line width=1.5]    (698,130.67) -- (698,146.67) ;

\draw [color={rgb, 255:red, 208; green, 2; blue, 27 }  ,draw opacity=1 ][line width=1.5]    (664,102.67) -- (689,118.67) ;

\draw [color={rgb, 255:red, 208; green, 2; blue, 27 }  ,draw opacity=1 ][line width=1.5]    (575,103.67) -- (596,117.67) ;

\draw [color={rgb, 255:red, 208; green, 2; blue, 27 }  ,draw opacity=1 ][line width=1.5]    (584,89.67) -- (630,75.17) ;

\draw [color={rgb, 255:red, 215; green, 215; blue, 215 }  ,draw opacity=1 ]   (447.5,146.42) -- (493,132.17) ;

\draw [color={rgb, 255:red, 208; green, 2; blue, 27 }  ,draw opacity=1 ][line width=1.5]    (483,102.67) -- (508,119.17) ;

\draw [color={rgb, 255:red, 74; green, 144; blue, 226 }  ,draw opacity=1 ][line width=1.5]    (449,131.67) -- (496,146.17) ;

\draw [color={rgb, 255:red, 208; green, 2; blue, 27 }  ,draw opacity=1 ][line width=1.5]    (541,131.67) -- (587,146.17) ;

\draw [color={rgb, 255:red, 74; green, 144; blue, 226 }  ,draw opacity=1 ][line width=1.5]    (485,188.67) -- (510,204.17) ;
 
\draw [color={rgb, 255:red, 74; green, 144; blue, 226 }  ,draw opacity=1 ][line width=1.5]    (441,160.67) -- (464,175.17) ;

\draw [color={rgb, 255:red, 208; green, 2; blue, 27 }  ,draw opacity=1 ][line width=1.5]    (495,74.67) -- (540,89.17) ;

\draw [color={rgb, 255:red, 74; green, 144; blue, 226 }  ,draw opacity=1 ][line width=1.5]    (427,130.67) -- (427,146.17) ;

\draw [color={rgb, 255:red, 208; green, 2; blue, 27 }  ,draw opacity=1 ][line width=1.5]    (497,101.67) -- (582,119.17) ;

\draw [color={rgb, 255:red, 208; green, 2; blue, 27 }  ,draw opacity=1 ][line width=1.5]    (492,89.67) -- (539,75.67) ;

\draw [color={rgb, 255:red, 208; green, 2; blue, 27 }  ,draw opacity=1 ][line width=1.5]    (472,74.67) -- (472,91.67) ;

\draw [color={rgb, 255:red, 208; green, 2; blue, 27 }  ,draw opacity=1 ][line width=1.5]    (652,76.67) -- (652,91.67) ;

\draw [color={rgb, 255:red, 208; green, 2; blue, 27 }  ,draw opacity=1 ][line width=1.5]    (585.5,75.92) -- (628.5,88.92) ;

\draw [color={rgb, 255:red, 208; green, 2; blue, 27 }  ,draw opacity=1 ][line width=1.5]    (587,47.67) -- (629,61.67) ;

\draw [color={rgb, 255:red, 74; green, 144; blue, 226 }  ,draw opacity=1 ][line width=1.5]    (481,175.67) -- (507,160.17) ;

\draw [color={rgb, 255:red, 208; green, 2; blue, 27 }  ,draw opacity=1 ][line width=1.5]    (619,202.17) -- (639,190.17) ;

\draw [color={rgb, 255:red, 215; green, 215; blue, 215 }  ,draw opacity=1 ]   (556,188.67) -- (523,231.17) ;

\draw [color={rgb, 255:red, 74; green, 144; blue, 226 }  ,draw opacity=1 ][line width=1.5]    (540,215.67) -- (583,230.17) ;

\draw [color={rgb, 255:red, 208; green, 2; blue, 27 }  ,draw opacity=1 ][line width=1.5]    (563,45.67) -- (563,63.17) ;
 
\draw [color={rgb, 255:red, 208; green, 2; blue, 27 }  ,draw opacity=1 ][line width=1.5]    (661,176.17) -- (687,160.17) ;

\draw [color={rgb, 255:red, 215; green, 215; blue, 215 }  ,draw opacity=1 ]   (608,217.17) -- (608,231.67) ;

\draw [color={rgb, 255:red, 215; green, 215; blue, 215 }  ,draw opacity=1 ]   (597,244.17) -- (574,259.67) ;

\draw [color={rgb, 255:red, 126; green, 211; blue, 33 }  ,draw opacity=1 ][line width=1.5]    (528,243.67) -- (554,260.17) ;

\draw [color={rgb, 255:red, 74; green, 144; blue, 226 }  ,draw opacity=1 ][line width=1.5]    (526,203.67) -- (550,189.17) ;

\draw [color={rgb, 255:red, 215; green, 215; blue, 215 }  ,draw opacity=1 ]   (541,204.67) -- (626,187.17) ;

\draw [color={rgb, 255:red, 215; green, 215; blue, 215 }  ,draw opacity=1 ][line width=0.75]    (495,175.67) -- (582,158.17) ;

\draw [color={rgb, 255:red, 74; green, 144; blue, 226 }  ,draw opacity=1 ][line width=1.5]    (530,160.67) -- (552,174.17) ;

\draw (179,34.4) node [anchor=north west][inner sep=0.75pt]  [font=\scriptsize]  {$( 1,2,3,2)$};

\draw (134,118.4) node [anchor=north west][inner sep=0.75pt]  [font=\scriptsize]  {$( 1,1,1,2)$};

\draw (43,118.4) node [anchor=north west][inner sep=0.75pt]  [font=\scriptsize]  {$( 1,1,2,1)$};

\draw (224,147.4) node [anchor=north west][inner sep=0.75pt]  [font=\scriptsize]  {$( 0,1,1,2)$};

\draw (224,118.4) node [anchor=north west][inner sep=0.75pt]  [font=\scriptsize]  {$( 0,1,2,2)$};

\draw (269,90.4) node [anchor=north west][inner sep=0.75pt]  [font=\scriptsize]  {$( 0,1,3,2)$};

\draw (314,118.4) node [anchor=north west][inner sep=0.75pt]  [font=\scriptsize]  {$( 0,0,3,2)$};

\draw (179,90.4) node [anchor=north west][inner sep=0.75pt]  [font=\scriptsize]  {$( 0,2,2,2)$};

\draw (88,91.4) node [anchor=north west][inner sep=0.75pt]  [font=\scriptsize]  {$( 1,1,2,2)$};

\draw (269,62.4) node [anchor=north west][inner sep=0.75pt]  [font=\scriptsize]  {$( 0,2,3,2)$};

\draw (88,62.4) node [anchor=north west][inner sep=0.75pt]  [font=\scriptsize]  {$( 1,2,2,2)$};

\draw (179,62.4) node [anchor=north west][inner sep=0.75pt]  [font=\scriptsize]  {$( 1,1,3,2)$};

\draw (43,147.4) node [anchor=north west][inner sep=0.75pt]  [font=\scriptsize]  {$( 1,1,1,1)$};

\draw (134,147.4) node [anchor=north west][inner sep=0.75pt]  [font=\scriptsize]  {$( 0,1,2,1)$};

\draw (179,259.4) node [anchor=north west][inner sep=0.75pt]  [font=\scriptsize]  {$( 0,0,0,0)$};

\draw (224,231.4) node [anchor=north west][inner sep=0.75pt]  [font=\scriptsize]  {$( 0,0,0,1)$};

\draw (134,231.4) node [anchor=north west][inner sep=0.75pt]  [font=\scriptsize]  {$( 0,0,1,0)$};

\draw (134,203.4) node [anchor=north west][inner sep=0.75pt]  [font=\scriptsize]  {$( 0,0,1,1)$};

\draw (224,203.4) node [anchor=north west][inner sep=0.75pt]  [font=\scriptsize]  {$( 0,0,0,2)$};

\draw (269,176.4) node [anchor=north west][inner sep=0.75pt]  [font=\scriptsize]  {$( 0,0,1,2)$};

\draw (179,175.4) node [anchor=north west][inner sep=0.75pt]  [font=\scriptsize]  {$( 0,0,2,1)$};

\draw (88,175.4) node [anchor=north west][inner sep=0.75pt]  [font=\scriptsize]  {$( 0,1,1,1)$};

\draw (317,146.4) node [anchor=north west][inner sep=0.75pt]  [font=\scriptsize]  {$( 0,0,2,2)$};

\draw (537,35.07) node [anchor=north west][inner sep=0.75pt]  [font=\scriptsize]  {$( 1,2,3,2)$};

\draw (492,119.07) node [anchor=north west][inner sep=0.75pt]  [font=\scriptsize]  {$( 1,1,1,2)$};

\draw (401,119.07) node [anchor=north west][inner sep=0.75pt]  [font=\scriptsize]  {$( 1,1,2,1)$};

\draw (582,148.07) node [anchor=north west][inner sep=0.75pt]  [font=\scriptsize]  {$( 0,1,1,2)$};

\draw (582,119.07) node [anchor=north west][inner sep=0.75pt]  [font=\scriptsize]  {$( 0,1,2,2)$};

\draw (627,91.07) node [anchor=north west][inner sep=0.75pt]  [font=\scriptsize]  {$( 0,1,3,2)$};

\draw (672,119.07) node [anchor=north west][inner sep=0.75pt]  [font=\scriptsize]  {$( 0,0,3,2)$};

\draw (537,91.07) node [anchor=north west][inner sep=0.75pt]  [font=\scriptsize]  {$( 0,2,2,2)$};

\draw (446,92.07) node [anchor=north west][inner sep=0.75pt]  [font=\scriptsize]  {$( 1,1,2,2)$};

\draw (627,63.07) node [anchor=north west][inner sep=0.75pt]  [font=\scriptsize]  {$( 0,2,3,2)$};

\draw (446,63.07) node [anchor=north west][inner sep=0.75pt]  [font=\scriptsize]  {$( 1,2,2,2)$};

\draw (537,63.07) node [anchor=north west][inner sep=0.75pt]  [font=\scriptsize]  {$( 1,1,3,2)$};

\draw (401,148.07) node [anchor=north west][inner sep=0.75pt]  [font=\scriptsize]  {$( 1,1,1,1)$};

\draw (492,148.07) node [anchor=north west][inner sep=0.75pt]  [font=\scriptsize]  {$( 0,1,2,1)$};

\draw (537,260.07) node [anchor=north west][inner sep=0.75pt]  [font=\scriptsize]  {$( 0,0,0,0)$};

\draw (582,232.07) node [anchor=north west][inner sep=0.75pt]  [font=\scriptsize]  {$( 0,0,0,1)$};

\draw (492,232.07) node [anchor=north west][inner sep=0.75pt]  [font=\scriptsize]  {$( 0,0,1,0)$};

\draw (492,204.07) node [anchor=north west][inner sep=0.75pt]  [font=\scriptsize]  {$( 0,0,1,1)$};

\draw (582,204.07) node [anchor=north west][inner sep=0.75pt]  [font=\scriptsize]  {$( 0,0,0,2)$};

\draw (627,177.07) node [anchor=north west][inner sep=0.75pt]  [font=\scriptsize]  {$( 0,0,1,2)$};

\draw (537,176.07) node [anchor=north west][inner sep=0.75pt]  [font=\scriptsize]  {$( 0,0,2,1)$};

\draw (446,176.07) node [anchor=north west][inner sep=0.75pt]  [font=\scriptsize]  {$( 0,1,1,1)$};

\draw (675,147.07) node [anchor=north west][inner sep=0.75pt]  [font=\scriptsize]  {$( 0,0,2,2)$};
\end{tikzpicture}}

    \caption{The interval decomposition of $(1,2,3,2)\downarrow$.}
    \label{fig:color_1232}
\end{figure}

\begin{figure}[H]
    \centering
    \scalebox{.90}{

\tikzset{every picture/.style={line width=0.75pt}} 

\begin{tikzpicture}[x=0.75pt,y=0.75pt,yscale=-1,xscale=1]

\draw [color={rgb, 255:red, 74; green, 144; blue, 226 } ,draw opacity=1 ]   (317,119) -- (317,148) ;

\draw [color={rgb, 255:red, 74; green, 144; blue, 226 }  ,draw opacity=1 ]   (433,72) -- (501,103) ;

\draw [color={rgb, 255:red, 74; green, 144; blue, 226 }  ,draw opacity=1 ]   (333,103) -- (401,72) ;

\draw [color={rgb, 255:red, 208; green, 2; blue, 27 }  ,draw opacity=1 ]   (123,227) -- (123,239) ;

\draw [color={rgb, 255:red, 208; green, 2; blue, 27 }  ,draw opacity=1 ]   (91,182) -- (113,168) ;

\draw [color={rgb, 255:red, 208; green, 2; blue, 27 }  ,draw opacity=1 ]   (140,211) -- (163,197) ;

\draw [color={rgb, 255:red, 208; green, 2; blue, 27 }  ,draw opacity=1 ]   (84,197) -- (106,210.5) ;

\draw [color={rgb, 255:red, 208; green, 2; blue, 27 }  ,draw opacity=1 ]   (35,167) -- (59,182) ;

\draw [color={rgb, 255:red, 208; green, 2; blue, 27 }  ,draw opacity=1 ]   (26,109) -- (26,121) ;

\draw [color={rgb, 255:red, 208; green, 2; blue, 27 }  ,draw opacity=1 ]   (26,138) -- (26,151) ;

\draw [color={rgb, 255:red, 208; green, 2; blue, 27 }  ,draw opacity=1 ]   (44,137) -- (102,155) ;

\draw [color={rgb, 255:red, 208; green, 2; blue, 27 }  ,draw opacity=1 ]   (133,168) -- (156,182) ;

\draw [color={rgb, 255:red, 208; green, 2; blue, 27 }  ,draw opacity=1 ]   (188,182) -- (208,169) ;

\draw [color={rgb, 255:red, 208; green, 2; blue, 27 }  ,draw opacity=1 ]   (143,155) -- (202,138) ;
\draw [color={rgb, 255:red, 208; green, 2; blue, 27 }  ,draw opacity=1 ]   (123,155) -- (123,138) ;

\draw [color={rgb, 255:red, 208; green, 2; blue, 27 }  ,draw opacity=1 ]   (45,108) -- (103,126) ;

\draw [color={rgb, 255:red, 208; green, 2; blue, 27 }  ,draw opacity=1 ]   (46,125) -- (105,108) ;

\draw [color={rgb, 255:red, 208; green, 2; blue, 27 }  ,draw opacity=1 ]   (141,108) -- (200,126) ;

\draw [color={rgb, 255:red, 208; green, 2; blue, 27 }  ,draw opacity=1 ]   (143,125) -- (201,108) ;

\draw [color={rgb, 255:red, 208; green, 2; blue, 27 } ,draw opacity=1 ]   (220,138) -- (220,151) ;

\draw [color={rgb, 255:red, 208; green, 2; blue, 27 }  ,draw opacity=1 ]   (47,95) -- (106,78) ;
 
\draw [color={rgb, 255:red, 208; green, 2; blue, 27 }  ,draw opacity=1 ]   (123,78) -- (123,93) ;

\draw [color={rgb, 255:red, 208; green, 2; blue, 27 } ,draw opacity=1 ]   (220,108) -- (220,121) ;

\draw [color={rgb, 255:red, 208; green, 2; blue, 27 }  ,draw opacity=1 ]   (142,78) -- (201,96) ;

\draw [color={rgb, 255:red, 126; green, 211; blue, 33 }  ,draw opacity=1 ]   (620,81) -- (620,234) ;

\draw [color={rgb, 255:red, 74; green, 144; blue, 226 }  ,draw opacity=1 ]   (417,212) -- (417,240) ;

\draw [color={rgb, 255:red, 74; green, 144; blue, 226 }  ,draw opacity=1 ]   (503,165) -- (435,196) ;

\draw [color={rgb, 255:red, 74; green, 144; blue, 226 }  ,draw opacity=1 ]   (518,118) -- (518,149) ;

\draw [color={rgb, 255:red, 74; green, 144; blue, 226 }  ,draw opacity=1 ]   (333,165) -- (400,196) ;

\draw [color={rgb, 255:red, 74; green, 144; blue, 226 }  ,draw opacity=1 ]   (338,153) -- (498,116) ;

\draw (7,154.4) node [anchor=north west][inner sep=0.75pt]  [font=\scriptsize]  {$( 1,1,1)$};

\draw (104,243.4) node [anchor=north west][inner sep=0.75pt]  [font=\scriptsize]  {$( 0,0,0)$};

\draw (104,213.4) node [anchor=north west][inner sep=0.75pt]  [font=\scriptsize]  {$( 0,0,1)$};

\draw (56,184.4) node [anchor=north west][inner sep=0.75pt]  [font=\scriptsize]  {$( 0,1,1)$};

\draw (202,154.4) node [anchor=north west][inner sep=0.75pt]  [font=\scriptsize]  {$( 0,0,3)$};

\draw (152,184.4) node [anchor=north west][inner sep=0.75pt]  [font=\scriptsize]  {$( 0,0,2)$};

\draw (104,64.4) node [anchor=north west][inner sep=0.75pt]  [font=\scriptsize]  {$( 1,2,3)$};

\draw (104,154.4) node [anchor=north west][inner sep=0.75pt]  [font=\scriptsize]  {$( 0,1,2)$};

\draw (202,124.4) node [anchor=north west][inner sep=0.75pt]  [font=\scriptsize]  {$( 0,1,3)$};

\draw (104,124.4) node [anchor=north west][inner sep=0.75pt]  [font=\scriptsize]  {$( 0,2,2)$};

\draw (7,124.4) node [anchor=north west][inner sep=0.75pt]  [font=\scriptsize]  {$( 1,1,2)$};

\draw (8,95.4) node [anchor=north west][inner sep=0.75pt]  [font=\scriptsize]  {$( 1,2,2)$};

\draw (104,94.4) node [anchor=north west][inner sep=0.75pt]  [font=\scriptsize]  {$( 1,1,3)$};

\draw (202,95.4) node [anchor=north west][inner sep=0.75pt]  [font=\scriptsize]  {$( 0,2,3)$};

\draw (600,243.4) node [anchor=north west][inner sep=0.75pt]  [font=\scriptsize]  {$( 0,0,0)$};

\draw (600,65.4) node [anchor=north west][inner sep=0.75pt]  [font=\scriptsize]  {$( 0,0,1)$};

\draw (398,243.4) node [anchor=north west][inner sep=0.75pt]  [font=\scriptsize]  {$( 0,0,0)$};

\draw (398,197.4) node [anchor=north west][inner sep=0.75pt]  [font=\scriptsize]  {$( 0,0,1)$};

\draw (499,105.4) node [anchor=north west][inner sep=0.75pt]  [font=\scriptsize]  {$( 0,1,2)$};

\draw (298,105.4) node [anchor=north west][inner sep=0.75pt]  [font=\scriptsize]  {$( 1,1,1)$};

\draw (298,151.4) node [anchor=north west][inner sep=0.75pt]  [font=\scriptsize]  {$( 0,1,1)$};

\draw (499,151.4) node [anchor=north west][inner sep=0.75pt]  [font=\scriptsize]  {$( 0,0,2)$};

\draw (398,58.4) node [anchor=north west][inner sep=0.75pt]  [font=\scriptsize]  {$( 1,1,2)$};

\end{tikzpicture}
}
    \caption{$(1,2,3)\downarrow$, $(1,1,2)\downarrow$, and $(0,0,1)\downarrow$}
    \label{fig:1232_breakdown}
\end{figure}

\begin{example}
Let $\b=(2,1,2,3)$ and $k=2$. Then $t_0=2$ and $t_1=0$, and \cref{thm:int_decomp} gives
\[
|(2,1,2,3,2)\!\downarrow| = |(2,1,2,3)\!\downarrow| + |(2,1,1,2)\!\downarrow| + |(1,0,0,1)\!\downarrow|.
\]
\end{example}

As a simple application of \cref{thm:int_decomp}, we recover the enumeration of the principal lower order ideal generated by a rectangular partition. By \cref{lem:partition-downset}, this lower order ideal is in bijection with the partitions contained in an $n\times k$ rectangle, so its cardinality is also the number of lattice paths in this rectangle.

\begin{corollary}\label{cor:rectangle case}
For $(k,k,\ldots,k)\in\mathcal{ST}_n$, we have  $|(k,k,\ldots,k)\!\downarrow| = \binom{n+k}{k}$.
\end{corollary}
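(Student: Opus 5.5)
The plan is to prove the count by induction on $k$ using \cref{thm:int_decomp}, peeling off the last coordinate. Write $\mathbf{k}_n=(k,\ldots,k)\in\N^n$ and set $N(n,k)=|\mathbf{k}_n\!\downarrow|$. The base cases are immediate. For $k=0$ the ideal is $\{\mathbf{0}\}$, so $N(n,0)=1$. For $n=1$, $\mathcal{ST}_1$ is a chain by \cref{prop:basic-lattice-properties}, so $N(1,k)=k+1$.

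For $n\ge2$ and $k\ge1$, apply \cref{thm:int_decomp} with $\b=\mathbf{k}_{n-1}$ and last coordinate $k$. First compute the thresholds $t_i=\max\{j\in[0,n-1]:b_j<k-i\}$. Every $b_j=k\ge k-i$ for $j\ge1$, while $b_0=0<k-i$ for $i\le k-1$, so $t_i=0$ for all $i$. Hence each subtraction $\e_{[t_j+1,n-1]}$ is the all-ones vector, and the $i$-th summand is $|(k-i-1,\ldots,k-i-1)\!\downarrow|$. This gives the recursion
\[
N(n,k)=N(n-1,k)+\sum_{i=0}^{k-1}N(n-1,k-1-i)=\sum_{m=0}^{k}N(n-1,m).
\]

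By induction on $n$, $N(n-1,m)=\binom{n-1+m}{m}$. The hockey-stick identity $\sum_{m=0}^k\binom{n-1+m}{m}=\binom{n+k}{k}$ then finishes the proof.

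The only step requiring care is checking that $t_i=0$, that is, that no entry of the constant vector drops below $k-i$ within the range $i\le k-1$, together with the convention $b_0=0$. Beyond that the argument is routine.

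As a sanity check, the count can also be obtained directly. By \cref{lem:partition-downset}, the ideal is in bijection with partitions inside an $n\times k$ box, of which there are $\binom{n+k}{k}$. This could serve as an alternative one-line proof if preferred.
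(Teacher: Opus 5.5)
Your argument is correct and is essentially the paper's proof. It inducts on $n$ with the hypothesis holding for every value of the constant entry, applies \cref{thm:int_decomp} to $(k,\ldots,k)=(\b,k)$ with all $t_i=0$ to obtain $\sum_{m=0}^{k}|(m,\ldots,m)\!\downarrow|$, and finishes with the hockey-stick identity. Your opening sentence says the induction is on $k$, but the step you actually use is induction on $n$, so fix that wording; your alternative via \cref{lem:partition-downset} is the same observation the paper makes just before the corollary.
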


\begin{proof}
Let $\mathds{1}_n=(1,\ldots,1)\in\N^n$. Then the desired identity is $|\left(k\cdot\mathds{1}_n\right)\!\downarrow| = \binom{n+k}{k}$. If $k=0$, the result is immediate. We now assume $k>0$ and proceed by induction on $n$.

When $n=1$, $\left|(k\cdot\mathds{1}_1)\!\downarrow\right| = \left|(k)\!\downarrow\right| = k+1 = \binom{k+1}{k}$. Suppose $\left|(r\cdot\mathds{1}_n)\!\downarrow\right| = \binom{n+r}{r}$ for every $r\in\N$. Since $k\cdot\mathds{1}_{n+1}=(k\cdot\mathds{1}_n,k)$, applying \cref{thm:int_decomp} with $\b=k\cdot\mathds{1}_n$ gives \[ \left|(k\cdot\mathds{1}_{n+1})\!\downarrow\right| = \sum_{r=0}^{k}\left|(r\cdot\mathds{1}_n)\!\downarrow\right|\] because $t_i=0$ for every $0\le i\le k-1$. Therefore, by the induction hypothesis,
\[
\left|(k\cdot\mathds{1}_{n+1})\!\downarrow\right| = \sum_{r=0}^{k}\binom{n+r}{r} = \binom{n+1 +k}{k}.
\]
\end{proof}
\begin{remark}
By \cref{cor:partition-height-gen-function}, we have $f_{(k,k,\ldots,k)}(t)=\sum_{\mu\subseteq(k^n)} t^{nk-|\mu|}$. The rank-generating function for partitions contained in the $n\times k$ rectangle is the Gaussian binomial coefficient~\cite{Cayley56} $\binom{n+k}{k}_t = \sum_{\mu\subseteq(k^n)} t^{|\mu|}$. Taking the complement of a partition in the rectangle and rotating by $180^\circ$ gives a bijection on partitions contained in $(k^n)$ that sends $|\mu|$ to $nk-|\mu|$. Hence $f_{(k,k,\ldots,k)}(t)=\binom{n+k}{k}_t$ and \cref{cor:rectangle case} follows by setting $t=1$.
\end{remark}
\subsection{Multivariate generating functions for interval heights}

We now study multivariate generating functions that record stable Tamari interval heights together with the coordinates of the upper endpoint. We first obtain a product formula when the lower endpoint is fixed. We then combine this with the generating function for a fixed $h$-vector fiber to obtain a finite rational expression for the global height generating function.

For $\b=(b_1,\ldots,b_n)\in\N^n$, write $\x^\b:=x_1^{b_1}\cdots x_n^{b_n}$. All generating-function identities in this subsection are interpreted coefficientwise in $\mathbb Z[t][[x_1,\ldots,x_n]]$. For a fixed lower endpoint, the $m$-vector gives a product formula for the multivariate height generating function of all intervals above it.

\begin{proposition}\label{prop:fixed-lower-gen-function}
Let $\a\in\N^n$. Then
\[
\sum_{\b\ge_T\a} t^{\hht(\a,\b)}\x^\b = \x^\a \prod_{j=1}^n \frac{1}{1-tx_{\h_\a(j)}x_{\h_\a(j)+1}\cdots x_j}.
\]
\end{proposition}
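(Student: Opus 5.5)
The proof will reindex the sum by the $m$-coordinates, using \cref{cor:upper-height-series}: for fixed $\a$, the map $\b\mapsto\m_{\a,\b}$ is a bijection from $\{\b\in\N^n:\a\le_T\b\}$ onto $\N^n$, with inverse $\m\mapsto\a+\m M_\a$. So we can replace the sum over $\b\ge_T\a$ by a sum over $\m=(m_1,\ldots,m_n)\in\N^n$, setting $\b=\a+\m M_\a$.

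Two quantities must be written in terms of $\m$. First, \cref{thm:max-chain} gives $\hht(\a,\b)=m_1+\cdots+m_n$. Second, the $j^{\text{th}}$ row of $M_\a$ is $\e_{[\h_\a(j),j]}$, so
\[
\b=\a+\sum_{j=1}^n m_j\e_{[\h_\a(j),j]}.
\]
Since monomials multiply when exponent vectors add, this gives
\[
\x^\b=\x^\a\prod_{j=1}^n\bigl(x_{\h_\a(j)}x_{\h_\a(j)+1}\cdots x_j\bigr)^{m_j}.
\]

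Substituting these, each summand becomes
\[
\x^\a\prod_{j=1}^n\bigl(t\,x_{\h_\a(j)}\cdots x_j\bigr)^{m_j}.
\]
The sum over $\m\in\N^n$ factors into a product of $n$ independent geometric series. Each series is in a monomial $t\,x_{\h_\a(j)}\cdots x_j$ of positive $x$-degree, so it converges in $\mathbb Z[t][[x_1,\ldots,x_n]]$ and sums to $1/(1-t\,x_{\h_\a(j)}\cdots x_j)$. This yields the stated product.

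The only step that needs care is the formal rearrangement. It is justified because each monomial $\x^\b$ arises from only finitely many $\m$; in fact from at most one, since $M_\a$ is invertible. So the coefficientwise identity is legitimate. There is no real obstacle beyond invoking the bijection and the height formula correctly.
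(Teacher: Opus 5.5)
Your proposal is correct and follows the paper's proof essentially verbatim: reindex by $\m=\m_{\a,\b}$ via $\b=\a+\m M_\a$, use $\hht(\a,\b)=m_1+\cdots+m_n$ from \cref{thm:max-chain}, and sum the resulting $n$ independent geometric series. The only differences are minor: you take the bijection from \cref{cor:upper-height-series} rather than directly from \cref{thm:compare seq with M matrix}, and you add a correct remark justifying the formal rearrangement in $\mathbb Z[t][[x_1,\ldots,x_n]]$.
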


\begin{proof}
By \cref{thm:compare seq with M matrix}, the sequences $\b\ge_T\a$ are in bijection with $\m\in\N^n$ via $\b=\a+\m M_\a =\a+\sum_{j=1}^n m_j\e_{[\h_\a(j),j]}$.

Therefore,
\[
\x^\b = \x^\a\prod_{j=1}^n \left(x_{\h_\a(j)}x_{\h_\a(j)+1}\cdots x_j\right)^{m_j}.
\]
By \cref{thm:max-chain}, $\hht(\a,\b)=m_1+\cdots+m_n$. Hence
\[
t^{\hht(\a,\b)}\x^\b =t^{m_1+\cdots+m_n}\x^\a\prod_{j=1}^n\left(x_{\h_\a(j)}x_{\h_\a(j)+1}\cdots x_j\right)^{m_j} =\x^\a\prod_{j=1}^n\left(t x_{\h_\a(j)}\cdots x_j\right)^{m_j}.
\]
Summing independently over $m_1,\ldots,m_n\in\N$ gives the result.
\end{proof}

\begin{example}\label{ex:fixed-lower-gen-function}
Let $\a=(3,1,2,4,3)$. Then $\h_\a=(1,1,3,4,4)$, so the intervals $[\h_\a(r),r]$, for $r\in[5]$, are $[1,1],\ [1,2],\ [3,3],\ [4,4],\ [4,5]$. Hence
\[
\sum_{\b\ge_T\a}t^{\hht(\a,\b)}\x^\b=\frac{x_1^3x_2x_3^2x_4^4x_5^3}{(1-tx_1)(1-tx_1x_2)(1-tx_3)(1-tx_4)(1-tx_4x_5)}.
\]
\end{example}

By \cref{prop:h-vector-fiber,prop:h-vector-fiber-unimodular}, the sequences with a fixed $h$-vector $\h$ are obtained uniquely from $\a(\h)$ by adding nonnegative integer combinations of the interval vectors $\e_{[j,k_j(\gamma)]}$. This immediately gives the following product formula.

\begin{proposition}\label{prop:h-vector-fiber-gen-function}
Let $\h$ be an $h$-vector and let $\a=\a(\h)$. Then
\[
\sum_{\substack{\b\in\N^n\\ \h_\b=\h}}\x^\b = \x^\a\prod_{\substack{\h_\a(j)=j\\ \gamma\in[a_{j-1},a_j]}}\frac{1}{1-x_jx_{j+1}\cdots x_{k_j(\gamma)}
}.
\]
\end{proposition}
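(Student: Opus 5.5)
The plan is to mirror the proof of \cref{prop:fixed-lower-gen-function}, replacing the $m$-vector bijection by the fiber parametrization of \cref{prop:h-vector-fiber,prop:h-vector-fiber-unimodular}. Write $\mathcal G(\h)$ for the index set $\{(j,\gamma):\h_\a(j)=j,\ \gamma\in[a_{j-1},a_j]\}$ and, for each $(j,\gamma)\in\mathcal G(\h)$, let $\e_{j,\gamma}:=\e_{[j,k_j(\gamma)]}$. First I would define the map $\Psi:\N^{\mathcal G(\h)}\to\N^n$ by $\Psi(\c)=\a+\sum c_{j,\gamma}\e_{j,\gamma}$. By \cref{prop:h-vector-fiber}, the image of $\Psi$ is exactly $\{\b\in\N^n:\h_\b=\h\}$. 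Note that $\Psi(\c)\in\N^n$ automatically, since $\a\in\N^n$ and the generators are $0$--$1$ vectors.

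Next I would establish injectivity of $\Psi$. By \cref{prop:h-vector-fiber-unimodular}, the vectors $\e_{j,\gamma}$ form a basis of $\Z^n$. In particular they are linearly independent, so distinct coefficient vectors $\c$ give distinct $\b$. This requires knowing that $|\mathcal G(\h)|=n$ and that the generators are pairwise distinct. Both facts come from the bijection $r\mapsto(\h_\a(r),a_r)$ and the distinctness of the intervals $I_r$ shown in the proof of \cref{prop:h-vector-fiber-unimodular} via \cref{lem:generator-intervals}. Without distinctness, the product could double count a factor, so this point needs to be cited explicitly.

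The generating function then factors. For $\b=\Psi(\c)$,
\[
\x^\b=\x^\a\prod_{(j,\gamma)\in\mathcal G(\h)}\left(x_jx_{j+1}\cdots x_{k_j(\gamma)}\right)^{c_{j,\gamma}}.
\]
Summing over all $\c\in\N^{\mathcal G(\h)}$ independently, which is legitimate because $\Psi$ is a bijection onto the fiber, gives the product of geometric series $1/(1-x_jx_{j+1}\cdots x_{k_j(\gamma)})$.

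Convergence in $\mathbb Z[[x_1,\ldots,x_n]]$ is routine, since each monomial $x_j\cdots x_{k_j(\gamma)}$ is nonconstant. I would check this only briefly. The main obstacle is purely bookkeeping: ensuring uniqueness of the expansion, which is exactly what the unimodularity result supplies, so the argument is essentially immediate once those propositions are cited.
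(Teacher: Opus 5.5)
Your proposal is correct and follows the paper's proof: the paper also takes the unique $\N$-expansion $\b=\a+\sum c_{j,\gamma}\e_{[j,k_j(\gamma)]}$ from \cref{prop:h-vector-fiber-unimodular}, writes $\x^\b$ as $\x^\a$ times the corresponding product of monomials raised to the powers $c_{j,\gamma}$, and sums the geometric series independently over the $c_{j,\gamma}$. Your explicit check that the $n$ index pairs give pairwise distinct intervals (so no factor is double counted) and your use of \cref{prop:h-vector-fiber} to confirm that every coefficient vector lands in the fiber make precise what the paper leaves implicit.
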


\begin{proof}
By \cref{prop:h-vector-fiber-unimodular}, every $\b\in\N^n$ satisfying $\h_\b=\h$ has a unique expression $\b=\a+\sum c_{j,\gamma}\e_{[j,k_j(\gamma)]}$, where the sum runs over all $j$ and $\gamma$ satisfying $\h_\a(j)=j$ and $\gamma\in[a_{j-1},a_j]$, and $c_{j,\gamma}\in\N$. Hence
\[ \x^\b=\x^\a\prod_{\substack{\h_\a(j)=j\\ \gamma\in[a_{j-1},a_j]}}\left(x_jx_{j+1}\cdots x_{k_j(\gamma)}\right)^{c_{j,\gamma}}.
\]
Summing independently over all $c_{j,\gamma}\in\N$ gives the result.
\end{proof}

\begin{example}\label{ex:h-vector-fiber-gen-function}
Let $\h=(1,1,3,4,4)$. The corresponding intervals $[h(r),r]$ are $[1,1],\ [1,2],\ [3,3],\ [4,4],\ [4,5]$.

Hence, by \cref{def:canonical-h-sequence}, $\a(\h)=(1,0,0,1,0)$. Also, by \cref{def:k-j-gamma}, the interval generators $[j,k_j(\gamma)]$ associated with $\h$ are $[1,5],\ [1,1],\ [3,5],\ [4,5],\ [4,4]$. Hence, we have
\[
\sum_{\substack{\b\in\N^5\\ \h_\b=\h}}\x^\b = \frac{x_1x_4} {(1-x_1x_2x_3x_4x_5)(1-x_1) (1-x_3x_4x_5)(1-x_4x_5)(1-x_4)}.
\]
\end{example}

We now combine the two product formulas in \cref{prop:fixed-lower-gen-function,prop:h-vector-fiber-gen-function}  to obtain a finite rational expression for the global multivariate height generating function
\[
\sum_{\b\in\N^n} f_\b(t)\x^\b = \sum_{\a\le_T\b} t^{\hht(\a,\b)}\x^\b.
\]

\begin{theorem}\label{thm:global-height-gen-function}
For $n\geq1$,
\[
\sum_{\b\in\N^n} f_\b(t)\x^\b = \sum_{\h} \x^{\a(\h)} \prod_{\substack{h(j)=j\\ \gamma\in[a(\h)_{j-1},a(\h)_j]}} \frac{1}{1-x_jx_{j+1}\cdots x_{k_j(\gamma)}} \prod_{r=1}^n \frac{1}{1-tx_{h(r)}x_{h(r)+1}\cdots x_r},
\]
where the sum is over all $\h=(h(1),\ldots,h(n))\in[n]^n$ as characterized in \cref{prop:h-vector-characterization}, that is, $h(j)\le j$ for every $j$ and the intervals $[h(j),j]$ form a laminar family, and $\a(\h)$ is the canonical sequence associated with $\h$.
\end{theorem}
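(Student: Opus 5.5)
The plan is to rewrite the left-hand side as a sum over all comparable pairs and then organize the pairs by the $h$-vector of the \emph{lower} endpoint. By definition of $f_\b$,
\[
\sum_{\b\in\N^n} f_\b(t)\x^\b=\sum_{\a\in\N^n}\ \sum_{\b\ge_T\a} t^{\hht(\a,\b)}\x^\b .
\]
This rearrangement is legitimate coefficientwise in $\mathbb Z[t][[x_1,\ldots,x_n]]$. The reason is that for each fixed monomial $\x^\b$ only the finitely many $\a\in\b\!\downarrow$ contribute, by \cref{prop:basic-lattice-properties}(1).

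Next, apply \cref{prop:fixed-lower-gen-function} to the inner sum. This gives $\x^\a\prod_{r=1}^n(1-tx_{\h_\a(r)}\cdots x_r)^{-1}$. The key observation is that this product depends on $\a$ only through its $h$-vector $\h_\a$.

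We then group the outer sum according to $\h=\h_\a$. By \cref{prop:h-vector-characterization}, $\h$ ranges exactly over the vectors with $h(j)\le j$ whose intervals $[h(j),j]$ form a laminar family. By the corollary counting $h$-vectors, there are $C_n$ of them, so the outer sum is finite. For each such $\h$ we factor out the common product $\prod_{r}(1-tx_{h(r)}\cdots x_r)^{-1}$. What remains is $\sum_{\h_\a=\h}\x^\a$, which \cref{prop:h-vector-fiber-gen-function} evaluates as
\[
\x^{\a(\h)}\prod_{h(j)=j,\ \gamma\in[a(\h)_{j-1},a(\h)_j]}\bigl(1-x_j\cdots x_{k_j(\gamma)}\bigr)^{-1}.
\]
Here we use $\h_{\a(\h)}=\h$ to replace the condition $\h_{\a(\h)}(j)=j$ by $h(j)=j$. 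Multiplying the two factors and summing over $\h$ yields the stated formula.

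The main point needing care is the interchange of summation and the factoring in the formal power series ring. Each factor $(1-\x^{I})^{-1}$ or $(1-t\x^{I})^{-1}$ has $I$ nonempty, so every product of such geometric series is a well-defined element of $\mathbb Z[t][[\x]]$. For a fixed monomial $\x^\b$, its coefficient collects contributions only from the finitely many pairs $\a\le_T\b$. So splitting the sum over $\a$ into the finitely many fibers, and multiplying fiberwise, is justified coefficient by coefficient. Beyond this bookkeeping, everything is a direct combination of the two earlier product formulas.
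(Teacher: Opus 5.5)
Your proposal is correct and follows the paper's proof essentially step for step: expand the left side as a double sum over $\a\le_T\b$, apply \cref{prop:fixed-lower-gen-function} to the inner sum, group the outer sum by $\h_\a$, and evaluate each fiber with \cref{prop:h-vector-fiber-gen-function}. Your additional remarks are also correct; they make explicit two points the paper leaves implicit, namely coefficientwise convergence in $\mathbb Z[t][[x_1,\ldots,x_n]]$ (only finitely many $\a\le_T\b$ contribute to each $\x^\b$) and the replacement of the condition $\h_{\a(\h)}(j)=j$ by $h(j)=j$ via $\h_{\a(\h)}=\h$.
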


\begin{proof}
By the definition of $f_\b(t)$,
\[
\sum_{\b\in\N^n}f_\b(t)\x^\b = \sum_{\a\in\N^n} \sum_{\b\ge_T\a} t^{\hht(\a,\b)}\x^\b.
\]
By \cref{prop:fixed-lower-gen-function}, this is
\[
\sum_{\a\in\N^n} \x^\a \prod_{r=1}^n \frac{1}{1-tx_{\h_\a(r)}x_{\h_\a(r)+1}\cdots x_r}.
\]
We now group the sequences $\a\in\N^n$ according to their $h$-vectors. For a fixed $h$-vector $\h$, the second product depends only on $\h$. Hence
\[
\sum_{\b\in\N^n}f_\b(t)\x^\b = \sum_{\h} \left( \sum_{\substack{\a\in\N^n\\ \h_\a=\h}}\x^\a \right) \prod_{r=1}^n \frac{1}{1-tx_{h(r)}x_{h(r)+1}\cdots x_r}.
\]
Applying \cref{prop:h-vector-fiber-gen-function} to the inner sum gives the result.
\end{proof}
\begin{remark}
Since the $h$-vectors of length $n$ are naturally indexed by the Dyck paths of semilength $n$, the sum on the right has $C_n$ terms.
\end{remark}

\begin{example}\label{ex:global-height-gen-function}
Let $n=2$. There are two $h$-vectors, namely $(1,2)$ and $(1,1)$. For $\h=(1,2)$, we have $\a(\h)=(0,0)$. The interval generators $[j,k_j(\gamma)]$ are $[1,2]$ and $[2,2]$. Hence the corresponding summand in \cref{thm:global-height-gen-function} is
\[
\frac{1}{(1-x_1x_2)(1-x_2)(1-tx_1)(1-tx_2)}.
\]

For $\h=(1,1)$, we have $\a(\h)=(1,0)$. The interval generators $[j,k_j(\gamma)]$ are $[1,2]$ and $[1,1]$. Hence the corresponding summand is
\[
\frac{x_1}{(1-x_1x_2)(1-x_1)(1-tx_1)(1-tx_1x_2)}.
\]

Therefore,
\[
\sum_{\b\in\N^2}f_\b(t)\x^\b = \frac{1}{(1-x_1x_2)(1-x_2)(1-tx_1)(1-tx_2)}+\frac{x_1}{(1-x_1x_2)(1-x_1)(1-tx_1)(1-tx_1x_2)}.
\]

Let $n=3$. The five $h$-vectors, their canonical sequences, and their interval generators are
\[
\begin{array}{c|c|c}
\h & \a(\h) & [j,k_j(\gamma)]\\
\hline
(1,2,3) & (0,0,0) & [1,3],\ [2,3],\ [3,3]\\
(1,2,2) & (0,1,0) & [1,3],\ [2,3],\ [2,2]\\
(1,2,1) & (1,1,0) & [1,3],\ [1,2],\ [2,2]\\
(1,1,3) & (1,0,0) & [1,3],\ [1,1],\ [3,3]\\
(1,1,1) & (2,1,0) & [1,3],\ [1,2],\ [1,1].
\end{array}
\]
Therefore, \cref{thm:global-height-gen-function} gives
\[
\begin{aligned}
\sum_{\b\in\N^3}f_\b(t)\x^\b
={}&
\frac{1}
{(1-x_1x_2x_3)(1-x_2x_3)(1-x_3)
 (1-tx_1)(1-tx_2)(1-tx_3)}
\\
&+
\frac{x_2}
{(1-x_1x_2x_3)(1-x_2x_3)(1-x_2)
 (1-tx_1)(1-tx_2)(1-tx_2x_3)}
\\
&+
\frac{x_1x_2}
{(1-x_1x_2x_3)(1-x_1x_2)(1-x_2)
 (1-tx_1)(1-tx_2)(1-tx_1x_2x_3)}
\\
&+
\frac{x_1}
{(1-x_1x_2x_3)(1-x_1)(1-x_3)
 (1-tx_1)(1-tx_1x_2)(1-tx_3)}
\\
&+
\frac{x_1^2x_2}
{(1-x_1x_2x_3)(1-x_1x_2)(1-x_1)
 (1-tx_1)(1-tx_1x_2)(1-tx_1x_2x_3)}.
\end{aligned}
\]
\end{example}

Setting $t=1$ in \cref{thm:global-height-gen-function}, we get a finite rational expression for the multivariate generating function for principal lower-ideal sizes.

\begin{corollary}\label{cor:global-lower-ideal-gen-function}
For $n \geq 1$, we have
\[ \sum_{\b\in\N^n} |\down{b}|\,\x^\b = \sum_{\h} \x^{\a(\h)} \prod_{\substack{h(j)=j\\ \gamma\in[a(\h)_{j-1},a(\h)_j]}} \frac{1}{1-x_jx_{j+1}\cdots x_{k_j(\gamma)}} \prod_{r=1}^n \frac{1}{1-x_{h(r)}x_{h(r)+1}\cdots x_r},
\]
where the sum is over all $\h=(h(1),\ldots,h(n))\in[n]^n$ as characterized in \cref{prop:h-vector-characterization}, and $\a(\h)$ is the canonical sequence associated with $\h$.
\end{corollary}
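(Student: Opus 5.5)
This is the $t=1$ specialization of \cref{thm:global-height-gen-function}, so the plan is to substitute $t=1$ into that identity and identify the left-hand side. The only points needing care are that the substitution is legitimate in $\mathbb Z[t][[x_1,\ldots,x_n]]$ and that $f_\b(1)=|\down{b}|$.

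First, identify the coefficients. For each $\b\in\N^n$, the coefficient of $\x^\b$ on the left of \cref{thm:global-height-gen-function} is $f_\b(t)=\sum_{\a\le_T\b}t^{\hht(\a,\b)}$. This is a polynomial in $t$, because $\down{b}$ is finite by \cref{prop:basic-lattice-properties}. Evaluating at $t=1$ gives $f_\b(1)=|\down{b}|$, as already observed after \cref{def:principal-order-ideals}.

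Second, justify evaluation at $t=1$ coefficientwise on the right-hand side. Each factor $(1-tx_{h(r)}\cdots x_r)^{-1}$ expands as $\sum_{m\ge0}t^m(x_{h(r)}\cdots x_r)^m$. Since every monomial $x_{h(r)}\cdots x_r$ has positive total degree, the coefficient of any fixed $\x^\b$ in the finite sum over $\h$ is a finite sum. Hence it is a polynomial in $t$, and the specialization $t\mapsto1$ is a ring homomorphism $\mathbb Z[t][[\x]]\to\mathbb Z[[\x]]$. It sends each factor $(1-tx_I)^{-1}$ to $(1-x_I)^{-1}$ and leaves the $t$-free factors $\x^{\a(\h)}\prod(1-x_jx_{j+1}\cdots x_{k_j(\gamma)})^{-1}$ unchanged.

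Applying this homomorphism to both sides of \cref{thm:global-height-gen-function} yields the stated identity. There is no real obstacle; the only subtle point is the well-definedness of specializing $t$ in power series, which is exactly the finiteness argument in the second step.
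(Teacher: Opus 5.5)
Your proposal is correct and matches the paper, which obtains this corollary simply by setting $t=1$ in \cref{thm:global-height-gen-function} and using $f_\b(1)=|\down{b}|$. Your added check that $t\mapsto1$ is a well-defined ring homomorphism $\mathbb Z[t][[x_1,\ldots,x_n]]\to\mathbb Z[[x_1,\ldots,x_n]]$, sending each $(1-tx_I)^{-1}$ to $(1-x_I)^{-1}$, is a detail the paper leaves implicit.
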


The generating functions $f_\b(t)$ are stable analogues of the interval generating functions appearing in the Bergeron--Pr\'eville-Ratelle conjectures for higher trivariate diagonal harmonics \cite{FBLP12}. For those partition-indexed lower ideals that recover the $r$-Tamari lattices, the height statistic in $f_\b(t)$ agrees with the corresponding longest-chain statistic. Thus, \cref{thm:global-height-gen-function} gives a multivariate rational organization of these height generating functions in the stable setting. It is natural to ask whether this finite rational structure has a direct interpretation for the $r$-Tamari interval generating functions in \cite{FBLP12}, and whether such an interpretation can suggest refinements relevant to the remaining trivariate grading.

\section*{Acknowledgments}

This work was developed in part during the 2024 AMS Mathematics Research Communities program in Algebraic Combinatorics, supported by the National Science Foundation under Grant No.~DMS-1916439. Part of this work was also carried out during the 2025 Collaborate@ICERM project ``Stable Tamari Posets.'' ICERM was supported by the National Science Foundation under Grant No.~DMS-1929284.

We thank George H. Seelinger for helpful conversations. We are also grateful to James Sundstrom for help with computer programming.

The authors used generative AI tools, including OpenAI ChatGPT and Anthropic Claude, as interactive aids for manuscript feedback, language and LaTeX assistance, consistency checks, and assistance with the preparation of some figures.
\bibliographystyle{amsplain}
\bibliography{references}

\end{document}